\newtheorem{cor}{Corollary}[section]
\newtheorem{deff}[cor]{Definition}
\newtheorem{nota}[cor]{Notation}
\newtheorem{ex}[cor]{Example}
\newtheorem{lem}[cor]{Lemma}
\newtheorem{prop}[cor]{Proposition}
\newtheorem{rem}[cor]{Remark}
\newtheorem{theo}[cor]{Theorem}
\newcommand{\Z}{\mathbb{Z}}
\newcommand{\Q}{\mathbb{Q}}
\newcommand{\F}{\mathbb{F}}
\newcommand{\kk}{\mathbbm{k}}
\newcommand{\Zm}{\Z[\delta]_{\mathfrak{m}}}
\newcommand{\p}{p}
\newcommand{\qn}[2][q]{\llbracket #2\rrbracket_{#1}}
\newcommand{\qnd}[2][\delta]{\llbracket #2\rrbracket_{#1}}
\newcommand{\qnn}[1]{\llbracket #1\rrbracket}
\DeclarePairedDelimiter\abs{\lvert}{\rvert}
\DeclarePairedDelimiter\ip{\langle}{\rangle}
\DeclareMathOperator{\Hom}{Hom}
\DeclareMathOperator{\End}{End}
\DeclareMathOperator{\Ima}{Im}
\DeclareMathOperator{\rad}{rad}
\DeclareMathOperator{\Std}{Std}
\DeclareMathOperator{\id}{\mathsf{id}}
\DeclareMathOperator{\spn}{span}
\newcommand{\bleq}{\mathrel{\mathpalette\bleqinn\relax}}
\newcommand{\bleqinn}[2]{%
  \ooalign{%
    \raisebox{.2ex}{$#1\blacktriangleleft$}\cr
    $#1\leq$\cr
  }%
}
\newcommand{\tlmod}[1]{\mathcal{#1}}
\newcommand{\ctlmod}[2][n]{\tlmod{S}(#1,#2)}
\newcommand{\stlmod}[2][n]{\tlmod{L}(#1,#2)}
\newcommand{\uqmod}[1]{\mathsf{#1}}
\newcommand{\wmod}[1]{\Delta(#1)}
\newcommand{\tmod}[1]{\uqmod{T}(#1)}
\newcommand{\supp}[1]{\text{supp}(#1)}
\newcommand{\mo}[2][]{\mathrm{m}^{#1}_{#2}}
\newcommand{\TL}{\mathsf{TL}}
\newcommand{\tlnkd}{\TL_n^\kk(\delta)}
\newcommand{\tlnzd}{\TL_n^{\Zm}(\delta)}
\newcommand{\uq}{\boldsymbol{\mathrm{U}}_q}
\newcommand{\tlcat}{\mathscr{TL}}
\newcommand{\sltwo}{\mathfrak{sl}_2}
\newcommand{\uqsl}{\uq(\sltwo)}
\newcommand{\td}{\mathbf{td}}
\newcommand{\trunc}[2][n]{\mathcal{T}_{#1}^{#2}}
\newcommand{\jwlp}{{\overline{\mathsf{JW}}}}
\newcommand{\jwlpz}{\mathsf{JW}}
\newcommand{\jw}{\mathsf{jw}}
\newcommand{\upr}{\overline{\mathsf{U}}}
\newcommand{\dor}{\overline{\mathsf{D}}}
\newcommand{\zupr}{\mathsf{U}}
\newcommand{\zdor}{\mathsf{D}}
\newcommand{\ssupr}{\mathsf{u}}
\newcommand{\ssdor}{\mathsf{d}}
\newcommand{\sslo}{\mathsf{x}}
\newcommand{\alg}[1]{\mathscr{#1}}
\newcommand{\cellposet}{\Lambda}
\newcommand{\cellindices}[1]{M(#1)}
\newcommand{\cellindex}[1]{\mathfrak{#1}}
\newcommand{\cellbasissym}{c}
\newcommand{\cellbasis}[3][\lambda]{\cellbasissym_{\cellindex{#2},\cellindex{#3}}^{#1}}
\newcommand{\cellhalf}[2][\lambda]{\cellbasissym_{\cellindex{#2}}^{#1}}
\newcommand{\rr}[4][\lambda]{r_{#2}^\lambda(\cellindex{#3},\cellindex{#4})}
\newcommand{\tlcellposet}[1][n]{\Lambda_{#1}}
\newcommand{\tlcellindices}[2][n]{M_{#1}(#2)}
\newcommand{\basis}{\mathscr{B}}
\newcommand{\vv}[2][\lambda]{v_{#1}}
\newcommand{\NN}[2]{N_{#1}}
\newcommand{\yy}[2][n]{y_{#1}^{#2}}
\newcommand{\zz}[2][n]{z_{#1}^{#2}}
\newcommand{\yz}[2][n]{e_{#1}^{#2}}
\newcommand{\cellmod}[1]{\mathcal{#1}}
\newcommand{\cmod}[2][n]{\cellmod{S}(#1,#2)}
\newcommand{\tllinewidth}{1.25}
\tikzset{
centered/.style={baseline={([yshift=-0.5ex]current bounding box.center)}},
mylabels/.style={font=\tiny},
minlabels/.style={font=\fontsize{4}{0}\selectfont},
st/.style={line width=\tllinewidth,color=black},
jw/.style={line width=\tllinewidth,color=black,fill=gray!30,rounded corners},
jwlp/.style={line width=\tllinewidth,color=black,fill=pink,rounded corners},
jwlpz/.style={line width=\tllinewidth,color=black,fill=Goldenrod,rounded corners
},
morw/.style={line width=\tllinewidth,color=black,rounded corners},
morg/.style={line width=\tllinewidth,color=black,fill=white,rounded corners},
empty/.style={color=black,densely dotted,rounded corners},
}
\newcommand{\tlcap}[3]{\draw[st] (#1) to[out=90,in=180] (#2) to[out=0,in=90] (#3);}
\newcommand{\tlcup}[3]{\draw[st] (#1) to[out=270,in=180] (#2) to[out=0,in=270] (#3);}
\newcommand{\jantzenopacity}{0.9}
\colorlet{top}{red!95!black}
\colorlet{bot}{blue!95!white}
\colorlet{colzero}{bot!100!top}
\colorlet{colone}{bot!80!top}
\colorlet{coltwo}{bot!60!top}
\colorlet{colthree}{bot!40!top}
\colorlet{colfour}{bot!20!top}
\colorlet{colfive}{bot!00!top}
\newcommand{\colzero}{colzero}
\newcommand{\colone}{colone}
\newcommand{\coltwo}{coltwo}
\newcommand{\colthree}{colthree}
\newcommand{\colfour}{colfour}
\newcommand{\colfive}{colfive}
\title{Cell modules for the Temperley--Lieb algebra in mixed characteristic}
\author[Stuart Martin]{Stuart Martin}
\address[Stuart Martin]{
    Department of Pure Mathematics and Mathematical Statistics;
    Centre for Mathematical Sciences;
    Wilberforce Road;
    Cambridge;
    CB3 0WB;
    United Kingdom
}
\email{sm137@cam.ac.uk}
\author[Charles Senécal]{Charles Senécal}
\address[Charles Senécal]{
    Department of Pure Mathematics and Mathematical Statistics;
    Centre for Mathematical Sciences;
    Wilberforce Road;
    Cambridge;
    CB3 0WB;
    United Kingdom
}
\email{cs2228@cam.ac.uk}
\author[Robert A. Spencer]{Robert A. Spencer}
\address[Robert A. Spencer]{}
\email{maths@robertandrewspencer.com}
\date{\today}
\begin{document}

\begin{abstract}
We study the representation theory of the Temperley--Lieb algebra $\tlnkd$ in mixed characteristic, i.e. over an arbitrary field $\kk$ of characteristic $\p$ and where $\delta$ satisfies some minimal polynomial $m_\delta$. In particular, we completely describe the submodule structure of cell modules for $\TL_n$ and give their Alperin diagrams. The proof is entirely diagrammatic and does not appeal to the role of $\TL_n$ as the endomorphism algebra of tensor powers of the fundamental representation of $\uqsl$. We also investigate two-dimensional Jantzen-like filtrations of the cell modules related to the mixed characteristic.
\end{abstract}

\maketitle

\tableofcontents

\section*{Introduction}
The Temperley--Lieb algebras $\TL_n(\delta)$, dating back to the work of Temperley and Lieb on lattice models in statistical physics \cite{Temperley1971}, have, since their introduction, played a role in many different contexts, both in mathematical physics and in mathematics. Notably, they appear in Jones's work on polynomial knot invariants \cite{Jones1985}, serve as a canonical example of Graham and Lehrer's theory of cellular algebras \cite{Graham1996}, and, being a quotient of the Hecke algebras of type $A_n$, also play a central role in the representation theory of the Lie algebra $\mathfrak{sl}_2$ and its associated quantum group through (quantum) Schur--Weyl duality.

More recently, the Temperley--Lieb algebras have found applications to Soergel bimodule theory in characteristic zero, where the (two-coloured) Temperley--Lieb category appears as degree-zero morphisms between colour-alternating objects in the diagrammatic Hecke category for dihedral groups \cite{Elias2016}. Here, the parameter $\delta$ is chosen in a way that agrees with the choice of the realisation and depends on the order of the dihedral group $D_{2m}$ in the finite case. The well-known Jones--Wenzl projectors play a critical role, as they yield idempotents in the Hecke category, whose images correspond to indecomposable Soergel bimodules. An analogous result holds over fields of positive characteristic and the corresponding idempotents are the $p$-Jones--Wenzl projectors of \cite{Burrull2019}, whose coefficients can be computed from the $p$-canonical basis in type $\tilde{A}_1$.

Although the Temperley--Lieb algebras' representation theory in characteristic zero has been well understood for a long time \cite{Goodman1993,Ridout2014,Westbury1995} and is mostly derived from self-contained, diagrammatic and combinatorial techniques, a lot of the recent endeavours to understand its characteristic $\p$ representation theory \cite{Cox2003,Andersen2019,Burrull2019,Tubbenhauer2021,Sutton2023} have explicitly relied on $\TL_n$'s role as $\End_{\uqsl}(V^{\otimes n})$ through Schur--Weyl duality in order to apply the well-known theory of Weyl and tilting modules for $\uqsl$ (\cite{Andersen1991,Ringel1991,Donkin1998,Andersen2018}). 

Of central importance throughout the study of the representation theory of Temperley--Lieb algebras, and indeed of any cellular algebra, is the investigation of their cell modules. These modules carry a complete set of irreducible representations as their heads, and counting their composition factors also determines the decomposition numbers and the blocks of the algebra. More generally, structural information about cell modules gives a lot of information about the representation category of the algebra.

Lately, there have been some efforts to build the whole characteristic $\p$ theory from first principles, starting from the generators and relations definition of the Temperley--Lieb algebras.  Over a commutative ring $\kk$ with distinguished element $\delta$, the algebra $\tlnkd$ has generators $u_1,u_2,\ldots,u_{n-1}$ and unit $\id_n$, with relations
\begin{equation*}
\begin{aligned}
&u_i^2=\delta u_i && \mbox{for } i\geq 1,\\
&u_iu_{i\pm 1}u_i=u_i && \mbox{for } i,i\pm 1\in \{1,\ldots,n-1\},\\
&u_iu_j=u_ju_i && \mbox{if } \abs{i-j}\geq 2.
\end{aligned}
\end{equation*}
These relations are in direct correspondence with the usual Temperley--Lieb diagrams,
\begin{equation*}
u_i\mapsto\;
\begin{tikzpicture}[scale=0.25,centered]
\draw[st] (0,0) -- (0,3);
\draw[st] (3,0) -- (3,3);
\tlcap{5,0}{6,1}{7,0};
\tlcup{5,3}{6,2}{7,3};
\draw[st] (9,0) -- (9,3);
\draw[st] (12,0) -- (12,3);
\begin{scope}[overlay]
\end{scope}
\node at (1.5,1.5) {$\cdots$};
\node at (10.5,1.5) {$\cdots$};
\end{tikzpicture}\;,\quad
\id_n\mapsto\;
\begin{tikzpicture}[scale=0.25,centered]
\draw[st] (0,0) -- (0,3);
\draw[st] (2,0) -- (2,3);
\draw[st] (5,0) -- (5,3);
\draw[st] (7,0) -- (7,3);
\node at (3.5,1.5) {$\cdots$};
\end{tikzpicture}\;,
\end{equation*}
where the cup and the cap of $u_i$ link the sites $i$ and $i+1$. The Temperley--Lieb algebras can thus be studied purely diagrammatically, without resorting to Schur--Weyl duality and the representation theory of $\uqsl$. This allows for some more generality as it makes sense to study its representation theory over any pointed ring $(\kk,\delta)$, not requiring, in particular, $\delta$ to be of the form $q+q^{-1}$. This is the approach taken in \cite{Spencer2023} and in \cite{Martin2022}, where the decomposition numbers for $\TL_n$ in mixed characteristic and the dimensions of the corresponding simple modules are computed, and where the mixed characteristic analogues of the $\p$-Jones--Wenzl idempotents are constructed, respectively.

This paper closes the loop by completely describing the submodule structure of the cell modules $\ctlmod{m}$ of the Temperley--Lieb algebra $\TL_n$ in mixed characteristic, which is done by giving their (unambiguous) Alperin diagrams \cite{Alperin1980}. The proof is entirely diagrammatic and highlights the role played by the mixed Jones--Wenzl idempotents in the cellular structure of the Temperley--Lieb algebras.

These results also allow us to study two-dimensional Jantzen-like filtrations of the cell modules, obtained from the corresponding intersection forms, the computation of which can be reduced to taking partial traces of some mixed Jones--Wenzl idempotents using our results about the structure of cell modules. This has ties to Kazhdan--Lusztig theory, where there has been much interest into Jantzen filtrations (and the related Jantzen conjectures), as there is a deep interplay between these filtrations and the Kazhdan--Lusztig conjectures~\cite{Gabber1981}. Recently, counterexamples to the semi-semisimplicity of Jantzen filtrations for some non-degenerate deformation directions have been studied \cite{Williamson2016} and, more generally, non-diagonalisability of the canonical bilinear forms on Weyl modules (or on cell modules for the corresponding endomorphism algebra) seems to be linked with the failure of the Kazhdan--Lusztig conjecture. The results of the present paper may thus be useful to start the study of forms for cell modules over two-dimensional rings, the Jantzen filtrations of Weyl modules for quantum groups, and the consequences about the failure of the Kazhdan--Lusztig conjecture in certain cases.

\vspace{1em}\noindent
The paper is organised as follows. Section~\ref{section:qnum} recalls the definition of quantum numbers and explores some of their properties over prime characteristics. In Section~\ref{section:tl}, we recall the definitions of the Temperley--Lieb category and of cellular algebras, we recall the construction of several cellular bases of $\TL_n$ using the light-leaves formalism of \cite{Elias2015}, and we give the definitions of the classical and mixed Jones--Wenzl idempotents. In Section~\ref{section:diag}, we produce an entirely diagrammatic and self-contained proof of the submodule structure of the cell modules of $\TL_n$ in complete generality for the choice of $\kk$ and $\delta$. Finally, Section~\ref{section:filt} describes the two-dimensional Jantzen filtrations of cell modules with respect to a certain grid of ideals depending on the mixed characteristic.

\section{Quantum numbers and modular combinatorics}\label{section:qnum}
The representation theory of the Templerley--Lieb algebras is intimately connected to a $q$-analogue of the integers known as ``quantum numbers''. These polynomials are also variously known in the literature under the names ``Vieta-Fibonacci polynomials'' \cite{Horadam2002}, ``compressed Chebyshev polynomials'' \cite{Keri2002} and ``modified Chebyshev polynomials'' \cite{Witula2006}.

\begin{deff}\label{defqn}
The quantum numbers $\qnn{n}$, defined for any $n\in\Z$, are polynomials in $\Z[\delta]$. They are defined using the recurrence relation $\delta\qnn{n}=\qnn{n-1}+\qnn{n+1}$ subject to initial conditions $\qnn{0}=0$ and $\qnn{1}=1$.
\end{deff}

Under the identification $\delta\mapsto q+q^{-1}$ which induces a bijection between $\Z[\delta]$ and the ring of symmetric Laurent polynomials in $q$, the quantum numbers defined as above are equivalent to
\begin{equation}\label{eq:quantum_as_q}
\qn{n}=q^{n-1}+q^{n-3}+\ldots +q^{-n+1}=\frac{q^n-q^{-n}}{q-q^{-1}}.
\end{equation}
Note that this final form is expressed as an element of $\Q(q)$, thought it lives in the image of $\Z[q+q^{-1}] \hookrightarrow \Q(q)$.

Using this presentation, it is clear that under the specialisation $q\mapsto 1$ (or, equivalently, $\delta\mapsto 2$), the quantum numbers give back the integers: $\qn[q=1]{n}=\qn[\delta=2]{n} = n$. Even though the formulation in terms of $q$ and $q^{-1}$ is often useful, it is important to keep in mind that quantum numbers are polynomials in $\delta$ and thus give well-defined specialisations no matter the value of $\delta$ (or $q$).

\subsection{Factorisation of quantum numbers}

The following lemma follows from direct calculations using equation \eqref{eq:quantum_as_q}.

\begin{lem}\label{qnum} For any $\ell, m,n \in \Z$,
\begin{enumerate}
\item $\qn{n\ell}=\qn{\ell}\qn[q^\ell]{n}$, and hence
\item if $m\mid n$, then $\qn{m}\mid \qn{n}$.
\end{enumerate}
\end{lem}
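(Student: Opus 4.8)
The plan is to prove part (1) first and then deduce part (2) as an immediate corollary, since divisibility of polynomials is exactly what an identity of the form $\qn{n\ell}=\qn{\ell}\cdot(\text{polynomial})$ records.

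For part (1), I would work with the closed form \eqref{eq:quantum_as_q}, writing $\qn{k}=\frac{q^k-q^{-k}}{q-q^{-1}}$ as an element of $\Q(q)$. The key observation is that the substitution $q\mapsto q^\ell$ in this formula sends $q-q^{-1}$ to $q^\ell-q^{-\ell}$, so that
\begin{equation*}
\qn[q^\ell]{n}=\frac{q^{\ell n}-q^{-\ell n}}{q^\ell-q^{-\ell}}.
\end{equation*}
Multiplying this by $\qn{\ell}=\frac{q^\ell-q^{-\ell}}{q-q^{-1}}$ telescopes the denominators: the factor $q^\ell-q^{-\ell}$ cancels, leaving $\frac{q^{\ell n}-q^{-\ell n}}{q-q^{-1}}=\qn{n\ell}$, which is exactly the claimed identity. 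This is the ``direct calculation'' alluded to just before the statement, and it is genuinely a one-line cancellation once the substitution is set up.

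The one point requiring a little care — and the only plausible obstacle — is the bookkeeping about \emph{where} these expressions live. The quantum numbers are defined as elements of $\Z[\delta]$, whereas the manipulation above takes place in $\Q(q)$; I must check that $\qn[q^\ell]{n}$, which a priori is a symmetric Laurent polynomial in $q^\ell$, is a legitimate polynomial in $\delta$ (indeed it is $\qnn{n}$ evaluated at $\delta_\ell:=q^\ell+q^{-\ell}=\qn{\ell+1}-\qn{\ell-1}$, hence lies in $\Z[\delta]$). The cleanest way to handle this is to note that the identification $\delta\mapsto q+q^{-1}$ is an injective ring homomorphism $\Z[\delta]\hookrightarrow\Q(q)$, as recalled after Definition~\ref{defqn}; since the desired identity is an equality of elements of $\Z[\delta]$, it suffices to verify it after applying this injection, which reduces everything to the computation in $\Q(q)$ above.

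For part (2), suppose $m\mid n$ and write $n=m\ell$. Applying part (1) with this $\ell$ gives $\qn{n}=\qn{m\ell}=\qn{m}\,\qn[q^m]{n/m}$, where the second factor lies in $\Z[\delta]$ by the discussion above. Hence $\qn{m}$ divides $\qn{n}$ in $\Z[\delta]$, as required.
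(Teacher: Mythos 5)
Your proof is correct and follows exactly the approach the paper intends: the paper's entire ``proof'' is the remark that the lemma ``follows from direct calculations using equation~\eqref{eq:quantum_as_q}'', and your telescoping computation in $\Q(q)$ is that calculation. Your extra care about the embedding $\Z[\delta]\hookrightarrow\Q(q)$ and the identification $q^\ell+q^{-\ell}=\qnn{\ell+1}-\qnn{\ell-1}$ is a welcome elaboration of the bookkeeping the paper itself records in $\delta$-notation immediately after the lemma.
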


These statements have their equivalent in $\delta$ notation, meaning that $m\mid n$ implies $\qnd{m}\mid \qnd{n}$ and that we have $\qnd{n\ell}=\qnd{\ell}\qn[\qnn{\ell+1}-\qnn{\ell-1}]{n}$, where the subscript indicates the value of $\delta$ to substitute in the polynomial.

As alluded to at the beginning of this section, the family of quantum numbers is closely related to the family of Chebyshev polynomials in the following way.
\begin{lem}
  Let $U_n$ denote the family of Chebyshev polynomials of the second kind, defined by the formula $U_{n+1}(\cos x)=(\sin nx )/(\sin x )$. Then $\qnd{n} = U_{n+1}(\delta/2)$.
\end{lem}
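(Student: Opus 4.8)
The plan is to reduce both sides to the same ratio of sines via the standard trigonometric substitution. Concretely, I would put $q = e^{\mathrm{i}x}$, so that $\delta = q + q^{-1} = 2\cos x$ and hence $\delta/2 = \cos x$. Feeding this into the closed form \eqref{eq:quantum_as_q} gives, under this substitution,
\[
  \qnd{n} = \frac{q^{n} - q^{-n}}{q - q^{-1}} = \frac{e^{\mathrm{i}nx} - e^{-\mathrm{i}nx}}{e^{\mathrm{i}x} - e^{-\mathrm{i}x}} = \frac{\sin(nx)}{\sin x},
\]
which is precisely $U_{n+1}(\cos x) = U_{n+1}(\delta/2)$ by the defining formula for the Chebyshev polynomials. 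That is essentially the entire computation; the only remaining work is to upgrade this to a genuine identity of polynomials in $\delta$.

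The single point requiring care is exactly that upgrade. The substitution $q = e^{\mathrm{i}x}$ only realises values $\delta \in [-2,2]$, so a priori the displayed identity is established only on that interval. However, both sides are honest polynomials in $\delta$ — the left-hand side by Definition~\ref{defqn}, and the right-hand side as the composite of $U_{n+1}$ with $\delta \mapsto \delta/2$ — and two polynomials agreeing on infinitely many points coincide. This immediately promotes the trigonometric equality to the desired polynomial identity valid for all $\delta$.

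A cleaner, fully algebraic alternative sidesteps the analytic substitution by showing that both sequences obey the same three-term recurrence with the same seed, and then inducting on $n$. Writing $t = \delta/2$, the sum-to-product identity $\sin((n{+}1)x) + \sin((n{-}1)x) = 2\cos(x)\sin(nx)$ yields $U_{n+2}(t) = 2t\,U_{n+1}(t) - U_n(t)$, so $f(n) := U_{n+1}(\delta/2)$ satisfies $f(n+1) = \delta f(n) - f(n-1)$ — exactly the recurrence defining $\qnn{n}$. The base cases match, since $U_1(\cos x) = \sin(0)/\sin x = 0$ and $U_2(\cos x) = \sin(x)/\sin x = 1$ force $f(0) = 0 = \qnn{0}$ and $f(1) = 1 = \qnn{1}$, and induction then gives equality for every $n$.

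I expect no serious obstacle here: the statement is a routine dictionary entry between two standard encodings of the same integer sequence. The one genuinely delicate item is bookkeeping the indexing convention, as the normalisation $U_{n+1}(\cos x) = \sin(nx)/\sin x$ adopted here is shifted relative to the more common convention $U_m(\cos x) = \sin((m{+}1)x)/\sin x$; I would therefore verify explicitly that the offset is chosen so that the Chebyshev index $n+1$ matches the quantum-number index $n$ — which it does, both sides producing $\sin(nx)/\sin x$.
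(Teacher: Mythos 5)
Your proposal is correct, and in fact it contains the paper's proof verbatim as your ``cleaner, fully algebraic alternative'': the paper's argument is exactly the observation that $U_{n+1}(\delta/2)$ satisfies the recurrence $\delta\qnn{n}=\qnn{n-1}+\qnn{n+1}$ with the initial conditions $\qnn{0}=0$, $\qnn{1}=1$ of Definition~\ref{defqn}, followed by induction. Your primary route --- substituting $q=e^{\mathrm{i}x}$ so both sides become $\sin(nx)/\sin x$, then promoting the identity from $\delta\in[-2,2]$ to a polynomial identity because two polynomials agreeing at infinitely many points coincide --- is a genuinely different (equally standard) argument; it buys a one-line computation at the cost of the extension step, which you correctly flag as the only delicate point. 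Note that your recurrence derivation via the sum-to-product identity quietly uses the same extension principle (the trigonometric identity only pins down the polynomials on $[-1,1]$), whereas the paper sidesteps this by invoking the classical recursion of the $U_n$ as known. Your attention to the shifted indexing convention $U_{n+1}(\cos x)=\sin(nx)/\sin x$ is also well placed, since that offset is precisely what makes the statement come out as $\qnd{n}=U_{n+1}(\delta/2)$ rather than $U_{n-1}$ or $U_n$.
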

\begin{proof}
Using the classical recursion relation of the polynomials $U_n$, one can check that that the renormalisation $U_{n+1}(\delta/2)$ satisfies the same recursion relation and initial conditions as in Definition~\ref{defqn}.
\end{proof}

The factorisation of Chebyshev polynomials, or equivalently of quantum numbers, is intimately linked to cyclotomic polynomials: using equation~\eqref{eq:quantum_as_q}, one can write
\begin{equation*}
\qn{n}=\frac{q^n-q^{-n}}{q-q^{-1}}=q^{-n+1}\frac{q^{2n}-1}{q^2-1}=q^{-n+1}\prod_{\substack{k\mid 2n\\k\geq 3}} \Phi_k(q),
\end{equation*}
where $\Phi_k(q)$ is the $k^{th}$ cyclotomic polynomial. Since $\Phi_k$ is irreducible, symmetric, and of degree $\varphi(k)$, the polynomial $q^{-\frac{\varphi(k)}{2}}\Phi_k(q)$ can be written as an irreducible polynomial in $q+q^{-1}$ that we denote by
\begin{equation*}
\psi_k(q+q^{-1}):=q^{-\frac{\varphi(k)}{2}}\Phi_k(q).
\end{equation*}
Comparing degrees gives the following.

\begin{lem}\label{chebyfac}
The polynomials $\psi_k$ are irreducible over $\Q$ and have integer coefficients. Moreover, the quantum number $\qnd{n}$ factors as
\begin{equation*}
\qnd{n}=\prod_{\substack{k\mid 2n\\k\geq 3}}\psi_k(\delta).
\end{equation*}
\end{lem}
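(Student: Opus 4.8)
The plan is to treat the two assertions separately, first nailing down the algebra of the substitution $\delta = q + q^{-1}$, and then reducing the integrality and irreducibility of $\psi_k$ to standard facts about $\Phi_k$ and the product formula to a bookkeeping of exponents of $q$.

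First I would record that for $k \geq 3$ the polynomial $\Phi_k$ is palindromic of even degree $\varphi(k)$: its roots are the primitive $k$-th roots of unity, which for $k \geq 3$ pair off as $\zeta \neq \zeta^{-1}$, so $q^{\varphi(k)}\Phi_k(q^{-1}) = \Phi_k(q)$ and $\varphi(k)$ is even. For any palindromic $P(q) = \sum_{i=0}^{2e} c_i q^i$ with $c_i = c_{2e-i}$ one has $q^{-e}P(q) = c_e + \sum_{j=1}^{e} c_{e-j}(q^j + q^{-j})$, and since each $q^j + q^{-j}$ is a monic integer polynomial of degree $j$ in $\delta = q + q^{-1}$, the quantity $q^{-\varphi(k)/2}\Phi_k(q) = \psi_k(\delta)$ is a monic polynomial of degree $\varphi(k)/2$ with integer coefficients. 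This settles the integrality claim and supplies the identity $\Phi_k(q) = q^{\varphi(k)/2}\psi_k(q + q^{-1})$ that I reuse below.

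For irreducibility I would argue by reduction to the irreducibility of $\Phi_k$ over $\Q$. Suppose $\psi_k = fg$ with $f, g \in \Q[\delta]$ of degrees $d_f, d_g \geq 1$ and $d_f + d_g = \varphi(k)/2$. Then $F(q) := q^{d_f} f(q + q^{-1})$ and $G(q) := q^{d_g} g(q + q^{-1})$ are genuine nonconstant polynomials in $\Q[q]$, of degrees $2d_f$ and $2d_g$, and $F(q)G(q) = q^{\varphi(k)/2}\psi_k(q + q^{-1}) = \Phi_k(q)$. This is a nontrivial factorisation over $\Q$, contradicting the irreducibility of $\Phi_k$; hence $\psi_k$ is irreducible over $\Q$. (Equivalently, $\psi_k$ is the minimal polynomial of $\zeta_k + \zeta_k^{-1}$, the degrees matching since $[\Q(\zeta_k + \zeta_k^{-1}):\Q] = \varphi(k)/2$.)

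Finally, for the factorisation of $\qnd{n}$ I would substitute $\Phi_k(q) = q^{\varphi(k)/2}\psi_k(\delta)$ into the product formula $\qn{n} = q^{-n+1}\prod_{k\mid 2n,\,k\geq 3}\Phi_k(q)$ displayed above, obtaining $\qn{n} = q^{\,-n+1+\sum \varphi(k)/2}\prod_{k}\psi_k(\delta)$ with the sum over $k \mid 2n$, $k \geq 3$. The point (this is the ``comparing degrees'' step) is that the total power of $q$ vanishes: from $\sum_{d\mid 2n}\varphi(d) = 2n$ and removing $k = 1, 2$ (both dividing $2n$, with $\varphi(1) = \varphi(2) = 1$) one gets $\sum_{k\mid 2n,\,k\geq 3}\varphi(k) = 2n - 2$, so the exponent is $-n+1+(n-1) = 0$, giving $\qnd{n} = \prod_{k\mid 2n,\,k\geq 3}\psi_k(\delta)$. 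I expect the only real subtlety to be the first step, namely verifying cleanly that $q^{-\varphi(k)/2}\Phi_k$ lands in $\Z[\delta]$ rather than merely $\Q[\delta]$, which hinges on the palindromic symmetry of $\Phi_k$ for $k \geq 3$; once the normalisation is pinned down, both the irreducibility and the product formula follow routinely.
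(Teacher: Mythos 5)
Your proof is correct and follows essentially the same route as the paper: the paper defines $\psi_k(q+q^{-1}) := q^{-\varphi(k)/2}\Phi_k(q)$, cites the irreducibility and symmetry of $\Phi_k$, and dispatches the lemma with ``comparing degrees,'' which is exactly the exponent computation $\sum_{k \mid 2n,\, k\geq 3}\varphi(k) = 2n-2$ you carry out. You have simply filled in the details the paper leaves implicit (palindromicity giving integrality, and lifting a factorisation of $\psi_k$ to one of $\Phi_k$ for irreducibility), all of which are sound.
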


Of particular interest for later sections will be the reductions modulo $p$ of quantum numbers, and hence the following result will be useful.

\begin{lem}\label{lem:psioddeven}
If $\ell$ is odd then $\psi_\ell(x) = \psi_{2\ell}(x)$ when reduced mod $2$, but they are distinct in all other prime characteristics.
\end{lem}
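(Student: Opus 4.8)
The plan is to reduce everything to the classical identity relating cyclotomic polynomials at $q$ and $-q$. First I would record that for odd $\ell>1$ the primitive $2\ell$-th roots of unity are exactly the negatives of the primitive $\ell$-th roots of unity, which gives $\Phi_{2\ell}(q)=\Phi_\ell(-q)$; combining this with $\varphi(2\ell)=\varphi(\ell)$ (valid since $\ell$ is odd) and the defining relation $\psi_k(q+q^{-1})=q^{-\varphi(k)/2}\Phi_k(q)$, a single substitution $q\mapsto -q$ yields the clean polynomial identity
\[
\psi_{2\ell}(x)=(-1)^{d}\,\psi_\ell(-x),\qquad d:=\deg\psi_\ell=\varphi(\ell)/2,
\]
valid in $\Z[x]$. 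Throughout, $\ell\geq 3$ is odd so that $\varphi(\ell)$ is even and both $\psi_\ell,\psi_{2\ell}$ are defined.

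Writing $\psi_\ell(x)=\sum_{i=0}^{d}a_i x^i$ (monic, $a_d=1$), this identity reads $\psi_{2\ell}(x)-\psi_\ell(x)=\sum_i\bigl((-1)^{d-i}-1\bigr)a_i x^i=-2\sum_{d-i\text{ odd}}a_i x^i$. Every coefficient of the difference is even, so it vanishes modulo $2$ and $\psi_\ell\equiv\psi_{2\ell}$; this settles the first claim. For the converse it then suffices to exhibit a single coefficient $a_i$ with $d-i$ odd that is not divisible by the odd prime $p$ in question (and, in characteristic $0$, merely nonzero).

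For the distinctness in odd characteristic I would reduce to the squarefree case using $\Phi_\ell(q)=\Phi_r(q^{e})$, where $r=\rad(\ell)$ and $e=\ell/r$. Since $\varphi(\ell)=e\,\varphi(r)$ this gives $\psi_\ell(x)=\psi_r\bigl(p_e(x)\bigr)$, where $p_e$ is the monic integer polynomial determined by $p_e(q+q^{-1})=q^{e}+q^{-e}$ (so $\deg p_e=e$). As $e$ is odd, $p_e$ is an odd polynomial, $p_e(-x)=-p_e(x)$, and combining this with the displayed identity (using $(-1)^d=(-1)^{d_r}$ with $d_r:=\deg\psi_r$) I would rewrite
\[
\psi_{2\ell}(x)-\psi_\ell(x)=-\,g\bigl(p_e(x)\bigr),\qquad g(y):=\psi_r(y)-(-1)^{d_r}\psi_r(-y)\in\Z[y].
\]
Here $g$ is a single-variable obstruction attached to the \emph{squarefree} modulus $r$: the leading ($y^{d_r}$) terms cancel, while the coefficient of $y^{d_r-1}$ equals $2a^{(r)}_{d_r-1}=-2\mu(r)$, and $\mu(r)=\pm1$ because $r$ is squarefree and $>1$ (using the standard fact that the coefficient of $q^{\varphi(n)-1}$ in $\Phi_n$ is $-\mu(n)$). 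Thus $g$ has degree $d_r-1$ with leading coefficient $\pm2$, so $g\not\equiv 0\pmod p$ for every odd $p$. Finally, since $p_e$ is monic of positive degree, composition is degree-multiplicative, so $g\bigl(p_e(x)\bigr)$ has leading coefficient $\pm2\cdot 1^{\,d_r-1}=\pm2$ and is nonzero modulo every odd $p$; hence $\psi_\ell$ and $\psi_{2\ell}$ are distinct in all characteristics $\neq 2$.

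The only genuinely delicate point is the non-squarefree case: the naive candidate coefficient $a_{d-1}=-\mu(\ell)$ vanishes as soon as $\ell$ fails to be squarefree, so distinctness cannot be read off from the subleading term of $\psi_\ell$ itself. The radical substitution $\psi_\ell=\psi_r\circ p_e$ is exactly what launders this away, transporting the problem to the squarefree modulus $r$ where the Möbius coefficient is a unit. I would take care to justify $\Phi_\ell(q)=\Phi_r(q^{e})$ and the monicity of $p_e$ explicitly, as these underpin all the degree bookkeeping above.
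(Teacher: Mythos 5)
Your proof is correct. For the mod-$2$ half you follow essentially the same route as the paper: both arguments rest on $\varphi(2\ell)=\varphi(\ell)$ together with $\Phi_{2\ell}(q)=\Phi_\ell(-q)$, the only difference being that you track the sign carefully and extract the exact integral identity $\psi_{2\ell}(x)=(-1)^d\psi_\ell(-x)$, from which the evenness of every coefficient of $\psi_{2\ell}-\psi_\ell$ is immediate. For distinctness in odd characteristic, however, your argument is genuinely different. The paper argues through roots: modulo an odd prime the polynomial $x^{2\ell}-1$ is separable, so its distinct factors $\Phi_\ell$ and $\Phi_{2\ell}$ cannot become equal (the printed phrase that $\psi_n$ is ``irreducible modulo $p$'' is an overstatement, since cyclotomic polynomials generally factor mod $p$; only distinctness is needed), and this separability argument literally requires $p\nmid 2\ell$. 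You instead compute the difference exactly: $\psi_{2\ell}-\psi_\ell=-g\bigl(p_e(x)\bigr)$, where after passing to the radical $r=\rad(\ell)$ the polynomial $g$ has leading coefficient $-2\mu(r)=\pm 2$. What this buys is uniformity in $p$: the difference is $\pm 2$ times a monic integer polynomial, hence nonzero modulo \emph{every} odd prime, including the case $p\mid\ell$ (e.g.\ $\ell=9$, $p=3$, where $\psi_{18}-\psi_9=-2$), which the paper's one-line argument does not cover; your diagnosis that the non-squarefree case is the delicate point, and the use of $\Phi_\ell(q)=\Phi_r(q^e)$ to transport the problem to the squarefree modulus where the M\"obius coefficient is a unit, are exactly what is needed there. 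What it costs is length and machinery: the radical identity, the monic polynomials $p_e$ with $p_e(q+q^{-1})=q^e+q^{-e}$, and the fact that the subleading coefficient of $\Phi_r$ (equivalently of $\psi_r$) is $-\mu(r)$ all need to be justified, as you note, whereas the paper's argument is two lines.
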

\begin{proof}
  If $\ell$ is odd then $\phi(\ell) = \phi(2\ell)$ and $\Phi_\ell(x) = \Phi_{2\ell}(-x)$ so $\psi_\ell(x)$ and $\psi_{2\ell}(x)$ are equal modulo $2$. On the other hand for odd $p$, the cyclotomic polynomial has no repeated roots modulo $p$ so $\psi_n(x)$ is irreducible modulo $p$.
\end{proof}

Fix a field or domain $\kk$ of characteristic $\p$ and let $\overline{\delta}\in\kk$. The elements of $\Z[\delta]$ have a canonical image in the pointed ring $(\kk,\overline{\delta})$, so we may consider the quantum numbers $\qnn{n}$ as elements in $\kk$. Since $m\mid n$ implies $\qnn{m}\mid \qnn{n}$, if some quantum number vanishes in $\kk$, then there exists a smallest positive $\ell$ such that $\qnn{\ell}=0$ and if $[n] = 0$ then $\ell \mid n$. If no quantum number vanishes in $\kk$, we say $\ell=\infty$. The pair $(\ell,\p)$ is called the \textbf{mixed characteristic} of the pair $(\kk,\delta)$, and our main case of interest is the \textbf{strictly mixed characteristic} case, when both $p$ and $\ell$ are positive and finite.

\begin{rem}
While it is tempting to assume that any pair of prime $p$ and number $\ell$ defines a valid mixed characteristic, this is not the case.

Indeed, if $p \mid \ell$ and $p \neq \ell$, then it is not possible for $\qnn{\ell}$ to be the smallest vanishing quantum number. Indeed, suppose $\qnn{\ell}=0$ where $\ell = \ell'p$ for $\ell'\neq 1$. Then $\qn{\ell} = \qn{p}\qn[q^p]{\ell'}$. However, $\qn[q^p]{\ell'} = (\qn{\ell'})^p$ by the Freshman Lemma so that $0 = \qnn{\ell'}^{p}\qnn{p}$ and either $\qnn{\ell'}$ or $\qnn{p}$ must vanish, a contradiction on the minimality of $\ell$.
\end{rem}

Let $m_\delta$ be an irreducible factor of $\qnn{\ell}$ in $\Z[\delta]$ that does not divide any $\qnn{n}$ for $1\leq n < \ell$.
\begin{lem}\label{lem:irredfac}
  These irreducible factors are given by
  \begin{equation}
  m_\delta = 
      \begin{cases}
      \psi_{2n}(\delta)&n\text{ is even}\\
      \psi_n(\pm\delta)&n \text{ is odd}
      \end{cases}
  \end{equation}
\end{lem}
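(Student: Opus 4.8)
The plan is to read off the answer from the explicit factorisation supplied by Lemma~\ref{chebyfac}, namely $\qnd{\ell}=\prod_{k\mid 2\ell,\,k\geq 3}\psi_k(\delta)$, and then isolate which factors survive the minimality condition. First I would note that each $\psi_k$ is monic in $\delta$ (its top-degree term is inherited from the monic $\Phi_k$), hence primitive, so by Gauss's lemma the $\psi_k(\delta)$ are genuinely irreducible and pairwise non-associate in the unique factorisation domain $\Z[\delta]$. Since the factorisation of any $\qnd{n}$ therefore consists of distinct non-associate irreducibles, one gets the clean divisibility criterion that $\psi_k(\delta)\mid\qnd{n}$ if and only if $k\geq 3$ and $k\mid 2n$. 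This turns the lemma into a purely combinatorial question: which divisors $k\mid 2\ell$ have the property that $k\nmid 2n$ for every $1\leq n<\ell$?

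The crux is to compute, for a fixed admissible $k$, the least positive $n$ with $k\mid 2n$; an elementary check gives $n=k$ when $k$ is odd and $n=k/2$ when $k$ is even. The factor $\psi_k(\delta)$ is one of the desired $m_\delta$ exactly when this least value equals $\ell$. I would then split on the parity of $\ell$. If $\ell$ is even, an odd $k$ would force $k=\ell$, which is impossible, so only the even choice $k=2\ell$ (with least $n=\ell$) survives, yielding the single factor $\psi_{2\ell}(\delta)$. If $\ell$ is odd, both $k=\ell$ (odd, least $n=\ell$) and $k=2\ell$ (even, least $n=\ell$) survive, producing the two factors $\psi_\ell(\delta)$ and $\psi_{2\ell}(\delta)$.

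It remains to match the odd-$\ell$ pair $\{\psi_\ell(\delta),\psi_{2\ell}(\delta)\}$ with the compact form $\psi_\ell(\pm\delta)$ in the statement. For this I would substitute $q\mapsto -q$, under which $\delta=q+q^{-1}\mapsto-\delta$, into the defining relation $\psi_k(q+q^{-1})=q^{-\varphi(k)/2}\Phi_k(q)$, and feed in the two facts recorded in the proof of Lemma~\ref{lem:psioddeven} for odd $\ell$: that $\varphi(\ell)=\varphi(2\ell)$ and that $\Phi_{2\ell}(q)=\Phi_\ell(-q)$. This yields $\psi_\ell(-\delta)=(-1)^{\varphi(\ell)/2}\psi_{2\ell}(\delta)$, so that $\psi_\ell(-\delta)$ and $\psi_{2\ell}(\delta)$ coincide up to the unit $\pm 1$ and the two factors collapse to $\psi_\ell(\pm\delta)$, as claimed.

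I expect the only delicate point --- and hence the main obstacle --- to be the sign bookkeeping in this last step: one has to carry the half-integer exponent $\varphi(k)/2$ correctly through the substitution $q\mapsto -q$ and confirm that the resulting discrepancy between $\psi_\ell(-\delta)$ and $\psi_{2\ell}(\delta)$ is exactly the unit $(-1)^{\varphi(\ell)/2}$, so that the identification respects associates in $\Z[\delta]$. The remaining ingredients --- the UFD structure of $\Z[\delta]$ and the minimal-$n$ count --- are routine.
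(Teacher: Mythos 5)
Your proof is correct and takes essentially the same route as the paper, whose own proof is just the assertion that the lemma ``follows directly from'' Lemmata~\ref{chebyfac} and~\ref{lem:psioddeven}. Your write-up supplies exactly the details the paper leaves implicit --- the irreducibility/non-associateness of the $\psi_k(\delta)$ in the UFD $\Z[\delta]$, the minimal-$n$ computation identifying $k=2\ell$ (and $k=\ell$ for odd $\ell$), and the sign bookkeeping $\psi_\ell(-\delta)=(-1)^{\varphi(\ell)/2}\psi_{2\ell}(\delta)$ --- and all of it checks out.
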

\begin{proof}
This follows directly from Lemmata \ref{chebyfac} and \ref{lem:psioddeven}.
\end{proof}

The reduction modulo $\p$ of $m_\delta$ which we will call $\overline{m_\delta}$ is itself irreducible by Lemma \ref{lem:psioddeven} and is a minimal polynomial for $\overline{\delta}\in\kk$. The ideal $\mathfrak{m}:=(\p,m_\delta)$ is maximal in $\Z[\delta]$, and there is a canonical specialisation morphism $\Zm\to\kk$ sending $\delta$ to $\overline{\delta}$.

\begin{lem}\label{lem:irreduciblemd}
  Let $(\ell,\p)$ define a valid mixed characteristic. If $p = 2$ and $\ell\geq 3$ then, as elements of the ring $\Z[\delta]/(\p)$, the multiplicity of $m_\delta$ in $\qnn{\ell}$ is 2. Otherwise, if $p$ is odd or if $\p=\ell=2$, it appears with multiplicity 1. Moreover, for any $\p$ and any $\ell$, it appears with multiplicity 1 in the ring $\Z[\delta]/(\p^i)$ for any $i\geq 2$.
\end{lem}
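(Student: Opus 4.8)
The plan is to read the multiplicity off the factorisation $\qnn{\ell}=\prod_{k\mid 2\ell,\,k\geq 3}\psi_k(\delta)$ of Lemma~\ref{chebyfac} and to track how often the reduction of $m_\delta=\psi_{k_0}(\pm\delta)$, with $k_0\in\{\ell,2\ell\}$ by Lemma~\ref{lem:irredfac}, divides it. Since $\delta\mapsto-\delta$ is a ring automorphism of $\Z[\delta]$ fixing $\qnn{\ell}$ up to sign, I may assume $m_\delta=\psi_{k_0}(\delta)$. I first note that $\p\nmid\ell$ unless $\p=\ell=2$: if $\p$ is odd and $\p\mid\ell$ then the preceding remark forces $\ell=\p$, but one computes $\overline{\psi_\p}(\delta)=(\delta\mp2)^{(\p-1)/2}$, which is reducible for $\p>3$, contradicting the standing hypothesis that $\overline{m_\delta}$ is irreducible. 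For odd $\p$ this gives $\p\nmid 2\ell$, so the $2\ell$-th roots of unity are pairwise distinct in $\overline{\F_\p}$; as the roots of $\overline{\qnn{\ell}}$ are exactly the distinct, simple values $\zeta+\zeta^{-1}$ for $\zeta^{2\ell}=1$, $\zeta\neq\pm1$, the reduction $\overline{\qnn{\ell}}$ is separable. Hence every irreducible factor, and in particular $\overline{m_\delta}$, occurs with multiplicity $1$, settling the odd-$\p$ case.

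For $\p=2$ the admissible values are $\ell=2$ (where $\qnn{2}=\delta=m_\delta$ has multiplicity $1$) and $\ell$ odd with $\ell\geq 3$, the substantive case. Here the divisors of $2\ell$ that are $\geq 3$ occur in pairs $(d,2d)$ with $d\mid\ell$ odd and $d\geq 3$, and Lemma~\ref{lem:psioddeven} gives $\overline{\psi_d}=\overline{\psi_{2d}}$ modulo $2$. Distinct odd $d$ yield coprime factors, since the roots $\zeta+\zeta^{-1}$ involve $\zeta$ of distinct odd orders. Thus $\overline{\qnn{\ell}}=\prod_{d\mid\ell,\,d\geq 3}\overline{\psi_d}^{\,2}$, and $\overline{m_\delta}=\overline{\psi_\ell}$ appears to exact power $2$, giving the first claim.

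For the ``moreover'' I reduce modulo $\p$. Whenever the multiplicity modulo $\p$ is already $1$ — the odd-$\p$ case and the case $\p=\ell=2$ — the ring map $(\Z/\p^i)[\delta]\to\F_\p[\delta]$ shows $m_\delta^2\mid\qnn{\ell}$ is impossible mod $\p^i$, so the multiplicity stays $1$. The only case requiring work is $\p=2$ with $\ell$ odd $\geq 3$, where the multiplicity must drop from $2$ to $1$. Writing $\qnn{\ell}=\psi_\ell\,\psi_{2\ell}\,R$ with $R=\prod_{d\mid\ell,\,3\leq d<\ell}\psi_d\psi_{2d}$, the factor $R$ is coprime to $\psi_\ell$ mod $2$, hence (lifting a Bézout relation) a unit in $(\Z/4)[\delta]/(\psi_\ell)$; since $\psi_\ell,\psi_{2\ell}$ are monic of equal degree $\varphi(\ell)/2$, one finds that $m_\delta^2\mid\qnn{\ell}$ in $(\Z/4)[\delta]$ is equivalent to $\psi_\ell\equiv\psi_{2\ell}\pmod 4$, and reducing $(\Z/2^i)[\delta]\to(\Z/4)[\delta]$ reduces everything to ruling this out mod $4$.

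The crux, and the main obstacle, is therefore to show $\psi_\ell\not\equiv\psi_{2\ell}\pmod 4$ for odd $\ell\geq 3$. Here I use $\psi_{2\ell}(\delta)=(-1)^{\varphi(\ell)/2}\psi_\ell(-\delta)$, which follows from $\Phi_{2\ell}(q)=\Phi_\ell(-q)$. If $\varphi(\ell)/2$ is odd, then $\psi_\ell-\psi_{2\ell}$ is twice the even part of $\psi_\ell$, whose constant term is odd: indeed $\qnn{\ell}(0)=\pm1$ (immediate from the recurrence of Definition~\ref{defqn} at $\delta=0$), so $\prod_k\psi_k(0)=\pm1$ forces $\psi_\ell(0)=\pm1$ and thus $\psi_\ell(0)-\psi_{2\ell}(0)=\pm2\not\equiv0\pmod 4$. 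If $\varphi(\ell)/2$ is even, then $\psi_\ell-\psi_{2\ell}$ is twice the odd part of $\psi_\ell$, and I must show this odd part is nonzero mod $2$, i.e.\ that $\overline{\psi_\ell}\in\F_2[\delta]$ has a term of odd degree. But $\overline{\psi_\ell}$ is separable of positive degree (as $\Phi_\ell$ is separable mod $2$ for odd $\ell$), so it cannot lie in $\{g(\delta)^2:g\in\F_2[\delta]\}=\F_2[\delta^2]$; being a non-square, it has an odd-degree term. This settles the remaining case; the delicate point is exactly this passage from multiplicity $2$ mod $2$ to multiplicity $1$ mod $4$, where the perfect-square obstruction is decisive.
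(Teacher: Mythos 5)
Your proposal takes a genuinely different and far more detailed route than the paper, whose proof is a one-line appeal to two asserted facts (that $\psi_\ell(\delta)\equiv\psi_\ell(-\delta)\pmod{\p^i}$ only when $\p=2$ and $i=1$, and that the relevant $\psi_n$ are otherwise distinct). Your analysis of $\p=2$ is correct and supplies substance the paper omits entirely: the reduction of the mod-$2^i$ claim to the single non-congruence $\psi_\ell\not\equiv\psi_{2\ell}\pmod 4$, and the two-case proof of that non-congruence (constant term $\pm 2$ when $\varphi(\ell)/2$ is odd; the non-square/separability obstruction in $\F_2[\delta]$ when $\varphi(\ell)/2$ is even) are sound and are the real content of the ``moreover'' statement.

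There is, however, a genuine gap in the odd-$\p$ case: you dispose of $\p\mid\ell$ by arguing that $\ell=\p$ would contradict ``the standing hypothesis that $\overline{m_\delta}$ is irreducible.'' That hypothesis is not among the hypotheses of the lemma (which quantifies over all valid mixed characteristics), and the case $\ell=\p$ genuinely occurs: taking $\overline{\delta}=2$ in $\F_\p$ gives $\qnn{n}=n$ for all $n$, so $(\ell,\p)=(\p,\p)$ is a valid mixed characteristic. In that case $m_\delta=\psi_\p(\pm\delta)$ reduces to $(\delta\mp 2)^{(\p-1)/2}$, which is reducible for $\p\geq 5$; in other words the assertion you borrow from the paper's prose is itself false here, so it cannot be used to exclude the case. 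Worse, your key tool fails precisely in this case: $\qnn{\p}\equiv\pm(\delta^2-4)^{(\p-1)/2}\pmod{\p}$ is not separable for $\p\geq 5$, so ``every irreducible factor occurs with multiplicity one'' is wrong (the linear factor $\delta-2$ occurs $(\p-1)/2$ times). The lemma is still true here, but for a different reason: $\qnn{\p}=\psi_\p\,\psi_{2\p}$, and modulo $\p$ these two factors are powers of the coprime polynomials $\delta-2$ and $\delta+2$, so $\psi_\p\nmid\psi_{2\p}$ and the largest power of $m_\delta$ dividing $\qnn{\p}$ is the first; your ``moreover'' argument then lifts this to $\Z[\delta]/(\p^i)$ unchanged. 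Note also that even for $\p=\ell=3$ your argument breaks silently: no contradiction arises there (the reduction $(\delta\mp 2)^{1}$ is irreducible), yet you go on to assert $\p\nmid 2\ell$, which is false; the conclusion survives only because $\qnn{3}=\delta^2-1$ happens to be separable modulo $3$. Patching the proof requires treating $\ell=\p$ as a live case with the coprimality argument above rather than excluding it.
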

\begin{proof}
Choosing $m_\delta$ as in Lemma~\ref{lem:irredfac}, the result follows from the fact that $\psi_\ell(\delta)=\psi_{\ell}(-\delta)\pmod{\p^i}$ if and only if $\p=2$ and $i=1$, and that all $\psi_n$ are distinct modulo $\p^i$ for any other choice of $\p,i$.
\end{proof}

\subsection{$(\ell, p)$-digits and the family tree}
In much the same way that the $p$-adic digits of numbers control the $p$-modular representation theory of certain objects, we find that in mixed characteristic the ``$(\ell, p)$-digits'' of certain numbers are central.

\begin{deff}\label{def:lpexp}
For $n\in\Z_{\geq 0}$, define its \textbf{$(\ell,\p)$-adic expansion} by writing $n+1=\sum_{i=0}^{\infty} n_i\p^{(i)}$, where $\p^{(0)}:=1$ and $\p^{(i)}:=\ell\p^{i-1}$ for $i\geq 1$, and the coefficients are integers satisfying $0\leq n_0<\ell$ and $0\leq n_i<\p$ for $i\geq 1$. If $k$ is the largest integer such that $n_k\neq 0$, this expansion will be written as
$$ n+1=[n_k,n_{k-1},\ldots,n_0]_{\ell,\p}.$$
\end{deff}

\begin{ex}\label{ex:lpadic}
Let $\ell=5$ and $\p=3$, as is the case when $$\delta = 1 + \sqrt{-1} \in \F_9 \simeq \F_3[\delta]/(\delta^2+\delta-1).$$
Note that $\qnn{5} = \delta^4 - 3\delta^2 + 1 = (\delta^2+\delta-1)(\delta^2-\delta-1)$ and that $(\delta^2+\delta-1)$ is irreducible modulo 3 as per Lemma \ref{lem:irreduciblemd}. The $(5,3)$-adic extension associated to the integer 685 is given by
$$685+1=686=1\cdot 5\cdot 3^4+2\cdot 5\cdot 3^3+0\cdot 5\cdot 3^2+0\cdot5\cdot 3+ 2\cdot 5 +1=[1,2,0,0,2,1]_{5,3}.$$
\end{ex}

\begin{deff}\label{def:ancestors}
If $s$ is the smallest integer such that $n_s\neq 0$ and $s<k$, then define the \textbf{mother} $\mo{n}$ of $n$ as
\begin{equation*}
\mo{n}+1=[n_k,\ldots,n_{s+1},0,\ldots,0]_{\ell,\p}.
\end{equation*}
If $s=k$, then $n$ has no mother and is called \textbf{Eve}. The elements of the set $\{\mo{n},\mo[2]{n},\ldots\}$ are called \textbf{ancestors} of $n$. Given $n,s\in\Z_{\geq 0}$, write $a_{n,s}$ for the youngest ancestor of $n$ having its $s^{th}$ $(\ell,\p)$-digit equal to 0. We use the convention $a_{n,-1}=0$.
\end{deff}

\addtocounter{cor}{-2}
\begin{ex}[continued]
Keeping the same setup as above, the integer 685 has mother
$$\mo{685}=[1,2,0,0,2,0]_{5,3}-1=685-1=684.$$
Its complete list of ancestors is given by $\{684,674,404\}$, with $404$ being Eve. Note also that $a_{685,0}=684$, $a_{685,1}=a_{685,2}=a_{685,3}=674$, and $a_{685,4}=404$.
\end{ex}
\addtocounter{cor}{1}

\begin{deff}\label{def:support}
The \textbf{support} of the integer $n$ is the set
$$\supp{n}=\{n_k\p^{(k)}\pm n_{k-1}\p^{(k-1)}\pm\cdots\pm n_1\p^{(1)}\pm n_0\p^{(0)}-1\}.$$
\end{deff}

\addtocounter{cor}{-3}
\begin{ex}[continued]
Continuing with the same setup, we have
$$\supp{685}=\{685,683,665,663,145,143,125,123\}.$$
\end{ex}
\addtocounter{cor}{2}

\begin{figure}
\caption{Values of $\supp{n}$ for $\ell=5$ and $\p=3$. The $y$-axis is $n$ and a square at $(x,y)$ is coloured iff $x \in \supp{y}$.}
\includegraphics[scale=0.5]{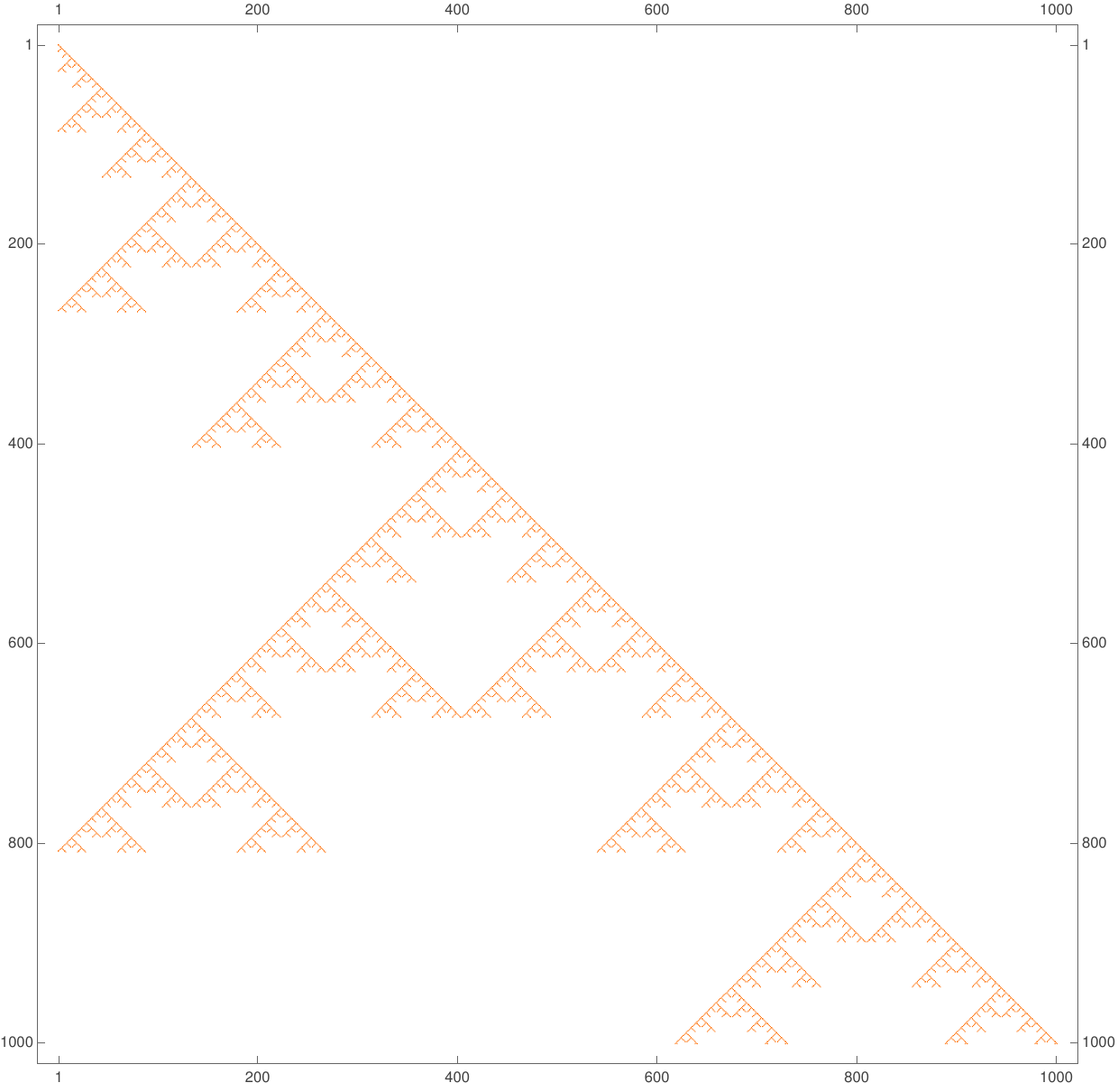}
\end{figure}

The following definition is taken from \cite{Sutton2023}, with a slight difference in conventions causing shifts by one in the integers considered and their reflections.

\begin{deff}[{\cite[Definition 2.7]{Sutton2023}}]\label{defadm}
Let $S\subset\Z_{\geq 0}$ be a finite set and write it as a disjoint union of \textbf{stretches} $S=\sqcup_i S_i$, where a stretch $S_i$ is a maximal subset of consecutive integers. Write
$$n+1=[n_k,n_{k-1},\ldots,n_0]_{\ell,\p}.$$
The set $S$ is called \textbf{down-admissible} for $n$ if
\begin{enumerate}
\item $n_{\min(S_i)}\neq 0$ for all $i$,
\item if $s\in S$ and $n_{s+1}=0$, then $s+1\in S$.
\end{enumerate}
For a down-admissible set $S$ for $n$, define its \textbf{downward reflection along $S$} as
\begin{equation*}
n[S]=[n_k,\epsilon_{k-1}n_{k-1},\ldots,\epsilon_0 n_0]_{\ell,\p}-1,\quad \epsilon_i=\begin{cases}
1 & \mbox{if } i\notin S,\\
-1 & \mbox{if } i\in S.
\end{cases}
\end{equation*}
The set $S$ is called \textbf{up-admissible} for $n$ if
\begin{enumerate}
\item $n_{\min(S_i)}\neq 0$ for all $i$,
\item if $s\in S$ and $n_{s+1}=\p-1$, then $s+1\in S$.
\end{enumerate}
For an up-admissible set $S$ for $n$, define its \textbf{upward reflection along $S$} as
\begin{equation*}
n(S)=[n'_{k(S)},n'_{k(S)-1},\ldots,n'_0]_{\ell,\p}-1,\quad n'_i=\begin{cases}
n_i & \mbox{if } i\notin S,i-1\notin S,\\
n_i+2 & \mbox{if } i\notin S,i-1\in S,\\
-n_i & \mbox{if } i\in S,
\end{cases}
\end{equation*}
where $n_j$ is taken to be $0$ if $j>k$ and $k(S)$ is the largest integer such that $n'_{k(S)}\neq 0$.
\end{deff}

\begin{rem} For every $n\in\Z_{\geq 0}$,
\begin{enumerate}
\item the set $S$ is down-admissible for $n$ if and only if it is up-admissible for $n[S]$, in which case $n[S](S)=n$. Similarly, $n(S)[S]=n$, and
\item the set $\supp{n}$ can be reinterpreted in terms of down-admissible sets as $\supp{n}=\{n[S]\mid S\mbox{ down-admissible for }n\}$, and
\item there is a finite number of down-admissible sets for $n$ but an infinite number of up-admissible ones.
\end{enumerate}
\end{rem}

\begin{ex} Let $(\ell,\p)=(5,3)$ as in Example~\ref{ex:lpadic}. Let $n=123$. Then, $$n+1=124=[2,2,0,4]_{5,3}.$$ The only up-admissible sets for $n$ of size 1 are $\{0\}$ and $\{3\}$, and the only ones of size 2 are $\{0,3\}$, $\{2,3\}$, and $\{3,4\}$. Any other up-admissible set for $n$ is obtained by adding an arbitrary number of consecutive integers after 3 to the up-admissible sets $\{3\},\{0,3\},\{2,3\},\{0,2,3\}$ or $\{0,1,2,3\}$.

\begin{figure}[h]\label{fig:bars}
  \begin{tikzpicture}[scale=0.35]
    \node[anchor=east, align=right] at (1.7,-1) {$[0,0,0,2,2,0,4]_{5,3}$};
    \draw[line width=2pt, line cap=round,color=black] (-1,-0) -- (0,-0);
    
    \foreach \i in {0,...,3} {\begin{scope}[shift={(0,\i*2.5)}]
    \pgfmathsetmacro{\t}{100-\i*25} 

    \draw[line width=2pt, line cap=round,color=black!\t] (-4-\i,0.5) -- (-3,0.5);
    
    \draw[line width=2pt, line cap=round,color=black!\t] (-4-\i,1) -- (-3,1);
    \draw[line width=2pt, line cap=round,color=black!\t] (-1,1) -- (0,1);
    
    \draw[line width=2pt, line cap=round,color=black!\t] (-4-\i,1.5) -- (-2,1.5);
    
    \draw[line width=2pt, line cap=round,color=black!\t] (-4-\i,2) -- (-2,2);
    \draw[line width=2pt, line cap=round,color=black!\t] (-1,2) -- (0,2);
    
    \draw[line width=2pt, line cap=round,color=black!\t] (-4-\i,2.5) -- (0,2.5);
    \end{scope}}
  
  \begin{scope}[shift={(8,8)}]
    \node[anchor=east, align=right] at (1.65,1) {$[1,2,0,0,2,1]_{5,3}$};
    \draw[line width=2pt, line cap=round,color=black] (-1,-0) -- (0,-0);

    \draw[line width=2pt, line cap=round,color=black] (-5,-0.5) -- (-4,-0.5);
    
    \draw[line width=2pt, line cap=round,color=black] (-5,-1) -- (-4,-1);
    \draw[line width=2pt, line cap=round,color=black!] (-1,-1) -- (0,-1);
    
    \draw[line width=2pt, line cap=round,color=black] (-4,-1.5) -- (-1,-1.5);
    
    \draw[line width=2pt, line cap=round,color=black] (-4,-2) -- (-0,-2);

    \draw[line width=2pt, line cap=round,color=black] (-5,-2.5) -- (-1,-2.5);

    \draw[line width=2pt, line cap=round,color=black] (-5,-3) -- (-0,-3);
  \end{scope}
  \end{tikzpicture}
  \caption{The down-admissible sets for $685=[1,2,0,0,2,1]_{5,3}-1$ (right) and some of the up-admissible sets for $123 = [2,2,0,4]_{5,3}-1$ (left). The bars are aligned at the set ${0,1,2,3,4}$ which is common to both.}
\end{figure}
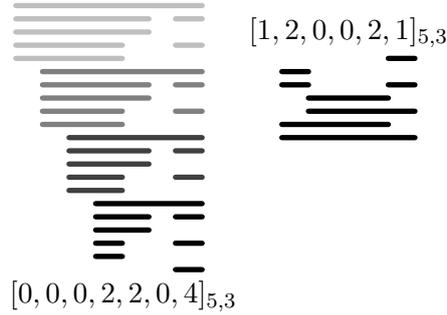

Consider also $n'=685$. Then, $n'+1=686=[1,2,0,0,2,1]_{5,3}$ and the down-admissible sets for $n'$ are $\{0\}, \{4\},
\{0,4\},
\{1,2,3\},
\{0,1,2,3\},\{1,2,3,4\},$ and $\{0,1,2,3,4\}.$
These are illustrated in figure \ref{fig:bars}.

Let $S=\{0,1,2,3,4\}$. Then, $n(S)=685=n'$, and $n'[S]=123=n$, so that $n(S)[S]=n$ as expected.

Finally,
\begin{gather*}
n'[\{0\}]=683, n'[\{4\}]=145,
n'[\{0,4\}]=143,
n'[\{1,2,3\}]=665,\\
n'[\{0,1,2,3\}]=663,n'[\{1,2,3,4\}]=125,\mbox{ and }n'[\{0,1,2,3,4\}]=123,
\end{gather*}
and all these values agree with the elements of $\supp{685}$ computed in Example~\ref{ex:lpadic}.
\end{ex}

For every up- or down-admissible set $S$, there is a unique finest partition of $S$ into up- or down-admissible stretches, called \textbf{minimal up- or down-admissible stretches}. It is not too hard to see that, if
$$n+1=[n_k,n_{k-1},\ldots,n_0]_{\ell,\p},$$
then a stretch $\{i,i-1,\ldots,j+1,j\}$ is minimal down-admissible if and only if
$$(n_{i+1},n_{i},\ldots,n_{j+1},n_j)=(n_{i+1},0,\ldots,0,n_j),$$
with $n_{i+1},n_j\neq 0$, and is minimal up-admissible if and only if
$$(n_{i+1},n_{i},\ldots,n_{j+1},n_j)=(n_{i+1},\p-1,\ldots,\p-1,n_j),$$
with $n_{i+1}\neq \p-1,n_j\neq 0$.

The following lemma exhibits a property of up-admissible sets that will be useful for inductive arguments.
\begin{lem}\label{indadmsets}
Suppose $S,S'$ are up-admissible sets for an integer $m\in\Z_{\geq 0}$ such that $S\subset S'$. Then there exist up-admissible sets
\begin{equation*}
S=S_1\subset S_2\subset \cdots\subset S_k=S'
\end{equation*}
such that $\abs{S_i}=\abs{S_{i-1}}+1$ for all $1<i\leq k$.
\end{lem}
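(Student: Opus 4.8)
The plan is to reduce the statement to a single-step claim and then induct on $\abs{S'\setminus S}$: it suffices to show that whenever $A\subsetneq B$ are both up-admissible for $m$, there is an element $z\in B\setminus A$ such that $A\cup\{z\}$ is again up-admissible. Granting this, I would take $A=S$, find such a $z_1$, then apply the claim again to $A=S\cup\{z_1\}$ and $B=S'$ (both up-admissible, still nested), and repeat. Since each step enlarges the current set by exactly one index while keeping it up-admissible and sandwiched between $S$ and $S'$, after $\abs{S'\setminus S}$ steps we reach $S'$ and obtain the desired chain with $\abs{S_i}=\abs{S_{i-1}}+1$.

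To make the single-step claim usable I would first record how the two defining conditions behave under adjoining one index $z$ to an up-admissible set $A$, writing $m+1=[n_k,\ldots,n_0]_{\ell,\p}$. Adding $z$ can only create a new stretch-minimum (precisely when $z-1\notin A$) or a new stretch-maximum (precisely when $z+1\notin A$), and the only fresh condition-(2) obligation is the one attached to $z$ itself, since for the other $s\in A$ the requirement ``$s+1\in$ set'' only becomes easier. Hence $A\cup\{z\}$ is up-admissible if and only if $z$ is \emph{bottom-addable} ($z-1\in A$ or $n_z\neq 0$) and \emph{top-addable} ($z+1\in A$ or $n_{z+1}\neq \p-1$). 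The single-step claim thus becomes: there exists $z\in B\setminus A$ that is simultaneously bottom- and top-addable for $A$.

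The key step is to locate such a $z$ by a descent, and this is exactly where condition (2) — the upward closure forbidding a right endpoint directly below a digit equal to $\p-1$ — is the main obstacle, since the two obvious extreme choices each fail one requirement (the maximum of $B\setminus A$ is always top-addable but may fail bottom-addability, and dually for the minimum). Set $T=B\setminus A$. The element $x=\max T$ is always top-addable, because if $x+1\notin B$ then $n_{x+1}\neq\p-1$ by condition (2) for $B$, and otherwise $x+1\in A$. If $x$ fails to be bottom-addable, then $n_x=0$ and $x-1\notin A$; condition (1) for $B$ forces $x-1\in B$ (else $x$ would be a minimum of a $B$-stretch with $n_x=0$), so $x-1\in T$. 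I would therefore start from $x_0=\max T$ and set $x_{i+1}=x_i-1$ as long as $x_i$ fails to be bottom-addable, producing a strictly decreasing sequence inside the finite set $T$, which must terminate at some $z=x_r$ that is bottom-addable. This $z$ is also top-addable: if $r=0$ then $z=\max T$, handled above, while if $r\geq 1$ then $z+1=x_{r-1}$ was not bottom-addable, so $n_{z+1}=0\neq\p-1$. Adjoining $z$ then preserves both conditions by the criterion above, which establishes the single-step claim and hence, by the induction, the lemma.
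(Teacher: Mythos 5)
Your proposal is correct and takes essentially the same route as the paper: both proofs induct on $\abs{S'\setminus S}$ and enlarge $S$ by one carefully chosen element of $S'\setminus S$ at each step, and your descent from $\max(S'\setminus S)$ terminates at exactly the element the paper names directly (the largest position with nonzero digit in the top stretch of $S'\setminus S$, or that stretch's minimum if all its digits vanish). Your write-up is in fact more complete, since the explicit bottom-/top-addability criterion and the checks against conditions (1) and (2) for $B$ supply the verification that the paper's proof only asserts.
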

\begin{proof}
The proof will proceed by induction on $k:=\abs{S'\setminus S}$, the base case $k=1$ being trivial.

Suppose $\abs{S'\setminus S}=k$. Then write $S'\setminus S=\sqcup_{i=1}^l T_i$ as a disjoint union of stretches with $T_1<T_2<\cdots <T_l$. With $m+1 = [m_t, m_{t-1}, \ldots, m_0]_{\ell,p}$, let $A:=\{a\in T_l\mid m_a\neq 0\}$. If $A\neq\varnothing$, set $\alpha=\max A$. Otherwise, set $\alpha=\min T_l$. Then $S'':=S\cup\{\alpha\}$ is up-admissible, $S\subset S''\subset S'$, $\abs{S''}=\abs{S}+1$, and $\abs{S'\setminus S''}=k-1$. The desired sequence of up-admissible sets then follows by applying the induction hypothesis to $S''$ and $S'$.
\end{proof}

\section{The Temperley--Lieb category}\label{section:tl}
We define the Temperley--Lieb algebras and category and start discussing their representation theory.

\subsection{Diagrams}
The most versatile presentation of Temperley-Lieb objects is through planar pairings known as ``diagrams''. 

\begin{deff}
Let $m,n\in\Z_{\geq 0}$ be of the same parity. An $(m,n)$-diagram is a picture consisting of two rows of sites aligned horizontally, $m$ on the bottom and $n$ on top. Those sites are paired by non-intersecting lines living inside the strip between the two rows. These pictures are considered up to isotopy fixing the sites.
\end{deff}

For example, here are a $(5,5)$-diagram, a $(5,3)$-diagram, a $(2,6)$-diagram and a $(4,0)$-diagram :
\begin{equation*}
d_1=\;
\begin{tikzpicture}[scale=0.25,mylabels,centered]
\draw[st] (0,0) -- (0,3);
\draw[st] (2,0) to[out=90,in=180] (3,1) to[out=0,in=90] (4,0);
\draw[st] (6,0)..controls{(6,1.7) and (2,1.3)}..(2,3);
\draw[st] (4,3) to[out=270,in=180] (5,2) to[out=0,in=270] (6,3);
\draw[st] (8,0) -- (8,3);
\end{tikzpicture}
\;,\quad
d_2=\;
\begin{tikzpicture}[scale=0.25,mylabels,centered]
\draw[st] (0,0)..controls{(0,2.2) and (6,1.8)}..(6,4);
\draw[st] (2,0) to[out=90,in=180] (5,2) to[out=0,in=90] (8,0);
\draw[st] (4,0) to[out=90,in=180] (5,1) to[out=0,in=90] (6,0);
\draw[st] (2,4) to[out=270,in=180] (3,3) to[out=0,in=270] (4,4);
\end{tikzpicture}
\;,\quad
d_3=\;
\begin{tikzpicture}[scale=0.25,mylabels,centered]
\draw[st] (4,0)..controls{(4,1.7) and (0,1.3)}..(0,3);
\draw[st] (6,0) -- (6,3);
\draw[st] (2,3) to[out=270,in=180] (3,2) to[out=0,in=270] (4,3);
\draw[st] (8,3) to[out=270,in=180] (9,2) to[out=0,in=270] (10,3);
\end{tikzpicture}
\;,\quad
d_4=\;
\begin{tikzpicture}[scale=0.25,mylabels,centered]
\draw[st] (2,0) to[out=90,in=180] (5,2) to[out=0,in=90] (8,0);
\draw[st] (4,0) to[out=90,in=180] (5,1) to[out=0,in=90] (6,0);
\end{tikzpicture}
\;.
\end{equation*}

A line that pairs a bottom site to a top site is called a \textbf{propagating line}. An $(m,n)$-diagram where the number of propagating lines is maximal is called \textbf{epic} if $m\geq n$, and \textbf{monic} if $m\leq n$. More generally, the number of propagating lines in a diagram is called its \textbf{through degree}. For example, the diagram $d_1$ above has through degree 3, written $\td(d_1)=3$. The through degree of a linear combination of diagrams is defined to be the highest through degree of its diagrams.

\begin{deff}
The Temperley--Lieb category $\tlcat^\kk(\delta)$ is the monoidal category where
\begin{itemize}
\item objects are the non-negative integers $\Z_{\geq 0}$;
\item for $m,n\in\Z_{\geq 0}$, $\Hom_{\tlcat^\kk(\delta)}(m,n)$ is the free $\kk$-module having basis all $(m,n)$-diagrams;
\item composition of morphisms is done on basis elements by vertically concatenating the two diagrams and replacing every closed loop by a factor of $\delta$;
\item tensor product on objects is given by $m\otimes n= m+n$ and on morphisms is given by horizontal concatenation.
\end{itemize}
\end{deff}

When the context is clear, we sometimes drop the $\kk$ and the $\delta$ from the notation and simply write $\tlcat$. The definition of composition of morphisms begs for an example: taking the diagrams $d_1\in\Hom_\tlcat(5,5)$ and $d_2\in\Hom_\tlcat(5,3)$ from the previous example, we get
\begin{equation*}
d_2\circ d_1=\;
\begin{tikzpicture}[scale=0.25,mylabels,centered]
\draw[st] (0,0)..controls{(0,2.2) and (6,1.8)}..(6,4);
\draw[st] (2,0) to[out=90,in=180] (5,2) to[out=0,in=90] (8,0);
\draw[st] (4,0) to[out=90,in=180] (5,1) to[out=0,in=90] (6,0);
\draw[st] (2,4) to[out=270,in=180] (3,3) to[out=0,in=270] (4,4);
\end{tikzpicture}
\;\circ\;
\begin{tikzpicture}[scale=0.25,mylabels,centered]
\draw[st] (0,0) -- (0,3);
\draw[st] (2,0) to[out=90,in=180] (3,1) to[out=0,in=90] (4,0);
\draw[st] (6,0)..controls{(6,1.7) and (2,1.3)}..(2,3);
\draw[st] (4,3) to[out=270,in=180] (5,2) to[out=0,in=270] (6,3);
\draw[st] (8,0) -- (8,3);
\end{tikzpicture}
\;=\;
\begin{tikzpicture}[scale=0.25,mylabels,centered]
\draw[st] (0,3)..controls{(0,5.2) and (6,4.8)}..(6,7);
\draw[st] (2,3) to[out=90,in=180] (5,5) to[out=0,in=90] (8,3);
\draw[st] (4,3) to[out=90,in=180] (5,4) to[out=0,in=90] (6,3);
\draw[st] (2,7) to[out=270,in=180] (3,6) to[out=0,in=270] (4,7);
\draw[st] (0,0) -- (0,3);
\draw[st] (2,0) to[out=90,in=180] (3,1) to[out=0,in=90] (4,0);
\draw[st] (6,0)..controls{(6,1.7) and (2,1.3)}..(2,3);
\draw[st] (4,3) to[out=270,in=180] (5,2) to[out=0,in=270] (6,3);
\draw[st] (8,0) -- (8,3);
\end{tikzpicture}
\;
=\delta\;
\begin{tikzpicture}[scale=0.25,mylabels,centered]
\draw[st] (0,0)..controls{(0,2.2) and (6,1.8)}..(6,4);
\draw[st] (2,0) to[out=90,in=180] (3,1) to[out=0,in=90] (4,0);
\draw[st] (6,0) to[out=90,in=180] (7,1) to[out=0,in=90] (8,0);
\draw[st] (2,4) to[out=270,in=180] (3,3) to[out=0,in=270] (4,4);
\end{tikzpicture}\;.
\end{equation*}

\begin{deff}
Let $n\in\Z_{\geq 0}$. The Temperley--Lieb algebra $\TL_n^\kk(\delta)$ is defined to be $\End_{\tlcat^\kk(\delta)}(n)$.
\end{deff}

Morphisms in the Temperley--Lieb category will often be drawn as grey boxes, representing some linear combination of diagrams: for example,
\begin{equation*}
\begin{tikzpicture}[scale=0.25,mylabels,centered]
\draw[st] (1,-1) -- (1,3);
\draw[st] (3,-1) -- (3,3);
\draw[st] (5,-1) -- (5,3);
\draw[morg] (0,0) rectangle (6,2);
\node at (3,1) {$F$};
\end{tikzpicture}\;
\end{equation*}
denotes some arbitrary morphism $F\in\Hom_{\tlcat}(m,n)$, where the domain and codomain will be made clear from the context. We will sometimes use trapezes such as
\begin{equation*}
\begin{tikzpicture}[scale=0.25,mylabels,centered]
\draw[st] (1,2) -- (1,3);
\draw[st] (3,-1) -- (3,3);
\draw[st] (5,2) -- (5,3);
\draw[morg] (2,0) -- (4,0) -- (6,2) -- (0,2) -- cycle;
\node at (3,1) {$F$};
\end{tikzpicture}\;,\quad
\begin{tikzpicture}[scale=0.25,mylabels,centered]
\draw[st] (1,-1) -- (1,0);
\draw[st] (3,-1) -- (3,3);
\draw[st] (5,-1) -- (5,0);
\draw[morg] (0,0) -- (6,0) -- (4,2) -- (2,2) -- cycle;
\node at (3,1) {$F$};
\end{tikzpicture}\;,\quad
\begin{tikzpicture}[scale=0.25,mylabels,centered]
\draw[st] (1,-1) -- (1,3);
\draw[st] (3,-1) -- (3,3);
\draw[st] (5,2) -- (5,3);
\draw[morg] (0,0) -- (4,0) -- (6,2) -- (0,2) -- cycle;
\node at (2,1) {$F$};
\end{tikzpicture}\;\text{, and}\quad
\begin{tikzpicture}[scale=0.25,mylabels,centered]
\draw[st] (1,-1) -- (1,3);
\draw[st] (3,-1) -- (3,3);
\draw[st] (5,-1) -- (5,0);
\draw[morg] (0,0) -- (6,0) -- (4,2) -- (0,2) -- cycle;
\node at (2,1) {$F$};
\end{tikzpicture}
\end{equation*}
to emphasise that $F$ goes from a lower $m$ to a greater $n$, or vice versa.

\subsection{Light leaves and cellular structures}

The Temperley--Lieb algebra is a fundamental example of a particular class of algebras called \textbf{cellular} algebras.

\begin{deff}[\cite{Graham1996}]\label{cellular}
Let $R$ be a commutative unital ring. A cellular $R$-algebra is an associative unital algebra $\alg{A}$ with cellular data $(\cellposet, M,\cellbasissym,*)$, where
\begin{enumerate}
\item $(\cellposet,\preceq)$ is a partially ordered set and for all $\lambda\in\cellposet$, $\cellindices{\lambda}$ is a finite set such that $\cellbasissym:\sqcup_{\lambda\in\cellposet} \cellindices{\lambda}\times \cellindices{\lambda}\to \alg{A}$ is an injective map having an $R$-basis of $\alg{A}$ as its image.
\item If $\lambda\in\cellposet$ and $\cellindex{s},\cellindex{t}\in \cellindices{\lambda}$, we write $\cellbasis{s}{t}:=\cellbasissym(\cellindex{s},\cellindex{t})$. Then $*$ is an $R$-linear anti-involution of $\alg{A}$ such that $(\cellbasis{s}{t})^*=\cellbasis{t}{s}$.
\item \label{cell3} If $\lambda\in\cellposet$ and $\cellindex{s},\cellindex{t}\in \cellindices{\lambda}$, we have, for all $a\in\alg{A}$,
\begin{equation}\label{multcell}
a \cellbasis{s}{t}\equiv\sum_{\cellindex{u}\in \cellindices{\lambda}}\rr{a}{u}{s} \cellbasis{u}{t} \pmod{\alg{A}^{\prec \lambda}},
\end{equation}
where $\rr{a}{u}{s}\in R$ doesn't depend on $\cellindex{t}$ and where $\alg{A}^{\prec \lambda}$ is the $R$-submodule of $\alg{A}$ generated by $\{\cellbasis[\mu]{v}{w} : \mu\prec\lambda,\cellindex{v},\cellindex{w}\in \cellindices{\mu}\}$.
\end{enumerate}
\end{deff}

\begin{rem}\label{rightcell}
Note that applying the anti-involution $*$ to equation \eqref{multcell}, one gets
\begin{equation}\label{cellmult}
\cellbasis{t}{s} a^*\equiv \sum_{\cellindex{u}\in \cellindices{\lambda}} \rr{a}{u}{s}\cellbasis{t}{u}\pmod{\alg{A}^{\prec \lambda}}.
\end{equation}
\end{rem}

\begin{deff}\label{modcell}
Given a cellular algebra $\alg{A}$ with cellular datum as above, for all $\lambda\in\cellposet$, we define its (left) cell module $W(\lambda)$ as the free $R$-module with basis $\{\cellhalf{s} : \cellindex{s}\in \cellindices{\lambda}\}$ and left $\alg{A}$-action given by
\begin{equation*}
a \cdot \cellhalf{s}=\sum_{\cellindex{t}\in \cellindices{\lambda}}\rr{a}{t}{s}\cellhalf{t}
\end{equation*}
for all $a\in\alg{A},\cellindex{s}\in \cellindices{\lambda}$.
\end{deff}

The third property in Definition~\ref{cellular} ensures that the left action above is well-defined. Using Remark~\ref{rightcell}, is is thus possible to define a right action on the same space, giving a right cell module denoted $W(\lambda)^*$.

\begin{lem}[\cite{Graham1996}]\label{celllayers}
There is an isomorphism of $(\alg{A},\alg{A})$-bimodules $$\alpha^\lambda: W(\lambda)\otimes W(\lambda)^*\to \alg{A}^{\preceq \lambda}/\alg{A}^{\prec \lambda}$$ given by $\cellhalf{s}\otimes (\cellhalf{t})^*\mapsto \cellbasis{s}{t}+\alg{A}^{\prec \lambda}$. 
\end{lem}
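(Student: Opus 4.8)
The plan is to exhibit $\alpha^\lambda$ as a bijection of $R$-bases and then verify separately that it respects the left and right $\alg{A}$-actions. First I would record the two bases. By Definition~\ref{modcell} the free $R$-module $W(\lambda)$ has basis $\{\cellhalf{s}:\cellindex{s}\in\cellindices{\lambda}\}$ and $W(\lambda)^*$ has basis $\{(\cellhalf{t})^*:\cellindex{t}\in\cellindices{\lambda}\}$, so $W(\lambda)\otimes W(\lambda)^*$ is free over $R$ on the products $\cellhalf{s}\otimes(\cellhalf{t})^*$ indexed by $\cellindices{\lambda}\times\cellindices{\lambda}$. On the other side, since $\cellbasissym$ is injective with image an $R$-basis of $\alg{A}$, the submodule $\alg{A}^{\preceq\lambda}$ is free on $\{\cellbasis[\mu]{v}{w}:\mu\preceq\lambda\}$ and $\alg{A}^{\prec\lambda}$ is free on the same set restricted to $\mu\prec\lambda$; hence the cosets $\{\cellbasis{s}{t}+\alg{A}^{\prec\lambda}:\cellindex{s},\cellindex{t}\in\cellindices{\lambda}\}$ form an $R$-basis of the quotient. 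Therefore $\alpha^\lambda$, being defined by a bijection of these two bases, is automatically a well-defined isomorphism of $R$-modules.

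It then remains to check that $\alpha^\lambda$ is a bimodule map. For the left action, given $a\in\alg{A}$ I would compute
\[
\alpha^\lambda\bigl(a\cdot(\cellhalf{s}\otimes(\cellhalf{t})^*)\bigr)=\alpha^\lambda\Bigl(\sum_{\cellindex{u}}\rr{a}{u}{s}\,\cellhalf{u}\otimes(\cellhalf{t})^*\Bigr)=\sum_{\cellindex{u}}\rr{a}{u}{s}\,\cellbasis{u}{t}+\alg{A}^{\prec\lambda},
\]
which by the cellular axiom \eqref{multcell} equals $a\cellbasis{s}{t}+\alg{A}^{\prec\lambda}=a\cdot\alpha^\lambda(\cellhalf{s}\otimes(\cellhalf{t})^*)$. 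For the right action I would invoke the transposed relation \eqref{cellmult} of Remark~\ref{rightcell}: applying $*$ and replacing $a$ by $a^*$ shows $\cellbasis{s}{t}\,a\equiv\sum_{\cellindex{u}}\rr{a^*}{u}{t}\,\cellbasis{s}{u}\pmod{\alg{A}^{\prec\lambda}}$, which is exactly the image under $\alpha^\lambda$ of $\cellhalf{s}\otimes\bigl((\cellhalf{t})^*\cdot a\bigr)$ once the right action on $W(\lambda)^*$ is read off from the same structure constants $\rr{a^*}{u}{t}$.

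Before any of this is meaningful, one must know that $\alg{A}^{\prec\lambda}$ and $\alg{A}^{\preceq\lambda}$ are genuine two-sided ideals, so that the quotient carries a bimodule structure at all. I would deduce this from \eqref{multcell} and \eqref{cellmult} by noting that, for $\mu\prec\lambda$, every product $a\cellbasis[\mu]{v}{w}$ lies in $\alg{A}^{\prec\mu}+\spn_R\{\cellbasis[\mu]{u}{w}:\cellindex{u}\in\cellindices{\mu}\}\subseteq\alg{A}^{\prec\lambda}$, and symmetrically on the right, so that both spans are stable under left and right multiplication.

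The step I expect to be the main obstacle is the bookkeeping for the right action: one has to pin down the right $\alg{A}$-module structure on $W(\lambda)^*$ so that its structure constants are precisely the $\rr{a^*}{u}{t}$ appearing above, and then track the anti-involution $*$ carefully so that the two congruences modulo $\alg{A}^{\prec\lambda}$ line up on the correct (second) index. Everything else reduces to a direct comparison of the two bases.
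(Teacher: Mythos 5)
Your proof is correct. The paper does not prove this lemma at all --- it simply cites Graham--Lehrer --- and your argument is precisely the standard verification found there: the two sides are free $R$-modules on bases in evident bijection, the left action matches by the cellular axiom \eqref{multcell}, the right action matches by the transposed relation \eqref{cellmult} (with $a$ replaced by $a^*$, acting on the second index), and the congruences make sense because \eqref{multcell} and \eqref{cellmult} together show $\alg{A}^{\prec\lambda}$ and $\alg{A}^{\preceq\lambda}$ are two-sided ideals. Your flagged concern about the bookkeeping for the right action is handled exactly as you describe, since the paper defines the right module structure on $W(\lambda)^*$ via Remark~\ref{rightcell} with structure constants $\rr{a^*}{u}{t}$, so nothing further is needed.
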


The study of the structure of the cell modules is closely related to that of a bilinear form defined on them, the existence of which relies on the properties from Definition~\ref{cellular} and the following lemma.

\begin{lem}[\cite{Graham1996}]\label{lembil}
Let $\lambda\in\cellposet$ and $a\in\alg{A}$. For all $\cellindex{s},\cellindex{t},\cellindex{u},\cellindex{v}\in\cellindices{\lambda}$, we have
\begin{equation*}
\cellbasis{s}{t} a \cellbasis{u}{v} \equiv \phi_a(\cellindex{t},\cellindex{u}) \cellbasis{s}{v}\pmod{\alg{A}^{\prec \lambda}},
\end{equation*}
where $\phi_a(\cellindex{t},\cellindex{u})\in R$ is independent of $\cellindex{s}$ and $\cellindex{v}$.
\end{lem}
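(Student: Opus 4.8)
The plan is to evaluate $\cellbasis{s}{t}\, a\, \cellbasis{u}{v}$ modulo $\alg{A}^{\prec\lambda}$ in two different groupings and to compare the results. First I would record the two multiplication rules I will use throughout. Equation~\eqref{multcell} says that left multiplication by any $b\in\alg{A}$ alters only the \emph{first} index of a cell basis element, $b\,\cellbasis{p}{q}\equiv\sum_{\cellindex{u}}\rr{b}{u}{p}\cellbasis{u}{q}\pmod{\alg{A}^{\prec\lambda}}$, with coefficients independent of $\cellindex{q}$. Applying $*$ to this relation (using that $\alg{A}^{\prec\lambda}$ is $*$-stable, since $(\cellbasis[\mu]{v}{w})^*=\cellbasis[\mu]{w}{v}$, and that $*$ is an anti-involution), or equivalently reading off Remark~\ref{rightcell}, gives the dual rule: right multiplication by $b$ alters only the \emph{second} index, $\cellbasis{p}{q}\,b\equiv\sum_{\cellindex{u}}\rr{b^*}{u}{q}\cellbasis{p}{u}\pmod{\alg{A}^{\prec\lambda}}$, with coefficients independent of $\cellindex{p}$. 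These same two relations, applied to basis elements indexed by $\mu\prec\lambda$, also show that $\alg{A}^{\prec\lambda}$ is a two-sided ideal, so multiplying a congruence modulo $\alg{A}^{\prec\lambda}$ on either side by an element of $\alg{A}$ is legitimate.

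Now I would group the product as $(\cellbasis{s}{t}\,a)\,\cellbasis{u}{v}$ and apply the right-multiplication rule twice: first to $\cellbasis{s}{t}\,a$ and then to each resulting product with $\cellbasis{u}{v}$. Each step leaves the first index equal to $\cellindex{s}$, so the product is congruent to an $R$-combination of the $\cellbasis{s}{y}$ (first index fixed). Grouping instead as $\cellbasis{s}{t}\,(a\,\cellbasis{u}{v})$ and applying the left-multiplication rule twice leaves the second index equal to $\cellindex{v}$, so the product is also congruent to an $R$-combination of the $\cellbasis{x}{v}$ (second index fixed). Because the cell basis is $R$-linearly independent — equivalently, by Lemma~\ref{celllayers} the classes $\cellbasis{x}{y}+\alg{A}^{\prec\lambda}$ form an $R$-basis of $\alg{A}^{\preceq\lambda}/\alg{A}^{\prec\lambda}$ — the two spans $\spn_R\{\cellbasis{s}{y}\}$ and $\spn_R\{\cellbasis{x}{v}\}$ meet only in the line through $\cellbasis{s}{v}$. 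Hence the product is congruent to $\phi\cdot\cellbasis{s}{v}$ for a single scalar $\phi\in R$.

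It remains to see that $\phi$ is independent of $\cellindex{s}$ and $\cellindex{v}$, and this is the one point requiring care. Reading the coefficient of $\cellbasis{s}{v}$ off the first grouping expresses $\phi$ entirely through right-multiplication coefficients, which by the dual rule (equivalently Remark~\ref{rightcell}) do not depend on the untouched first index $\cellindex{s}$; reading it off the second grouping expresses $\phi$ entirely through left-multiplication coefficients, which by \eqref{multcell} do not depend on the untouched second index $\cellindex{v}$. Since both computations give the same $\phi$, it is independent of both $\cellindex{s}$ and $\cellindex{v}$, depending only on $a$, $\cellindex{t}$, and $\cellindex{u}$; setting $\phi_a(\cellindex{t},\cellindex{u}):=\phi$ then completes the proof. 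The main obstacle is precisely this last independence argument: each single grouping, taken alone, only exhibits independence from one of the two indices, and it is the comparison of the two groupings — underwritten by the linear independence of the cellular basis — that forces independence from both.
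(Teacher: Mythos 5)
Your proof is correct. The paper itself gives no argument for this lemma (it is quoted directly from \cite{Graham1996}), and your two-grouping argument --- computing $\cellbasis{s}{t}\,a\,\cellbasis{u}{v}$ once via the right-multiplication rule of Remark~\ref{rightcell} and once via equation~\eqref{multcell}, then using linear independence of the cellular basis modulo $\alg{A}^{\prec\lambda}$ to force the product onto the line through $\cellbasis{s}{v}$ and to extract independence of the scalar from $\cellindex{s}$ and $\cellindex{v}$ --- is precisely the standard proof in the cited reference, including the necessary observations that $\alg{A}^{\prec\lambda}$ is a two-sided ideal and that each grouping alone only yields independence from one of the two indices.
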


The scalars appearing in the above lemma allow us to define, for $\lambda\in\cellposet$,
\begin{equation}
\begin{aligned}
\langle\cdot ,\cdot\rangle:W(\lambda)\times W(\lambda)&\to\kk\\
(\cellhalf{s},\cellhalf{t})&\mapsto\ip{\cellhalf{s},\cellhalf{t}}=\phi_{\mathbf{1}}(\cellindex{s},\cellindex{t}),
\end{aligned}
\end{equation}
and extended bilinearly. Here, $\mathbf{1}$ is the unit of $\alg{A}$.

\begin{prop}[\cite{Graham1996}]
For all $\lambda\in\cellposet$, the bilinear form $\langle\cdot,\cdot\rangle$ is symmetric and
$
\langle ax,y\rangle=\langle x,a^* y\rangle
$
for all $x,y\in W(\lambda)$, $a\in\alg{A}$.
\end{prop}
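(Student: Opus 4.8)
The plan is to deduce both statements from the single structural identity obtained by setting $a=\mathbf{1}$ in Lemma~\ref{lembil}, namely
\[
\cellbasis{x}{s}\cellbasis{t}{y}\equiv\ip{\cellhalf{s},\cellhalf{t}}\,\cellbasis{x}{y}\pmod{\alg{A}^{\prec\lambda}},
\]
which holds for all $\cellindex{x},\cellindex{s},\cellindex{t},\cellindex{y}\in\cellindices{\lambda}$, together with the fact that the cell basis elements $\cellbasis{x}{y}$ are linearly independent modulo $\alg{A}^{\prec\lambda}$ (immediate from the cellular basis axiom, cf. Lemma~\ref{celllayers}). Since $\ip{\cdot,\cdot}$ is bilinear, it suffices to establish both claims on basis vectors $\cellhalf{s},\cellhalf{t}$.

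For symmetry, I would apply the anti-involution $*$ to the displayed identity. As $*$ is $R$-linear it fixes the scalar $\ip{\cellhalf{s},\cellhalf{t}}$; it reverses products and satisfies $(\cellbasis{x}{s})^*=\cellbasis{s}{x}$; and it stabilises $\alg{A}^{\prec\lambda}$ because it merely permutes the spanning set $\{\cellbasis[\mu]{v}{w}:\mu\prec\lambda\}$. Hence the image of the identity reads $\cellbasis{y}{t}\cellbasis{s}{x}\equiv\ip{\cellhalf{s},\cellhalf{t}}\,\cellbasis{y}{x}$. Applying the structural identity directly to the same product instead gives $\cellbasis{y}{t}\cellbasis{s}{x}\equiv\ip{\cellhalf{t},\cellhalf{s}}\,\cellbasis{y}{x}$. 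Comparing the coefficients of the basis element $\cellbasis{y}{x}$ modulo $\alg{A}^{\prec\lambda}$ forces $\ip{\cellhalf{s},\cellhalf{t}}=\ip{\cellhalf{t},\cellhalf{s}}$.

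For the adjunction, I would expand the triple product $\cellbasis{x}{s}\,a\,\cellbasis{t}{y}$ in two ways. Grouping the last two factors and using the left action \eqref{multcell}, $a\cellbasis{t}{y}\equiv\sum_{\cellindex{w}}\rr{a}{w}{t}\cellbasis{w}{y}$, so the structural identity and the definition of the cell action yield $\cellbasis{x}{s}\,a\,\cellbasis{t}{y}\equiv\ip{\cellhalf{s},a\cdot\cellhalf{t}}\,\cellbasis{x}{y}$. Grouping the first two factors and using the transposed relation \eqref{cellmult} in the form $\cellbasis{x}{s}\,a\equiv\sum_{\cellindex{w}}r_{a^*}^\lambda(\cellindex{w},\cellindex{s})\cellbasis{x}{w}$ gives instead $\cellbasis{x}{s}\,a\,\cellbasis{t}{y}\equiv\ip{a^*\cdot\cellhalf{s},\cellhalf{t}}\,\cellbasis{x}{y}$. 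Equating coefficients of $\cellbasis{x}{y}$ yields $\ip{\cellhalf{s},a\cdot\cellhalf{t}}=\ip{a^*\cdot\cellhalf{s},\cellhalf{t}}$; replacing $a$ by $a^*$ and using $(a^*)^*=a$ rewrites this as $\ip{a\cdot\cellhalf{s},\cellhalf{t}}=\ip{\cellhalf{s},a^*\cdot\cellhalf{t}}$, which is the desired identity on basis vectors and hence on all of $W(\lambda)$ by bilinearity.

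The argument is essentially formal, so there is no deep obstacle; the only points demanding care are the bookkeeping of which indices are the ``inner'' ones governing the scalar in Lemma~\ref{lembil}, and the two facts that make the coefficient comparisons legitimate: that $*$ genuinely stabilises $\alg{A}^{\prec\lambda}$, and that the scalar $\phi_{\mathbf{1}}(\cellindex{s},\cellindex{t})$ is intrinsic, i.e. independent of the outer indices $\cellindex{x},\cellindex{y}$. Both are precisely what Lemma~\ref{lembil} and the cellular axioms supply, after which every step is forced.
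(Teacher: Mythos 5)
Your proof is correct. The paper offers no proof of this proposition at all---it is quoted directly from \cite{Graham1996}---and your argument (symmetry by applying the anti-involution $*$ to the congruence $\cellbasis{x}{s}\cellbasis{t}{y}\equiv\ip{\cellhalf{s},\cellhalf{t}}\cellbasis{x}{y}\pmod{\alg{A}^{\prec\lambda}}$ and comparing coefficients, adjunction by expanding $\cellbasis{x}{s}\,a\,\cellbasis{t}{y}$ in two ways via \eqref{multcell} and \eqref{cellmult}) is precisely the standard Graham--Lehrer argument that the citation points to, with the only implicit ingredient being the standard fact that $\alg{A}^{\prec\lambda}$ is a two-sided ideal, which the cellular axioms supply.
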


This bilinear form provides the following powerful identification of the irreducible modules of $\alg{A}$.

\begin{theo}[\cite{Graham1996}]
Suppose $R$ is a field and set, for all $\lambda\in\cellposet$, $$\rad{(\lambda)}=\{x\in W(\lambda) : \ip{x,y}=0,\; \forall y\in W(\lambda)\}.$$
Also set $\cellposet_0=\{\lambda\in\Lambda : \rad{(\lambda)}\neq W(\lambda)\}$.
\begin{enumerate}
\item $\alg{A}$ is semisimple if and only if $\rad{(\lambda)}=0$ for all $\lambda\in\cellposet$.
\item For $\lambda\in\cellposet_0$, the head of the cell module $L(\lambda):=W(\lambda)/\rad{(\lambda)}$ is absolutely irreducible.
\item The set $\{L(\lambda) : \lambda\in\cellposet_0\}$ is a complete set of non-isomorphic irreducible modules.
\end{enumerate}
\end{theo}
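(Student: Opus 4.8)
The plan is to reduce all three assertions to a single multiplication formula extracted from Lemma~\ref{lembil}, used in tandem with the two-sided ideal filtration $\alg{A}^{\prec\lambda}\subset\alg{A}^{\preceq\lambda}$. Specialising Lemma~\ref{lembil} to $a=\mathbf{1}$ and comparing with Definition~\ref{modcell} yields, for all $\cellindex{u},\cellindex{v},\cellindex{s}\in\cellindices{\lambda}$,
\[
\cellbasis{u}{v}\cdot\cellhalf{s}=\ip{\cellhalf{v},\cellhalf{s}}\,\cellhalf{u}\qquad\text{in }W(\lambda),
\]
and hence, by linearity in the second slot, $\cellbasis{u}{v}\cdot x=\ip{\cellhalf{v},x}\,\cellhalf{u}$ for every $x\in W(\lambda)$: the cell-basis elements of $\alg{A}^{\preceq\lambda}$ act as rank-one operators. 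The contravariance $\ip{ax,y}=\ip{x,a^*y}$ shows at once that $\rad(\lambda)$ is an $\alg{A}$-submodule, so $L(\lambda)=W(\lambda)/\rad(\lambda)$ is a genuine quotient module.

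For (2) I would first show $\rad(\lambda)$ is the unique maximal submodule when $\lambda\in\cellposet_0$. If a proper submodule $N$ contained some $x\notin\rad(\lambda)$, then $\ip{\cellhalf{v},x}\neq 0$ for some $\cellindex{v}$, so $\cellbasis{u}{v}\cdot x=\ip{\cellhalf{v},x}\cellhalf{u}\in N$ forces every $\cellhalf{u}\in N$, giving $N=W(\lambda)$, a contradiction; thus each proper submodule lies in $\rad(\lambda)$, which is itself proper exactly because $\lambda\in\cellposet_0$. Hence $L(\lambda)$ is simple and is the head of $W(\lambda)$. Absolute irreducibility follows from the same formula: given $\varphi\in\End_{\alg{A}}(L(\lambda))$, fix $\cellindex{v},\cellindex{s}$ with $\ip{\cellhalf{v},\cellhalf{s}}\neq 0$ and a lift $w$ of $\varphi(\overline{\cellhalf{s}})$; then for every $\cellindex{u}$,
\[
\varphi(\overline{\cellhalf{u}})=\ip{\cellhalf{v},\cellhalf{s}}^{-1}\cellbasis{u}{v}\cdot\varphi(\overline{\cellhalf{s}})=\ip{\cellhalf{v},\cellhalf{s}}^{-1}\ip{\cellhalf{v},w}\,\overline{\cellhalf{u}},
\]
so $\varphi$ is multiplication by a scalar independent of $\cellindex{u}$, whence $\End_{\alg{A}}(L(\lambda))=R$.

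For (3), completeness uses the cell filtration. Given a simple $S$, choose $\lambda$ minimal with $\alg{A}^{\preceq\lambda}S\neq 0$; minimality gives $\alg{A}^{\prec\lambda}S=0$, and the vanishing-form case ($\lambda\notin\cellposet_0$) is excluded because then $(\alg{A}^{\preceq\lambda})^2\subseteq\alg{A}^{\prec\lambda}$ would force $\alg{A}^{\preceq\lambda}S=0$. Picking $\cellindex{t},z$ with $\cellbasis{s}{t}z\neq 0$, the map $\theta\colon W(\lambda)\to S$, $\cellhalf{u}\mapsto\cellbasis{u}{t}z$, is $\alg{A}$-linear since $a\cellbasis{u}{t}\equiv\sum_{\cellindex{w}}\rr{a}{w}{u}\cellbasis{w}{t}\pmod{\alg{A}^{\prec\lambda}}$ and $\alg{A}^{\prec\lambda}z=0$; being nonzero it is onto, so $S\cong L(\lambda)$. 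Distinctness rests on the order-theoretic observation that, by inspecting which cell-basis elements can occur in $a\cellbasis{s}{t}$ for $a\in\alg{A}^{\preceq\mu}$, one gets $\alg{A}^{\preceq\mu}W(\nu)=0$ unless $\nu\preceq\mu$. Since $\cellbasis{s}{t}\in\alg{A}^{\preceq\lambda}$ acts nontrivially on $L(\lambda)$, any $L(\mu)$ occurring as a composition factor of $W(\lambda)$ forces $\lambda\preceq\mu$; applying this to $L(\lambda)\cong L(\mu)$, a factor of both $W(\lambda)$ and $W(\mu)$, yields $\lambda\preceq\mu$ and $\mu\preceq\lambda$, i.e.\ $\lambda=\mu$.

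Finally, (1) is a dimension count. With $d_\lambda=\abs{\cellindices{\lambda}}=\dim W(\lambda)$ the cell basis gives $\dim\alg{A}=\sum_{\lambda\in\cellposet}d_\lambda^2$, while $\dim L(\lambda)\leq d_\lambda$ with equality iff $\rad(\lambda)=0$. Because the $L(\lambda)$ for $\lambda\in\cellposet_0$ are a complete set of pairwise non-isomorphic \emph{absolutely} irreducible modules, Wedderburn theory gives $\sum_{\lambda\in\cellposet_0}(\dim L(\lambda))^2\leq\dim\alg{A}$ with equality iff $\alg{A}$ is semisimple. Chaining
\[
\sum_{\lambda\in\cellposet_0}(\dim L(\lambda))^2\leq\sum_{\lambda\in\cellposet_0}d_\lambda^2\leq\sum_{\lambda\in\cellposet}d_\lambda^2=\dim\alg{A},
\]
semisimplicity is equivalent to equality throughout, i.e.\ to $\cellposet_0=\cellposet$ together with $\rad(\lambda)=0$ for each such $\lambda$, which is precisely $\rad(\lambda)=0$ for all $\lambda\in\cellposet$. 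The step I expect to be the main obstacle is the order-theoretic bookkeeping of the third paragraph, together with the consistent treatment of the edge cases $\lambda\notin\cellposet_0$, where one must exploit $(\alg{A}^{\preceq\lambda})^2\subseteq\alg{A}^{\prec\lambda}$ and the absence of nilpotent ideals in a semisimple algebra to keep the ideal filtration compatible with the module-theoretic claims; the rank-one formula itself makes the rest essentially mechanical.
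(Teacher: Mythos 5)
The paper does not prove this theorem at all --- it is imported verbatim from \cite{Graham1996} --- and your argument is precisely the classical Graham--Lehrer one: the rank-one action $\cellbasis{u}{v}\cdot x=\ip{\cellhalf{v},x}\,\cellhalf{u}$ derived from Lemma~\ref{lembil}, uniqueness of the maximal submodule, $\End_{\alg{A}}(L(\lambda))=R$, the two-sided ideal filtration for completeness, and the Wedderburn dimension count for semisimplicity. Your proof is correct as written; the only steps left implicit (that $\End_{\alg{A}}(L(\lambda))=R$ over a field gives absolute irreducibility, and the finiteness of $\cellposet$ needed to choose a minimal $\lambda$ in part (3)) are standard facts compatible with the setting.
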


Note that a cellular basis is not unique, nor is the cellular datum in general. In particular, a few different cellular bases for the Temperley--Lieb algebra may be constructed using the light ladder strategy of \cite{Elias2015}, as done in \cite{Sutton2023} and recalled below. Although these bases are different, the corresponding cell layers (in the sense of \cite{Konig1996}) stay the same and hence the cell modules are isomorphic.

\begin{deff}\label{lightladders}
Let $F\in\Hom_{\tlcat}(m,n)$ be a morphism and fix a family of morphisms $\{G_n\in\End_{\tlcat}(n)\}_{n\geq 0}$. 
Define new morphisms $\varepsilon_{-1}(F)\in\Hom_{\tlcat}(m+1,n-1)$, provided $n-1\geq 0$, and $\varepsilon_{+1}(F)\in\Hom_{\tlcat}(m+1,n+1)$ by
\begin{equation*}
\varepsilon_{-1}(F):=\begin{tikzpicture}[scale=0.25,mylabels,centered]
\draw[st] (0,-1) -- (0, 0);
\draw[st] (2,-1) -- (2, 0);
\draw[st] (4,-1) -- (4, 0);
\draw[st] (6,-1) -- (6, 0);
\draw[morg] (-1,0) -- (1,2) -- (7,2) -- (7,0) -- cycle;
\node at (4,1) {$F$};
\tlcap{6,2}{7,3}{8,2};
\draw[st] (8,-1) -- (8,2);
\draw[st] (2,2) -- (2,5);
\draw[st] (4,2) -- (4,5);
\draw[morg] (1,2) rectangle (5,4);
\node at (3,3) {$G_{n-1}$};
\end{tikzpicture}\;
,\quad
\varepsilon_{+1}(F):=
\begin{tikzpicture}[scale=0.25,mylabels,centered]
\draw[st] (0,-1) -- (0, 0);
\draw[st] (2,-1) -- (2, 0);
\draw[st] (4,-1) -- (4, 0);
\draw[st] (6,-1) -- (6, 0);
\draw[morg] (-1,0) -- (1,2) -- (7,2) -- (7,0) -- cycle;
\node at (4,1) {$F$};
\draw[st] (8,-1) -- (8,2);
\draw[st] (2,2) -- (2,5);
\draw[st] (4,2) -- (4,5);
\draw[st] (6,2) -- (6,5);
\draw[st] (8,2) -- (8,5);
\draw[morg] (1,2) rectangle (9,4);
\node at (5.5,3) {$G_{n+1}$};
\end{tikzpicture}
\;
.
\end{equation*}
Then, given a sequence $\underline{s}=(s_1,s_2,\ldots,s_k)\in\{\pm 1\}^k$, define $\varepsilon_{\underline{s}}(F)\in\Hom_{\tlcat}(m+k,n+l)$, where $l=\sum_{i=1}^k s_i$, by $\varepsilon_{\underline{s}}(F):=\varepsilon_{s_k}\circ\ldots\circ\varepsilon_{s_2}\circ\varepsilon_{s_1}(F)$. Note that this is well-defined if and only if all the partial sums of the sequence $\underline{s}$ are non-negative.

The \textbf{down morphism} associated to the sequence $\underline{s}$ and defined by applying $\varepsilon_{\underline{s}}$ to the empty diagram is denoted by $\delta^G_{\underline{s}}\in\Hom_{\tlcat}(k,l)$. The corresponding \textbf{up morphism}, obtained by taking the vertical reflection of $\delta^G_{\underline{s}}$, is denoted by $\upsilon^G_{\underline{s}}\in\Hom_{\tlcat}(l,k)$.
\end{deff}

These morphisms will shortly form ``rungs'' of a ladder-like construction.

\begin{rem}
More generally, light ladders in the $\mathfrak{sl}_n$ web category are indexed by paths in the associated positive Weyl chamber. In the present case, for $\mathfrak{sl}_2$, this is equivalent to Dyck paths, or sequences of $\pm 1$'s with non-negative partial sums as described above.
\end{rem}

To define the cellular bases for $\tlnkd$, start with the poset $\tlcellposet=\{\lambda_1-\lambda_2 : (\lambda_1,\lambda_2)\vdash n\}$ of the differences in length between the rows of partitions of $n$ having at most two parts, with the usual order on the integers. Note that the elements of $\tlcellposet$ can be written in simpler terms as $\{m\in\Z : 0\leq m\leq n\mbox{ and }m\equiv_2 n\}$, but the associated partitions actually carry useful information. For $\lambda\in\tlcellposet$, let $\tlcellindices{\lambda}=\Std((\lambda_1,\lambda_2))$ be the set of standard Young tableaux of shape $(\lambda_1,\lambda_2)$. From a standard Young tableau $\cellindex{t}\in\tlcellindices{\lambda}$, define an associated sequence $\underline{s}_{\cellindex{t}}=(s_1,\ldots,s_n)\in\{\pm 1\}^n$ by setting $s_i=+1$ if $i$ lies in the first row of $\cellindex{t}$ and $s_i=-1$ if it lies in the second row. Note that this is a valid Dyck path due to the fact the Young tableau is standard.

Then, given two standard Young tableaux $\cellindex{t_1},\cellindex{t_2}\in\tlcellindices{\lambda}$ and their associated sequences $\underline{s}_{\cellindex{t_1}},\underline{s}_{\cellindex{t_2}}$, write $\upsilon^G_{\cellindex{t_1}}:=\upsilon^G_{\underline{s}_{\cellindex{t_1}}}$ and $\delta^G_{\cellindex{t_2}}:=\delta^G_{\underline{s}_{\cellindex{t_2}}}$, and define $\cellbasis{t_1}{t_2}:=\upsilon^G_{\cellindex{t_1}} \cdot\delta^G_{\cellindex{t_2}}$ and $\cellhalf{t_1}=\upsilon^G_{\cellindex{t_1}}$. Finally, the anti-involution $*$ is given by reflecting the diagrams across the horizontal axis. As will be seen below, some particular choices for the family $\{G_n\}$ then lead to different cellular bases, denoted by $\basis(G)=\{\cellbasis{s}{t} : \lambda\in\cellposet,\cellindex{s},\cellindex{t}\in\tlcellindices{\lambda}\}$.

\begin{theo}[\cite{Graham1996}]
Taking $G_n=\id_n$ for all $n$, the above construction gives the diagram basis $\basis(\id)$ of the Temperley--Lieb algebra $\TL_n^\kk(\delta)$, which is cellular with the datum described above.
\end{theo}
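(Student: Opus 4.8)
The plan is to recognise the elements $\cellbasis{s}{t}=\upsilon^{\id}_{\cellindex{s}}\circ\delta^{\id}_{\cellindex{t}}$ as exactly the usual diagram basis of $\tlnkd$, written in its cup--cap factorised form, and then to read off each of the three axioms of Definition~\ref{cellular} from the geometry of the diagrams. The key structural input is the through-degree filtration: I will show that the ideal $\TL_n^{\prec\lambda}$ spanned by the basis elements with cell label strictly below $\lambda$ coincides with the $\kk$-span of all $(n,n)$-diagrams of through degree strictly less than $\lambda$. This holds because the diagrams of through degree exactly $\mu$ are precisely the basis elements with label $\mu$, and since the order on $\tlcellposet$ is the usual order on the integers, this matching turns the cellular machinery into a statement purely about through degrees.

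First I would establish the combinatorial backbone. With $G_n=\id_n$ the morphism $\upsilon^{\id}_{\cellindex{t}}\in\Hom_{\tlcat}(\lambda,n)$ obtained from a standard Young tableau $\cellindex{t}$ of shape $(\lambda_1,\lambda_2)$ is an honest monic $(\lambda,n)$-diagram (no loops or lower terms appear, since $\id$ contributes nothing), and I claim $\cellindex{t}\mapsto\upsilon^{\id}_{\cellindex{t}}$ is a bijection between $\tlcellindices{\lambda}=\Std((\lambda_1,\lambda_2))$ and the set of monic $(\lambda,n)$-diagrams. This is the classical correspondence between two-row standard tableaux (equivalently Dyck paths) and half-diagrams, tracked through the recursion defining $\varepsilon_{\pm 1}$: a $+1$ step adds a propagating strand while a $-1$ step caps off, so $\underline{s}_{\cellindex{t}}$ records precisely which top sites begin a cup and which continue a through-strand; injectivity is clear from the cup pattern and the two sides have equal (ballot-type) cardinality. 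Consequently every $(n,n)$-diagram $d$ factors uniquely as $d=\upsilon^{\id}_{\cellindex{s}}\circ\delta^{\id}_{\cellindex{t}}$ with $\lambda=\td(d)$ and $\cellindex{s},\cellindex{t}$ recovered from its top and bottom halves. Hence $\cellbasissym$ is injective with image the diagram basis, giving axiom~(1). Axiom~(2) is immediate: horizontal reflection $*$ reverses composition and interchanges $\upsilon^{\id}_{\cellindex{t}}$ with $\delta^{\id}_{\cellindex{t}}$, so $(\cellbasis{s}{t})^*=(\delta^{\id}_{\cellindex{t}})^*\circ(\upsilon^{\id}_{\cellindex{s}})^*=\upsilon^{\id}_{\cellindex{t}}\circ\delta^{\id}_{\cellindex{s}}=\cellbasis{t}{s}$.

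For axiom~(3), fix $a\in\tlnkd$ and a basis element $\cellbasis{s}{t}=\upsilon^{\id}_{\cellindex{s}}\circ\delta^{\id}_{\cellindex{t}}$ with label $\lambda$. Then $a\cdot\cellbasis{s}{t}=(a\circ\upsilon^{\id}_{\cellindex{s}})\circ\delta^{\id}_{\cellindex{t}}$, so I would analyse $a\circ\upsilon^{\id}_{\cellindex{s}}\in\Hom_{\tlcat}(\lambda,n)$. Composition cannot raise the through degree, so $\td(a\circ\upsilon^{\id}_{\cellindex{s}})\leq\lambda$; expanding in the diagram basis of $\Hom_{\tlcat}(\lambda,n)$ and using the bijection above, its through-degree-$\lambda$ part is a combination $\sum_{\cellindex{u}}\rr{a}{u}{s}\,\upsilon^{\id}_{\cellindex{u}}$ with scalars $\rr{a}{u}{s}\in\kk$ (absorbing any loop factors of $\delta$) depending only on $a$ and $\cellindex{s}$, while the remaining terms have through degree $<\lambda$. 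Post-composing with $\delta^{\id}_{\cellindex{t}}$ and again using sub-multiplicativity of the through degree, those remaining terms land in $\TL_n^{\prec\lambda}$, yielding $a\cdot\cellbasis{s}{t}\equiv\sum_{\cellindex{u}}\rr{a}{u}{s}\,\cellbasis{u}{t}\pmod{\TL_n^{\prec\lambda}}$ with coefficients independent of $\cellindex{t}$, as required.

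The main obstacle is the bookkeeping behind the second paragraph: one must check carefully that the light-ladder morphisms $\upsilon^{\id}_{\cellindex{t}}$ really exhaust the monic half-diagrams, that their through-degree-$\lambda$ parts span the top layer of $\Hom_{\tlcat}(\lambda,n)$, and that the abstractly defined filtration $\TL_n^{\prec\lambda}$ agrees with the through-degree filtration. Everything else is then formal, but this identification is where the content sits: it is what makes the integer order on $\tlcellposet$ compatible with the diagram geometry and reduces the three axioms to elementary statements about cups, caps, and propagating strands.
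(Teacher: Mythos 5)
The paper does not actually prove this statement: it is imported wholesale from \cite{Graham1996}, and the surrounding text only sets up the light-ladder formalism so that the generalisation (Proposition~\ref{unitriangular}) can be proved later. So there is no in-paper argument to compare yours against line by line; what can be said is that your proof is correct, complete in its essentials, and is the classical argument underlying the cited result. Your three ingredients are exactly the right ones: (i) the bijection between two-row standard tableaux (ballot sequences) and monic half-diagrams, which gives the unique monic--epic factorisation of any $(n,n)$-diagram through its through degree and hence axiom (1); (ii) the observation that the horizontal flip $*$ exchanges $\upsilon^{\id}_{\cellindex{t}}$ and $\delta^{\id}_{\cellindex{t}}$, giving axiom (2); and (iii) the identification of the abstract ideal $\TL_n^{\prec\lambda}$ with the span of diagrams of through degree strictly less than $\lambda$, so that sub-multiplicativity of $\td$ under composition yields the multiplication rule \eqref{multcell} with coefficients $\rr{a}{u}{s}$ manifestly independent of $\cellindex{t}$. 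Point (iii), which you correctly single out as where the content sits, is also taken for granted by the paper itself: in the proof of Proposition~\ref{unitriangular} the authors write $\tlnkd^{<k}=\td(<k)$ without comment, and their unitriangularity argument for general families $\{G_n\}$ with $G_n\equiv\id_n\pmod{\td(<n)}$ reduces cellularity of $\basis(G)$ to precisely the base case you have verified directly. In short: your proof fills in the citation rather than deviating from the paper, and it does so correctly.
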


We can now apply the definition of cell modules to the Temperley--Lieb algebras.

\begin{deff}
The cell modules for the cellular algebra $\TL_n(\delta)$ are denoted by $\ctlmod{k}$, for $k\in\cellposet$. A basis of $\ctlmod{k}$ is given by the set $\{\cellhalf[k]{t} : \cellindex{t}\in \cellindices{k}\}$ obtained from the light ladder construction using the family $G_n=\id_n$ for all $n$, and the action is given by the usual left multiplication modulo terms of lower cellular order. This amounts to saying that the basis is given by all epic $(n,k)$-diagrams, and that the action is the usual multiplication with resulting diagrams sent to zero if the number of propagating lines decreases. The simple head of the cell module $\ctlmod{k}$ will be denoted by $\stlmod{k}$.
\end{deff}

More generally, the half-diagrams $\upsilon^{\id}_{\cellindex{t}}$ coming from the light-ladder construction using the family $G_n=\id_n$ give a way to write down a basis of any morphism space within the Temperley--Lieb category.

\begin{lem}\label{lemma:homspace}
Let $a,b\in\Z_{\geq 0}$ of the same parity. Then, a basis of the space $\Hom_{\tlcat}(a,b)$ is given by the set
\begin{equation*}
\bigsqcup_{k=0}^{\min{(a,b)}}\left\{\upsilon^{\id}_{\cellindex{s}}\cdot\delta^{\id}_{\cellindex{t}} =\;
\begin{tikzpicture}[scale=0.25,mylabels,centered]
\draw[st] (1,-1) -- (1,0);
\draw[st] (1,4) -- (1,5);
\draw[st] (3,-1) -- (3,5);
\draw[st] (5,-1) -- (5,5);
\draw[st] (7,-1) -- (7,0);
\draw[st] (7,4) -- (7,5);
\draw[morg] (0,0) -- (2,2) -- (6,2) -- (8,0) -- cycle;
\node at (4,1) {$\delta^{\id}_{\cellindex{t}}$};
\draw[morg] (2,2) -- (6,2) -- (8,4) -- (0,4) -- cycle;
\node at (4,3) {$\upsilon^{\id}_{\cellindex{s}}$};
\end{tikzpicture}\;
: \cellindex{s}\in\tlcellindices[b]{k}, \cellindex{t}\in\tlcellindices[a]{k} \right\},
\end{equation*}
where $\tlcellindices[a]{k}$ and $\tlcellindices[b]{k}$ are non-empty only when $k$ is of the same parity as both $a$ and $b$.
\end{lem}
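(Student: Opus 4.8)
The plan is to exhibit a bijection between the proposed set and the diagram basis of $\Hom_{\tlcat}(a,b)$. By definition $\Hom_{\tlcat}(a,b)$ is free on the $(a,b)$-diagrams, and since $G_n=\id_n$ introduces no turn-backs or closed loops upon gluing, each composite $\upsilon^{\id}_{\cellindex{s}}\cdot\delta^{\id}_{\cellindex{t}}$ is a genuine $(a,b)$-diagram carrying no power of $\delta$. Thus it suffices to show that the assignment $(\cellindex{s},\cellindex{t})\mapsto\upsilon^{\id}_{\cellindex{s}}\cdot\delta^{\id}_{\cellindex{t}}$ is a bijection onto the set of all $(a,b)$-diagrams.

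First I would record the shapes involved. For $\cellindex{t}\in\tlcellindices[a]{k}$ the path $\underline{s}_{\cellindex{t}}$ has length $a$ and total height $k$, so $\delta^{\id}_{\cellindex{t}}\in\Hom_{\tlcat}(a,k)$ is an epic $(a,k)$-diagram of through degree $k$; dually $\upsilon^{\id}_{\cellindex{s}}\in\Hom_{\tlcat}(k,b)$ is a monic $(k,b)$-diagram for $\cellindex{s}\in\tlcellindices[b]{k}$. Unwinding the recursion defining $\varepsilon_{\pm1}$ with $G=\id$, an $\varepsilon_{-1}$ step caps the rightmost free strand against a new incoming strand while an $\varepsilon_{+1}$ step adds a fresh propagating strand; hence $\delta^{\id}_{\cellindex{t}}$ is precisely the nested cap diagram obtained by bracket-matching $\underline{s}_{\cellindex{t}}$, and the standardness of $\cellindex{t}$ (equivalently, non-negativity of the partial sums) is exactly the planarity constraint that the caps do not cross. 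This realises the classical bijection between $\tlcellindices[a]{k}=\Std((\tfrac{a+k}{2},\tfrac{a-k}{2}))$ and epic $(a,k)$-diagrams, and dually for $\upsilon^{\id}$ and monic $(k,b)$-diagrams.

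The geometric heart is the cut-and-glue bijection. Given an $(a,b)$-diagram $d$ with $\td(d)=k$, I would isotope $d$ so that all bottom--bottom caps lie below a horizontal level and all top--top cups above it, then cut along that level: it meets each of the $k$ propagating lines once and no turn-back, splitting $d$ uniquely into a lower epic $(a,k)$-diagram $c$ and an upper monic $(k,b)$-diagram $u$, with the $k$ cut points matched in left-to-right order. Thus $d=u\circ c$ with $(u,c)$ determined by $d$, and conversely gluing returns $d$; combined with the previous paragraph this identifies $d$ with a unique triple $(k,\cellindex{s},\cellindex{t})$. Note also that $\td(\upsilon^{\id}_{\cellindex{s}}\cdot\delta^{\id}_{\cellindex{t}})=k$ exactly, since the $k$ middle points all propagate through, so the through degree recovers $k$ and the cut recovers $\cellindex{s}$ and $\cellindex{t}$; this already yields injectivity of the assignment.

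The part I expect to demand the most care is checking that the recursively defined $\delta^{\id}_{\cellindex{t}}$ really equals the bracket-matching cap diagram and that the cut is canonical with no hidden choice. To avoid an exhaustive diagram chase one can finish by counting: the number of $(a,b)$-diagrams is the Catalan number $C_{(a+b)/2}$, while $\sum_{k}\lvert\tlcellindices[a]{k}\rvert\,\lvert\tlcellindices[b]{k}\rvert$ counts pairs of lattice paths of lengths $a$ and $b$ ending at a common height, which after reversing the second and concatenating become single non-negative paths of length $a+b$ from $0$ to $0$, again counted by $C_{(a+b)/2}$. Hence injectivity, the easy direction, already forces a bijection. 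At $a=b=n$ this specialises to the already-stated cellularity of $\basis(\id)$ in $\tlnkd$.
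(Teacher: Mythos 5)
Your proof is correct. A point of comparison worth noting: the paper gives no proof of this lemma at all—it is stated as a standard fact, essentially the classical Graham--Lehrer diagram combinatorics \cite{Graham1996}, and the paper only later (in the proof of Proposition~\ref{unitriangular}) explains how the analogous statement for general families $\{G_n\}$ follows from the $\id$ case by unitriangularity. Your argument supplies exactly the details the paper leaves implicit: with $G_n=\id_n$ the light-ladder recursion produces the bracket-matching cap diagrams, so down- and up-morphisms are in bijection with epic and monic diagrams indexed by two-row standard tableaux; the unique factorisation of an $(a,b)$-diagram of through degree $k$ into a monic $(k,b)$-diagram composed with an epic $(a,k)$-diagram then gives injectivity of $(\cellindex{s},\cellindex{t})\mapsto \upsilon^{\id}_{\cellindex{s}}\cdot\delta^{\id}_{\cellindex{t}}$, and your Catalan count ($\sum_k \abs{\tlcellindices[a]{k}}\,\abs{\tlcellindices[b]{k}} = C_{(a+b)/2}$ via reversing and concatenating ballot paths) forces surjectivity, neatly sidestepping a direct check that every diagram arises from the recursion. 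The only loose point is your closing remark that the case $a=b=n$ "specialises to the already-stated cellularity of $\basis(\id)$": your argument recovers that $\basis(\id)$ is a \emph{basis} of $\tlnkd$, but cellularity also requires the multiplication condition (3) of Definition~\ref{cellular}, which your proof does not address; this does not affect the lemma itself, which is purely a basis statement.
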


However, other choices for the family $\{G_n\}$ will be of crucial importance for the rest of this paper. A sufficient condition for the corresponding light ladders to lead to bases of morphism spaces, cellular bases of $\TL_n$ algebras, and isomorphic corresponding cell modules is in the statement of Proposition~\ref{unitriangular}. For completeness, we provide a proof, which will make use of the following order on standard tableaux.

\begin{deff}[{\cite[Definition 3.4]{Bowman2017}}]
Let $\mathfrak{s},\mathfrak{t}$ be two standard tableaux of shape $\lambda\vdash n$ and let $\trianglelefteq$ denote the usual dominance order on Young diagrams. For any $1\leq i\leq n$, write $\mathfrak{t}(i)$ for the Young diagram of the subtableau of $\mathfrak{t}$ consisting of the boxes containing the integers $1,2,\ldots,i$. We denote by $\bleq$ the \textbf{reverse lexicographic order} on standard tableaux, defined by $\mathfrak{s}\bleq \mathfrak{t}$ if either $\mathfrak{s}=\mathfrak{t}$ or $\mathfrak{s}(i)\trianglelefteq\mathfrak{t}(i)$ for the largest $i$ such that $\mathfrak{s}(i)\neq \mathfrak{t}(i)$. We write $\mathfrak{s}\blacktriangleleft\mathfrak{t}$ if $\mathfrak{s}\bleq\mathfrak{t}$ and $\mathfrak{s}\neq\mathfrak{t}$.
\end{deff}

\begin{ex}
Consider the standard tableaux $\mathfrak{s},\mathfrak{t}$ of shape $(4,2,1)\vdash 7$ given by
\begin{equation*}
\mathfrak{s}=\ytableaushort{1246,35,7}\quad\mbox{and}\quad
\mathfrak{t}=\ytableaushort{1235,46,7}.
\end{equation*}
Then, $\mathfrak{s}(7)=\mathfrak{t}(7)$ and $\mathfrak{s}(6)=\mathfrak{t}(6)$, but $\mathfrak{s}(5)=(3,2)\neq (4,1)=\mathfrak{t}(5)$. Since $(3,2)\triangleleft (4,1)$, this gives $\mathfrak{s}\blacktriangleleft \mathfrak{t}$.
\end{ex}

When choosing the family $\{G_n\}$ as in the following proposition, one can show that the change of basis between $\basis(\id)$ and $\basis(G)$ is unitriangular with respect to the reverse lexicographic order. This proves cellularity for the new basis.

\begin{prop}\label{unitriangular}
If $G_n$ is a family of morphisms such that $G_n\equiv \id_n \pmod{\td(<n)}$ for all $n$, then the corresponding light ladders lead to bases of the morphism spaces as in Lemma~\ref{lemma:homspace} and $\basis(G)$ is a cellular basis of $\tlnkd$.
\end{prop}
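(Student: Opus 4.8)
The plan is to reduce both assertions to a single unitriangularity statement for the change of basis between $\basis(\id)$ and $\basis(G)$. Fix $a,b$ of the same parity and work inside $\Hom_{\tlcat}(a,b)$, equipped with the $\id$-light-ladder basis of Lemma~\ref{lemma:homspace}, indexed by triples $(k,\cellindex s,\cellindex t)$ with $\cellindex s\in\tlcellindices[b]{k}$, $\cellindex t\in\tlcellindices[a]{k}$. I would order these by declaring $(k',\cellindex{s}',\cellindex{t}')$ smaller than $(k,\cellindex s,\cellindex t)$ whenever $k'<k$, or $k'=k$ and the pair $(\cellindex{s}',\cellindex{t}')$ is strictly smaller than $(\cellindex s,\cellindex t)$ in the lexicographic refinement of $\bleq$. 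This \emph{combined order} refines the through-degree filtration, so that $\alg{A}^{\prec\lambda}$ is precisely the span of the basis elements of through degree $<\lambda$; it is this compatibility that lets unitriangularity carry the entire argument.

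The technical core is the expansion of the half-diagrams. I would first prove, for every $\cellindex t$, that
\[
\delta^G_{\cellindex t}=\delta^{\id}_{\cellindex t}+\bigl(\text{combined-smaller }\id\text{-light-ladder basis elements of }\Hom_{\tlcat}(a,l)\bigr),
\]
by induction on the number of boxes of $\cellindex t$, writing $\delta^G_{\cellindex t}=\varepsilon^G_{s}(\delta^G_{\cellindex t'})$ for the truncation $\cellindex t'$ and splitting $G_m=\id_m+R_m$ with $R_m\in\td(<m)$ at the rung being added. The term discarding $R_m$ is $\varepsilon^{\id}_{s}(\delta^G_{\cellindex t'})$; feeding in the inductive expansion of $\delta^G_{\cellindex t'}$ and using that the pure rung $\varepsilon^{\id}_{s}$ acts combined-triangularly on the $\id$-basis (sending $\delta^{\id}_{\cellindex t'}$ to $\delta^{\id}_{\cellindex t}$, since extending two tableaux by a box in the same position preserves $\bleq$) reproduces the leading term and only combined-smaller elements. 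The remaining term carries the factor $R_m$ of through degree $<m$, which I claim contributes only combined-smaller elements once the ladder is completed. Applying the anti-involution $*$ — which preserves through degree and interchanges $\delta^{\id}_{\cellindex u}\leftrightarrow\upsilon^{\id}_{\cellindex u}$ without disturbing the index set — gives the analogous expansion of $\upsilon^G_{\cellindex s}$. Multiplying the two expansions then yields $\upsilon^G_{\cellindex s}\delta^G_{\cellindex t}=\upsilon^{\id}_{\cellindex s}\delta^{\id}_{\cellindex t}$ plus combined-smaller $\id$-basis elements, the lower-through-degree cross-terms landing in $\alg{A}^{\prec\lambda}$ and expanding into strictly smaller elements there. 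Hence the change of basis is unitriangular, in particular invertible, which proves the first claim for every $\Hom_{\tlcat}(a,b)$ and, on taking $a=b=n$, that $\basis(G)$ is a $\kk$-basis of $\tlnkd$.

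It then remains to verify the cellular axioms of Definition~\ref{cellular} for $\basis(G)$. Axiom (2) is immediate from the construction, since $\upsilon^G_{\cellindex s}=(\delta^G_{\cellindex s})^*$ gives $(\upsilon^G_{\cellindex s}\delta^G_{\cellindex t})^*=\upsilon^G_{\cellindex t}\delta^G_{\cellindex s}$; axiom (1) is the basis statement just established. For axiom (3), unitriangularity with respect to the through-degree-refining order shows that $\alg{A}^{\prec\lambda}$ is the same two-sided ideal for both bases and that, modulo it, $\{\upsilon^G_{\cellindex s}\}$ and $\{\upsilon^{\id}_{\cellindex s}\}$ are related by a unitriangular matrix. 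Given $x\in\tlnkd$, the class of $x\,\upsilon^G_{\cellindex s}\in\Hom_{\tlcat}(\lambda,n)$ modulo $\td(<\lambda)$ is a combination $\sum_{\cellindex u}r^G_x(\cellindex u,\cellindex s)\,\upsilon^G_{\cellindex u}$ with coefficients independent of any right index, the remainder having through degree $<\lambda$. Right-multiplying by $\delta^G_{\cellindex t}$ and using that through-degree-$<\lambda$ morphisms compose with $\delta^G_{\cellindex t}$ into $\alg{A}^{\prec\lambda}$ gives exactly the cellular multiplication rule \eqref{multcell}, as desired.

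The main obstacle is the single-rung analysis underlying the inductive step: showing that the pure ladder rung $\varepsilon^{\id}_{s}$ is combined-triangular on the $\id$-light-ladder basis, and that inserting the lower-through-degree part $R_m$ of $G_m$ at a rung and completing the ladder yields only $\bleq$-strictly-smaller basis elements. This is delicate precisely because a downward rung can raise the through degree of a lower term back to the leading value (by a parity/reflection effect), so the control must be exercised at the finer level of $\bleq$ rather than through degree alone. This is the genuinely diagrammatic point, tying the reduction of propagating lines at a rung to the reverse lexicographic order on the recorded tableaux and relying on the combinatorics of the order of \cite{Bowman2017}; once it is in hand, the passage to a basis, the transfer of the cellular axioms, and the compatibility with $*$ are all formal.
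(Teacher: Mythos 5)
Your overall strategy coincides with the paper's: exhibit a unitriangular change of basis between $\basis(\id)$ and $\basis(G)$ with respect to an order refining through degree by the reverse lexicographic order $\bleq$ of \cite{Bowman2017}, and then transfer the basis and cellularity statements formally. (The paper phrases the unitriangularity inside each cell module and then invokes the globalisation argument of \cite{Goodman2011} via Lemma~\ref{celllayers} rather than re-verifying axiom (3) of Definition~\ref{cellular} by hand, but that difference is cosmetic.)

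The problem is that the core of the argument is missing from your proposal. Everything hinges on what you call the ``single-rung analysis'': that the pure rung $\varepsilon^{\id}_{s}$ acts triangularly on the $\id$-light-ladder basis and, crucially, that inserting the lower-through-degree remainder $R_m$ of $G_m=\id_m+R_m$ at a rung and then completing the ladder yields only $\bleq$-strictly-smaller monic terms. You state this as a claim, flag it yourself as ``the main obstacle'' and ``the genuinely diagrammatic point,'' and never prove it; but this claim \emph{is} the proof, and everything else is bookkeeping. It is precisely what the paper establishes in its Claim: by induction on $n$ it examines the last \emph{two} rungs of the ladder, distinguishes the four possible configurations with their recorded tableaux $\cellindex{t_1},\cellindex{t_2},\cellindex{t_3},\cellindex{t_4}$, shows that in the two configurations ending with an upward rung every non-identity term of $G_k$ or $G_{k-1}$ strictly drops the through degree (so the inductive hypothesis applies), and that in the two remaining configurations exactly one additional monic term survives --- produced when the cap of a downward rung closes onto a cup, i.e.\ the ``parity/reflection effect'' you allude to --- and that this extra term reduces to the second configuration, which is $\bleq$-smaller because $\cellindex{t_2}\blacktriangleleft\cellindex{t_3}$ and $\cellindex{t_2}\blacktriangleleft\cellindex{t_4}$. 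Without carrying out this case analysis (or an equivalent), your induction has no content: one cannot otherwise conclude that a through-degree-deficient insertion, after later downward rungs restore the through degree to its leading value, lands strictly below the leading tableau in $\bleq$. Note also that the paper peels off the last two rungs simultaneously precisely because the dangerous extra monic term arises from the interaction of two consecutive rungs; your one-box-at-a-time induction would need to be restructured to make that interaction visible.
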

\begin{proof}
The first step is to show that changing from $\id_n$ to $G_n$ in the light ladder construction induces a unitriangular change of basis in every cell module.

Note that $\tlnkd^{<k}=\td(<n)$ is the ideal generated by diagrams in $\Hom_{\tlcat}(k,n)$ that do not have maximal through degree. For every $k\in\cellposet$ and $\cellindex{t}\in\tlcellindices{k}$, since the set $\{\upsilon^{\id}_{\cellindex{t}}:\cellindex{t}\in\tlcellindices{k}\}$ is a basis of the cell module $\ctlmod{k}$, we can write
\begin{equation*}
\upsilon^G_{\cellindex{t}}= \upsilon^{\id}_{\cellindex{t}}+\sum_{\cellindex{u}\in\tlcellindices{k}}f(\cellindex{t},\cellindex{u})\upsilon^{\id}_{\cellindex{u}}+\td(<k),
\end{equation*}
for some $f(\cellindex{t},\cellindex{u})\in\kk$, where the sole copy of $\upsilon^{\id}_{\cellindex{t}}$ is obtained by taking the identity term in the expansion of every $G_i$ appearing in $\upsilon^G_{\cellindex{t}}$.

{\renewcommand{\qedsymbol}{\ensuremath{\triangle}}
\noindent\textbf{\underline{Claim:}} $f(\cellindex{t},\cellindex{u})=0$ unless $\cellindex{u}\blacktriangleleft\cellindex{t}$.
\begin{proof}[Proof of the claim]
If $n=0$ or $1$, there is nothing to show. Let $n\geq 2$ and suppose the result is true for non-negative integers less than $n$. Let $\upsilon^G_{\cellindex{t}}\in\Hom_{\tlcat}(k,n)$, and focus on the last two steps in its light ladder construction. There are four possibilities, drawn below with their corresponding Young tableaux. Remember that the conventions here make the pictures upside down compared to Definition~\ref{lightladders}.
\ytableausetup{nosmalltableaux}
\begin{equation*}
\setlength{\tabcolsep}{10pt}
\renewcommand{\arraystretch}{5}
\scalebox{0.8}{
\begin{tabular}{cccc}
\begin{tikzpicture}[scale=0.25,mylabels,centered]
    \draw[st] (1,1) -- (1,9);
    \draw[st] (3,1) -- (3,9);
    \draw[st] (5,1) -- (5,9);
    \draw[st] (7,1) -- (7,9);
    \draw[st] (9,8) -- (9,9);
    \draw[morg] (0,2) rectangle (8,4);
    \draw[morg] (2,4) rectangle (8,6);
    \draw[morg] (4,6) -- (8,6) -- (10,8) -- (4,8) -- cycle;
    \node at (4,3) {$G_k$};
    \node at (5,5) {$G_{k-1}$};
    \node at (6,7) {$F$};
\end{tikzpicture}
&
\begin{tikzpicture}[scale=0.25,mylabels,centered]
    \draw[st] (1,1) -- (1,2);
    \draw[st] (1,6) -- (1,9);
    \draw[st] (1,4)..controls{(1,5) and (-3,4.6)}..(-3,6);
    \draw[st] (-3,6) -- (-3,9);
    \draw[st] (3,1) -- (3,9);
    \draw[st] (5,1) -- (5,9);
    \draw[st] (7,1) -- (7,9);
    \draw[st] (9,8) -- (9,9);
    \tlcup{-1,6}{0,5.1}{1,6};
    \draw[st] (-1,6) -- (-1,9);
    \draw[morg] (0,2) rectangle (8,4);
    \draw[morg] (2,4) rectangle (8,6);
    \draw[morg] (0,6) -- (8,6) -- (10,8) -- (0,8) -- cycle;
    \node at (4,3) {$G_k$};
    \node at (5,5) {$G_{k-1}$};
    \node at (4,7) {$F$};
\end{tikzpicture}
&
\begin{tikzpicture}[scale=0.25,mylabels,centered]
    \draw[st] (-7,4) -- (-7,9);
    \draw[st] (-5,6) -- (-5,9);
    \draw[st] (-3,6) -- (-3,9);
    \draw[st] (-1,4) -- (-1,9);
    \draw[st] (1,1) -- (1,9);
    \draw[st] (3,1) -- (3,9);
    \draw[st] (5,1) -- (5,9);
    \draw[st] (7,1) -- (7,9);
    \draw[st] (9,8) -- (9,9);
    \tlcup{-5,6}{-4,5}{-3,6};
    \tlcup{-7,4}{-4,2}{-1,4};
    \draw[morg] (0,2) rectangle (8,4);
    \draw[morg] (-2,4) rectangle (8,6);
    \draw[morg] (-4,6) -- (8,6) -- (10,8) -- (-4,8) -- cycle;
    \node at (4,3) {$G_k$};
    \node at (3,5) {$G_{k+1}$};
    \node at (2,7) {$F$};
\end{tikzpicture}
&
\begin{tikzpicture}[scale=0.25,mylabels,centered]
    \draw[st] (1,1) -- (1,9);
    \draw[st] (-3,4) -- (-3,9);
    \draw[st] (3,1) -- (3,9);
    \draw[st] (5,1) -- (5,9);
    \draw[st] (7,1) -- (7,9);
    \draw[st] (9,8) -- (9,9);
    \tlcup{-3,4}{-2,3}{-1,4};
    \draw[st] (-1,4) -- (-1,9);
    \draw[morg] (0,2) rectangle (8,4);
    \draw[morg] (-2,4) rectangle (8,6);
    \draw[morg] (0,6) -- (8,6) -- (10,8) -- (0,8) -- cycle;
    \node at (4,3) {$G_k$};
    \node at (3,5) {$G_{k+1}$};
    \node at (4,7) {$F$};
\end{tikzpicture}\\
$\substack{\scalebox{0.9}{\begin{tikzpicture}[scale=0.35,centered,mylabels]
\draw[densely dotted] (0,0) -- (6,0) -- (6,2) -- (8,2) -- (8,4) -- (0,4) -- cycle;
\draw (8,2) rectangle (10,4);
\draw (10,2) rectangle (12,4);
\node at (9,3) {$n-1$};
\node at (11,3) {$n$};
\end{tikzpicture}}\vspace{0.5em}\\
\cellindex{t_1}}$
&
$\substack{
\scalebox{0.9}{\begin{tikzpicture}[scale=0.35,centered,mylabels]
\draw[densely dotted] (0,0) -- (4,0) -- (4,2) -- (10,2) -- (10,4) -- (0,4) -- cycle;
\draw (10,2) rectangle (12,4);
\draw (4,0) rectangle (6,2);
\node at (11,3) {$n$};
\node at (5,1) {$n-1$};
\end{tikzpicture}}\vspace{0.5em}\\
\cellindex{t_2}}$
&
$\substack{
\scalebox{0.9}{\begin{tikzpicture}[scale=0.35,centered,mylabels]
\draw[densely dotted] (0,0) -- (4,0) -- (4,2) -- (12,2) -- (12,4) -- (0,4) -- cycle;
\draw (2,0) rectangle (4,2);
\draw (4,0) rectangle (6,2);
\node at (5,1) {$n$};
\node at (3,1) {$n-1$};
\end{tikzpicture}}\vspace{0.5em}\\
\cellindex{t_3}}$
&
$\substack{
\scalebox{0.9}{\begin{tikzpicture}[scale=0.35,centered,mylabels]
\draw[densely dotted] (0,0) -- (4,0) -- (4,2) -- (10,2) -- (10,4) -- (0,4) -- cycle;
\draw (10,2) rectangle (12,4);
\draw (4,0) rectangle (6,2);
\node at (11,3) {$n-1$};
\node at (5,1) {$n$};
\end{tikzpicture}}\vspace{0.5em}\\
\cellindex{t_4}}$
\end{tabular}
}
\end{equation*}
In each case, $F$ is a morphism in $\Hom_{\tlcat}(m,n-2)$, for some $m\in\{k,k-2,k+2\}$. In the first two cases, any choice of diagram other than the identity in the expansion of $G_k$ or $G_{k-1}$ leads to terms that are not monic, so the result follows by the inductive hypothesis applied to $F$. In the last two cases, there is exactly one other possible monic term coming from
\begin{equation*}
\begin{tikzpicture}[scale=0.25,mylabels,centered]
    \draw[st] (-7,4) -- (-7,9);
    \draw[st] (-5,6) -- (-5,9);
    \draw[st] (-3,6) -- (-3,9);
    \draw[st] (-1,6) -- (-1,9);
    \draw[st] (1,1) -- (1,4);
    \draw[st] (1,6) -- (1,9);
    \draw[st] (3,1) -- (3,9);
    \draw[st] (5,1) -- (5,9);
    \draw[st] (7,1) -- (7,9);
    \draw[st] (9,8) -- (9,9);
    \tlcup{-5,6}{-4,5}{-3,6};
    \tlcup{-7,4}{-4,2}{-1,4};
    \draw[empty] (0,2) rectangle (8,4);
    \draw[empty] (-2,4) rectangle (8,6);
    \draw[morg] (-4,6) -- (8,6) -- (10,8) -- (-4,8) -- cycle;
    \tlcup{-1,6}{0,5.2}{1,6};
    \tlcap{-1,4}{0,4.8}{1,4};
    \node at (2,7) {$F$};
\end{tikzpicture}\quad\quad\mbox{and}\quad\quad
\begin{tikzpicture}[scale=0.25,mylabels,centered]
    \draw[st] (1,1) -- (1,4);
    \draw[st] (1,6) -- (1,9);
    \draw[st] (-3,4) -- (-3,9);
    \draw[st] (3,1) -- (3,9);
    \draw[st] (5,1) -- (5,9);
    \draw[st] (7,1) -- (7,9);
    \draw[st] (9,8) -- (9,9);
    \tlcup{-3,4}{-2,3}{-1,4};
    \draw[st] (-1,6) -- (-1,9);
    \draw[empty] (0,2) rectangle (8,4);
    \draw[empty] (-2,4) rectangle (8,6);
    \draw[morg] (0,6) -- (8,6) -- (10,8) -- (0,8) -- cycle;
    \node at (4,7) {$F$};
    \tlcup{-1,6}{0,5.2}{1,6};
    \tlcap{-1,4}{0,4.8}{1,4};
\end{tikzpicture}\;,
\end{equation*}
both of which reduce to the second case above. Since $\cellindex{t_2}\blacktriangleleft\cellindex{t_3}$ and $\cellindex{t_2}\blacktriangleleft\cellindex{t_4}$, the claim follows.
\end{proof}
}

The claim allows us to write
\begin{equation*}
\upsilon^G_{\cellindex{t}}= \upsilon^{\id}_{\cellindex{t}}+\sum_{\substack{\cellindex{u}\in\tlcellindices{\lambda}\\\cellindex{u}\blacktriangleleft\cellindex{t}}}f(\cellindex{t},\cellindex{u})\upsilon^{\id}_{\cellindex{u}}+\td(<k),
\end{equation*}
indicating that there is indeed a unitriangular change of basis between the sets $\{\upsilon^{\id}_{\cellindex{t}} : \cellindex{t}\in\tlcellindices{k}\}$ and $\{\upsilon^G_{\cellindex{t}} : \cellindex{t}\in\tlcellindices{k}\}$. Since the former is a basis of $\ctlmod{k}$, so is the latter.

These bases of cell modules can be globalised (see Section 2.3 of \cite{Goodman2011} for details) into a cellular basis of $\tlnkd$ by taking $\{\alpha^k(\upsilon^G_{\cellindex{s}}\otimes(\upsilon^G_{\cellindex{t}})^*)\}$, where $\alpha^k$ is the isomorphism of Lemma~\ref{celllayers}. This is precisely the definition of $\basis(G)$.

Finally, the fact that the light ladders corresponding to the family $\{G_n\}$ can be used to construct bases of morphism spaces as in Lemma~\ref{lemma:homspace} also follows from unitriangularity with respect to the cellular order.
\end{proof}

Our main use for Proposition~\ref{unitriangular} is to apply it for the families of Jones--Wenzl idempotents $\{\jw_n\}$, semisimple $(\ell,\p)$-Jones--Wenzl idempotents $\{\jwlpz_n\}$ and their specialisations $\{\jwlp_n\}$, all of which will be introduced in the next section. Since all these families satisfy the hypothesis of Proposition~\ref{unitriangular}, $\basis(\jw)$, $\basis(\jwlpz)$, and $\basis(\jwlp)$ are cellular bases of $\TL_n$ over the appropriate field and all lead to isomorphic cell modules.

\begin{ex}
Here are the 3 down morphisms of $\TL_3^\kk(\delta)$, along with the corresponding Young tableaux.
\setlength{\tabcolsep}{20pt}
\renewcommand{\arraystretch}{3}
\begin{equation*}
\begin{tabular}{c|cc}
\begin{tikzpicture}[scale=0.25,mylabels,centered]
    \draw[st] (1,-1) -- (1,3);
    \draw[st] (3,-1) -- (3,3);
    \draw[st] (5,-1) -- (5,3);
    \draw[morg] (0,0) rectangle (6,2);
    \node at (3,1) {$G_3$};
\end{tikzpicture}
&
\begin{tikzpicture}[scale=0.25,mylabels,centered]
    \draw[st] (1,-1) -- (1,3);
    \draw[st] (3,-1) -- (3,2);
    \draw[st] (5,-1) -- (5,2);
    \draw[morg] (0,0) rectangle (4,2);
    \draw[st] (3,2) to[out=90,in=180] (4,3) to[out=0,in=90] (5,2);
    \node at (2,1) {$G_2$};
\end{tikzpicture}
&
\begin{tikzpicture}[scale=0.25,mylabels,centered]
    \draw[st] (1,1) to[out=90,in=180] (2,2) to[out=0,in=90] (3,1);
    \draw[st] (5,1) -- (5,3);
\end{tikzpicture}\\
$\cellindex{s}=\ytableaushort{123}$
&
$\cellindex{t}=\ytableaushort{12,3}$
&
$\cellindex{u}=\ytableaushort{13,2}$
\end{tabular}
\end{equation*}
Note that there are hidden copies of $G_1=\id_1$ on the strands going up in the second and third diagrams. The corresponding cellular basis of $\TL_3^\kk(\delta)$ is
\begin{equation*}
\cellbasis[\hspace{1pt} \resizebox{10pt}{!}{\ydiagram{3}}]{s}{s}=
\begin{tikzpicture}[scale=0.25,mylabels,centered]
    \draw[st] (1,-1) -- (1,3);
    \draw[st] (3,-1) -- (3,3);
    \draw[st] (5,-1) -- (5,3);
    \draw[morg] (0,0) rectangle (6,2);
    \node at (3,1) {$G_3$};
\end{tikzpicture}\;,\quad
\cellbasis[\hspace{1pt} \resizebox{6pt}{!}{\ydiagram{2,1}}]{t}{t}=\;
\begin{tikzpicture}[scale=0.25,mylabels,centered]
    \draw[st] (1,-1) -- (1,7.5);
    \draw[st] (3,-1) -- (3,2);
    \draw[st] (5,-1) -- (5,2);
    \draw[morg] (0,0) rectangle (4,2);
    \draw[st] (3,2) to[out=90,in=180] (4,3) to[out=0,in=90] (5,2);
    \draw[st] (3,4.5) to[out=270,in=180] (4,3.5) to[out=0,in=270] (5,4.5);
    \draw[st] (3,4.5) -- (3,7.5);
    \draw[st] (5,4.5) -- (5,7.5);
    \draw[morg] (0,4.5) rectangle (4,6.5);
    \node at (2,1) {$G_2$};
    \node at (2,5.5) {$G_2$};
\end{tikzpicture}\;,\quad
\cellbasis[\hspace{1pt} \resizebox{6pt}{!}{\ydiagram{2,1}}]{u}{t}=\;
\begin{tikzpicture}[scale=0.25,mylabels,centered]
    \draw[st] (1,-1) -- (1,3);
    \draw[st] (3,-1) -- (3,2);
    \draw[st] (5,-1) -- (5,2);
    \draw[morg] (0,0) rectangle (4,2);
    \draw[st] (3,2) to[out=90,in=180] (4,3) to[out=0,in=90] (5,2);
    \draw[st] (1,6.5) to[out=270,in=180] (2,5.5) to[out=0,in=270] (3,6.5);
    \draw[st] (5,5.5) -- (5,6.5);
    \draw[st] (1,3)..controls{(1,4.5) and (5,4)}..(5,5.5);
    \node at (2,1) {$G_2$};
\end{tikzpicture}\;,\quad
\cellbasis[\hspace{1pt} \resizebox{6pt}{!}{\ydiagram{2,1}}]{t}{u}=\;
\begin{tikzpicture}[scale=0.25,mylabels,centered]
    \draw[st] (1,0) to[out=90,in=180] (2,1) to[out=0,in=90] (3,0);
    \draw[st] (5,0) -- (5,1);
    \draw[st] (3,4.5) to[out=270,in=180] (4,3.5) to[out=0,in=270] (5,4.5);
    \draw[st] (3,4.5) -- (3,7.5);
    \draw[st] (5,4.5) -- (5,7.5);
    \draw[st] (1,3.5) -- (1,7.5);
    \draw[morg] (0,4.5) rectangle (4,6.5);
    \draw[st] (1,3.5)..controls{(1,2) and (5,2.5)}..(5,1);
    \node at (2,5.5) {$G_2$};
\end{tikzpicture}\;,\quad
\cellbasis[\hspace{1pt} \resizebox{6pt}{!}{\ydiagram{2,1}}]{u}{u}=\;
\begin{tikzpicture}[scale=0.25,mylabels,centered]
    \draw[st] (1,0) to[out=90,in=180] (2,1) to[out=0,in=90] (3,0);
    \draw[st] (1,3) to[out=270,in=180] (2,2) to[out=0,in=270] (3,3);
    \draw[st] (5,0) -- (5,3);
\end{tikzpicture}\;.
\end{equation*}
\end{ex}

The problem of counting the composition factors of cell modules in mixed characteristic has been done in \cite{Spencer2023}. Their result may be stated in terms of up-admissible sets as follows.

\begin{theo}[\cite{Spencer2023}]\label{compfac}
The composition factors of $\ctlmod{m}$ are given by the set
\begin{equation*}
\left\{\stlmod{m(S)} : m(S)\leq n, S\mbox{ is up-admissible for } m\right\}.
\end{equation*}
\end{theo}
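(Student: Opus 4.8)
The plan is to pin down the composition factors by squeezing them between a lower bound, obtained from explicit submodules of $\ctlmod{m}$, and an upper bound, obtained from the cellular bilinear form. Since we expect the Temperley--Lieb decomposition numbers to be multiplicity-free, it essentially suffices to determine which simples $\stlmod{\mu}$ occur; the multiplicity-one statement will then fall out of matching the two bounds. Throughout, the combinatorial engine is the dictionary between the cell order on $\tlcellposet[n]$ and the $(\ell,\p)$-adic reflection calculus of Section~\ref{section:qnum}: the composition factors $\stlmod{\mu}$ must have $\mu\geq m$ in the cell order, and the candidates $\mu=m(S)$ are exactly the upward reflections, so that $m=m(S)[S]$ by the remark following Definition~\ref{defadm}.

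For the lower bound I would construct, for each up-admissible set $S$ with $m(S)\leq n$, a nonzero homomorphism $\ctlmod{m(S)}\to\ctlmod{m}$ whose image lies in $\rad(m)$. Diagrammatically such a map is given by a half-diagram that reduces the $m(S)$ strands down to $m$ strands through a cup pattern dictated by the (minimal) stretches of $S$, with a mixed Jones--Wenzl idempotent $\jwlp$ inserted; each minimal up-admissible stretch realises one elementary reflection across a wall, and the composite effects the full reflection. The non-vanishing of the map is detected by a partial trace of $\jwlp$, which evaluates to a ratio of quantum numbers: using the factorisation $\qn{n\ell}=\qn{\ell}\qn[q^\ell]{n}$ of Lemma~\ref{qnum} together with the cyclotomic factorisation of Lemma~\ref{chebyfac}, read modulo the maximal ideal $\mathfrak{m}$, one sees this ratio is nonzero precisely when $S$ is up-admissible. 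Each surviving map forces $\stlmod{m(S)}$ to be a composition factor of $\ctlmod{m}$ (with the convention that $\stlmod{m(S)}$ is dropped when the form on $\ctlmod{m(S)}$ degenerates entirely, as happens for the most singular weights).

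For the upper bound I would invoke the Jantzen-type sum formula for cellular algebras, or equivalently analyse the Gram determinant of $\ctlmod{m}$ directly. That determinant is a product of quantum numbers indexed by the boxes of the underlying two-row partition, and its $\mathfrak{m}$-adic valuation, computed from the $(\ell,\p)$-adic digit combinatorics, bounds the weighted total $\sum_{\mu}[\ctlmod{m}:\stlmod{\mu}]$. Matching this total against the contributions of the $\stlmod{m(S)}$ produced above shows that no further factors appear. The whole argument is then closed by induction on $\abs{S}$, fed by Lemma~\ref{indadmsets} (which refines any up-admissible set one element at a time) and by the branching rule along $\TL_{n-1}\hookrightarrow\TL_n$: restriction sends $\ctlmod[n]{m}$ to a module with a two-step cell filtration by $\ctlmod[n-1]{m-1}$ and $\ctlmod[n-1]{m+1}$, and the up-admissible calculus is compatible with this branching.

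The main obstacle I anticipate lies in the upper-bound step: one must show that the $\mathfrak{m}$-adic valuation of the Gram determinant is \emph{exactly} accounted for by the reflections $m(S)$, that the boundary constraint $m(S)\leq n$ truncates the family in precisely the right way, and that the resulting multiplicities are all equal to $1$ rather than merely bounded. This is where the behaviour of $m_\delta$ modulo $\p$ recorded in Lemma~\ref{lem:irreduciblemd} becomes delicate, in particular the characteristic-two doubling, where $m_\delta$ occurs with multiplicity $2$ in $\qnn{\ell}$: one must verify that this doubling is absorbed by two distinct reflections rather than producing a composition factor of multiplicity two.
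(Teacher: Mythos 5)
Your proposal takes a genuinely different route from the paper, but before comparing, note what the paper actually does: Theorem~\ref{compfac} is not proved there from first principles. The statement that $\stlmod{r}$ occurs in $\ctlmod{m}$ exactly when $m\in\supp{r}$, and then with multiplicity one, is imported from \cite[Theorem~8.4]{Spencer2023}, and the paper's ``proof'' is only the combinatorial dictionary identifying $\{r : m\in\supp{r}\}$ with $\{m(S) : S\mbox{ up-admissible for }m,\ m(S)\leq n\}$ via $m(S)=r[S](S)=r$. You instead attempt to reprove Spencer's theorem by a lower/upper bound squeeze, which is more ambitious --- but both halves have genuine gaps. On the lower-bound side, your non-vanishing mechanism is the wrong one: the partial trace attached to your candidate generator $\yy{m(S)}\jwlp_{m(S)}\upr_S\jwlp_m$ is its self-pairing under the cellular form, and by the computation in Section~\ref{section:filt} this equals $\gamma_S=\ip{\vv[m(S)]{},\vv[m(S)]{}}=\prod_{s\in S}\qnd{a_{m(S),s}[S]+1}/\qnd{a_{m(S),s-1}[S]+1}$, which lies in $\mathfrak{m}=(\p,m_\delta)$ for \emph{every} nonempty up-admissible $S$; it vanishes in $\kk$ precisely because the element sits in the radical. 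So a quantum-number ratio being ``nonzero exactly when $S$ is up-admissible'' cannot detect your map. What is actually needed is that $\jwlp_{m(S)}\upr_S\jwlp_m\neq 0$ over $\kk$, which rests on the basis theorem for morphisms between mixed Jones--Wenzl idempotents (Proposition~\ref{stwztheo}, i.e.\ \cite[Theorem~3.21]{Sutton2023}), an input your sketch never invokes. Note also that a homomorphism $\ctlmod{m(S)}\to\ctlmod{m}$ itself need not exist: the homomorphism criterion forces the image to be killed by $\td(<m(S))$, which is exactly why Proposition~\ref{genelem} only produces a map into a quotient of $\ctlmod{m}$ --- still enough to certify the factor, but not what you assert.

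The upper bound, which you yourself identify as the crux, is not merely unfinished but circular as sketched. First, $\Zm$ is a two-dimensional local ring and $\mathfrak{m}$ is not principal, so there is no ``$\mathfrak{m}$-adic valuation'' of the Gram determinant to feed a classical Jantzen sum formula; one must choose a one-parameter deformation (the $\delta$-direction over $\F_{\p}$, whose generic fibre is semisimple, or the $\p$-direction, whose generic fibre is the characteristic-zero root-of-unity case), and in the first option the characteristic-two doubling of $m_\delta$ inside $\qnn{\ell}$ (Lemma~\ref{lem:irreduciblemd}) genuinely distorts the valuation count --- you flag this risk but do not resolve it. Second, and more fundamentally, any such sum formula controls $\sum_{i\geq 1}\dim J^i$, a \emph{dimension-weighted} quantity, not the number of distinct factors or their multiplicities; to convert it into ``no further factors occur, and each occurs once'' you need the dimensions of the simple modules $\stlmod{\mu}$, which in mixed characteristic are equivalent to the decomposition numbers you are trying to determine. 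Without a carefully organised induction (on $n$, or along the $(\ell,\p)$-digits) that breaks this circularity --- none is set up, and the branching-rule remark does not supply it --- the squeeze does not close.
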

\begin{proof}
The simple module $\stlmod{r}$ is known to be a composition factor of $\ctlmod{m}$ if and only if $m\in\supp{r}$, in which case it appears with multiplicity one (see \cite{Spencer2023}, Theorem~8.4). The condition $m\in\supp{r}$ is equivalent to the fact that there exists a down-admissible set $S$ for $r$ such that $r[S]=m$. But then $S$ is up-admissible for $m$ and $m(S)=r[S](S)=r$, which gives the desired result.
\end{proof}

\subsection{Up-morphisms, down-morphisms, and Jones--Wenzl idempotents} For any integer $n\geq 1$, the well-known (semisimple) Jones--Wenzl idempotent $\jw_n$ can be defined as the unique element of $\TL_n^{\Q(\delta)}$ projecting on the trivial module. They will be denoted by a grey rectangle:
\begin{equation*}
\jw_n=\begin{tikzpicture}[scale=0.25,mylabels,centered]
\draw[st] (1,-1) -- (1,3);
\draw[st] (3,-1) -- (3,3);
\draw[st] (5,-1) -- (5,3);
\draw[jw] (0,0) rectangle (6,2);
\node at (3,1) {$n$};
\end{tikzpicture}\;.
\end{equation*}
They have an explicit recursive definition given by
\begin{equation*}
\jw_1=\id_1,\quad\text{and}\quad
\begin{tikzpicture}[scale=0.25,mylabels,centered]
\draw[st] (1,-1) -- (1,3);
\draw[st] (3,-1) -- (3,3);
\draw[st] (5,-1) -- (5,3);
\draw[st] (7,-1) -- (7,3);
\draw[jw] (0,0) rectangle (8,2);
\node at (4,1) {$n+1$};
\end{tikzpicture}
\;=\;
\begin{tikzpicture}[scale=0.25,mylabels,centered]
\draw[st] (1,-1) -- (1,3);
\draw[st] (3,-1) -- (3,3);
\draw[st] (5,-1) -- (5,3);
\draw[jw] (0,0) rectangle (6,2);
\node at (3,1) {$n$};
\draw[st] (7,-1) -- (7,3);
\end{tikzpicture}\;
-\frac{\qnn{n}}{\qnn{n+1}}\;
\begin{tikzpicture}[scale=0.25,mylabels,centered]
\draw[st] (1,-1) -- (1,8);
\draw[st] (3,-1) -- (3,8);
\draw[st] (5,-1) -- (5,2);
\draw[st] (5,5) -- (5,8);
\draw[st] (7,-1) -- (7,2);
\draw[st] (7,5) -- (7,8);
\draw[jw] (0,0) rectangle (6,2);
\node at (3,1) {$n$};
\tlcap{5,2}{6,3}{7,2};
\tlcup{5,5}{6,4}{7,5};
\draw[jw] (0,5) rectangle (6,7);
\node at (3,6) {$n$};
\end{tikzpicture}\;.
\end{equation*}
\begin{lem}\label{jwproperties}
The semisimple Jones--Wenzl idempotents satisfy $(\jw_n)^*=\jw_n$, as well as
\begin{gather*}
\begin{tikzpicture}[scale=0.25,mylabels,centered]
\draw[st] (1,-1) -- (1,5);
\draw[st] (3,-1) -- (3,5);
\draw[st] (5,-1) -- (5,5);
\draw[st] (7,-1) -- (7,5);
\draw[jw] (0,0) rectangle (8,2);
\node at (4,1) {$n$};
\draw[jw] (2,2) rectangle (6,4);
\node at (4,3) {$m$};
\end{tikzpicture}\;
=\;
\begin{tikzpicture}[scale=0.25,mylabels,centered]
\draw[st] (1,-1) -- (1,3);
\draw[st] (3,-1) -- (3,3);
\draw[st] (5,-1) -- (5,3);
\draw[st] (7,-1) -- (7,3);
\draw[jw] (0,0) rectangle (8,2);
\node at (4,1) {$n$};
\end{tikzpicture}\;,\quad
\begin{tikzpicture}[scale=0.25,mylabels,centered]
\draw[st] (1,-1) -- (1,3);
\draw[st] (3,-1) -- (3,2);
\draw[st] (5,-1) -- (5,2);
\draw[st] (7,-1) -- (7,3);
\draw[jw] (0,0) rectangle (8,2);
\node at (4,1) {$n$};
\tlcap{3,2}{4,3}{5,2};
\end{tikzpicture}\;
=0,\\
\begin{tikzpicture}[scale=0.25,mylabels,centered]
\draw[st] (1,-1) -- (1,3);
\draw[st] (3,-1) -- (3,3);
\draw[st] (5,-1) -- (5,3);
\draw[jw] (0,0) rectangle (8,2);
\node at (4,1) {$n$};
\tlcap{7,2}{8,3}{9,2};
\tlcup{7,0}{8,-1}{9,0};
\draw[st] (9,0) -- (9,2);
\node at (9.8,1) {$k$};
\end{tikzpicture}\;
=\;
\frac{\qnn{n+1}}{\qnn{n+1-k}}\;
\begin{tikzpicture}[scale=0.25,mylabels,centered]
\draw[st] (1,-1) -- (1,3);
\draw[st] (3,-1) -- (3,3);
\draw[st] (5,-1) -- (5,3);
\draw[jw] (0,0) rectangle (6,2);
\node at (3,1) {$n-k$};
\end{tikzpicture}\;,
\end{gather*}
where the idempotent $\jw_m$ and the cap can be placed anywhere on top of $\jw_n$. The invariance under $*$ implies that the same properties hold with $\jw_m$ placed underneath instead, and with the cap replaced by a cup on the bottom of $\jw_n$.
\end{lem}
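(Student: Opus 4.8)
The plan is to prove all the relations by a single induction on $n$ anchored on Wenzl's recursion, introducing them in an order that forestalls circularity. Write $\cap_i,\cup_i$ for the elementary cap and cup on sites $i,i+1$, so that the generator is $u_i=\cup_i\cap_i$ and the recursion reads $\jw_{n+1}=(\jw_n\otimes\id_1)-\tfrac{\qnn{n}}{\qnn{n+1}}(\jw_n\otimes\id_1)\,u_n\,(\jw_n\otimes\id_1)$. I take the idempotency $\jw_n^2=\jw_n$ as part of the defining characterisation; it gives at once the \emph{padded idempotency} $(\jw_{n-1}\otimes\id_1)\jw_n=\jw_n$, since left-multiplying the recursion for $\jw_n$ by the idempotent $\jw_{n-1}\otimes\id_1$ reproduces it verbatim. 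The symmetry $(\jw_n)^*=\jw_n$ is then immediate: $*$ is an anti-involution fixing $\id_1$ and every $u_i$, so applying it to the recursion and feeding in $(\jw_n)^*=\jw_n$ inductively returns the recursion unchanged, the base case $\jw_1=\id_1$ being trivial.

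The technical heart is the cap relation $\cap_i\jw_n=0$ (the second displayed identity; the cup version follows by $*$), which I prove by induction on $n$, splitting on the position $i$. For $i\le n-1$ the cap meets only the $\jw_n$-block inside $\jw_n\otimes\id_1$ in both terms of the recursion for $\jw_{n+1}$, so the inductive hypothesis annihilates each term directly. The case $i=n$ is the crux. I would first record the single-strand closure $\cap_n(\jw_n\otimes\id_1)\cup_n=\tfrac{\qnn{n+1}}{\qnn{n}}\jw_{n-1}$---the $k=1$ instance of the last relation---straight from the recursion, the coefficient arising as $\delta\qnn{n}-\qnn{n-1}=\qnn{n+1}$ after using idempotency to collapse the quadratic term. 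Recognising the middle of $\cap_n(\jw_n\otimes\id_1)\,u_n\,(\jw_n\otimes\id_1)$ as exactly such a closure then reduces it to $\tfrac{\qnn{n+1}}{\qnn{n}}\jw_{n-1}\cap_n(\jw_n\otimes\id_1)$; substituting into the recursion, the two quantum-integer factors cancel, and commuting $\jw_{n-1}$ past $\cap_n$ and absorbing it by padded idempotency makes the two surviving terms identical, so $\cap_n\jw_{n+1}=0$.

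The remaining two relations follow formally. For the $\jw_m$-absorption, expand $\jw_m=\id_m+X$, where $X$ is a combination of diagrams of through degree $<m$; each such diagram lies in the ideal generated by the $u_j$ and so meets $\jw_n$ through a cap on two adjacent sites, hence is killed by the cap relation, leaving $\jw_m\jw_n=\jw_n$ (the variant with $\jw_m$ underneath follows by $*$). For the general $k$-strand partial trace I would induct on $k$: peeling off the outermost closure contributes the factor $\tfrac{\qnn{n+1}}{\qnn{n}}$ and an inner $\jw_{n-1}$, using the $k=1$ case together with the $\jw_m$-absorption to refold the projector, while the inductive hypothesis applied to $\jw_{n-1}$ and $k-1$ contributes $\tfrac{\qnn{n}}{\qnn{n+1-k}}$; the telescoping product is $\tfrac{\qnn{n+1}}{\qnn{n+1-k}}$, as claimed.

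I expect the $i=n$ case of the cap relation to be the genuine obstacle. It alone needs the auxiliary single-strand trace as an input, and its bookkeeping must be arranged so that only the elementary padded idempotency $(\jw_{n-1}\otimes\id_1)\jw_n=\jw_n$ is invoked---not the full $\jw_m$-absorption, which is logically downstream of the cap relation and whose premature use would render the whole argument circular. Keeping this dependency ordering straight, rather than any single diagrammatic manipulation, is where the real care lies.
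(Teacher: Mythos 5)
The paper offers no proof of this lemma at all: these are the classical properties of the semisimple Jones--Wenzl projectors (going back to Wenzl and Jones), which the paper invokes as known background for the new material in Sections 3 and 4, so there is nothing internal to compare your argument against. Judged on its own, your proof is correct, and its logical architecture — the genuinely delicate point — is sound. Taking idempotency from the paper's primary characterisation of $\jw_n$ (the unique idempotent projecting onto the trivial module) is legitimate, and from there each step checks out: left-multiplying Wenzl's recursion by $\jw_{n-1}\otimes\id_1$ does give the padded idempotency $(\jw_{n-1}\otimes\id_1)\jw_n=\jw_n$; the single-strand closure $\cap_n(\jw_n\otimes\id_1)\cup_n=\tfrac{\qnn{n+1}}{\qnn{n}}\jw_{n-1}$ follows from the recursion via $\cap_n\cup_n=\delta\,\id_{n-1}$, the isotopy identity $\cap_n\bigl(((X\otimes\id_1)u_{n-1}(Y\otimes\id_1))\otimes\id_1\bigr)\cup_n=XY$, and $\jw_{n-1}^2=\jw_{n-1}$, with coefficient $\delta\qnn{n}-\qnn{n-1}=\qnn{n+1}$ exactly as you state; and in the crux case $i=n$, writing $u_n=\cup_n\cap_n$ turns the quadratic term into $\tfrac{\qnn{n+1}}{\qnn{n}}\,\jw_{n-1}\cap_n(\jw_n\otimes\id_1)$, after which sliding $\jw_{n-1}$ past the disjoint cap and absorbing it by padded idempotency alone cancels the two terms. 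You are also right that this is where circularity threatens, and your ordering (cap relation strictly before absorption, only padded idempotency used inside the induction) avoids it; the derivations of absorption and of the general partial trace from the cap relation are then routine, as you indicate. Two cosmetic points: state the base case $\cap_1\jw_2=\cap_1-\delta^{-1}\cap_1u_1=0$ of the cap induction explicitly, and in the induction on $k$ it is the innermost closure arc that is evaluated first (the outermost arc is not accessible until the inner ones are removed) — the telescoping product $\tfrac{\qnn{n+1}}{\qnn{n}}\cdot\tfrac{\qnn{n}}{\qnn{n+1-k}}=\tfrac{\qnn{n+1}}{\qnn{n+1-k}}$ is unaffected.
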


When $(\kk,\delta)$ is chosen so that $\tlnkd$ is not semisimple, the Jones--Wenzl idempotents are no longer well-defined in general. However, they have analogues, called the $(\ell,\p)$-Jones--Wenzl idempotents and denoted by $\jwlp_n$, that project onto the \emph{projective cover} of the trivial module instead. They were first defined in \cite{Burrull2019} for the characteristic $\p$ case, and in \cite{Martin2022} and \cite{Sutton2023} independently for the mixed case.

\begin{deff}[{\cite[Definition 3.7]{Sutton2023}}]
Fix a family of morphisms $\{G_n\in\End_{\tlcat}(n)\}_{n\geq 0}$. Let $n\in\Z_{\geq 0}$ and write $n+1=[n_k,n_{k-1},\ldots,n_0]_{\ell,\p}$, and let $0\leq i<k$ such that $n_i\neq 0$. Let $x=[n_k,\ldots,n_{i+1},0,\ldots,0]_{\ell,\p}-1$, and define a morphism $\delta_i:n\to n[\{i\}]$ by
\begin{equation*}
\delta_i=\;
\begin{tikzpicture}[scale=0.25,mylabels,centered]
\draw[st] (1,-1) -- (1,4);
\draw[st] (3,-1) -- (3,2);
\draw[st] (5,-1) -- (5,2);
\draw[st] (7,-1) -- (7,4);
\draw[morg] (0,0) rectangle (4,2);
\node at (2,1) {$G_x$};
\tlcap{3,2}{4,3}{5,2};
\node at (4.2,3.9) {$n_i\p^{(i)}$};
\end{tikzpicture}\;.
\end{equation*}
If $S=\{s_0<s_1<\cdots <s_j\}$ is a down-admissible set for $n$, then define the down-morphism $\delta_S:n\to n[S]$ by $\delta_S:=\delta_{s_0}\delta_{s_1}\cdots\delta_{s_j}$, and the corresponding up-morphism $\upsilon_S:n[S]\to n$ by $\upsilon_S=(\delta_S)^*$.
\end{deff}

\begin{rem}
A family of morphisms $\{G_n\in\End_{\tlcat}(n)\}_{n\geq 0}$ is said to be \textbf{left-aligned} if
\begin{equation*}
\begin{tikzpicture}[scale=0.25,mylabels,centered]
\draw[st] (1,-1) -- (1,5);
\draw[st] (3,-1) -- (3,5);
\draw[st] (5,-1) -- (5,5);
\draw[morg] (0,0) rectangle (6,2);
\node at (3,1) {$G_n$};
\draw[morg] (0,2) rectangle (4,4);
\node at (2,3) {$G_m$};
\end{tikzpicture}\;
=\;
\begin{tikzpicture}[scale=0.25,mylabels,centered]
\draw[st] (1,-3) -- (1,3);
\draw[st] (3,-3) -- (3,3);
\draw[st] (5,-3) -- (5,3);
\draw[morg] (0,0) rectangle (6,2);
\node at (3,1) {$G_n$};
\draw[morg] (0,0) rectangle (4,-2);
\node at (2,-1) {$G_m$};
\end{tikzpicture}\;
=\;
\begin{tikzpicture}[scale=0.25,mylabels,centered]
\draw[st] (1,-1) -- (1,3);
\draw[st] (3,-1) -- (3,3);
\draw[st] (5,-1) -- (5,3);
\draw[morg] (0,0) rectangle (6,2);
\node at (3,1) {$G_n$};
\end{tikzpicture}\;,
\end{equation*}
for all $1\leq m\leq n$. If $\{G_n\}$ is left aligned and $S=\{s_0,\ldots,s_j\}$ is an up-admissible stretch for some $n$ with $n+1=[n_k,n_{k-1},\ldots,n_0]_{\ell,\p}$ and writing $x=[n_k,\ldots,n_{s_k+1},0,\ldots,0]_{\ell,\p}-1$, the down-morphism $\delta_{S}=\delta_{s_0}\cdots\delta_{s_j}$ simplifies as
\begin{equation*}
\delta_S=\;
\begin{tikzpicture}[scale=0.25,mylabels,centered]
\draw[st] (1,-1) -- (1,4);
\draw[st] (3,-1) -- (3,2);
\draw[st] (5,-1) -- (5,2);
\draw[st] (7,-1) -- (7,4);
\draw[morg] (0,0) rectangle (4,2);
\node at (2,1) {$G_x$};
\tlcap{3,2}{4,3}{5,2};
\node at (4,3.7) {$S$};
\end{tikzpicture}\;,
\end{equation*}
where the label $S$ signifies that the cap contains $\sum_{s\in S}n_s\p^{(s)}$ strands.
\end{rem}

When $G_n=\jw_n$ for all $n$, the corresponding down- and up-morphisms will be denoted by $\ssdor_S$ and $\ssupr_S$, respectively. If $S$ is a down-admissible set for $n$, define
\begin{equation*}
\sslo_n^S:=\ssupr_S\,\jw_{n[S]}\,\ssdor_S=\;
\begin{tikzpicture}[scale=0.25,mylabels,centered]
\draw[st] (1,-1) -- (1,0);
\draw[st] (1,6) -- (1,7);
\draw[st] (3,-1) -- (3,7);
\draw[st] (5,-1) -- (5,7);
\draw[st] (7,-1) -- (7,0);
\draw[st] (7,6) -- (7,7);
\draw[morg] (0,0) -- (2,2) -- (6,2) -- (8,0) -- cycle;
\node at (4,1) {$\ssdor_S$};
\draw[jw] (2,2) rectangle (6,4);
\node at (4,3) {$n[S]$};
\draw[morg] (2,4) -- (6,4) -- (8,6) -- (0,6) -- cycle;
\node at (4,5) {$\ssupr_S$};
\end{tikzpicture}\;
\in\End_{\tlcat}(n).
\end{equation*}
These elements are orthogonal idempotents over $\Q(\delta)$. A careful selection of several of these idempotents allows us to define the following element, which actually has coefficients in $\Zm$ (see Remark~\ref{rem:specialisation}). Recall the definition of the \emph{ancestors} $a_{n,s}$ from Defintion~\ref{def:ancestors}.

\begin{deff}[{\cite[Definition 3.12]{Sutton2023}}]\label{jwlpzdef}
For any $n\geq 1$, the semisimple $(\ell,\p)$-Jones--Wenzl idempotent is an element of $\tlnzd$ defined by
\begin{equation*}
\jwlpz_n:=\sum_{m\in\supp{n}}\lambda_n^S\sslo_n^S,\quad\mbox{with}\quad
\lambda_n^S:=\prod_{s\in S}\frac{\qnd{a_{n,s-1}[S]+1}}{\qnd{a_{n,s}[S]+1}}\;,
\end{equation*}
where in each term $S$ is the unique down-admissible set such that $n[S]=m$. It is denoted by a yellow rectangle:
\begin{equation*}
\jwlpz_n=\begin{tikzpicture}[scale=0.25,mylabels,centered]
\draw[st] (1,-1) -- (1,3);
\draw[st] (3,-1) -- (3,3);
\draw[st] (5,-1) -- (5,3);
\draw[jwlpz] (0,0) rectangle (6,2);
\node at (3,1) {$n$};
\end{tikzpicture}\;.
\end{equation*}
\end{deff}

The following lemma from \cite{Sutton2023} gives another useful expression of the semisimple $(\ell,\p)$-Jones--Wenzl idempotent.

\begin{lem}\label{jwlpzexp}
The semisimple $(\ell,\p)$-Jones--Wenzl idempotent can be written as
\begin{equation*}
\jwlpz_n=\sum_{m\in\supp{\mo{n}}}\lambda_{\mo{n}}^{S'}\left(
\begin{tikzpicture}[scale=0.25,mylabels,centered]
\draw[st] (1,2) -- (1,3);
\draw[st] (3,2) -- (3,3);
\draw[st] (5,2) -- (5,3);
\draw[st] (7,2) -- (7,3);
\draw[st] (9,2) -- (9,3);
\draw[st] (11,2) -- (11,10);
\draw[morg] (0,3) -- (10,3) -- (8,5) -- (2,5) -- cycle;
\node at (5,4) {$\ssdor_{S'}$};
\draw[jw] (2,5) rectangle (12,7);
\node at (7,6) {$n[S']$};
\draw[morg] (2,7) -- (8,7) -- (10,9) -- (0,9) -- cycle;
\node at (5,8) {$\ssupr_{S'}$};
\draw[st] (1,9) -- (1,10);
\draw[st] (3,9) -- (3,10);
\draw[st] (5,9) -- (5,10);
\draw[st] (7,9) -- (7,10);
\draw[st] (9,9) -- (9,10);
\node at (13.2,3) {$a_t\p^{(t)}$};
\end{tikzpicture}
+\frac{\qnd{n[S'][t]+1}}{\qnd{\mo{n}[S']+1}}\;
\begin{tikzpicture}[scale=0.25,mylabels,centered]
\draw[st] (1,2) -- (1,3);
\draw[st] (3,2) -- (3,11);
\draw[st] (5,2) -- (5,11);
\draw[st] (7,2) -- (7,3);
\draw[st] (9,2) -- (9,3);
\draw[st] (11,2) -- (11,5);
\draw[morg] (0,3) -- (10,3) -- (8,5) -- (2,5) -- cycle;
\node at (5,4) {$\ssdor_{S'}$};
\draw[jw] (2,7) rectangle (6,9);
\node at (4,8) {$n[S'][t]$};
\tlcap{7,5}{9,7}{11,5};
\tlcup{7,11}{9,9}{11,11};
\begin{scope}[shift={(0,4)}]
\draw[morg] (2,7) -- (8,7) -- (10,9) -- (0,9) -- cycle;
\node at (5,8) {$\ssupr_{S'}$};
\draw[st] (1,9) -- (1,10);
\draw[st] (3,9) -- (3,10);
\draw[st] (5,9) -- (5,10);
\draw[st] (7,9) -- (7,10);
\draw[st] (9,9) -- (9,10);
\end{scope}
\draw[st] (11,11) -- (11,14);
\node at (13.2,4) {$a_t\p^{(t)}$};
\end{tikzpicture}\;
\right),
\end{equation*}
where $S'$ is the up-admissible set such that $m=\mo{n}[S']$ and $a_t$ is the first non-zero coefficient in the $(\ell,\p)$-expansion of $n$.
\end{lem}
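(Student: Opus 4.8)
The plan is to start from the defining expression $\jwlpz_n=\sum_{m\in\supp{n}}\lambda_n^S\sslo_n^S$ of Definition~\ref{jwlpzdef} and reorganise the sum according to the lowest nonzero $(\ell,\p)$-digit. Writing $t$ for the position of the first nonzero digit and $a_t=n_t$, so that $\mo{n}=n-a_t\p^{(t)}$, I would first set up the combinatorial backbone: forgetting the sign attached to digit $t$ sends $\supp{n}$ two-to-one onto $\supp{\mo{n}}$. Concretely, every down-admissible set $S$ for $n$ avoids all positions below $t$, and either misses $t$---in which case $S=S'$ is down-admissible for $\mo{n}$ and $n[S]=\mo{n}[S']+a_t\p^{(t)}$---or contains $t$, in which case the lowest stretch of $S$ is $\{t,\ldots,t'-1\}$ up to the next nonzero digit $t'$, removing it leaves a set $S'$ down-admissible for $\mo{n}$, and $n[S]=n[S'][t]$ in value (the intermediate zero digits reflecting trivially). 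Thus each $S'$ down-admissible for $\mo{n}$ has exactly two preimages $S_+$ and $S_-$, and the sum splits as $\sum_{m\in\supp{\mo{n}}}\bigl(\lambda_n^{S_+}\sslo_n^{S_+}+\lambda_n^{S_-}\sslo_n^{S_-}\bigr)$.

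Next I would treat the two families of diagrams. Since $\{\jw_n\}$ is left-aligned (a consequence of the absorption relation of Lemma~\ref{jwproperties}), the morphisms $\ssdor_{S_\pm},\ssupr_{S_\pm}$ factor through the mother's morphisms $\ssdor_{S'},\ssupr_{S'}$ acting only on the top $\mo{n}$-strands, leaving the $a_t\p^{(t)}$ lowest strands to be handled on their own. For $S_+$ these lowest strands pass straight into $\jw_{n[S']}$, so $\sslo_n^{S_+}$ is exactly the first pictured diagram. For $S_-$ the extra lowest-stretch factor in $\ssdor_{S_-}$ caps the lowest strands against a Jones--Wenzl box; collapsing that box via the idempotency, absorption and partial-trace relations of Lemma~\ref{jwproperties} turns $\sslo_n^{S_-}$ into the cap--cup configuration of the second pictured diagram, up to a quantum-number scalar of partial-trace type $\qnn{m+1}/\qnn{m+1-k}$.

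Finally I would reconcile the scalars. For the $S_+$ term the identity $\lambda_n^{S_+}=\lambda_{\mo{n}}^{S'}$ follows from the product formula for $\lambda$ once one checks that, for $s>t$, the ancestor $a_{n,s}$ coincides with $a_{\mo{n},s}$ (both having digits $0,\ldots,s$ zero, in particular digit $t$) and that reflecting these ancestors along $S_+$ or along $S'$ gives the same value, digit $t$ already being zero in them. For the $S_-$ term one expands $\lambda_n^{S_-}$ and isolates the factor coming from the lowest stretch; here the key inputs are $a_{n,t}=\mo{n}$, the boundary value $a_{n,t-1}$ (with the convention $a_{n,-1}=0$), and the triviality of reflecting zero digits. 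Multiplying the resulting combinatorial ratio by the partial-trace scalar from the previous step and telescoping the quantum numbers yields precisely $\lambda_{\mo{n}}^{S'}\,\qnd{n[S'][t]+1}/\qnd{\mo{n}[S']+1}$. Summing over $m\in\supp{\mo{n}}$ then gives the stated formula.

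I expect the main obstacle to be exactly this last reconciliation of scalars: the coefficient $\qnd{n[S'][t]+1}/\qnd{\mo{n}[S']+1}$ is \emph{not} produced by the combinatorial factor $\lambda_n^{S_-}$ alone, but by its interaction with the quantum-number scalar generated when the lowest Jones--Wenzl box is partially traced away, so the two sources must be tracked simultaneously and shown to telescope into the single clean ratio. The diagrammatic reductions themselves, while notation-heavy, should be routine given left-alignment and Lemma~\ref{jwproperties}.
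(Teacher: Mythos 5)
First, a point of reference: the paper does not prove this lemma at all --- it is imported from \cite{Sutton2023} (``The following lemma from [Sutton2023]\dots''), so your attempt can only be judged against what a correct argument must contain, not against an in-paper proof. Your opening moves are indeed how any proof must start: down-admissible sets $S$ for $n$ split according to whether they contain the position $t$ of the lowest nonzero digit, giving the two-to-one correspondence with down-admissible sets $S'$ for $\mo{n}$, and for $t\notin S$ the term $\lambda_n^{S'}\sslo_n^{S'}$ matches the first pictured diagram because $\ssdor_{S'}$ computed for $n$ is $\ssdor_{S'}$ computed for $\mo{n}$ tensored with $a_t\p^{(t)}$ identity strands. (Two small imprecisions: when $t\in S$ the lowest stretch of $S$ need not equal $\{t,\ldots,t'-1\}$ --- it continues past $t'$ whenever $t'\in S'$, and one must delete only $\{t,\ldots,t'-1\}$, not the whole stretch; and your ancestor comparison also needs the boundary index $s-1=t$, where $a_{n,t}=\mo{n}$.)

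The genuine gap is your diagrammatic treatment of the terms with $t\in S$. By the definition of the down morphism, $\ssdor_{S}$ ends with $\delta_t=(\mathrm{cap})\circ(\jw_{\mo{n}[S']}\otimes\id)$: the projector $\jw_{\mo{n}[S']}$ is applied to the mother strands and then its last $a_t\p^{(t)}$ outputs are capped \emph{against the bypass strands}. This is not a partial trace (the cap does not close strands of the projector back around onto itself), and absorption from Lemma~\ref{jwproperties} only deletes a smaller projector into a larger one, whereas the adjacent projector $\jw_{n[S'][t]}$ is smaller. So no relation ``collapses'' this box and no scalar of the form $\qnn{m+1}/\qnn{m+1-k}$ is produced; the claim that $\sslo_n^{S}$ becomes the box-free cap--cup diagram up to a partial-trace scalar is false. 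Concretely, take $(\ell,\p)=(3,2)$ and $n=4$, so $\mo{n}=2$, $t=0$, $S=\{0\}$, $S'=\varnothing$: then $\sslo_4^{\{0\}}=(\jw_2\otimes\id_2)\,e\,(\jw_2\otimes\id_2)$, where $e$ is the through-degree-zero diagram with nested caps and cups, and expanding $\jw_2=\id_2-\qnd{2}^{-1}u_1$ exhibits it as a sum of four distinct diagrams --- not a multiple of $e$. Indeed, if your collapse were possible the right-hand side of the lemma, read with box-free second diagrams, would not even lie in $\tlnzd$ (the coefficient of one of the four diagrams above acquires a pole at $m_\delta$), contradicting Remark~\ref{rem:specialisation}. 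What actually happens is the opposite of what you propose: the projectors $\jw_{\mo{n}[S']}$ flanking the cap and cup are \emph{kept} --- they are precisely what the trapezoids in the second diagram must be understood to terminate in for the picture to be consistent with Definition~\ref{jwlpzdef} --- so the $t\in S$ term of the defining sum equals the second diagram verbatim, with no diagrammatic manipulation at all. The entire content of the lemma is then the pair of purely combinatorial identities $\lambda_n^{S'}=\lambda_{\mo{n}}^{S'}$ and $\lambda_n^{S'\cup\{t,\ldots,t'-1\}}=\lambda_{\mo{n}}^{S'}\cdot\qnd{n[S'][t]+1}/\qnd{\mo{n}[S']+1}$, proved by tracking the ancestors $a_{n,s}$ for $s\geq t$. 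Your plan instead manufactures the quantum-number ratio out of the interaction of $\lambda_n^{S}$ with a partial-trace scalar that never arises, so the ``scalar reconciliation'' you yourself flag as the main obstacle cannot be made to close as described.
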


\begin{rem}\label{rem:specialisation}
The coefficients $\lambda_n^S$, as well as many of the coefficients inside $\sslo_n^S$, are elements of $\Q(\delta)$ and might not descend to well-defined elements of $\kk$, nor do the idempotents $\lambda_n^S\sslo_n^S$ in general. It is nontrivial to show (see \cite[Theorem 3.18]{Sutton2023} or \cite[Proposition 3.5]{Martin2022}) that taking the whole sum actually gives an element of $\tlnzd$ and that the resulting coefficients can thus can be specialised to $\kk$. This is the crucial fact that allows us to define the following.
\end{rem}

\begin{deff}
For any integer $n\geq 1$, the $(\ell,\p)$-Jones--Wenzl idempotent $\jwlp_n$ is an element of $\tlnkd$ defined as the image of $\jwlpz_n$ under the specialisation morphism $\tlnzd\to\tlnkd$. It is denoted by a pink rectangle:
\begin{equation*}
\jwlp_n=\begin{tikzpicture}[scale=0.25,mylabels,centered]
\draw[st] (1,-1) -- (1,3);
\draw[st] (3,-1) -- (3,3);
\draw[st] (5,-1) -- (5,3);
\draw[jwlp] (0,0) rectangle (6,2);
\node at (3,1) {$n$};
\end{tikzpicture}\;.
\end{equation*}
\end{deff}

Keeping up with the notation set up above, when taking the family $G_n=\jwlpz_n$ (resp. $G_n=\jwlp_n$) for all $n$, the corresponding down- and up-morphisms will be denoted by $\zdor_S$ and $\zupr_S$ (resp. $\dor_S$ and $\upr_S$). We close this section with the following surprising behaviours of $(\ell,\p)$-Jones--Wenzl elements, respectively called \emph{non-classical absorption} and \emph{shortening} in \cite{Sutton2023}. They will be used throughout sections \ref{section:diag} and \ref{section:filt}.

\begin{prop}[{\cite[Proposition 3.19]{Sutton2023}}]\label{shortening}
Let $S$ be a down-admissible stretch for $m$. Then,
\begin{equation*}
\begin{tikzpicture}[scale=0.25,mylabels,centered]
\draw[jwlpz] (0,0) rectangle (8,2);
\node at (4,1) {$\jwlpz_m$};
\draw[st] (1,2)..controls{(1,3.3) and (3,2.7)}..(3,4);
\draw[st] (7,2)..controls{(7,3.3) and (5,2.7)}..(5,4);
\tlcap{3,2}{4,3}{5,2};
\node at (4,3.5) {$S$};
\draw[jwlpz] (2,4) rectangle (6,6);
\node at (4,5) {$\jwlpz_{m[S]}$};
\end{tikzpicture}\;
=\jwlpz_{m[S]}\zdor_S\jwlpz_m=\zdor_S\jwlpz_m=\;
\begin{tikzpicture}[scale=0.25,mylabels,centered]
\draw[jwlpz] (0,0) rectangle (8,2);
\node at (4,1) {$\jwlpz_m$};
\draw[st] (1,2)..controls{(1,3.3) and (3,2.7)}..(3,5);
\draw[st] (7,2)..controls{(7,3.3) and (5,2.7)}..(5,5);
\tlcap{3,2}{4,3}{5,2};
\node at (4,3.7) {$S$};
\end{tikzpicture}\;,
\end{equation*}
and
\begin{equation*}
\begin{tikzpicture}[scale=0.25,mylabels,centered]
\draw[jwlpz] (0,0) rectangle (4,2);
\node at (2,1) {$\jwlpz_a$};
\draw[st] (1,2)..controls{(1,3.3) and (3,2.7)}..(3,4);
\draw[st] (7,2)..controls{(7,3.3) and (5,2.7)}..(5,4);
\tlcap{3,2}{4,3}{5,2};
\draw[st] (5,0) -- (5,2);
\draw[st] (7,0) -- (7,2);
\node at (5.7,1.5) {$S$};
\draw[jwlpz] (2,4) rectangle (6,6);
\node at (4,5) {$\jwlpz_{m[S]}$};
\end{tikzpicture}\;
=\jwlpz_{m[S]}\zdor_S(\jwlpz_a\otimes \id_{m-a})=\zdor_S\jwlpz_m=\;
\begin{tikzpicture}[scale=0.25,mylabels,centered]
\draw[jwlpz] (0,0) rectangle (8,2);
\node at (4,1) {$\jwlpz_m$};
\draw[st] (1,2)..controls{(1,3.3) and (3,2.7)}..(3,5);
\draw[st] (7,2)..controls{(7,3.3) and (5,2.7)}..(5,5);
\tlcap{3,2}{4,3}{5,2};
\node at (4,3.7) {$S$};
\end{tikzpicture}\;,
\end{equation*}
where $a$ is the youngest ancestor of $m$ for which all digits indexed by elements of $S$ are zero. Applying the involution $*$ also gives the identities
\begin{equation*}
\jwlpz_{m(S)}\zupr_S\jwlpz_m=\jwlpz_{m(S)}\zupr_S,\quad (\jwlpz_a\otimes \id_{m(S)-a})\zupr_S\jwlpz_m=\jwlpz_{m(S)}\zupr_S,
\end{equation*}
where $S$ is an up-admissible stretch for $m$ and $a$ is the youngest ancestor of $m(S)$ for which all digits indexed by elements of $S$ are zero.
\end{prop}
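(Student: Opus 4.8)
The plan is to isolate one essential phenomenon — the \emph{non-classical absorption} $\jwlpz_{m[S]}\zdor_S\jwlpz_m=\zdor_S\jwlpz_m$ — and to recover all the other equalities, together with the $*$-versions, formally from it. First I would dispose of the equalities relating the bare-cap pictures to the $\zdor_S$-expressions. In the left-aligned form of the down-morphism, $\zdor_S$ equals the bare cap on the $S$-strands precomposed with $\jwlpz_x\otimes\id_{m-x}$, where $\jwlpz_x$ is its left-aligned factor; since the family $\{\jwlpz_n\}$ is left-aligned, the absorption displayed in the left-aligned Remark gives $(\jwlpz_x\otimes\id_{m-x})\jwlpz_m=\jwlpz_m$, and both outer equalities in the first display reduce to this. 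The $*$-versions follow by applying the anti-involution, which fixes each idempotent ($(\jwlpz_n)^*=\jwlpz_n$) and exchanges $\zdor_S\leftrightarrow\zupr_S$, after relabelling $m[S]\rightsquigarrow m$ so that $S$ becomes up-admissible.

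For the core identity I would induct on the number of generations of $m$ lying strictly below the stretch $S$. When there are none — so $S$ is the lowest nonzero block and the peeled digit $t$ of Lemma~\ref{jwlpzexp} lies in $S$ — I expand $\jwlpz_m$ by that lemma into a straight-through term and a correction term carrying the scalar $\qnd{m[S'][t]+1}/\qnd{\mo{m}[S']+1}$ together with a small cap-cup on the $a_t\p^{(t)}$ strands. Composing on top with the external cap of these same $a_t\p^{(t)}$ strands and evaluating the resulting nested caps and closed loops by the semisimple capping and absorption formulas of Lemma~\ref{jwproperties} produces exactly one quantum-number ratio from each term; the mechanism is that this ratio telescopes against the scalar above, so that the two contributions recombine into an expression on which, by the absorption of Lemma~\ref{jwproperties}, the outer projector $\jwlpz_{m[S]}$ acts as the identity.

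When there are generations below $S$, I use Lemma~\ref{jwlpzexp} to peel off the lowest one. Its cap-cup and left-aligned $\jwlpz$ live on strands disjoint from those contracted by the external cap, so the external cap commutes past the peeling and the claim for $m$ reduces to the claim for $\mo{m}$, which has one fewer generation below $S$; reflecting along $S$ commutes with passing to the mother in this range, so the outer idempotent reduces compatibly and the induction closes. The shortening identities then fall out by reading off which factors survive: the external cap only contracts the $S$-block, so, generation by generation, every left-aligned $\jwlpz$ lying above the youngest ancestor $a$ with vanishing $S$-digits is reabsorbed and replaced by the identity via Lemma~\ref{jwproperties}, leaving precisely $\jwlpz_a\otimes\id_{m-a}$ and hence $\jwlpz_{m[S]}\zdor_S(\jwlpz_a\otimes\id_{m-a})=\zdor_S\jwlpz_m$.

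The hard part will be the base case of the induction, where the quantum-number bookkeeping must be carried out exactly: one has to check that the ratio produced by the capping formula of Lemma~\ref{jwproperties} cancels the coefficient $\qnd{m[S'][t]+1}/\qnd{\mo{m}[S']+1}$ of Lemma~\ref{jwlpzexp} term by term across $\supp{\mo{m}}$, since it is this cancellation — and not any semisimple positivity — that renders the outer idempotent redundant. The conceptual subtlety hiding inside this computation is a through-degree argument showing that $\zdor_S\jwlpz_m$ already lies in the image of $\jw_{m[S]}$ on top, so that the non-identity summands $\sslo_{m[S]}^{S''}$ (for $S''\neq\varnothing$) of $\jwlpz_{m[S]}$ contribute nothing; this is exactly what separates the non-classical absorption from the classical capping identity, where the outer projector cannot be removed.
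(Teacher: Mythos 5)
A point of order first: the paper does not actually prove this proposition. It is imported wholesale from \cite[Proposition~3.19]{Sutton2023} (no proof follows the statement), so there is no internal argument to compare yours against; what follows judges your sketch on its own terms. Its architecture is reasonable: the $*$-statements do follow formally (since $(\sslo_n^S)^*=\sslo_n^S$ gives $(\jwlpz_n)^*=\jwlpz_n$), and your closing observation identifies the right mechanism for the core identity. Indeed, using only Lemma~\ref{jwproperties} one checks $\jwlpz_{m[S]}\,\jw_{m[S]}=\jw_{m[S]}$ over $\Q(\delta)$ (for $S''\neq\varnothing$ the term $\ssupr_{S''}\jw_{m[S][S'']}\ssdor_{S''}$ kills $\jw_{m[S]}$, because caps composed on top of a classical Jones--Wenzl projector vanish), so the whole proposition would follow from the single claim $\jw_{m[S]}\,\zdor_S\,\jwlpz_m=\zdor_S\,\jwlpz_m$, i.e.\ from showing that every elementary cap on the surviving $m[S]$ strands annihilates $\zdor_S\jwlpz_m$. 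That vanishing statement is the true content of non-classical absorption, and you correctly flag it as what separates the mixed identity from the classical capping formula.

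As a proof, however, the proposal has two genuine gaps. First, you repeatedly invoke ``the family $\{\jwlpz_n\}$ is left-aligned'', i.e.\ $(\jwlpz_x\otimes\id_{m-x})\jwlpz_m=\jwlpz_m$, both to identify the bare-cap pictures with $\zdor_S\jwlpz_m$ and to run the generation-by-generation reabsorption that yields shortening. The paper never establishes this: its Remark only \emph{defines} left-alignment for an abstract family $\{G_n\}$ and records the resulting simplification of $\delta_S$. For the mixed projectors, this absorption property is itself a nontrivial theorem of exactly the same nature and depth as the statement being proven --- in \cite{Sutton2023} such mixed-absorbs-mixed facts belong to the same circle of results as absorption/shortening and are not available prior to them --- so taking it as an input makes the argument circular, or at best re-imports from the literature something as strong as what you were asked to prove. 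Second, the inductive step does not close as written. Lemma~\ref{jwlpzexp} expands $\jwlpz_m$ into \emph{classical} pieces $\ssupr_{S'}\,\jw\,\ssdor_{S'}$ indexed by $\supp{\mo{m}}$; it does not exhibit $\jwlpz_{\mo{m}}$ as a subdiagram. Consequently, the disjointness of the peeled cap-cup from the external cap does not by itself ``reduce the claim for $m$ to the claim for $\mo{m}$'': one must reassemble the classical expansion into the mother's mixed projector, and that is precisely the quantum-number bookkeeping (the telescoping against the coefficients $\qnd{m[S'][t]+1}/\qnd{\mo{m}[S']+1}$, together with the through-degree analysis of the traced terms) that you defer to the base case and describe only in outline. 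Until both points are supplied, this is a plausible plan in the spirit of \cite{Sutton2023} rather than a proof.
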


Of course, the properties of Proposition~\ref{shortening} also hold when everything is specialised to $\tlnkd$.

\subsection{Truncation}\label{sect:truncation}
The Temperley--Lieb algebras nest within the Temperley--Lieb category and it is useful for computations, as will be clear in section~\ref{section:filt}, to study how cell modules compare over various $\TL_n$. This will be done using truncation functors, whose definition relies on the following elements.

\begin{deff}
Let $n,m\in\Z_{\geq 0}$ of the same parity with $m\leq n$. If $m\neq 0$, define morphisms in $\Hom_{\tlcat}(m,n)$ by
\begin{equation*}
\yy{m}:=\;
\begin{tikzpicture}[scale=0.25,mylabels,centered]
\tlcup{0,6}{1,5}{2,6};
\tlcup{4,6}{5,5}{6,6};
\node at (8,5.5) {$\cdots$};
\tlcup{10,6}{11,5}{12,6};
\draw[st] (14,6)..controls{(14,3.8) and (7,4.2)}..(7,1);
\draw[st] (16,6)..controls{(16,3.8) and (9,4.2)}..(9,1);
\draw[st] (20,6)..controls{(20,3.8) and (13,4.2)}..(13,1);
\node at (18,5.5) {$\cdots$};
\node at (11,1.5) {$\cdots$};
\end{tikzpicture}
\quad\mbox{and}\quad
\zz{m}:=\;
\begin{tikzpicture}[scale=0.25,mylabels,centered]
\tlcup{2,6}{3,5}{4,6};
\tlcup{6,6}{7,5}{8,6};
\node at (10,5.5) {$\cdots$};
\tlcup{12,6}{13,5}{14,6};
\draw[st] (0,6)..controls{(0,3.8) and (7,4.2)}..(7,1);
\draw[st] (16,6)..controls{(16,3.8) and (9,4.2)}..(9,1);
\draw[st] (20,6)..controls{(20,3.8) and (13,4.2)}..(13,1);
\node at (18,5.5) {$\cdots$};
\node at (11,1.5) {$\cdots$};
\end{tikzpicture}\;.
\end{equation*}
Note that both are elements of $\ctlmod{m}$, that $(\zz{m})^*\yy{m}=\id_m$ and thus $\ip{\zz{m},\yy{m}}=1$, and that $\yz{m}:=\yy{m}(\zz{m})^*$ is an idempotent.
If $m=0$ and $\delta\neq 0$, define
\begin{equation*}
\yy{0}:=\;
\begin{tikzpicture}[scale=0.25,mylabels,centered]
\tlcup{0,6}{1,5}{2,6};
\tlcup{4,6}{5,5}{6,6};
\node at (8,5.5) {$\cdots$};
\tlcup{10,6}{11,5}{12,6};
\end{tikzpicture}
\quad\mbox{and}\quad
\yz{0}:=\delta^{-n/2}\yy{0}(\yy{0})^*=\frac{1}{\delta^{n/2}}\;
\begin{tikzpicture}[scale=0.25,mylabels,centered]
\tlcup{0,6}{1,5}{2,6};
\tlcup{4,6}{5,5}{6,6};
\node at (8,5.5) {$\cdots$};
\tlcup{10,6}{11,5}{12,6};
\begin{scope}[shift={(0,9)},yscale=-1]
\tlcup{0,6}{1,5}{2,6};
\tlcup{4,6}{5,5}{6,6};
\node at (8,5.5) {$\cdots$};
\tlcup{10,6}{11,5}{12,6};
\end{scope}
\end{tikzpicture}\;,
\end{equation*}
so that $\yz{0}$ is again an idempotent:
\begin{equation*}
\yz{0}\cdot\yz{0}=\frac{1}{\delta^n}\;
\begin{tikzpicture}[scale=0.25,mylabels,centered]
\tlcup{0,6}{1,5}{2,6};
\tlcup{4,6}{5,5}{6,6};
\node at (8,5.5) {$\cdots$};
\tlcup{10,6}{11,5}{12,6};
\node at (8,3) {$\cdots$};
\begin{scope}[shift={(0,9)},yscale=-1]
\tlcup{0,6}{1,5}{2,6};
\tlcup{4,6}{5,5}{6,6};
\tlcup{10,6}{11,5}{12,6};
\end{scope}
\begin{scope}[shift={(0,6)},yscale=-1]
\tlcup{0,6}{1,5}{2,6};
\tlcup{4,6}{5,5}{6,6};
\node at (8,5.5) {$\cdots$};
\tlcup{10,6}{11,5}{12,6};
\begin{scope}[shift={(0,9)},yscale=-1]
\tlcup{0,6}{1,5}{2,6};
\tlcup{4,6}{5,5}{6,6};
\tlcup{10,6}{11,5}{12,6};
\end{scope}
\end{scope}
\end{tikzpicture}\;
=\frac{\delta^{n/2}}{\delta^n}\;
\begin{tikzpicture}[scale=0.25,mylabels,centered]
\tlcup{0,6}{1,5}{2,6};
\tlcup{4,6}{5,5}{6,6};
\node at (8,5.5) {$\cdots$};
\tlcup{10,6}{11,5}{12,6};
\begin{scope}[shift={(0,9)},yscale=-1]
\tlcup{0,6}{1,5}{2,6};
\tlcup{4,6}{5,5}{6,6};
\node at (8,5.5) {$\cdots$};
\tlcup{10,6}{11,5}{12,6};
\end{scope}
\end{tikzpicture}\;=\yz{0}.
\end{equation*}
\end{deff}

\begin{rem}
Note that if $\delta=0$, then $\rad{\ctlmod{0}}=\ctlmod{0}$, so that $0\notin\cellposet_0$. It follows that the idempotent $\yz{m}$ is well-defined for any $m\in\cellposet_0$.
\end{rem}

The idempotents $\yz{m}$ then allow us to construct isomorphisms of algebras by ``truncating'' a bigger Temperley--Lieb algebra to a smaller one.

\begin{lem}[{\cite[Lemma~5.1]{Spencer2023}}]
The idempotent $\yz{m}$ induces an isomorphism of algebras $\TL_m\cong\yz{m}\TL_n\yz{m}$ defined by $u\mapsto \yy{m}u(\zz{m})^*$.
\end{lem}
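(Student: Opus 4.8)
The plan is to exhibit an explicit two-sided inverse and to verify the homomorphism property formally, using only the two identities recorded just before the statement: $(\zz{m})^*\yy{m}=\id_m$ and $\yz{m}=\yy{m}(\zz{m})^*$, the latter being idempotent. Write $\phi\colon\TL_m\to\TL_n$ for the map $\phi(u)=\yy{m}\,u\,(\zz{m})^*$. Since $\yy{m}\in\Hom_{\tlcat}(m,n)$ and $(\zz{m})^*\in\Hom_{\tlcat}(n,m)$, reading the juxtaposition as composition shows $\phi(u)\in\End_{\tlcat}(n)=\TL_n$, so $\phi$ is at least well-defined as a map of $\kk$-modules.

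First I would check that $\phi$ lands in the corner algebra $\yz{m}\TL_n\yz{m}$ and is multiplicative. Both facts follow by inserting $(\zz{m})^*\yy{m}=\id_m$ to collapse the middle factor: one computes $\yz{m}\,\phi(u)\,\yz{m}=\yy{m}(\zz{m})^*\yy{m}\,u\,(\zz{m})^*\yy{m}(\zz{m})^*=\yy{m}\,u\,(\zz{m})^*=\phi(u)$, and likewise $\phi(u)\phi(v)=\yy{m}\,u\,(\zz{m})^*\yy{m}\,v\,(\zz{m})^*=\yy{m}\,(uv)\,(\zz{m})^*=\phi(uv)$. Because $\phi(\id_m)=\yy{m}(\zz{m})^*=\yz{m}$ is the unit of $\yz{m}\TL_n\yz{m}$, this makes $\phi$ a unital algebra homomorphism into the corner.

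Next I would construct the candidate inverse $\psi\colon\yz{m}\TL_n\yz{m}\to\TL_m$ by $\psi(x)=(\zz{m})^*\,x\,\yy{m}$, which lands in $\End_{\tlcat}(m)=\TL_m$ by the same source–target bookkeeping. The identity $\psi(\phi(u))=(\zz{m})^*\yy{m}\,u\,(\zz{m})^*\yy{m}=u$ gives $\psi\circ\phi=\id_{\TL_m}$, hence injectivity of $\phi$. For the other composite I would use that every $x$ in the corner is fixed by $x\mapsto\yz{m}x\yz{m}$, so that $\phi(\psi(x))=\yy{m}(\zz{m})^*\,x\,\yy{m}(\zz{m})^*=\yz{m}\,x\,\yz{m}=x$; thus $\phi\circ\psi$ is the identity on $\yz{m}\TL_n\yz{m}$ and $\phi$ is surjective onto it. Together these show $\phi$ is an algebra isomorphism $\TL_m\cong\yz{m}\TL_n\yz{m}$.

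I do not expect a genuine obstacle here: the whole argument is the standard observation that a pair $(a,b)$ with $ba=\id$ and $p=ab$ idempotent induces an isomorphism $\End_{\tlcat}(m)\cong p\,\End_{\tlcat}(n)\,p$ via $u\mapsto a\,u\,b$, and each verification reduces to a single application of $(\zz{m})^*\yy{m}=\id_m$. The only points deserving a moment's care are the domain–codomain bookkeeping that places $\phi$ and $\psi$ in the correct endomorphism algebras, and the remark that membership in the corner is exactly the statement that $x$ is fixed under conjugation by the idempotent $\yz{m}$; the well-definedness of $\yz{m}$ itself is already handled by the preceding remark.
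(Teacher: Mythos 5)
Your proof is correct. The paper itself gives no argument for this lemma (it is imported directly from \cite[Lemma~5.1]{Spencer2023}), and what you write is exactly the standard corner-algebra observation that a pair $a=\yy{m}$, $b=(\zz{m})^*$ with $ba=\id_m$ and $ab=\yz{m}$ idempotent induces an isomorphism $\End_{\tlcat}(m)\cong \yz{m}\End_{\tlcat}(n)\yz{m}$, $u\mapsto aub$; every verification you perform is the right one. The only point worth flagging is the edge case $m=0$: the paper defines $\zz{m}$ only for $m\neq 0$, and for $m=0$ uses the normalised idempotent $\yz{0}=\delta^{-n/2}\yy{0}(\yy{0})^*$ (with $\delta\neq 0$, which is guaranteed by restricting to $m\in\cellposet_0$); your argument goes through verbatim there after replacing $(\zz{m})^*$ by $\delta^{-n/2}(\yy{0})^*$, since $\delta^{-n/2}(\yy{0})^*\yy{0}=\id_0$.
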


Letting $\yz{m}$ act on $\TL_n$-modules leads to the following definition of a truncation functor between categories of modules over different Temperley--Lieb algebras.

\begin{deff}[\cite{Spencer2023}]
Given integers $n,m\in\Z_{\geq 0}$ of the same parity with $m\leq n$, let $\trunc{m}$ be the functor from the category of $\TL_n$-modules to the category of $\TL_m$-modules defined on objects by $\trunc{m}(M)=\yz{m}M$, with $\TL_m$-action on an element $\yz{m}x\in\yz{m}M$ given by $u\cdot \yz{m}x=\yy{m}u(\zz{m})^*x$, and defined on morphisms by restriction.
\end{deff}

\begin{prop}[{\cite[Lemmas~5.2~and~5.3]{Spencer2023}}]\label{truncation}
The functor $\trunc{m}$ is exact, it preserves cell modules and simple modules, and it refines composition series. This means that $\trunc{m}(\ctlmod{r})=\ctlmod[m]{r}$ and $\trunc{m}(\stlmod{r})=\stlmod[m]{r}$, where it is understood that if $m<r$, $\ctlmod[m]{r}=\stlmod[m]{r}=0$.
\end{prop}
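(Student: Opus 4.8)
The plan is to view $\trunc{m}=\yz{m}(-)$ as an idempotent truncation (Schur) functor and to combine general idempotent-functor theory with a single explicit diagrammatic computation on cell modules. Throughout I will use the algebra isomorphism $\TL_m\cong\yz{m}\TL_n\yz{m}$ and the defining relation $(\zz{m})^*\yy{m}=\id_m$.

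First I would dispatch exactness. Writing $\trunc{m}(M)=\yz{m}M\cong\yz{m}\TL_n\otimes_{\TL_n}M$, the right module $\yz{m}\TL_n$ is a direct summand of the free module $\TL_n$ (because $\yz{m}$ is idempotent), hence projective and flat; so $\trunc{m}$ is exact. Equivalently, the decomposition $M=\yz{m}M\oplus(1-\yz{m})M$ is functorial, exhibiting $\trunc{m}$ as a projection onto a direct summand.

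The main computation is the identification $\trunc{m}(\ctlmod{r})\cong\ctlmod[m]{r}$. Recall $\ctlmod{r}$ has basis the half-diagrams (epic diagrams of through degree $r$), with $\TL_n$ acting by stacking and sending terms of through degree ${<}r$ to zero. I would define a map $\Phi\colon\ctlmod[m]{r}\to\yz{m}\ctlmod{r}$ on a half-diagram $w$ (a morphism $r\to m$ of through degree $r$) by $w\mapsto\yy{m}w$. Since $\yy{m}$ only adds cups, $\yy{m}w$ still has through degree $r$ and lies in $\ctlmod{r}$, and $\yz{m}(\yy{m}w)=\yy{m}(\zz{m})^*\yy{m}w=\yy{m}w$, so the image lands in $\yz{m}\ctlmod{r}$. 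Equivariance for $u\in\TL_m\cong\yz{m}\TL_n\yz{m}$ is $u\cdot(\yy{m}w)=\yy{m}u(\zz{m})^*\yy{m}w=\yy{m}(uw)$, matching the action on $\ctlmod[m]{r}$ modulo lower through degree. Injectivity follows from the retraction $(\zz{m})^*(\yy{m}w)=w$, and surjectivity from the fact that for a basis half-diagram $x$ of $\ctlmod{r}$ one has $\yz{m}x=\yy{m}\cdot\big((\zz{m})^*x\big)$, where the through-degree-$r$ part of $(\zz{m})^*x\in\Hom_{\tlcat}(r,m)$ is a combination of half-diagrams $w$ and the lower parts are annihilated by $\yy{m}$ inside $\ctlmod{r}$; thus $\yz{m}x\in\Ima\Phi$. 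When $r>m$ every $(\zz{m})^*x$ has through degree $\le m<r$, so $\yz{m}\ctlmod{r}=0$, consistent with $\ctlmod[m]{r}=0$.

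For simple modules and the refinement statement I would invoke general idempotent-functor theory: for any idempotent $e$ in a finite-dimensional algebra $A$, the functor $e(-)$ is exact, carries each simple $A$-module to a simple $eAe$-module or to $0$, and $L\mapsto eL$ restricts to a bijection between the simples $L$ with $eL\neq0$ and \emph{all} simple $eAe$-modules (no $*$-invariance of $e$ is required). Exactness together with ``simple $\mapsto$ simple-or-zero'' immediately gives refinement of composition series: apply $\trunc{m}$ to a composition series and delete the vanishing subquotients. To pin down the labels, note $\stlmod{r}$ is the simple head of $\ctlmod{r}$, so $\trunc{m}(\stlmod{r})$ is a quotient of $\trunc{m}(\ctlmod{r})=\ctlmod[m]{r}$; if $r>m$ this forces $\trunc{m}(\stlmod{r})=0$, while if $r\le m$ it is a simple-or-zero quotient of $\ctlmod[m]{r}$, hence equals $\stlmod[m]{r}$ or $0$. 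The bijection above, combined with the fact that the simples of $\TL_m$ are exactly $\{\stlmod[m]{\lambda}:\lambda\le m,\ \lambda\in\cellposet_0\}$ and that each can only arise as some $\trunc{m}(\stlmod{\lambda})$, then forces $\trunc{m}(\stlmod{r})=\stlmod[m]{r}$ for every admissible $r\le m$.

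I expect the cell-module identification to be the genuine obstacle: one must check that $\Phi$ is well defined and equivariant \emph{modulo lower cellular terms} and that it surjects onto $\yz{m}\ctlmod{r}$, which is precisely where the relation $(\zz{m})^*\yy{m}=\id_m$ and the through-degree bookkeeping do the work. I would also flag that, because $\yz{m}$ need not be invariant under the cellular anti-involution $*$, the simple-module claim should be routed through the general idempotent-functor theory rather than through a naive comparison of cellular bilinear forms: the latter introduces spurious powers of $\delta$ (from closed loops in $(\yy{m})^*\yy{m}$) and would fail when $\delta=0$, whereas the functorial argument is insensitive to this.
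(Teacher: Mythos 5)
The paper contains no proof of this proposition for you to be compared against: it is imported wholesale from \cite[Lemmas~5.2~and~5.3]{Spencer2023}, so the only question is whether your reconstruction stands on its own. It does. Exactness via the functorial decomposition $M=\yz{m}M\oplus(1-\yz{m})M$ is correct; the identification $\trunc{m}(\ctlmod{r})\cong\ctlmod[m]{r}$ via $\Phi(w)=\yy{m}w$ is correct, with the two facts doing the work being exactly the ones you isolate, namely $(\zz{m})^*\yy{m}=\id_m$ and the observation that post-composing with the monic diagram $\yy{m}$ preserves through degree (so lower-order terms stay lower-order and no loop factors appear); and routing the simple-module and refinement claims through the general theory of the idempotent truncation $e(-)$, $e=\yz{m}$, via $\TL_m\cong\yz{m}\TL_n\yz{m}$, is a legitimate and clean way to finish. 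One step deserves to be made explicit rather than implicit: your inference that a nonzero $\trunc{m}(\stlmod{r})$, being a simple quotient of $\ctlmod[m]{r}$, must equal $\stlmod[m]{r}$ relies on the fact that every simple quotient of a cell module is its head, and in cellular-algebra theory this holds only when $r$ lies in $(\tlcellposet[m])_0$, i.e.\ when the cellular form on $\ctlmod[m]{r}$ is not identically zero. For Temperley--Lieb this is automatic --- for $r\geq 1$ one has $\ip{\zz[m]{r},\yy[m]{r}}=1$, and the only possible failure is $r=0$ with $\delta=0$, in which case $\stlmod{0}=0$ as well and $\yz{0}$ (hence $\trunc{0}$) is not even defined --- but the sentence should appear, since the proposition is stated for an arbitrary pointed field. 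Your closing remark is also apt: because $\yz{m}$ is not $*$-invariant and $(\yy{m})^*\yy{m}=\delta^{(n-m)/2}\id_m$ carries loop factors, comparing bilinear forms naively would degenerate at $\delta=0$, so the functorial argument is the right one.
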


In particular, this shows that the submodule structure of cell modules $\ctlmod{m}$ does not depend on $n$ in the following sense: if $n<n'$, the list of composition factors $\stlmod{m(S)}$ appearing in $\ctlmod[n]{m}$ and $\ctlmod[n']{m}$ may be different (since they must satisfy $m(S)\leq n$ or $m(S)\leq n'$ respectively), but the lattice of submodules of $\ctlmod[n]{m}$ is obtained from that of $\ctlmod[n']{m}$ simply by removing the composition factors that are too large, keeping everything else intact.

\begin{ex}
It will be shown in Section~\ref{section:diag} that the Alperin diagram of $\ctlmod[267]{21}$ is as pictured on the left below. Applying the truncation functor $\trunc[267]{71}$ gives the Alperin diagram of $\ctlmod[71]{21}$ on the right.

\begin{equation*}
\begin{tikzpicture}[centered]
\node[inner sep=5pt, red] (21) at (0,0) {$21$};
\node[inner sep=5pt, red] (41) at (-2,-1) {$41$};
\node[inner sep=5pt, red] (27) at (0,-1) {$27$};
\node[inner sep=5pt] (81) at (2,-1) {$81$};
\node[inner sep=5pt, red] (37) at (-2,-2) {$37$};
\node[inner sep=5pt, red] (71) at (0,-2) {$71$};
\node[inner sep=5pt] (87) at (2,-2) {$87$};
\node[inner sep=5pt] (261) at (4,-2) {$261$};
\node[inner sep=5pt, red] (67) at (0,-3) {$67$};
\node[inner sep=5pt] (251) at (2,-3) {$251$};
\node[inner sep=5pt] (267) at (4,-3) {$267$};
\node[inner sep=5pt] (247) at (2,-4) {$247$};

\draw[thick, red] (21) -- (41);
\draw[thick, red] (21) -- (27);
\draw[thick] (21) -- (81);
\draw[thick, red] (41) -- (37);
\draw[thick, red] (41) -- (71);
\draw[thick, red] (27) -- (37);
\draw[thick] (27) -- (87);
\draw[thick] (81) -- (71);
\draw[thick] (81) -- (87);
\draw[thick] (81) -- (261);
\draw[thick, red] (37) -- (67);
\draw[thick, red] (71) -- (67);
\draw[thick] (71) -- (251);
\draw[thick] (87) -- (67);
\draw[thick] (87) -- (267);
\draw[thick] (261) -- (251);
\draw[thick] (261) -- (267);
\draw[thick] (67) -- (247);
\draw[thick] (251) -- (247);
\draw[thick] (267) -- (247);

\node at (1,-5) {$\ctlmod[267]{21}$};
\end{tikzpicture}\quad\rightsquigarrow\quad
\begin{tikzpicture}[centered]
\node[inner sep=5pt] (21) at (0,0) {$21$};
\node[inner sep=5pt] (41) at (-2,-1) {$41$};
\node[inner sep=5pt] (27) at (0,-1) {$27$};
\node[inner sep=5pt] (37) at (-2,-2) {$37$};
\node[inner sep=5pt] (71) at (0,-2) {$71$};
\node[inner sep=5pt] (67) at (0,-3) {$67$};

\draw[thick] (21) -- (41);
\draw[thick] (21) -- (27);
\draw[thick] (41) -- (37);
\draw[thick] (41) -- (71);
\draw[thick] (27) -- (37);
\draw[thick] (37) -- (67);
\draw[thick] (71) -- (67);

\node at (-1,-4) {$\ctlmod[71]{21}$};
\end{tikzpicture}
\end{equation*}
\end{ex}

The isomorphism $\phi:\trunc{m}\ctlmod{r}\to\ctlmod[m]{r}$ is explicitly given by $\phi(\yz{m}u):=(\zz{m})^*u$ and it is not hard to see that the map $\psi:\ctlmod[m]{r}\to\trunc{m}\ctlmod{r}:v\mapsto\yy{m}v$ is its inverse. These explicit maps can be used to show that truncation functors preserve certain Jantzen-like filtrations of cell modules defined using their corresponding bilinear forms. This will be done in Proposition~\ref{propsublattice}. Of crucial importance will be the following generalised version of contravariance.

\begin{lem}\label{contravariance}
Let $u\in\ctlmod{r}$, $v\in\ctlmod[m]{r}$, and $x$ be any morphism in $\Hom_{\tlcat}(n,m)$. Then, $\ip{xu,v}=\ip{u,x^*v}$, where the left side is computed using the bilinear form in $\ctlmod{r}$, while the right side takes place in $\ctlmod[m]{r}$.
\end{lem}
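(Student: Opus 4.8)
The plan is to reduce the whole statement to the contravariance identity $(x\circ u)^*=u^*\circ x^*$ for the anti-involution $*$, after re-expressing both bilinear forms as the same ``top through-degree coefficient'' computed inside the one fixed algebra $\TL_r$.

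First I would record a size-independent description of the cellular form. For any $N\geq r$ of the same parity, a half-diagram of $\ctlmod[N]{r}$ is (the class of) a morphism in $\Hom_{\tlcat}(r,N)$, and $\cellbasis[r]{s}{t}=\cellhalf[r]{s}\,(\cellhalf[r]{t})^*$. Feeding $a=\mathbf 1$ into Lemma~\ref{lembil} and unwinding the construction then shows that, for any $a,b\in\ctlmod[N]{r}$ regarded in $\Hom_{\tlcat}(r,N)$,
$$a^*\circ b\;\equiv\;\ip{a,b}\,\id_r\pmod{\td(<r)}\quad\text{in }\TL_r;$$
that is, $\ip{a,b}$ is exactly the coefficient of $\id_r$ obtained by reducing the endomorphism $a^*\circ b\in\TL_r=\Hom_{\tlcat}(r,r)$ modulo diagrams of through degree $<r$. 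The essential observation is that this reduction takes place in $\TL_r$ alone and is insensitive to the ambient size $N$: sandwiching any through-degree-$(<r)$ element of $\TL_r$ between $\cellhalf[r]{a}$ and $(\cellhalf[r]{b})^*$ produces an element of through degree $<r$ in $\TL_N$, so the reduction modulo lower cells in $\TL_N$ agrees with reduction modulo $\td(<r)$ in $\TL_r$.

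Granting this, the lemma becomes a one-line manipulation. Since $u\in\ctlmod{r}\subseteq\Hom_{\tlcat}(r,n)$, $x\in\Hom_{\tlcat}(n,m)$ and $v\in\ctlmod[m]{r}\subseteq\Hom_{\tlcat}(r,m)$, both $xu=x\circ u$ and $v$ lie in $\Hom_{\tlcat}(r,m)$, while both $u$ and $x^*v=x^*\circ v$ lie in $\Hom_{\tlcat}(r,n)$. Applying the description on the $m$-side and using $(xu)^*=u^*\circ x^*$ gives
$$\ip{xu,v}\,\id_r\;\equiv\;(xu)^*\circ v\;=\;u^*\circ x^*\circ v\pmod{\td(<r)},$$
while applying it on the $n$-side gives
$$\ip{u,x^*v}\,\id_r\;\equiv\;u^*\circ(x^*\circ v)\;=\;u^*\circ x^*\circ v\pmod{\td(<r)}.$$
By associativity of composition the two right-hand sides are literally the same morphism of $\TL_r$, so the two scalars agree.

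The only genuine obstacle is the first step: making precise that the cellular form of a cell module is computed inside the small algebra $\TL_r$ and does not remember the size of the algebra it came from. Once this size-independence is established (for $N=n$ and $N=m$ simultaneously), contravariance of $*$ and associativity do all the remaining work, and no through-degree estimates beyond the $\id_r$-coefficient bookkeeping are needed.
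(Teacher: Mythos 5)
Your proposal is correct and follows essentially the same route as the paper, whose entire proof is the observation that both $\ip{xu,v}$ and $\ip{u,x^*v}$ are the coefficient of $\id_r$ in the single element $u^*x^*v\in\Hom_{\tlcat}(r,r)$. Your additional verification that the form's $\id_r$-coefficient description is insensitive to the ambient size $N$ is exactly the bookkeeping the paper leaves implicit.
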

\begin{proof}
Both values are defined as the coefficient in front of the identity diagram $\id_r$ in the same element $u^*x^*v\in\Hom_{\tlcat}(r,r)$.
\end{proof}

\section{The submodule structure}\label{section:diag}
Suppose that $\kk$ is any field of characteristic $\p$ and let $\delta\in\kk$. This section describes the lattice of submodules of the cell modules $\ctlmod{m}$ in mixed characteristic. The proof is entirely diagrammatic and does not rely on Schur--Weyl duality, but some key steps are motivated by statements about the category of tilting modules for $\uq$, the quantum group of $\sltwo$. In fact, in the case where $\kk$ is algebraically closed and $\delta$ is of the form $q+q^{-1}$ for some $q\in\kk^\times$, the results of this section apply to the corresponding endomorphism algebras of the tilting modules $\tmod{1}^{\otimes n}$ and their cell modules coming from the cellular structure developped in \cite{Andersen2015}. Throughout, we will highlight the analogies between the two settings.

Recall the idempotents $\yz{m}$ from Section~\ref{sect:truncation}, constructed using the elements $\yy{m}$ and $\zz{m}$. The homomorphism criterion from \cite[Proposition 4.8]{Andersen2015} can be rewritten in Temperley--Lieb language in the following way.
\begin{prop}[Homomorphism criterion, diagrammatic version]\label{homcrit}
Let $m\in\cellposet_0$. There is an isomorphism of $\kk$-vector spaces given by
\begin{equation*}
  \Hom_{\TL_n^\kk(\delta)}(\cmod{m},M)\cong\{v\in M\mid \td(<m)\cdot v=0 \mbox{ and }\yz{m}v=v\}.
\end{equation*}
\end{prop}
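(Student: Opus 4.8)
The plan is to realise $\cmod{m}$ as a cyclic $\TL_n^\kk(\delta)$-module generated by the distinguished half-diagram $\yy{m}$, and to build the isomorphism from evaluation at this generator. The two identities that drive the whole argument are $(\zz{m})^*\yy{m}=\id_m$ and $\yz{m}=\yy{m}(\zz{m})^*$, both recorded in Section~\ref{sect:truncation}; note that $m\in\cellposet_0$ guarantees $\yy{m}$, $\zz{m}$ and $\yz{m}$ are well-defined. First I would check that $\yy{m}$ plays the role of a ``highest weight vector'': composing any diagram of through degree $<m$ with $\yy{m}$ produces a diagram of through degree $<m$, which is zero in $\cmod{m}$, so $\td(<m)\cdot\yy{m}=0$; and $\yz{m}\cdot\yy{m}=\yy{m}(\zz{m})^*\yy{m}=\yy{m}$. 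Since any $f\in\Hom_{\TL_n^\kk(\delta)}(\cmod{m},M)$ is $\TL_n^\kk(\delta)$-linear, these two facts immediately show that $\Phi\colon f\mapsto f(\yy{m})$ maps into the set $\{v\in M\mid\td(<m)v=0,\ \yz{m}v=v\}$, and $\Phi$ is plainly $\kk$-linear.

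For injectivity I would establish cyclicity directly: every basis half-diagram $d$ of $\cmod{m}$ satisfies $d=\bigl(d(\zz{m})^*\bigr)\cdot\yy{m}$, because $d(\zz{m})^*\yy{m}=d\,(\zz{m})^*\yy{m}=d$ and this composite already has full through degree (so no lower-order correction occurs). Thus $\yy{m}$ generates $\cmod{m}$ and $\Phi(f)=0$ forces $f=0$. For surjectivity, given $v$ in the target set I would define $f_v$ on basis half-diagrams by $f_v(d):=d(\zz{m})^*\cdot v$ and extend $\kk$-linearly; the normalisation $f_v(\yy{m})=\yy{m}(\zz{m})^*v=\yz{m}v=v$ then shows $\Phi(f_v)=v$, provided $f_v$ is a genuine homomorphism.

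The one step requiring care---indeed the crux of the proof---is verifying that $f_v$ is $\TL_n^\kk(\delta)$-linear, and this is exactly where both conditions on $v$ come into play. For $x\in\TL_n^\kk(\delta)$ and a basis half-diagram $d$, writing the composite as $x d=\sum_i c_i d_i + h$ with each $d_i$ of through degree $m$ and $h$ of through degree $<m$, one computes
\begin{equation*}
x\cdot f_v(d)=\bigl(x d(\zz{m})^*\bigr)v=\sum_i c_i\, d_i(\zz{m})^* v + h(\zz{m})^* v .
\end{equation*}
Here $h(\zz{m})^*\in\td(<m)$, so $h(\zz{m})^* v=0$ by hypothesis, while $x\cdot d=\sum_i c_i d_i$ in $\cmod{m}$ gives $f_v(x\cdot d)=\sum_i c_i d_i(\zz{m})^* v$; the two agree. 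Thus $\yz{m}v=v$ supplies the normalisation $f_v(\yy{m})=v$ and $\td(<m)v=0$ kills the lower-through-degree correction terms, so $\Phi$ is a bijection.
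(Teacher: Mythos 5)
Your proof is correct and follows essentially the same route as the paper's: evaluation at the generator $\yy{m}$ in one direction, and the assignment $v\mapsto\bigl(d\mapsto d(\zz{m})^*v\bigr)$ as its inverse. In fact you are somewhat more careful than the paper, which dismisses the $\TL_n^\kk(\delta)$-linearity of $\phi_v$ as ``clear''; your explicit observation that the lower-through-degree terms in $xd$ are killed by the hypothesis $\td(<m)\cdot v=0$ is precisely the point that justifies that step.
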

\begin{proof}
Let $\phi:\cmod{m}\to M$ be a morphism of $\TL_n^\kk(\delta)$-modules. Then, $\td(<m)\cdot\phi(\yy{m})=0$ because $\td(<m)\cdot\yy{m}=0$ in $\cmod{m}$. Also, $\yz{m}\cdot\phi(\yy{m})=\phi(\yz{m}\yy{m})=\phi(\yy{m})$, so $\phi(\yy{m})$ is an element of the right-hand side.

In the other direction, given an element $v$ of the right-hand side, define $\phi_v:\cmod{m}\to M$ by $\phi_v(c)=c(\zz{m})^*v$. This is clearly a $\tlnkd$-morphism, since $x\phi_v(c)=xc(\zz{m})^*v=\phi_v(xc)$ for any $x\in\tlnkd$.

Finally, note that $\phi_v(\yy{m})=\yy{m}(\zz{m})^*v=\yz{m}v=v$, so the map $v\mapsto\phi_v$ is a $\kk$-linear inverse of $\phi\mapsto\phi(\yy{m})$ and the result follows.
\end{proof}

Using this analogue of the homomorphism criterion, we can explicitly identify elements of the Temperley--Lieb category generating a submodule of a cell module with any given simple head.

\begin{prop}\label{genelem}
Fix $n,m\in\Z_{\geq 0}$ of the same parity with $n\geq m$ and consider the module $\ctlmod{m}$. Let $S$ be an up-admissible set for $m$, and define
\begin{equation*}
\vv[m(S)]{t}:=\yy{m(S)}\jwlp_{m(S)}\upr_S\jwlp_m=
\begin{tikzpicture}[scale=0.25,mylabels,centered]
\draw[st] (-1,4) -- (-1,5);
\draw[st] (1,4) -- (1,5);
\draw[st] (3,4) -- (3,5);
\draw[st] (5,4) -- (5,5);
\draw[st] (7,4) -- (7,5);
\draw[morg] (-2,4) -- (0,2) -- (6,2) -- (8,4) -- cycle;
\node at (3,3) {$\yy{m(S)}$};
\draw[jwlp] (0,0) rectangle (6,2);
\node at (3,1) {$m(S)$};
\draw[morg] (1,-2) -- (5,-2) -- (6,0) -- (0,0) -- cycle;
\node at (3,-1) {$\upr_S$};
\draw[jwlp] (1,-2) rectangle (5,-4);
\node at (3,-3) {$m$};
\draw[st] (2,-5) -- (2,-4);
\draw[st] (4,-5) -- (4,-4);
\end{tikzpicture}
\;
\in\Hom_{\tlcat}(m,n).
\end{equation*} 
Then, $\vv[m(S)]{}$ has a nonzero image in the cell module $\cmod{m}$, that we still write as $\vv[m(S)]{}$ by abuse of notation, and it generates a submodule having the composition factor $\stlmod{m(S)}$ as its head.
\end{prop}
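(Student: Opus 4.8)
The plan is to realise the submodule generated by $\vv[m(S)]{}$ as the image of a nonzero homomorphism $\ctlmod{m(S)}\to\ctlmod{m}$, from which the description of its head follows formally. I would invoke the homomorphism criterion (Proposition~\ref{homcrit}) with $m$ replaced by $m(S)$ and $M=\ctlmod{m}$: it suffices to check that the image $v$ of $\vv[m(S)]{}$ in $\ctlmod{m}$ lies in $\{v\in\ctlmod{m}\mid \td(<m(S))\cdot v=0,\ \yz{m(S)}v=v\}$. Granting this, the criterion produces $\phi_v\colon\ctlmod{m(S)}\to\ctlmod{m}$ with $\phi_v(\yy{m(S)})=v$; since $\yy{m(S)}$ generates $\ctlmod{m(S)}$ as a $\tlnkd$-module (as does any full through-degree half-diagram), the image of $\phi_v$ is exactly the submodule $\tlnkd\cdot v$. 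I assume $m(S)\leq n$ throughout, the case $m(S)>n$ being vacuous since then $\vv[m(S)]{}$ already vanishes.

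The condition $\yz{m(S)}v=v$ is immediate: writing $\vv[m(S)]{}=\yy{m(S)}\jwlp_{m(S)}\upr_S\jwlp_m$ and $\yz{m(S)}=\yy{m(S)}(\zz{m(S)})^*$, the identity $(\zz{m(S)})^*\yy{m(S)}=\id_{m(S)}$ from Section~\ref{sect:truncation} collapses $\yz{m(S)}\vv[m(S)]{}$ back to $\vv[m(S)]{}$. The condition $\td(<m(S))\cdot v=0$ is the crux and the step I expect to be the main obstacle. Here I would argue that for a diagram $x$ with $\td(x)<m(S)$ the element $x\yy{m(S)}\in\Hom_{\tlcat}(m(S),n)$ has through-degree $<m(S)$, so every diagram in its expansion carries a genuine turnback on the $m(S)$ strands feeding into $\jwlp_{m(S)}$; one then shows that capping $\jwlp_{m(S)}$ so as to drop its through-degree below $m(S)$, and composing the result with $\upr_S\jwlp_m$, always produces through-degree strictly below $m$, hence $0$ in $\ctlmod{m}$. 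This rests on the annihilation behaviour of the Jones--Wenzl projectors (Lemma~\ref{jwproperties}) together with the non-classical absorption and shortening identities of Proposition~\ref{shortening}, which govern exactly how $\jwlp_{m(S)}$ reacts to the admissible caps that are not simply killed; I expect essentially all the work to sit in this case analysis.

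For the non-vanishing of $v$ I would pass to the truncation $\trunc{m(S)}$ of Proposition~\ref{truncation}. Since $\yz{m(S)}v=v$, the element $v$ lies in $\trunc{m(S)}\ctlmod{m}$, and under the isomorphism $\trunc{m(S)}\ctlmod{m}\cong\ctlmod[m(S)]{m}$ it is sent to $(\zz{m(S)})^*v=\jwlp_{m(S)}\upr_S\jwlp_m$, again using $(\zz{m(S)})^*\yy{m(S)}=\id_{m(S)}$. As this assignment is an isomorphism of vector spaces, $v\neq 0$ is equivalent to $\jwlp_{m(S)}\upr_S\jwlp_m\neq 0$ in $\ctlmod[m(S)]{m}$. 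Because $\jwlp_{m(S)}\equiv\id_{m(S)}$ and $\jwlp_m\equiv\id_m$ modulo lower through-degree, this element is congruent to $\upr_S$ modulo $\td(<m)$, and $\upr_S$ has nonzero image in $\ctlmod[m(S)]{m}$ by the unitriangularity of the change of basis between $\basis(\jwlp)$ and $\basis(\id)$ from Proposition~\ref{unitriangular}; a routine check that the leading $\basis(\id)$-term of $\upr_S$ is not cancelled by the lower terms of $\jwlp_{m(S)}$ completes this.

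Finally the head is identified by soft arguments. The pairing $\ip{\zz{m(S)},\yy{m(S)}}=1$ shows $m(S)\in\cellposet_0$, so $\ctlmod{m(S)}$ has simple head $\stlmod{m(S)}$ and hence a unique maximal submodule $\rad(m(S))$. The image $\tlnkd\cdot v\cong\ctlmod{m(S)}/\ker\phi_v$ is nonzero by the previous paragraph, so $\ker\phi_v$ is a proper submodule and is therefore contained in $\rad(m(S))$; consequently $\tlnkd\cdot v$ has head $\stlmod{m(S)}$, as claimed.
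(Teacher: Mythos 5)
Your skeleton (homomorphism criterion, truncation for non-vanishing, soft head argument) is the right family of ideas, but the proof breaks at exactly the step you flag as the crux, and it breaks for a structural reason rather than a technical one: the annihilation condition $\td(<m(S))\cdot \vv[m(S)]{}=0$ in $\ctlmod{m}$ is \emph{false} in general, so Proposition~\ref{homcrit} can never be applied with target $M=\ctlmod{m}$ itself. Acting on $\vv[m(S)]{}$ by elements of through-degree $<m(S)$ is precisely how one produces the generators $\vv[m(S')]{}$ for $S\subset S'$ with $m(S')<m(S)$. Concretely, take $(\ell,\p)=(5,3)$, $m=21$, $S=\{1\}$ (so $m(S)=41$) and $S'=\{0,1\}$ (so $m(S')=37$), with $n\geq 41$. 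The computation displayed in the proof of Theorem~\ref{substruct} (the case $m(S_1)>m(S_2)$, which rests only on Proposition~\ref{fact} and Proposition~\ref{shortening}, not on the present proposition) gives
\begin{equation*}
\vv[37]{}\;=\;\yy{37}\,\jwlp_{37}\,\dor_{\{0\}}\,\jwlp_{41}\,\upr_{\{1\}}\,\jwlp_{21}
\;=\;\bigl(\yy{37}\,\jwlp_{37}\,\dor_{\{0\}}\,\jwlp_{41}\,(\zz{41})^*\bigr)\cdot \vv[41]{}\;=\;u\cdot \vv[41]{},
\end{equation*}
where $u$ visibly factors through the object $37$, hence $u\in\td(<41)$. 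Since $\vv[37]{}\neq 0$ in $\ctlmod{21}$ (your own non-vanishing argument via $\trunc{37}$ and unitriangularity applies verbatim to $S'$), we get $\td(<41)\cdot\vv[41]{}\neq 0$: your annihilation step and your non-vanishing step contradict one another, so no case analysis with Lemma~\ref{jwproperties} and Proposition~\ref{shortening} can rescue the former. Put differently, if your $\phi_v$ existed then $\tlnkd\cdot\vv[41]{}$ would be a homomorphic image of $\ctlmod{41}$, so all its composition factors would occur in $\ctlmod{41}$; but $\stlmod{37}$ occurs in $\tlnkd\cdot\vv[41]{}$, while $41\notin\supp{37}=\{37,31,27,21\}$, so $\stlmod{37}$ is not a factor of $\ctlmod{41}$ by Theorem~\ref{compfac}.

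The paper keeps your skeleton but repairs exactly this defect with a quotient trick: set $\NN{m(S)}{}:=\td(<m(S))\cdot\vv[m(S)]{}$, a submodule of $\ctlmod{m}$, and apply Proposition~\ref{homcrit} with target the quotient $\ctlmod{m}/\NN{m(S)}{}$, where the annihilation condition holds by construction and the idempotent condition is your same computation $(\zz{m(S)})^*\yy{m(S)}=\id_{m(S)}$. The resulting morphism $\phi:\ctlmod{m(S)}\to\ctlmod{m}/\NN{m(S)}{}$ has image $\bigl(\tlnkd\cdot\vv[m(S)]{}\bigr)/\NN{m(S)}{}$, a quotient of a cell module and hence with head $\stlmod{m(S)}$ once it is nonzero; since the head of $\tlnkd\cdot\vv[m(S)]{}$ surjects onto the head of this quotient, $\stlmod{m(S)}$ appears in the head of the submodule generated by $\vv[m(S)]{}$, which together with multiplicity one yields the claim. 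Your verification of the idempotent condition, your non-vanishing argument (which the paper leaves implicit, so it is a genuine complement), and your final head argument are all sound; the error lies solely in the choice of target for the homomorphism criterion.
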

\begin{proof}
Let $\NN{m(S)}{t}$ be the submodule $(\td(<m(S))\cdot \vv[m(S)]{t})<\ctlmod{m}$ and write $\overline{\vv[m(S)]{t}}$ for the image of $\vv[m(S)]{t}$ in the quotient $\ctlmod{m}/\NN{m(S)}{t}$. We have $\td(<m(S))\cdot \overline{\vv[m(S)]{t}}=0$ and
\begin{equation*}
\yz{m(S)}\overline{\vv[m(S)]{t}}=\ip{\zz{m(S)},\yy{m(S)}}\overline{\vv[m(S)]{t}}=\overline{\vv[m(S)]{t}},
\end{equation*}
so Proposition~\ref{homcrit} gives a $\tlnkd$-morphism $\phi:\ctlmod{m(S)}\to\ctlmod{m}/\NN{m(S)}{t}$. Note that
$\Ima{\phi}\cong\ip{\vv[m(S)]{t}}/\NN{m(S)}{t}$
has head isomorphic to $\stlmod{m(S)}$, hence the submodule of $\ctlmod{m}$ generated by $\vv[m(S)]{}$ contains $\stlmod{m(S)}$ in its head. This is the only composition factor of $\ctlmod{m}$ isomorphic to $\stlmod{m(S)}$, since every composition factor of appears with multiplicity one. This shows more generally that every composition factor $\stlmod{m(S)}$ of $\cmod{m}$ appears in the head of the submodule generated by a choice of some element $v$ with $\overline{v}=\overline{\vv[m(S)]{t}}$ in $\ctlmod{m}/\NN{m(S)}{t}$.
\end{proof}

The element $\vv[m(S)]{}$ should be seen as the diagrammatic incarnation of a $\uq$-morphism between tilting modules $\tmod{m}\to\tmod{m(S)}$, followed by an inclusion in $\tmod{1}^{\otimes n}$, and giving a non-zero morphism when pre-composed with the inclusion $\wmod{m}\xhookrightarrow{} \tmod{m}$.

Given two up-admissible sets $S,S'$ for $m$, Proposition~\ref{genelem} implies that the submodule with head isomorphic to $\stlmod{m(S)}$ is contained in the submodule with head isomorphic to $\stlmod{m(S')}$ if and only if $\vv[m(S)]{}=u\cdot\vv[m(S')]{}$ for some $0\neq u\in\tlnkd$. The study of the submodule structure of a given cell module $\ctlmod{m}$ can thus be reduced to studying morphisms, in the Temperley--Lieb category, between $(\ell,\p)$-Jones--Wenzl elements. These morphisms are the diagrammatic versions of $\uq$-morphisms between tilting modules, and have been investigated in \cite{Sutton2023}. Proposition~\ref{stwztheo} recalls some of their main results that will be needed for the rest of this section.

\begin{prop}[{\cite[Theorem 3.21]{Sutton2023}}]\label{stwztheo}
A basis for $$\Hom_{\tlcat}(\jwlp_m,\jwlp_l):=\{\jwlp_l\cdot x\cdot \jwlp_m\mid x\in\Hom_{\tlcat}(m,l)\}$$ is given by the set
\begin{equation*}
\{\jwlp_l \upr_{S'} \dor_{S} \jwlp_m \mid S, S'\mbox{ are down-admissible for } m,l\mbox{ resp. and } m[S]=l[S']\}.
\end{equation*}
Moreover, if $S,S'$ are minimal down- and up-admissible stretches for $m$, then
the following relations hold:
\begin{enumerate}
\item \textbf{Far-commutativity.} If $d(S,S')>1$,
\begin{gather*}
\dor_S\dor_{S'}\jwlp_{m}=\dor_{S'}\dor_S\jwlp_{m},\quad \dor_S\upr_{S'}\jwlp_m=\upr_{S'}\dor_S\jwlp_{m},\\ \upr_S\upr_{S'}\jwlp_m=\upr_{S'}\upr_S\jwlp_{m}.
\end{gather*}
\item \textbf{(Some) adjacency relations.} If $d(S,S')=1$ and $S'>S$,
\begin{gather*}
\dor_{S'}\upr_S\jwlp_m=\dor_{S\cup S'}\jwlp_m,\quad \dor_S\upr_{S'}\jwlp_m=\upr_{S'\cup S}\jwlp_m.
\end{gather*}
\end{enumerate}
\end{prop}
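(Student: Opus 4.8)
The plan is to treat the two claims separately, reducing everything to how the idempotents $\jwlp$ interact with caps and cups; the absorption and shortening identities of Proposition~\ref{shortening}, together with the recursive expansion of Lemma~\ref{jwlpzexp}, are the workhorses throughout.

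For the basis statement I would first prove spanning. Any $x\in\Hom_\tlcat(m,l)$ is a combination of the light-leaf diagrams of Lemma~\ref{lemma:homspace}, each factoring through a middle object of some through-degree $k$ as a cup half-diagram composed with a cap half-diagram. After post- and pre-composing with $\jwlp_l$ and $\jwlp_m$, I would push the bottom idempotent $\jwlp_m$ against the cap pattern and the top idempotent $\jwlp_l$ against the cup pattern. The classical Jones--Wenzl absorption of Lemma~\ref{jwproperties} kills any cap too short to be seen by an admissible reflection, while the non-classical absorption of Proposition~\ref{shortening} shows that the surviving cap patterns are exactly those realising a down-admissible reflection $m\to m[S]$, and symmetrically for the cups on the $l$ side. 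Matching the two necks forces $m[S]=l[S']$, and in that case the sandwiched diagram collapses, up to a scalar, to $\jwlp_l\upr_{S'}\dor_S\jwlp_m$. This yields the claimed spanning set.

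Linear independence I would obtain by a through-degree triangularity argument. Writing each $\jwlp_n=\id_n+(\text{lower through-degree})$, and noting that $\dor_S$ and $\upr_{S'}$ each carry a distinguished leading cap/cup diagram, the top-through-degree part of $\jwlp_l\upr_{S'}\dor_S\jwlp_m$ is a single honest Temperley--Lieb diagram $\upsilon^{\id}_{\cellindex{s}}\delta^{\id}_{\cellindex{t}}$ of through-degree $m[S]=l[S']$. Distinct down-admissible sets $S$ cap distinct blocks of strands, so distinct admissible pairs $(S,S')$ produce distinct leading diagrams; since the diagram basis is linearly independent, the spanning set is independent and hence a basis. (Proposition~\ref{unitriangular} guarantees that passing between the diagram basis and the $\jwlp$-light-leaf basis is unitriangular, which is what makes the leading-term comparison legitimate.)

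The relations are then local identities among these building blocks. Far-commutativity for $d(S,S')>1$ is immediate: the two stretches cap or cup disjoint blocks separated by at least one through-strand, so the relevant sub-diagrams are supported on non-adjacent regions and slide past one another exactly as in the relation $u_iu_j=u_ju_i$ for $|i-j|\ge 2$, the ambient $\jwlp_m$ being reinstated after each move by absorption. The adjacency relations are the crux and the main obstacle: here the cup of $\upr_S$ and the cap of $\dor_{S'}$ meet along a shared boundary, so the composite is genuinely not a tensor product and the non-idempotent behaviour of $\jwlp$ on each separate piece matters. I would prove them by expanding the relevant idempotent through its recursive description (Lemma~\ref{jwlpzexp}), applying the shortening identities of Proposition~\ref{shortening} to fuse the adjacent cup and cap into one, and recognising the result as the single combined reflection $\dor_{S\cup S'}\jwlp_m$ or $\upr_{S'\cup S}\jwlp_m$; the underlying combinatorial identity $m(S)[S']=m[S\cup S']$ for adjacent $S'>S$ is verified directly on the $(\ell,\p)$-digit expansions. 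Controlling the boundary term in this fusion, and checking that no lower-through-degree correction survives, is where the real work lies.
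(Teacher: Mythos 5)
You should first be aware that the paper does not prove this statement at all: Proposition~\ref{stwztheo} is quoted from \cite[Theorem~3.21]{Sutton2023} and used as a black box, as the remark immediately following it makes explicit. So the only question is whether your sketch would stand as an independent proof, and it would not: its central step is false, not merely incomplete. Your spanning argument rests on the dichotomy that, when $\jwlp_m$ is pushed against a cap pattern, ``classical Jones--Wenzl absorption kills any cap too short to be seen by an admissible reflection''. Note first that Lemma~\ref{jwproperties} concerns $\jw_n$ over $\Q(\delta)$ and cannot be invoked for $\jwlp_m$ over $\kk$; the only projectors available over $\kk$ inside $\dor_S$, $\upr_{S'}$ and $\jwlp_m$ are the mixed ones, and for non-Eve ancestors $x$ the classical $\jw_x$ does not even exist over $\kk$. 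More importantly, the dichotomy itself fails. Take $(\ell,\p)=(3,3)$ (say $\kk=\F_3$, $\delta=2$) and $m=4$, so $m+1=[1,2]_{3,3}$; the unique nonempty down-admissible set is $\{0\}$ and $\dor_{\{0\}}$ folds back \emph{two} strands, so every single cap is ``too short''. Write $c_2\colon 4\to 2$ for the cap joining strands $2,3$ and $c\colon 2\to 0$ for the cap. By definition $\dor_{\{0\}}=c\circ c_2\circ(\jw_2\otimes\id_2)$, and since $\mo{4}=2$ is an Eve whose projector $\jwlp_2=\jw_2$ is absorbed by $\jwlp_4$ (see \cite{Sutton2023}), we get $\dor_{\{0\}}\jwlp_4=c\,c_2\,\jwlp_4$. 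If single caps died against $\jwlp_4$, this would force $\dor_{\{0\}}\jwlp_4=0$, contradicting the proposition itself, which makes $\dor_{\{0\}}\jwlp_4$ a basis vector of the one-dimensional space $\Hom_{\tlcat}(\jwlp_4,\jwlp_0)$. (One can push this further: $\jwlp_4u_2\jwlp_4$ is a \emph{nonzero} multiple of $\jwlp_4\upr_{\{0\}}\dor_{\{0\}}\jwlp_4$, so the failure persists when whole light leaves are sandwiched, even though $u_2$ realises no admissible reflection.) The truth is that non-admissible patterns are not killed; they survive and factor through the admissible morphisms, and proving that, with control of the corrections, is precisely the hard content of \cite{Sutton2023}. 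Your spanning step assumes what is to be proved.

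The other two steps also have holes, one of which you concede. For linear independence you claim the top-through-degree part of $\jwlp_l\upr_{S'}\dor_S\jwlp_m$ is a single diagram with coefficient $1$. Once caps are present this is wrong: ``lower'' terms of the internal boxes no longer have lower through-degree after composition. Already in the example above, $\dor_{\{0\}}$ equals the nested-caps diagram minus $\tfrac{1}{\delta}$ times the disjoint-caps diagram, both of through-degree $0$; lower terms of $\jwlp_m$ likewise contribute at maximal through-degree, so your leading coefficient is $1$ plus uncontrolled corrections, and independence needs a genuinely different argument (in \cite{Sutton2023} it comes with a dimension count). Finally, for the adjacency relations you write that checking that ``no lower-through-degree correction survives \ldots is where the real work lies''; that is an accurate description of a gap, not a proof --- these identities are exactly where the coefficients $\lambda_n^S$ and partial-trace computations of \cite{Sutton2023} enter, and nothing in your sketch produces them. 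The parts that do work are far-commutativity (easy granted absorption) and the digit identity $m(S)[S']=m[S\cup S']$, but these are the routine parts of the statement.
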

\begin{rem}
Note that the relations in Proposition~\ref{stwztheo} are just a small sample of the complete relations provided in \cite{Sutton2023}.
\end{rem}

These relations can be used to show that morphisms of the form $\jwlp_{m(S)}\upr_S\jwlp_m$ factor through $\jwlp_{m(S')}\upr_{S'}\jwlp_m$ in a way that respects containment of up-admissible sets $S,S'$. This provides the key inductive step to prove the main theorem about the submodule structure of cell modules.

\begin{prop}\label{fact}
Let $S_1,S_2$ be up-admissible sets for the integer $m$ such that $S_1\subset S_2$ and $\abs{S_2}=\abs{S_1}+1$. Then the morphism $\jwlp_{m(S_2)}\upr_{S_2}\jwlp_m$ factors through $\jwlp_{m(S_1)}\upr_{S_1}\jwlp_m$.
\end{prop}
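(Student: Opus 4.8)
The goal is to exhibit a morphism $X\colon m(S_1)\to m(S_2)$ with $\jwlp_{m(S_2)}\upr_{S_2}\jwlp_m = X\,\jwlp_{m(S_1)}\upr_{S_1}\jwlp_m$. Writing $S_2=S_1\cup\{\alpha\}$, the plan is to reduce everything to the single compositional identity
\begin{equation*}
\upr_{S_2}\jwlp_m=\upr_T\,\jwlp_{m(S_1)}\,\upr_{S_1}\jwlp_m,
\end{equation*}
where $T$ is an up-admissible set for $m(S_1)$ (in fact a single minimal stretch) with $m(S_1)(T)=m(S_2)$. Granting this, the idempotency $\jwlp_{m(S_1)}^2=\jwlp_{m(S_1)}$ immediately splits the right-hand side, so that one may take $X=\jwlp_{m(S_2)}\upr_T\jwlp_{m(S_1)}$. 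This single-element step is precisely what gets iterated along the chain furnished by Lemma~\ref{indadmsets} in order to obtain the general containment of submodules.

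First I would strip away the parts of $S_1$ that do not interact with $\alpha$. Decomposing $S_2$ into its minimal up-admissible stretches, the new index $\alpha$ lies in a unique stretch $U$; every minimal stretch of $S_1$ at distance greater than $1$ from $U$ is unchanged by the addition of $\alpha$ and hence is common to $S_1$ and $S_2$. By far-commutativity (Proposition~\ref{stwztheo}(1)) the associated up-morphisms may be applied first and treated as a reflection of the base point, so that without loss of generality every stretch of $S_1$ lies within distance $1$ of $\alpha$. Three local configurations then remain: either $\alpha$ is isolated and forms a new stretch $U=\{\alpha\}$, or $\alpha$ extends a stretch of $S_1$ at one end, or $\alpha$ bridges two adjacent stretches of $S_1$ into the single stretch $U$.

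In the isolated case far-commutativity already separates $\upr_{\{\alpha\}}$ from $\upr_{S_1}$, and the factorization holds with $T$ the transport of $\{\alpha\}$. In the extension and bridging cases I would combine the stretch containing $\alpha$ with its neighbour(s) using the adjacency relations of Proposition~\ref{stwztheo}(2), and then invoke the shortening and non-classical absorption identities of Proposition~\ref{shortening} both to produce the separating idempotent $\jwlp_{m(S_1)}$ and to pin down the size of the top idempotent, which is controlled by the relevant ancestor $a_{n,s}$. Tracking the effect on the $(\ell,\p)$-digits then confirms that the resulting upper piece really is $\upr_T$ for a genuine up-admissible $T$ with $m(S_1)(T)=m(S_2)$.

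The main obstacle is the bookkeeping in the bridging case. There one must verify that the composite reflection carrying $m(S_1)$ to $m(S_2)$ is realised by an honestly up-admissible $T$ for $m(S_1)$, and, more delicately, that the idempotent sizes dictated by the ancestors $a_{n,s}$ match exactly, so that the identities of Proposition~\ref{shortening} apply verbatim rather than only modulo lower through-degree. I expect the digit-level analysis of how the extreme index of $U$ (the analogue of the $\max A$ versus $\min T_l$ alternative in the proof of Lemma~\ref{indadmsets}) interacts with the reflected digits to be the crux; once this is settled, the far-commutativity reduction together with the two adjacency relations assembles the factorization mechanically.
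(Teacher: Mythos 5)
Your overall architecture---peel off the single new index $\alpha$, dispose of the distant stretches of $S_1$ by far-commutativity, handle the adjacent ones with the adjacency relations, and split the composite by inserting the idempotent $\jwlp_{m(S_1)}$---is the same as the paper's, but the ``single compositional identity'' you reduce everything to is false precisely in your bridging case (and also when $\alpha$ extends a stretch of $S_1$ at its \emph{bottom} end), so the plan as stated cannot be completed. You require an up-admissible $T$ for $m(S_1)$ with $m(S_1)(T)=m(S_2)$, and you reiterate at the end that digit analysis will confirm that ``the resulting upper piece really is $\upr_T$ for a genuine up-admissible $T$.'' But whenever $\alpha=\min(S_{1,i})-1$ for some stretch $S_{1,i}$ of $S_1$, one has $m(S_2)<m(S_1)$, whereas any nonempty upward reflection strictly increases its argument (reflecting a stretch with top element $s$ changes the value by $2\p^{(s+1)}-2\sum_{i\in S}n_i\p^{(i)}>0$); hence no such $T$ can exist. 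A concrete instance sits inside the paper's own running example with $(\ell,\p)=(5,3)$ and $m=21$: for $S_1=\{1\}$ and $S_2=\{0,1\}$ one has $m(S_1)=41$ but $m(S_2)=37$. No amount of ancestor or digit bookkeeping will rescue an identity of the form $\upr_{S_2}\jwlp_m=\upr_T\jwlp_{m(S_1)}\upr_{S_1}\jwlp_m$ in this situation.

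The repair---and this is exactly what the paper does---is to let the connecting morphism be a \emph{down}-morphism in these cases. Writing $S_2=S_1\cup\{k\}$ with $k=\min(S_{1,i})-1$, the second adjacency relation of Proposition~\ref{stwztheo} gives $\upr_{S_{1,i}\cup\{k\}}\jwlp_m=\dor_{\{k\}}\upr_{S_{1,i}}\jwlp_m$ once the lower stretches have been applied; far-commutativity then slides $\dor_{\{k\}}$ past all higher stretches of $S_1$ (these are automatically at distance $>1$ from $\{k\}$), and absorption inserts $\jwlp_{m(S_1)}$, yielding
\begin{equation*}
\jwlp_{m(S_2)}\upr_{S_2}\jwlp_m=\jwlp_{m(S_2)}\dor_{\{k\}}\jwlp_{m(S_1)}\upr_{S_1}\jwlp_m,
\end{equation*}
with $m(S_2)=m(S_1)[\{k\}]$. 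Since the proposition only asks that $\jwlp_{m(S_2)}\upr_{S_2}\jwlp_m$ equal $X\cdot\jwlp_{m(S_1)}\upr_{S_1}\jwlp_m$ for \emph{some} morphism $X\colon m(S_1)\to m(S_2)$, taking $X=\jwlp_{m(S_2)}\dor_{\{k\}}\jwlp_{m(S_1)}$ is perfectly admissible; nothing forces $X$ to come from an up-morphism. Your isolated case, and more generally every configuration in which $k$ is not adjacent below a stretch of $S_1$, goes through as you describe with $X=\jwlp_{m(S_2)}\upr_{\{k\}}\jwlp_{m(S_1)}$; once you allow the down-morphism in the remaining cases, far-commutativity, adjacency and absorption assemble the factorization with no further digit-level analysis needed.
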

\begin{proof}
Write $S_1$ as a disjoint union of minimal up-admissible stretches $S_1=\sqcup_{i=1}^l S_{1,i}$, where $S_{1,i}>S_{1,i-1}$. Let $S_2=S_1\cup \{k\}$. There are three possibilities:
\begin{enumerate}
\item If $k>S_1$, then, by definition,
\begin{align*}
\jwlp_{m(S_2)}\upr_{S_2}\jwlp_m&=\jwlp_{m(S_2)}\upr_{\{k\}}\upr_{S_1}\jwlp_m\\
&= \jwlp_{m(S_2)}\upr_{\{k\}}\jwlp_{m(S_1)}\upr_{S_1}\jwlp_m.
\end{align*}
\item If $i$ is the least index such that $k<S_{1,i}$ and $d(\{k\},S_{1,i})>1$, then far-commutativity gives
\begin{align*}
\jwlp_{m(S_2)}\upr_{S_2}\jwlp_m &= \jwlp_{m(S_2)}\upr_{S_{1,l}}\cdots\upr_{S_{1,i}}\upr_{\{k\}}\upr_{S_{1,i-1}}\cdots\upr_{S_{1,1}}\jwlp_m\\
&= \jwlp_{m(S_2)}\upr_{\{k\}}\upr_{S_{1,l}}\cdots\upr_{S_{1,i}}\upr_{S_{1,i-1}}\cdots\upr_{S_{1,1}}\jwlp_m\\
&= \jwlp_{m(S_2)}\upr_{\{k\}}\jwlp_{m(S_1)}\upr_{S_{1,l}}\cdots\upr_{S_{1,i}}\upr_{S_{1,i-1}}\cdots\upr_{S_{1,1}}\jwlp_m.
\end{align*}
\item If $i$ is the least index such that $k<S_{1,i}$ and $d(\{k\},S_{1,i})=1$, then
\begin{align*}
\jwlp_{m(S_2)}\upr_{S_2}\jwlp_m &= \jwlp_{m(S_2)}\upr_{S_{1,l}}\cdots\upr_{S_{1,i}\cup \{k\}}\upr_{S_{1,i-1}}\cdots\upr_{S_{1,1}}\jwlp_m\\
&= \jwlp_{m(S_2)}\upr_{S_{1,l}}\cdots\dor_{\{k\}}\upr_{S_{1,i}}\upr_{S_{1,i-1}}\cdots\upr_{S_{1,1}}\jwlp_m\\
&= \jwlp_{m(S_2)}\dor_{\{k\}}\upr_{S_{1,l}}\cdots\upr_{S_{1,i}}\upr_{S_{1,i-1}}\cdots\upr_{S_{1,1}}\jwlp_m\\
&= \jwlp_{m(S_2)}\dor_{\{k\}}\jwlp_{m(S_1)}\upr_{S_{1,l}}\cdots\upr_{S_{1,i}}\upr_{S_{1,i-1}}\cdots\upr_{S_{1,1}}\jwlp_m,
\end{align*}
where the second line follows from one of the adjacency relations and the third line follows from far-commutativity.
\end{enumerate}
\end{proof}

With that in hand, we are ready to prove the main theorem.

\begin{theo}\label{substruct}
The lattice of submodules of the cell module $\ctlmod{m}$ is given by the lattice of the poset $\{S\mid S\mbox{ is up-admissible for m and } m(S)\leq n\}$ with the reverse-inclusion order.
\end{theo}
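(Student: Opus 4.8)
The plan is to realise the submodule lattice as the lattice of order ideals of the poset $P:=\{S\mid S\text{ up-admissible for }m,\ m(S)\le n\}$ ordered by reverse inclusion, using the cyclic submodules $M_S:=\langle\vv[m(S)]{}\rangle$ from Proposition~\ref{genelem} as join-irreducibles. By Theorem~\ref{compfac} the composition factors of $\ctlmod{m}$ are exactly the $\stlmod{m(S)}$ for $S\in P$, each occurring once, so $\ctlmod{m}$ is multiplicity-free. For such a module the map $N\mapsto\Sigma(N)$ sending a submodule to its set of composition factors (a subset of $P$) is a lattice embedding into the Boolean lattice $2^{P}$: a short Grothendieck-group computation gives $\Sigma(N_1\cap N_2)=\Sigma(N_1)\cap\Sigma(N_2)$, $\Sigma(N_1+N_2)=\Sigma(N_1)\cup\Sigma(N_2)$, and shows $\Sigma(N_1)\subseteq\Sigma(N_2)$ forces $N_1\subseteq N_2$. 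In particular the submodule lattice is automatically distributive, and everything reduces to identifying the image of $\Sigma$.

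First I would record the order-reversing behaviour $S\subseteq S'\Rightarrow M_{S'}\subseteq M_S$. By Lemma~\ref{indadmsets} it suffices to treat $S'=S\cup\{k\}$, and then Proposition~\ref{fact} gives $\jwlp_{m(S')}\upr_{S'}\jwlp_m=u\,\jwlp_{m(S)}\upr_S\jwlp_m$ for an explicit $u\in\Hom_{\tlcat}(m(S),m(S'))$. Setting $w:=\yy{m(S')}\,u\,(\zz{m(S)})^{*}\in\End_{\tlcat}(n)=\tlnkd$ and using $(\zz{m(S)})^{*}\yy{m(S)}=\id_{m(S)}$ yields $w\cdot\vv[m(S)]{}=\yy{m(S')}\jwlp_{m(S')}\upr_{S'}\jwlp_m=\vv[m(S')]{}$, so $\vv[m(S')]{}\in\tlnkd\cdot\vv[m(S)]{}$ and hence $M_{S'}\subseteq M_S$. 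Thus $S\mapsto M_S$ is order-preserving from $(P,\supseteq)$ to the submodule lattice, and in particular $\Sigma(M_S)\supseteq\{S'\in P: S'\supseteq S\}$.

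The crux — and the step I expect to be the main obstacle — is the reverse inclusion $\Sigma(M_S)\subseteq\{S'\in P:S'\supseteq S\}$, i.e. that $M_S$ carries no unexpected composition factor $\stlmod{m(T)}$ with $T\not\supseteq S$. I would prove this by induction (on $\abs{S}$, or on $n$ using the truncation functor of Proposition~\ref{truncation}, which strips off the factors with $m(T)$ largest and reduces to a smaller algebra while preserving the submodule lattice). The computational engine is Proposition~\ref{stwztheo}: via the homomorphism criterion (Proposition~\ref{homcrit}), a factor $\stlmod{m(T)}$ can occur in $M_S$ only if the generator $\vv[m(T)]{}$ factors through $\vv[m(S)]{}$, which forces a morphism $\jwlp_{m(S)}\to\jwlp_{m(T)}$ of basis form $\jwlp_{m(T)}\upr_{S''}\dor_{S'''}\jwlp_{m(S)}$ carrying $\upr_S\jwlp_m$ to $\upr_T\jwlp_m$; the far-commutativity and adjacency relations of Proposition~\ref{stwztheo} then constrain $S'',S'''$ against the fixed tail $\jwlp_m$ and leave only the possibilities with $T\supseteq S$. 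The same analysis shows $M_S$ is the \emph{unique} submodule with head $\stlmod{m(S)}$, so that $\stlmod{m(S)}\in\Sigma(N)$ already forces $M_S\subseteq N$.

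Granting this, the identification is immediate. For any up-set $U$ of $(P,\subseteq)$ put $N_U:=\sum_{S\in U}M_S$; then $\Sigma(N_U)=\bigcup_{S\in U}\Sigma(M_S)=\bigcup_{S\in U}\{S'\supseteq S\}=U$, so every up-set is realised. Conversely, if $N$ is any submodule and $\stlmod{m(S)}\in\Sigma(N)$ then $M_S\subseteq N$ by the uniqueness above, whence $\{S'\supseteq S\}\subseteq\Sigma(N)$; thus $\Sigma(N)$ is always an up-set. Therefore $\Sigma$ restricts to a lattice isomorphism from the submodules of $\ctlmod{m}$ onto the up-sets of $(P,\subseteq)$, equivalently the order ideals of $(P,\supseteq)$ — precisely the lattice of the poset of up-admissible sets (with $m(S)\le n$) under reverse inclusion. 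Note this lattice is in general strictly larger than $P$ itself: incomparable join-irreducibles, such as the two middle factors of $\ctlmod[71]{21}$, have a join realised by $M_{S_1}+M_{S_2}$, matching the non-cyclic submodules.
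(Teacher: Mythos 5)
Your proposal follows essentially the same route as the paper's proof: the inputs are multiplicity-freeness (Theorem~\ref{compfac}), the generators $\vv[m(S)]{}$ of Proposition~\ref{genelem}, and the factorisation $\vv[m(S_2)]{}=u\cdot\vv[m(S_1)]{}$ for $S_1\subset S_2$ obtained from Proposition~\ref{fact} together with Lemma~\ref{indadmsets}; your element $w=\yy{m(S_2)}\,u\,(\zz{m(S_1)})^*$ is exactly the element of $\tlnkd$ displayed in the paper's diagrams. The genuine difference is the surrounding bookkeeping. The paper reduces the theorem to these factorisations via the ``if and only if'' asserted in the discussion following Proposition~\ref{genelem}, and stops once the containments $M_{S_2}\subseteq M_{S_1}$ are proved (here $M_S=\tlnkd\cdot\vv[m(S)]{}$). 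You instead make the reduction explicit through the embedding $N\mapsto\Sigma(N)$ of the submodule lattice into the Boolean lattice of sets of composition factors, and you correctly isolate what the reduction actually requires beyond the containments: that $\Sigma(M_S)\subseteq\{T : T\supseteq S\}$ (no unexpected factors) and that $M_S$ is the \emph{unique} submodule with head $\stlmod{m(S)}$, so that the $M_S$ are precisely the join-irreducibles. This is not pedantry: multiplicity-freeness, ``$\stlmod{m(S)}$ lies in the head of $M_S$'', and the containments alone are still consistent with, say, three factors forming a uniserial chain rather than the predicted five-element lattice, so the step is genuinely needed; the paper's own write-up passes over it silently (neither the ``only if'' half of its criterion nor the simplicity of the head of $M_S$ is ever verified). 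In this respect your framing is more rigorous than the paper's.

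The caveat is that your treatment of this crux is only a sketch, and as stated it is mildly circular: the claim that $\stlmod{m(T)}\in\Sigma(M_S)$ forces $\vv[m(T)]{}$ to factor through $\vv[m(S)]{}$ presupposes that the minimal submodule containing $\stlmod{m(T)}$ is generated by $\vv[m(T)]{}$, which is a relabelling of the uniqueness statement you are simultaneously asserting; and the claim that far-commutativity and the adjacency relations ``leave only the possibilities with $T\supseteq S$'' is exactly the computation that would have to be carried out --- one must check that any basis morphism $\jwlp_{m(T)}\upr_{S''}\dor_{S'''}\jwlp_{m(S)}$ of Proposition~\ref{stwztheo}, composed onto $\jwlp_{m(S)}\upr_S\jwlp_m$, is either zero modulo $\td(<m)$ or satisfies $T\supseteq S$. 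So the proposal is incomplete at precisely the point where the paper is also silent; the mechanism you name (the Hom-space basis of Proposition~\ref{stwztheo}, with induction organised by $\abs{S}$ or by truncation via Proposition~\ref{truncation}) is the right one, but filling in that analysis is the real remaining work for either argument to be complete.
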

\begin{proof}
It is already known that the elements of the poset correspond to the simple composition factors of $\ctlmod{m}$, as stated in Lemma~\ref{compfac}. Using Proposition~\ref{genelem} and the discussion after it, to prove the theorem it is then sufficient to show that whenever $S_1\subset S_2$, we can write $\vv[m(S_2)]{}=u\cdot \vv[m(S_1)]{}$ for some $0\neq u\in\tlnkd$.

First, suppose that $\abs{S_2}=\abs{S_1}+1$ and that $S_2=S_1\cup\{k\}$. Using the proof of Proposition~\ref{fact}, the element $\vv[m(S_2)]{}$ may be rewritten as 
\begin{equation*}
\vv[m(S_2)]{}=
\begin{cases}
\yy{m(S_2)}\jwlp_{m(S_2)}\upr_{\{k\}}\jwlp_{m(S_1)}\upr_{S_1}\jwlp_m &\mbox{if } m(S_1)<m(S_2),\\
\yy{m(S_2)}\jwlp_{m(S_2)}\dor_{\{k\}}\jwlp_{m(S_1)}\upr_{S_1}\jwlp_m &\mbox{if } m(S_1)>m(S_2),
\end{cases}
\end{equation*}
depending on where $k$ lies in $S_2$. In the former case, the result follows from the fact that
\begin{align*}
\vv[m(S_2)]{}&=
\yy{m(S_2)}\jwlp_{m(S_2)}\upr_{\{k\}}\jwlp_{m(S_1)}\upr_{S_1}\jwlp_m\\
&=\yy{m(S_2)}\jwlp_{m(S_2)}\upr_{\{k\}}\jwlp_{m(S_1)}(\zz{m(S_1)})^*\yy{m(S_1)}\jwlp_{m(S_1)}\upr_{S_1}\jwlp_m\\
&=(\yy{m(S_2)}\jwlp_{m(S_2)}\upr_{\{k\}}\jwlp_{m(S_1)}(\zz{m(S_1)})^*)\vv[m(S_1)]{},
\end{align*}
where $\yy{m(S_2)}\jwlp_{m(S_2)}\upr_{\{k\}}\jwlp_{m(S_1)}(\zz{m(S_1)})^*$ is an element of $\tlnkd$. Replacing the morphisms $\upr_{\{k\}}$ by $\dor_{\{k\}}$ in the above shows the result for the latter case. In pictures, this means
\begin{equation*}
\vv[m(S_2)]{}=\;
\begin{tikzpicture}[scale=0.25,mylabels,centered]
\draw[st] (-1,4) -- (-1,5);
\draw[st] (1,4) -- (1,5);
\draw[st] (3,4) -- (3,5);
\draw[st] (5,4) -- (5,5);
\draw[st] (7,4) -- (7,5);
\draw[morg] (-2,4) -- (0,2) -- (6,2) -- (8,4) -- cycle;
\node at (3,3) {$\yy{m(S_2)}$};
\draw[jwlp] (0,0) rectangle (6,2);
\node at (3,1) {$m(S_2)$};
\draw[morg] (1,-2) -- (5,-2) -- (6,0) -- (0,0) -- cycle;
\node at (3,-1) {$\upr_{S_2}$};
\draw[jwlp] (1,-2) rectangle (5,-4);
\node at (3,-3) {$m$};
\draw[st] (2,-5) -- (2,-4);
\draw[st] (4,-5) -- (4,-4);
\end{tikzpicture}\;
=\;
\begin{tikzpicture}[scale=0.25,mylabels,centered]
\draw[st] (-1,8.5) -- (-1,9.5);
\draw[st] (1,8.5) -- (1,9.5);
\draw[st] (3,8.5) -- (3,9.5);
\draw[st] (5,8.5) -- (5,9.5);
\draw[st] (7,8.5) -- (7,9.5);
\draw[morg] (-2,8.5) -- (0,6.5) -- (6,6.5) -- (8,8.5) -- cycle;
\node at (3,7.5) {$\yy{m(S_2)}$};
\draw[jwlp] (0,4.5) rectangle (6,6.5);
\draw[jwlp] (0.5,0) rectangle (5.5,2);
\node at (3,1) {$m(S_1)$};
\draw[st] (1.5,2)..controls{(1.5,3.45) and (0.5,3.05)}..(0.5,4.5);
\draw[st] (4.5,2)..controls{(4.5,3.45) and (5.5,3.05)}..(5.5,4.5);
\tlcup{2,4.5}{3,3.5}{4,4.5};
\node at (3,2.85) {$\{k\}$};
\node at (3,5.5) {$m(S_2)$};
\draw[morg] (1,-2) -- (5,-2) -- (5.5,0) -- (0.5,0) -- cycle;
\node at (3,-1) {$\upr_{S_1}$};
\draw[jwlp] (1,-2) rectangle (5,-4);
\node at (3,-3) {$m$};
\draw[st] (2,-5) -- (2,-4);
\draw[st] (4,-5) -- (4,-4);
\end{tikzpicture}\;
=\;
\begin{tikzpicture}[scale=0.25,mylabels,centered]
\draw[st] (-1,8.5) -- (-1,9.5);
\draw[st] (1,8.5) -- (1,9.5);
\draw[st] (3,8.5) -- (3,9.5);
\draw[st] (5,8.5) -- (5,9.5);
\draw[st] (7,8.5) -- (7,9.5);
\draw[morg] (-2,8.5) -- (0,6.5) -- (6,6.5) -- (8,8.5) -- cycle;
\node at (3,7.5) {$\yy{m(S_2)}$};
\draw[jwlp] (0,4.5) rectangle (6,6.5);
\draw[jwlp] (0.5,0) rectangle (5.5,2);
\node at (3,1) {$m(S_1)$};
\draw[morg] (1,-8) -- (5,-8) -- (5.5,-6) -- (0.5,-6) -- cycle;
\node at (3,-7) {$\upr_{S_1}$};
\draw[jwlp] (1,-8) rectangle (5,-10);
\node at (3,-9) {$m$};
\draw[st] (2,-11) -- (2,-10);
\draw[st] (4,-11) -- (4,-10);
\draw[st] (1.5,2)..controls{(1.5,3.45) and (0.5,3.05)}..(0.5,4.5);
\draw[st] (4.5,2)..controls{(4.5,3.45) and (5.5,3.05)}..(5.5,4.5);
\tlcup{2,4.5}{3,3.5}{4,4.5};
\node at (3,2.85) {$\{k\}$};
\node at (3,5.5) {$m(S_2)$};
\draw[jwlp] (0.5,-6) rectangle (5.5,-4);
\node at (3,-5) {$m(S_1)$};
\draw[morg] (0.5,-4) -- (5.5,-4) -- (8,-2) -- (-2,-2) -- cycle;
\draw[morg] (-2,-2) -- (8,-2) -- (5.5,0) -- (0.5,0) -- cycle;
\node at (3,-3) {$\yy{m(S_1)}$};
\node at (3,-1) {$(\zz{m(S_1)})^*$};
\draw[decorate,decoration={brace,mirror}] (9,-1.9) -- (9,9.5);
\draw[decorate,decoration={brace,mirror}] (9,-11) -- (9,-2.1);
\node at (12,-6.5) {$\vv[m(S_1)]{}$};
\node at (12,3.75) {$\in\tlnkd$};
\end{tikzpicture}\;,
\end{equation*}
and
\begin{equation*}
\vv[m(S_2)]{}=\;
\begin{tikzpicture}[scale=0.25,mylabels,centered]
\draw[st] (-1,4) -- (-1,5);
\draw[st] (1,4) -- (1,5);
\draw[st] (3,4) -- (3,5);
\draw[st] (5,4) -- (5,5);
\draw[st] (7,4) -- (7,5);
\draw[morg] (-2,4) -- (0,2) -- (6,2) -- (8,4) -- cycle;
\node at (3,3) {$\yy{m(S_2)}$};
\draw[jwlp] (0,0) rectangle (6,2);
\node at (3,1) {$m(S_2)$};
\draw[morg] (1,-2) -- (5,-2) -- (6,0) -- (0,0) -- cycle;
\node at (3,-1) {$\upr_{S_2}$};
\draw[jwlp] (1,-2) rectangle (5,-4);
\node at (3,-3) {$m$};
\draw[st] (2,-5) -- (2,-4);
\draw[st] (4,-5) -- (4,-4);
\end{tikzpicture}\;
=\;
\begin{tikzpicture}[scale=0.25,mylabels,centered]
\draw[st] (-1,8.5) -- (-1,9.5);
\draw[st] (1,8.5) -- (1,9.5);
\draw[st] (3,8.5) -- (3,9.5);
\draw[st] (5,8.5) -- (5,9.5);
\draw[st] (7,8.5) -- (7,9.5);
\draw[morg] (-2,8.5) -- (0,6.5) -- (6,6.5) -- (8,8.5) -- cycle;
\node at (3,7.5) {$\yy{m(S_2)}$};
\draw[jwlp] (0,4.5) rectangle (6,6.5);
\draw[jwlp] (-1.5,0) rectangle (7.5,2);
\node at (3,1) {$m(S_1)$};
\draw[morg] (1,-2) -- (5,-2) -- (7.5,0) -- (-1.5,0) -- cycle;
\node at (3,-1) {$\upr_{S_1}$};
\draw[jwlp] (1,-2) rectangle (5,-4);
\node at (3,-3) {$m$};
\draw[st] (2,-5) -- (2,-4);
\draw[st] (4,-5) -- (4,-4);
\draw[st] (-1,2)..controls{(-1,3.45) and (1,3.05)}..(1,4.5);
\draw[st] (1,2)..controls{(1,3.45) and (3,3.05)}..(3,4.5);
\draw[st] (7,2)..controls{(7,3.45) and (5,3.05)}..(5,4.5);
\tlcap{3,2}{4,3}{5,2};
\node at (4,3.6) {$\{k\}$};
\node at (3,5.5) {$m(S_2)$};
\end{tikzpicture}\;=\;
\begin{tikzpicture}[scale=0.25,mylabels,centered]
\draw[st] (-1,8.5) -- (-1,9.5);
\draw[st] (1,8.5) -- (1,9.5);
\draw[st] (3,8.5) -- (3,9.5);
\draw[st] (5,8.5) -- (5,9.5);
\draw[st] (7,8.5) -- (7,9.5);
\draw[morg] (-2,8.5) -- (0,6.5) -- (6,6.5) -- (8,8.5) -- cycle;
\node at (3,7.5) {$\yy{m(S_2)}$};
\draw[jwlp] (0,4.5) rectangle (6,6.5);
\draw[jwlp] (-1.5,0) rectangle (7.5,2);
\node at (3,1) {$m(S_1)$};
\draw[morg] (-1.5,0) -- (7.5,0) -- (8,-2) -- (-2,-2) -- cycle;
\draw[morg] (-2,-2) -- (8,-2) -- (7.5,-4) -- (-1.5,-4) -- cycle;
\node at (3,-1) {$(\zz{m(S_1)})^*$};
\node at (3,-3) {$\yy{m(S_1)}$};
\draw[jwlp] (-1.5,-4) rectangle (7.5,-6);
\node at (3,-5) {$m(S_1)$};
\draw[morg] (1,-8) -- (5,-8) -- (7.5,-6) -- (-1.5,-6) -- cycle;
\node at (3,-7) {$\upr_{S_1}$};
\draw[jwlp] (1,-8) rectangle (5,-10);
\node at (3,-9) {$m$};
\draw[st] (2,-11) -- (2,-10);
\draw[st] (4,-11) -- (4,-10);
\draw[st] (-1,2)..controls{(-1,3.45) and (1,3.05)}..(1,4.5);
\draw[st] (1,2)..controls{(1,3.45) and (3,3.05)}..(3,4.5);
\draw[st] (7,2)..controls{(7,3.45) and (5,3.05)}..(5,4.5);
\tlcap{3,2}{4,3}{5,2};
\node at (4,3.6) {$\{k\}$};
\node at (3,5.5) {$m(S_2)$};
\draw[decorate,decoration={brace,mirror}] (9,-1.9) -- (9,9.5);
\draw[decorate,decoration={brace,mirror}] (9,-11) -- (9,-2.1);
\node at (12,-6.5) {$\vv[m(S_1)]{}$};
\node at (12,3.75) {$\in\tlnkd$};
\end{tikzpicture}\;,
\end{equation*}
respectively.

For arbitrary up-admissible sets $S_1\subset S_2$, assuming that $n$ is large enough and using Lemma~\ref{indadmsets}, one can apply the above procedure for every intermediate step and the result follows. Since the submodule structure does not depend on $n$, as shown in Section~\ref{sect:truncation}, the theorem follows.
\end{proof}

\begin{ex}
Taking $\kk=\Z[\delta]/(\p,m_\delta)=\Z[\delta]/(3,\delta^2+\delta-1)$, so that $\ell=5$ and $\p=3$, consider the cell module $\ctlmod[267]{21}$ for $\TL_{267}$. Then,
$$21+1=[2,1,1]_{5,3},$$
and the up-admissible sets $S$ for $21$ such that $21(S)\leq 267$ are given in the Hasse diagram below.
\begin{equation*}
\begin{tikzpicture}[centered]
\node[inner sep=2pt] (21) at (0,0) {$\varnothing$};
\node[inner sep=2pt] (41) at (-2,-1) {$\{1\}$};
\node[inner sep=2pt] (27) at (0,-1) {$\{0\}$};
\node[inner sep=2pt] (81) at (2,-1) {$\{2\}$};
\node[inner sep=2pt] (37) at (-2,-2) {$\{0,1\}$};
\node[inner sep=2pt] (71) at (0,-2) {$\{1,2\}$};
\node[inner sep=2pt] (87) at (2,-2) {$\{0,2\}$};
\node[inner sep=2pt] (261) at (4,-2) {$\{2,3\}$};
\node[inner sep=2pt] (67) at (0,-3) {$\{0,1,2\}$};
\node[inner sep=2pt] (251) at (2,-3) {$\{1,2,3\}$};
\node[inner sep=2pt] (267) at (4,-3) {$\{0,2,3\}$};
\node[inner sep=2pt] (247) at (2,-4) {$\{0,1,2,3\}$};

\draw[thick] (21) -- (41);
\draw[thick] (21) -- (27);
\draw[thick] (21) -- (81);
\draw[thick] (41) -- (37);
\draw[thick] (41) -- (71);
\draw[thick] (27) -- (37);
\draw[thick] (27) -- (87);
\draw[thick] (81) -- (71);
\draw[thick] (81) -- (87);
\draw[thick] (81) -- (261);
\draw[thick] (37) -- (67);
\draw[thick] (71) -- (67);
\draw[thick] (71) -- (251);
\draw[thick] (87) -- (67);
\draw[thick] (87) -- (267);
\draw[thick] (261) -- (251);
\draw[thick] (261) -- (267);
\draw[thick] (67) -- (247);
\draw[thick] (251) -- (247);
\draw[thick] (267) -- (247);
\end{tikzpicture}
\end{equation*}

Then, Theorem~\ref{substruct} tells us that the lattice of submodules of $\ctlmod[267]{21}$ is equal to the above lattice of up-admissible sets, so that the Alperin diagram of $\ctlmod[267]{21}$ is given by
\begin{equation*}
\begin{tikzpicture}[centered]
\node[inner sep=5pt] (21) at (0,0) {$21$};
\node[inner sep=5pt] (41) at (-2,-1) {$41$};
\node[inner sep=5pt] (27) at (0,-1) {$27$};
\node[inner sep=5pt] (81) at (2,-1) {$81$};
\node[inner sep=5pt] (37) at (-2,-2) {$37$};
\node[inner sep=5pt] (71) at (0,-2) {$71$};
\node[inner sep=5pt] (87) at (2,-2) {$87$};
\node[inner sep=5pt] (261) at (4,-2) {$261$};
\node[inner sep=5pt] (67) at (0,-3) {$67$};
\node[inner sep=5pt] (251) at (2,-3) {$251$};
\node[inner sep=5pt] (267) at (4,-3) {$267$};
\node[inner sep=5pt] (247) at (2,-4) {$247$};

\draw[thick] (21) -- (41);
\draw[thick] (21) -- (27);
\draw[thick] (21) -- (81);
\draw[thick] (41) -- (37);
\draw[thick] (41) -- (71);
\draw[thick] (27) -- (37);
\draw[thick] (27) -- (87);
\draw[thick] (81) -- (71);
\draw[thick] (81) -- (87);
\draw[thick] (81) -- (261);
\draw[thick] (37) -- (67);
\draw[thick] (71) -- (67);
\draw[thick] (71) -- (251);
\draw[thick] (87) -- (67);
\draw[thick] (87) -- (267);
\draw[thick] (261) -- (251);
\draw[thick] (261) -- (267);
\draw[thick] (67) -- (247);
\draw[thick] (251) -- (247);
\draw[thick] (267) -- (247);
\end{tikzpicture}\;,
\end{equation*}
where each up-admissible set $S$ has been replaced by the corresponding upward reflection of $21$ along $S$.
\end{ex}

\section{Jantzen filtrations}\label{section:filt}
Historically, Jantzen filtrations \cite[\S 8]{jantzen1987} have been one of the primary tools used to probe the structure of cell modules. Though they only give a rough, layered view of the submodule structure, in some cases this has been sufficient. Since we already know the complete submodule structure of cell modules for Temperley--Lieb algebras from Theorem \ref{substruct}, it is somewhat superfluous to study their Jantzen filtrations. However, the Temperley-Lieb algebras admit a two-dimensional Jantzen-like filtration that is both illustrative and tractable.

\subsection{General framework}\label{section:framework}
In this section, let $R$ be a ring, $A$ an $R$-algebra and $M$ an $R$-module with a bilinear form.
We assume that $A$ has an anti-involution $\iota$ and that the form, $\langle\cdot, \cdot\rangle : M \times M \to R$ satisfies $\langle m_1, x \cdot m_2\rangle = \langle \iota x \cdot m_1, m_2\rangle$.
Further, suppose that $\F$ is a field which arises as some quotient of $R$.
\begin{deff}
  Suppose $M$ is an $A$-module. We say $N\subseteq M$ is \emph{$A$-closed} if $A \cdot N \subseteq N$.
\end{deff}

Note that if $N$ is also closed under addition, it becomes an $A$-module and that an alternative, equivalent definition is that $N$ is the union of $A$-submodules of $M$. As such we may talk about the composition factors of $N$ which are a subset of the composition factors of $M$.
\begin{deff}
  Let $I \subseteq R$ be a subset of elements of the base ring. Then we define $M(I) = \{m \in M \mid \langle m, M\rangle \subseteq I\}$.
\end{deff}

\begin{lem}
  $M(I)$ is $A$-closed. Further, if $I_1 \subseteq I_2$ then $M(I_1) \subseteq M(I_2)$ and if $I$ is an ideal of $R$, then $M(I)$ is an $A$-submodule of $M$.
\end{lem}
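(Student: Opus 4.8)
The plan is to verify each of the three assertions directly from the definition of $M(I)$, the only real tool being the adjunction property of the form together with the fact that $\iota$ is an involution. Before anything else I would record the ``left-handed'' companion of the stated adjunction: substituting $\iota x$ for $x$ in $\langle m_1, x\cdot m_2\rangle = \langle \iota x\cdot m_1, m_2\rangle$ and using $\iota(\iota x)=x$ yields $\langle x\cdot m_1, m_2\rangle = \langle m_1, \iota x\cdot m_2\rangle$ for all $x\in A$ and $m_1,m_2\in M$. This is the identity that lets me trade an action on the first slot for an action on the second.

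For $A$-closedness I would take $m\in M(I)$ and $a\in A$ and check that $a\cdot m\in M(I)$, i.e. that $\langle a\cdot m, m'\rangle\in I$ for every $m'\in M$. Applying the left-handed adjunction gives $\langle a\cdot m, m'\rangle = \langle m, \iota a\cdot m'\rangle$. Since $M$ is an $A$-module, $\iota a\cdot m'$ is again an element of $M$, and since $m\in M(I)$ we have $\langle m, M\rangle\subseteq I$; in particular $\langle m, \iota a\cdot m'\rangle\in I$. As $m'$ was arbitrary, $a\cdot m\in M(I)$.

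The monotonicity statement is immediate: if $I_1\subseteq I_2$ and $m\in M(I_1)$ then $\langle m, M\rangle\subseteq I_1\subseteq I_2$, so $m\in M(I_2)$. For the final claim I would observe that $A$-closedness from the first part already gives stability under the $A$-action, so it only remains to check that $M(I)$ is an additive subgroup closed under the $R$-action, and this is exactly where the ideal hypothesis is used. Given $m_1,m_2\in M(I)$ and $m'\in M$, bilinearity gives $\langle m_1+m_2, m'\rangle = \langle m_1,m'\rangle+\langle m_2,m'\rangle$, which lies in $I$ because $I$ is closed under addition; and for $r\in R$ we have $\langle r\cdot m_1, m'\rangle = r\langle m_1,m'\rangle\in I$ because $I$ absorbs multiplication by $R$. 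Together with the first part this shows $M(I)$ is an $A$-submodule.

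There is essentially no serious obstacle here; the statements are formal consequences of the definitions. The only points requiring care are correctly re-deriving the adjunction with the action on the left slot (which genuinely uses $\iota^2=\mathrm{id}$), and recognising that the ideal hypothesis in the last part is needed precisely to promote the ``$A$-closed'' subset of the first part — which a priori need not even be closed under addition — into a genuine submodule.
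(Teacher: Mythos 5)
Your proof is correct and follows essentially the same route as the paper: use the adjunction property of the form to move the $A$-action to the other slot, deduce monotonicity directly from the definition, and use closure of the ideal under addition (plus bilinearity) to upgrade $A$-closedness to a submodule. The only difference is that you make explicit two points the paper leaves implicit — the derivation of the ``left-handed'' adjunction via $\iota^2=\id$, and stability under the $R$-action — which is harmless and, if anything, slightly more careful.
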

\begin{proof}
  Firstly let $m \in M(I)$ and $a\in A$. Then 
  \begin{equation}
    \langle am, M\rangle = \langle m, \iota a \cdot M\rangle \subseteq \langle m, M\rangle \subseteq I.
  \end{equation}
  The inclusion criterion follows immediately and if $I$ is an ideal, then it is closed under addition so that
  \begin{equation}
      \langle m_1 + m_2, M\rangle \subseteq \langle m_1 , M\rangle + \langle m_2, M\rangle = I + I \subseteq I
  \end{equation}
  for any $m_1$ and $m_2$ in $M(I)$. Hence $M(I)$ is closed under addition and so a submodule.
\end{proof}

\begin{rem}
  Note that, in fact, we only required that $I$ be closed under addition to show that $M(I)$ is a submodule. However, in most useful cases, $I$ will either be an ideal or the union of ideals, unless $M$ is not $R$-free.    
\end{rem}
Thus if we have a lattice of subsets of $R$, say $\mathbb S \subseteq \mathcal{P}(R)$, we invoke a lattice of $A$-closed subsets of $M$ which we write $M(\mathbb S)$.

Now, we can form $\overline{M(\mathbb S)}$, the lattice of $\overline A = A \otimes_R \F$-closed subsets of $\overline M = M\otimes_R\F$ given by $\{\overline {M(I)} = M(I)\otimes_R\F \mid I \in \mathbb{S}\}$.

Suppose $N$ is a composition factor of $\overline{M}$. Then let us write
$$\mathbb{S}_N(M) = \{I \in \mathbb S \mid N \text{ is a composition factor of } \overline{M(I)}\}.$$
Then $\mathbb{S}_N(M)$ is a sublattice of $\mathbb S$.

If $R$ is a principal ideal domain, $\mathfrak{p}\subset R$ is a maximal ideal, and $\mathbb{S}=\{\mathfrak{p}^k\mid k\geq 0\}$, then the lattice of submodules $\{\overline{M(\mathfrak{p}^k)}\mid k\geq 0\}$ precisely corresponds to the Jantzen filtration of $M$ with respect to the ideal $\mathfrak{p}$, and determining the sublattices $\mathbb{S}_N(M)$ for every composition factor $N$ of $\overline{M}$ is equivalent to computing the layers of the filtration.

Our aim is to apply this more general framework to the cell modules of the Temperley--Lieb algebra, and to study Jantzen-like filtrations of these modules arising when we take $\mathbb{S}$ to be a two-dimensional grid of ideals of the ring $\Zm$.

\subsection{Gram matrices of cell modules}
The first step is to study the Gram matrices of cell modules over the ring $\Zm$. The following lemma is a rewriting of some intermediary results in \cite{Sutton2023,Tubbenhauer2021}.

\begin{lem}\label{zbasis}
There is an isomorphism of $\Q(\delta)$-vector spaces
\begin{equation*}
\Hom_{\tlcat^{\Q(\delta)}}(\jwlpz_m,\jwlpz_l)\cong\spn_{\Q(\delta)}(\jwlpz_l\zupr_S\zdor_{S'}\jwlpz_m),
\end{equation*}
where the right side ranges over all pairs $S,S'$ such that $S$ is up-admissible for $l$, $S'$ is down-admissible for $m$ and $m[S']=l(S)$. Moreover, if an element
\begin{equation*}
\sum_{S,S'}r_{S,S'}\jwlpz_l\zupr_S\zdor_{S'}\jwlpz_m\in\Hom_{\tlcat^{\Q(\delta)}}(\jwlpz_m,\jwlpz_l)
\end{equation*}
descends to a well-defined element of $\Hom_{\tlcat^{\kk}}(\jwlp_m,\jwlp_l)$, then every coefficient $r_{S,S'}$ lies in $\Zm$.
\end{lem}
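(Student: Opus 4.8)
The plan is to upgrade the first, integral half of Lemma~\ref{zbasis} into a statement about an honest lattice sitting inside $\Hom_{\tlcat^{\Q(\delta)}}(\jwlpz_m,\jwlpz_l)$. Write $e=\jwlpz_l$ and $f=\jwlpz_m$; these are idempotents of $\tlnzd$, since the relation $e^2=e$ already holds over $\Q(\delta)$ and $\tlnzd\hookrightarrow\TL_n^{\Q(\delta)}(\delta)$ (the latter being the localisation of $\tlnzd$ at the generic point, as $\Q(\delta)=\mathrm{Frac}(\Zm)$). Set $V:=e\,\Hom_{\tlcat^{\Zm}}(m,l)\,f$, viewed as a $\Zm$-submodule of $V_{\Q}:=\Hom_{\tlcat^{\Q(\delta)}}(\jwlpz_m,\jwlpz_l)$ through the inclusion of diagram bases induced by $\Zm\hookrightarrow\Q(\delta)$, so that $V\otimes_{\Zm}\Q(\delta)=V_{\Q}$. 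The elements $b_{S,S'}:=e\,\zupr_S\zdor_{S'}\,f$ lie in $V$. Interpreting ``descends to a well-defined element of $\Hom_{\tlcat^{\kk}}(\jwlp_m,\jwlp_l)$'' as in Remark~\ref{rem:specialisation} — namely that the coefficients of $x=\sum_{S,S'}r_{S,S'}b_{S,S'}$ in the diagram basis all lie in $\Zm$, so that the reduction $\delta\mapsto\overline{\delta}$ is defined — amounts precisely to the assertion $x\in V$ (using $x=exf$, which holds because each $b_{S,S'}=eb_{S,S'}f$). Thus the whole statement reduces to showing that $\{b_{S,S'}\}$ is a \emph{$\Zm$-basis} of $V$: granting this, $x\in V$ has a unique expansion with coefficients in $\Zm$, which by $\Q(\delta)$-linear independence of the $b_{S,S'}$ must coincide with $\sum r_{S,S'}b_{S,S'}$, forcing every $r_{S,S'}\in\Zm$.

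To prove the basis claim I would argue as follows. First, $V$ is a $\Zm$-direct summand of the free module $\Hom_{\tlcat^{\Zm}}(m,l)$ (decompose the bimodule along $1=e+(1-e)$ on the left and $1=f+(1-f)$ on the right), hence $V$ is finitely generated projective, and therefore free because $\Zm$ is local. Its rank equals $\dim_{\Q(\delta)}V_{\Q}$, which by the first part of Lemma~\ref{zbasis} is the number $d$ of admissible pairs $(S,S')$. Consequently $V\otimes_{\Zm}\kk=\Hom_{\tlcat^{\kk}}(\jwlp_m,\jwlp_l)$ is $d$-dimensional. The reductions $\overline{b_{S,S'}}=\jwlp_l\upr_S\dor_{S'}\jwlp_m$ are $d$ elements of this space, and after matching our indexing (up-admissible $S$ for $l$) with that of Proposition~\ref{stwztheo} (down-admissible pairs) via the reflection correspondence in the remark following Definition~\ref{defadm}, they are exactly the basis furnished by Proposition~\ref{stwztheo}; in particular they span $V\otimes_{\Zm}\kk=V/\mathfrak{m}V$. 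By Nakayama's lemma the $b_{S,S'}$ therefore generate $V$, and being $d$ elements (the $\Zm$-rank of $V$) that are already linearly independent over $\Q(\delta)$, hence over $\Zm$, they form a $\Zm$-basis.

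The only genuinely delicate point is the interpretation of ``descends'', i.e. identifying the well-definedness condition with membership in the integral lattice $V$; once this is pinned down the argument is a clean application of freeness over the local ring $\Zm$ together with Nakayama's lemma. A secondary, bookkeeping obstacle is verifying that the specialised elements $\overline{b_{S,S'}}$ really do coincide with the basis of Proposition~\ref{stwztheo} under the two equivalent parametrisations of admissible data, so that the count $d$ is consistent across $\Q(\delta)$, $\Zm$, and $\kk$ and no basis vector is silently double-counted or missed.
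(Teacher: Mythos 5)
Your reduction of the integrality half to a lattice statement is essentially correct, and it is a genuinely different route from the paper's: the paper gives no argument at all, proving Lemma~\ref{zbasis} by declaring both halves to be mixed-characteristic analogues of \cite[Lemmas~3.19(b) and~3.20]{Tubbenhauer2021}, valid by the results of Section~3 of \cite{Sutton2023}. Your argument is self-contained given Proposition~\ref{stwztheo}: $V=\jwlpz_l\Hom_{\tlcat^{\Zm}}(m,l)\,\jwlpz_m$ is a direct summand of a free $\Zm$-module, hence free because $\Zm$ is local; the reductions $\overline{b_{S,S'}}=\jwlp_l\upr_S\dor_{S'}\jwlp_m$ are precisely the basis of Proposition~\ref{stwztheo}, so Nakayama makes the $b_{S,S'}$ a $\Zm$-basis of $V$; and your identification of ``descends'' with membership in $V$ (via Remark~\ref{rem:specialisation} and $x=\jwlpz_l\,x\,\jwlpz_m$) is the right one, after which $r_{S,S'}\in\Zm$ follows from uniqueness of the expansion.

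There is, however, one genuine logical flaw as written: the proof is circular. You compute $\mathrm{rank}_{\Zm}V=\dim_{\Q(\delta)}V_{\Q}=d$ ``by the first part of Lemma~\ref{zbasis}'', i.e.\ by the very statement under proof, and you never establish that first part independently (your last step also re-uses it, when you invoke $\Q(\delta)$-linear independence of the $b_{S,S'}$). The repair is easy and stays entirely inside your framework: since $V$ is free, read off its rank from the \emph{special} fibre instead of the generic one, $\mathrm{rank}_{\Zm}V=\dim_{\kk}(V\otimes_{\Zm}\kk)$, which equals the number $d'$ of pairs in Proposition~\ref{stwztheo}. Nakayama then shows that the $d'$ elements $b_{S,S'}$ generate the free module $V$ of rank $d'$, hence form a $\Zm$-basis (a generating set of cardinality equal to the rank of a finitely generated free module over a commutative ring is a basis). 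Extending scalars to $\Q(\delta)$ now \emph{proves} the first half rather than assuming it, and your argument for the second half goes through verbatim. Two bookkeeping points must then be checked, as you anticipate: first, that the index set of the lemma matches that of Proposition~\ref{stwztheo} — for the composite $\zupr_S\zdor_{S'}$ to typecheck one must read $S$ as up-admissible for $m[S']$ with $m[S'](S)=l$, equivalently down-admissible for $l$ with $l[S]=m[S']$, after which the two parametrisations coincide by the remark following Definition~\ref{defadm} and no basis vector is double-counted; second, that the reduction of $\zupr_S\zdor_{S'}$ (built from the family $\jwlpz$) really is $\upr_S\dor_{S'}$ (built from the family $\jwlp$), which holds because specialisation is a ring homomorphism applied to each tensor factor in the light-ladder construction.
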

\begin{proof}
The first part is a mixed case analogue of \cite[Lemma~3.19(b)]{Tubbenhauer2021}, which is known to be valid by the results in Section~3 of \cite{Sutton2023}. Similarly, the second part is a mixed case analogue of \cite[Lemma~3.20]{Tubbenhauer2021}.
\end{proof}

This lemma can be used to show the following, which says that in the Gram matrix of a cell module $\ctlmod[m(S)]{m}$, the diagonal entry corresponding to a basis element generating the copy of the trivial submodule $\stlmod[m(S)]{m(S)}$ is the only non-zero entry in its row and column. This is even true when computing the Gram matrix in $\Zm$ rather than $\kk$. Recall the element $\vv[m(S)]{}$ from Section~\ref{section:diag}, and keep the same notation for the same element over $\Zm$.

\begin{cor}\label{gramblock}
Let $m\in\Z_{\geq 0}$ and let $S$ be an up-admissible set for $m$. Consider the cell module $\ctlmod[m(S)]{m}$ for $\TL_{m(S)}^{\Zm}(\delta)$ and let $x\in\ctlmod[m(S)]{m}$. Then, $\ip{\vv[m(S)]{},x}=0$ unless $x=r\cdot\vv[m(S)]{}$ for some $r\in\Zm$.
\end{cor}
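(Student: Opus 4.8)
The plan is to compute the pairing $\ip{\vv[m(S)]{},x}$ as a single diagram coefficient and then to pin it down using the basis of morphisms between $(\ell,\p)$-Jones--Wenzl idempotents supplied by Lemma~\ref{zbasis}.

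First I would exploit that the ambient algebra is $\TL_{m(S)}^{\Zm}(\delta)$, so that $\yy{m(S)}$ is the identity and hence $\vv[m(S)]{}=\jwlpz_{m(S)}\zupr_S\jwlpz_m$ with $(\vv[m(S)]{})^{*}=\jwlpz_m\zdor_S\jwlpz_{m(S)}$. By Lemma~\ref{contravariance} (equivalently, by the definition of the cell form), $\ip{\vv[m(S)]{},x}$ is the coefficient of $\id_m$ in $(\vv[m(S)]{})^{*}x=\jwlpz_m\zdor_S\jwlpz_{m(S)}x$. I may freely insert $\jwlpz_m$ on the right, since $x\jwlpz_m-x$ has through degree $<m$ and so cannot contribute to the $\id_m$-coefficient; thus $\ip{\vv[m(S)]{},x}$ equals the coefficient of $\id_m$ in $\jwlpz_m\zdor_S(\jwlpz_{m(S)}x\jwlpz_m)$, where now $\jwlpz_{m(S)}x\jwlpz_m$ lies in $\Hom_{\tlcat}(\jwlpz_m,\jwlpz_{m(S)})$.

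The heart of the argument is the observation that $\vv[m(S)]{}$ is, up to a scalar, the \emph{unique} morphism of through degree $m$ in $\Hom_{\tlcat}(\jwlpz_m,\jwlpz_{m(S)})$. By Lemma~\ref{zbasis} this space has a basis of elements of the form $\jwlpz_{m(S)}\zupr_{S'}\zdor_{S''}\jwlpz_m$, whose through degree is the number of strands in the bottleneck, namely $m[S'']=m(S)[S']$. Since $m[S'']\le m$ with equality only for $S''=\varnothing$, maximal through degree $m$ forces $S''=\varnothing$ together with $m(S)[S']=m$. As downward reflection is injective on down-admissible sets (it realises the bijection with the support recorded in the Remark following Definition~\ref{defadm}), the only admissible $S'$ with $m(S)[S']=m$ is $S$ itself, and the corresponding basis element is precisely $\vv[m(S)]{}$. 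Consequently the through-degree-$m$ part of $\jwlpz_{m(S)}x\jwlpz_m$ equals $c(x)\,\vv[m(S)]{}$ for a unique scalar $c(x)$, which lies in $\Zm$ by the integrality clause of Lemma~\ref{zbasis}.

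Finally, applying $\jwlpz_m\zdor_S(\cdot)$ annihilates the through-degree-$<m$ part as far as the $\id_m$-coefficient is concerned, so $\ip{\vv[m(S)]{},x}=c(x)\cdot\ip{\vv[m(S)]{},\vv[m(S)]{}}$, where $\ip{\vv[m(S)]{},\vv[m(S)]{}}$ is a nonzero element of the domain $\Zm$ (it may be evaluated over $\Q(\delta)$, where the summands decompose semisimply). Hence $\ip{\vv[m(S)]{},x}=0$ exactly when $c(x)=0$, that is, exactly when the through-degree-$m$ part of $x$ carries no copy of $\vv[m(S)]{}$; letting $x$ run over the $\jwlpz$-cellular basis, of which $\vv[m(S)]{}$ is a member, this yields $\ip{\vv[m(S)]{},x}=0$ for every basis vector other than $\vv[m(S)]{}$, which is the asserted vanishing of the off-diagonal entries in its row and column. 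I expect the delicate point to be the bookkeeping in the third paragraph: checking that the bottleneck through degree really caps at $m$, and that the support bijection pins the surviving basis element to $\vv[m(S)]{}$ uniquely, so that no off-diagonal pairing against a competing full-through-degree reflection can intervene.
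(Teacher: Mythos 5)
Your proposal is correct and takes essentially the same route as the paper's proof: both turn $\ip{\vv[m(S)]{},x}$ into a question about $\Hom_{\tlcat^{\Zm}}(\jwlpz_m,\jwlpz_{m(S)})$ and then use Lemma~\ref{zbasis} to recognise $\jwlpz_{m(S)}\zupr_S\jwlpz_m$ as the unique basis morphism of full through degree $m$ (forcing $S''=\varnothing$ and, by injectivity of downward reflection, $S'=S$), so that the pairing only sees the $\vv[m(S)]{}$-component of $x$. If anything, your write-up is more careful than the paper's, which silently replaces $x$ by $\jwlpz_{m(S)}x\jwlpz_m$ and leaves the uniqueness of $S'$ implicit; the one unjustified (but also unnecessary) addition is your closing assertion that $\vv[m(S)]{}$ is literally a member of the $\jwlpz$-cellular basis and pairs to zero with every other member, which would require a separate absorption argument beyond what the corollary needs.
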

\begin{proof}
Let $x\in\ctlmod[m(S)]{m}$ such that $\ip{\vv[m(S)]{},x}\neq 0$, i.e.
\begin{equation*}
\begin{tikzpicture}[scale=0.25,mylabels,centered]
\draw[st] (3,1) -- (3,2);
\draw[st] (5,1) -- (5,2);
\draw[st] (7,1) -- (7,2);
\draw[jwlpz] (2,2) rectangle (8,4);
\node at (5,3) {$m$};
\draw[morg] (2,4) -- (8,4) -- (10,6) -- (0,6) -- cycle;
\node at (5,5) {$\upr_S$};
\draw[jwlpz] (0,6) rectangle (10,8);
\node at (5,7) {$m(S)$};
\draw[morg] (0,8) -- (10,8) -- (8,10) -- (2,10) -- cycle;
\node at (5,9) {$x^*$};
\draw[jwlpz] (2,10) rectangle (8,12);
\node at (5,11) {$m$};
\draw[st] (3,12) -- (3,13);
\draw[st] (5,12) -- (5,13);
\draw[st] (7,12) -- (7,13);
\end{tikzpicture}\;
=a\;
\begin{tikzpicture}[scale=0.25,mylabels,centered]
\draw[st] (3,0) -- (3,4);
\draw[st] (5,0) -- (5,4);
\draw[st] (7,0) -- (7,4);
\draw[jwlpz] (2,1) rectangle (8,3);
\node at (5,2) {$m$};
\end{tikzpicture}\;\mod \td(< m),
\end{equation*}
for some $0\neq a \in\Zm$. Then, $x$ needs to be a nonzero element of $\Hom_{\tlcat^{\Zm}}(\jwlpz_m,\jwlpz_{m(S)})$ not factoring through any $\jwlpz_l$ for $l<m$. Using the basis from Lemma~\ref{zbasis}, it follows that the only possibilities are scalar multiples of $\jwlpz_{m(S)}\zupr_S\jwlpz_m=\vv[m(S)]{}$, as desired.
\end{proof}

Let $\mathbb{S}$ be a subset of ideals of $\Zm$. A consequence of Corollary~\ref{gramblock} is the following, which gives a way to study the lattice of submodules of a cell module $\ctlmod{m}$ corresponding to the elements of $\mathbb{S}$, as defined in Section~\ref{section:framework}.

\begin{prop}\label{propsublattice}
Let $S$ be an up-admissible set for $m$ and let $n\geq m(S)$. Then,
\begin{equation*}
\mathbb{S}_{\stlmod{m(S)}}(\ctlmod{m})=\{I\in\mathbb{S}\mid \ip{\vv[m(S)]{},\vv[m(S)]{}}\in I\}.
\end{equation*}
\end{prop}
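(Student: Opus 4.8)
The plan is to reduce everything to the truncated algebra $\TL_{m(S)}$, where Corollary~\ref{gramblock} applies directly, and then to transport the conclusion back using the fact that $\Zm$ is local. Throughout I work over $R=\Zm$ and write $\overline{(-)}$ for $-\otimes_{\Zm}\kk$; by Theorem~\ref{compfac} every composition factor of $\overline{\ctlmod{m}}$ has multiplicity one, and by Lemma~\ref{contravariance} the value $\ip{\vv[m(S)]{},\vv[m(S)]{}}$ is the coefficient of the identity diagram in $(\vv[m(S)]{})^*\vv[m(S)]{}$, hence is the same whether computed in $\ctlmod{m}$ or in $\ctlmod[m(S)]{m}$, so the right-hand side is unambiguous. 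Setting $T=\trunc[n]{m(S)}$, Proposition~\ref{truncation} tells me $T$ is exact, refines composition series and sends $\stlmod{r}$ to $\stlmod[m(S)]{r}$ (which is $0$ when $r>m(S)$ and is $\not\cong\stlmod[m(S)]{m(S)}$ when $r<m(S)$). Since $T$ commutes with base change, this yields
\begin{equation*}
[\,\overline{\ctlmod{m}(I)}:\stlmod{m(S)}\,]=[\,\overline{T(\ctlmod{m}(I))}:\stlmod[m(S)]{m(S)}\,],
\end{equation*}
so it suffices to detect $\stlmod[m(S)]{m(S)}$ inside $\overline{T(\ctlmod{m}(I))}$.

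Next I would show that $T$ is compatible with the operation $M\mapsto M(I)$, namely $T(\ctlmod{m}(I))\cong(\ctlmod[m(S)]{m})(I)$ under the isomorphism $\phi$ of Section~\ref{sect:truncation}. This follows from a double inclusion using contravariance (Lemma~\ref{contravariance}): for $v\in\yz{m(S)}\ctlmod{m}(I)$ and any $w\in\ctlmod[m(S)]{m}$, the identity $\ip{(\zz{m(S)})^*v,w}=\ip{v,\zz{m(S)}w}$ places $\phi(v)$ in $(\ctlmod[m(S)]{m})(I)$; conversely, for $v'\in(\ctlmod[m(S)]{m})(I)$ and any $y\in\ctlmod{m}$, the identity $\ip{\yy{m(S)}v',y}=\ip{v',(\yy{m(S)})^*y}$ shows $\psi(v')=\yy{m(S)}v'$ lies in $\ctlmod{m}(I)$. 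The problem is then entirely internal to $\ctlmod[m(S)]{m}$.

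Inside $\ctlmod[m(S)]{m}$ the module $\stlmod[m(S)]{m(S)}$ is the top cell module, hence one-dimensional and equal to the simple submodule $\kk\vv[m(S)]{}$; since its multiplicity is one, a submodule has it as a composition factor if and only if it contains $\vv[m(S)]{}$. Corollary~\ref{gramblock} says $\ip{\vv[m(S)]{},x}$ equals the $\vv[m(S)]{}$-coefficient of $x$ times $\ip{\vv[m(S)]{},\vv[m(S)]{}}$, so $\ip{\vv[m(S)]{},\ctlmod[m(S)]{m}}=\ip{\vv[m(S)]{},\vv[m(S)]{}}\,\Zm$ as an ideal; consequently $\vv[m(S)]{}\in(\ctlmod[m(S)]{m})(I)$ over $\Zm$ precisely when $\ip{\vv[m(S)]{},\vv[m(S)]{}}\in I$. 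Reducing mod $\mathfrak{m}$ this already gives the implication $\ip{\vv[m(S)]{},\vv[m(S)]{}}\in I\Rightarrow\stlmod[m(S)]{m(S)}$ is a composition factor.

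The main obstacle is the reverse implication, where I only know $\overline{\vv[m(S)]{}}\in\overline{(\ctlmod[m(S)]{m})(I)}$ and must recover the integral statement $\ip{\vv[m(S)]{},\vv[m(S)]{}}\in I$. Here I would write $\vv[m(S)]{}=w+\eta$ with $w\in(\ctlmod[m(S)]{m})(I)$ and $\eta\in\mathfrak{m}\,\ctlmod[m(S)]{m}$, and pair with $\vv[m(S)]{}$: the term $\ip{w,\vv[m(S)]{}}$ lies in $I$, while Corollary~\ref{gramblock} forces $\ip{\eta,\vv[m(S)]{}}\in\mathfrak{m}\ip{\vv[m(S)]{},\vv[m(S)]{}}$. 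Writing $g=\ip{\vv[m(S)]{},\vv[m(S)]{}}$ this gives $g\in I+\mathfrak{m}g$, i.e. $g=i+\mu g$ with $i\in I$ and $\mu\in\mathfrak{m}$; since $\Zm$ is local, $1-\mu$ is a unit and $g=(1-\mu)^{-1}i\in I$. This Nakayama-type step, relying on the locality of $\Zm$ together with the single-row structure of the Gram matrix from Corollary~\ref{gramblock}, is the crux; everything else is bookkeeping with truncation and base change.
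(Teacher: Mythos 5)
Your proof is correct and takes essentially the same route as the paper's: reduce to the truncated algebra $\TL_{m(S)}$, prove $\trunc[n]{m(S)}(\ctlmod{m}(I))\cong(\ctlmod[m(S)]{m})(I)$ by exactly the double-inclusion-via-contravariance argument the paper uses, and then conclude inside $\ctlmod[m(S)]{m}$ from Corollary~\ref{gramblock}. Where you go beyond the paper is in spelling out the last step, which the paper compresses into ``the result follows from Corollary~\ref{gramblock}'': your observations that $\stlmod[m(S)]{m(S)}$ is the one-dimensional simple submodule spanned by $\vv[m(S)]{}$, that multiplicity one makes ``composition factor of a submodule'' equivalent to ``contains $\vv[m(S)]{}$'', and especially the Nakayama-type step (from $g\in I+\mathfrak{m}g$ conclude $g\in I$ because $\Zm$ is local) recovering the integral ideal-membership from the mod-$\mathfrak{m}$ statement, are precisely the details needed to make the paper's terse final sentence rigorous.

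One imprecision to fix: your opening claim that $\ip{\vv[m(S)]{},\vv[m(S)]{}}$ ``is the same whether computed in $\ctlmod{m}$ or in $\ctlmod[m(S)]{m}$'' is not right in general. The representative in $\ctlmod{m}$ carries the factor $\yy{m(S)}$, and $(\yy{m(S)})^*\yy{m(S)}=\delta^{(n-m(S))/2}\id_{m(S)}$, so the two self-pairings differ by $\delta^{(n-m(S))/2}$. This factor is a unit of $\Zm$ unless $\ell=2$ (i.e. $\overline{\delta}=0$), in which case the two readings of the right-hand side genuinely differ; the proposition should be read with the pairing taken in $\ctlmod[m(S)]{m}$, which is what your argument (like the paper's) actually uses throughout, so nothing downstream breaks.
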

\begin{proof}
This is where one can take full advantage of truncation functors and the fact that they preserve filtrations in the following sense: for any $r\geq 0$ and any ideal $I$,
\begin{equation*}
\trunc{r}(\ctlmod{m}(I))\cong\ctlmod[r]{m}(I)\cong(\trunc{r}\ctlmod{m})(I).
\end{equation*}
The second isomorphism follows directly from Proposition~\ref{truncation} and we will show that the first one is given by the restriction of the isomorphism $\phi:\trunc{r}\ctlmod{m}\to\ctlmod[r]{m}$, from Section~\ref{sect:truncation} to the submodule $\trunc{r}(\ctlmod{m}(I))$.

If $x\in\trunc{r}(\ctlmod{m}(I))$, then $x=\yz{r}u$ for some $u\in\ctlmod{m}(I)$ and $\phi(x)=(\zz{r})^*u\in\ctlmod[r]{m}$. Making use of contravariance from Lemma~\ref{contravariance}, this gives, for any $v\in\ctlmod[r]{m}$,
\begin{equation*}
\ip{\phi(x),v}=\ip{(\zz{r})^*u,v}=\ip{u,\zz{r}v}\in I,
\end{equation*}
since $u\in\ctlmod{m}(I)$. This shows that $\phi(\trunc{r}(\ctlmod{m}(I)))\subseteq \ctlmod[r]{m}(I)$.

Reciprocally, if $v\in\ctlmod[r]{m}(I)$, then $v=\id_r v=(\zz{r})^*\yy{r}v=\phi(\yz{r}\yy{r}v)=\phi(\yy{r} v)$, and for any $u\in\ctlmod{m}$,
\begin{equation*}
\ip{\yy{r}v,u}=\ip{v,(\yy{r})^*u}\in I,
\end{equation*}
since $(\yy{r})^*u\in\ctlmod[r]{m}$ and $v\in\ctlmod[r]{m}(I)$. This shows that $v\in\phi(\trunc{r}(\ctlmod{m}(I)))$, showing the reverse inclusion, and we conclude that $\phi(\trunc{r}(\ctlmod{m}(I)))=\ctlmod[r]{m}(I)$ as desired.

Thus, truncating if necessary, we may assume that $n=m(S)$ in $\mathbb{S}_{\stlmod{m(S)}}(\ctlmod{m})$. In that case, the copy of $\stlmod[m(S)]{m(S)}$ corresponds to the one-dimensional subspace spanned by (the image of) $v_{m(S)}$, and the result follows from Corollary~\ref{gramblock}.
\end{proof}

This says the the value of $\ip{\vv[m(S)]{},\vv[m(S)]{}}$ in the Gram matrix of $\ctlmod[m(S)]{m}$ determines the position of $\stlmod{m(S)}$ in Jantzen-like filtrations of $\ctlmod{m}$. The precise value of that entry will thus be of crucial importance in the next section.

\begin{prop}
Let $S$ be an up-admissible set for $m$. Then, in $\ctlmod[m(S)]{m}$ over $\Zm$,
\begin{equation*}
\ip{\vv[m(S)]{},\vv[m(S)]{}}=\prod_{s\in S}\frac{\qnd{a_{m(S),s}[S]+1}}{\qnd{a_{m(S),s-1}[S]+1}}\;.
\end{equation*}
\end{prop}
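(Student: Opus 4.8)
The plan is to compute the Gram entry incrementally, building $S$ up one element at a time and extracting a single quantum-number ratio at each stage. Since we work in $\ctlmod[m(S)]{m}$ we have $n=m(S)$, so $\yy{m(S)}=\id_{m(S)}$ and the generator is $\vv[m(S)]{}=\jwlpz_{m(S)}\zupr_S\jwlpz_m$. By definition of the cellular form, $\ip{\vv[m(S)]{},\vv[m(S)]{}}$ is the coefficient of $\id_m$ in $(\vv[m(S)]{})^{*}\vv[m(S)]{}$ modulo $\td(<m)$. I would carry out the whole computation over $\Q(\delta)$, where the classical Jones--Wenzl projectors and the cap-off relation of Lemma~\ref{jwproperties} are available; Corollary~\ref{gramblock} and Lemma~\ref{zbasis} then guarantee that the resulting scalar already lies in $\Zm$, so no separate integrality step is needed.

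The base case $S=\varnothing$ is immediate: $\vv[m(S)]{}=\jwlpz_m$ spans the one-dimensional module $\ctlmod[m]{m}$, and $\jwlpz_m^{*}\jwlpz_m=\jwlpz_m^{2}\equiv\id_m\pmod{\td(<m)}$ gives $\ip{\vv[m]{},\vv[m]{}}=1$, the empty product. For the inductive step I would use Lemma~\ref{indadmsets} to write $S=S'\cup\{k\}$ with $S'$ up-admissible and $\abs{S'}=\abs{S}-1$. The factorisation established in the proof of Theorem~\ref{substruct} then gives $\vv[m(S)]{}=u\cdot\vv[m(S')]{}$, where $u=\jwlpz_{m(S)}\rho\,\jwlpz_{m(S')}(\zz{m(S')})^{*}$ and $\rho$ is the single rung $\zupr_{\{k\}}$ or $\zdor_{\{k\}}$. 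Applying contravariance (Lemma~\ref{contravariance}) across the modules of sizes $m(S)$ and $m(S')$ yields
\[
\ip{\vv[m(S)]{},\vv[m(S)]{}}=\ip{u\,\vv[m(S')]{},u\,\vv[m(S')]{}}=\ip{\vv[m(S')]{},u^{*}u\,\vv[m(S')]{}},
\]
and Corollary~\ref{gramblock} shows that only the $\vv[m(S')]{}$-component of $u^{*}u\,\vv[m(S')]{}$ pairs nontrivially with $\vv[m(S')]{}$, so that $\ip{\vv[m(S)]{},\vv[m(S)]{}}=r_k\,\ip{\vv[m(S')]{},\vv[m(S')]{}}$, where $r_k$ is the coefficient of $\vv[m(S')]{}$ in $u^{*}u\,\vv[m(S')]{}$.

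The scalar $r_k$ is governed by the single-digit partial trace $\zdor_{\{k\}}\jwlpz_{m(S)}^{2}\zupr_{\{k\}}$ (or its $\zupr/\zdor$ analogue when $m(S')>m(S)$) sitting at the heart of $u^{*}u$. Expanding $\jwlpz_{m(S)}$ through the mother expansion of Lemma~\ref{jwlpzexp} exposes a classical Jones--Wenzl projector capped by the block of strands indexed by $k$, and contracting the cap of $\zdor_{\{k\}}$ against the cup of $\zupr_{\{k\}}$ via the classical cap-off relation of Lemma~\ref{jwproperties} collapses it to a single ratio $\qnd{\cdot}/\qnd{\cdot}$. Feeding this back into the induction, the claim reduces to the combinatorial statement that $r_k=\qnd{a_{m(S),k}[S]+1}/\qnd{a_{m(S),k-1}[S]+1}$ and that the factors indexed by $s\in S'$ are unchanged on passing from the data $(m(S'),S')$ to $(m(S),S)$; adding the elements of $S$ in increasing order, far-commutativity and the adjacency relations of Proposition~\ref{stwztheo} ensure the product genuinely telescopes within each maximal run of consecutive indices and splits between runs.

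I expect this digit bookkeeping to be the main obstacle: one must track precisely how the ancestors $a_{m(S),s}$ and the downward reflection $[S]$ transform under a single reflection at position $k$, and verify that the emitted quantum-number indices match those in the statement. The one technical wrinkle is that $\jwlpz_{m(S)}$ is \emph{not} idempotent over $\Zm$ (only its specialisation to $\kk$ is), so the square $\jwlpz_{m(S)}^{2}$ in $r_k$ must be treated honestly rather than replaced by $\jwlpz_{m(S)}$; however, this is a purely local computation at the single digit $k$, where one checks that the non-identity part of $\jwlpz_{m(S)}^{2}-\jwlpz_{m(S)}$ has through degree too small to survive the partial trace, leaving exactly the classical cap-off ratio.
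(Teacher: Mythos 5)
Your global strategy is sound and is genuinely different in organisation from the paper's: you induct on $\abs{S}$ one element at a time (via Lemma~\ref{indadmsets}), using the factorisation of $\vv[m(S)]{}$ through $\vv[m(S')]{}$, contravariance, and Corollary~\ref{gramblock}/Lemma~\ref{zbasis} to reduce to $\ip{\vv[m(S)]{},\vv[m(S)]{}}=r_k\,\ip{\vv[m(S')]{},\vv[m(S')]{}}$, whereas the paper inducts on the number of maximal stretches of $S$ and evaluates the sandwiched trace $\jwlpz_m\zdor_S\jwlpz_{m(S)}\zupr_S\jwlpz_m$ directly. Your base case and this reduction are fine. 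The genuine gap is in the evaluation of $r_k$, which is where essentially all of the content of the proposition sits, and there the mechanism you describe is not merely incomplete but would give the wrong answer.

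You propose to expand $\jwlpz_{m(S)}$ (Lemma~\ref{jwlpzexp}), cap the exposed classical projector against the cup of the rung, and discard the rest on through-degree grounds. Two things go wrong. First, the classical summand of $\jwlpz_{m(S)}$ contributes \emph{zero}: after the projector sitting inside $\zupr_{\{k\}}$ is absorbed, both ends of the cup created by $\zupr_{\{k\}}$ lie under $\jw_{m(S)}$, and a classical Jones--Wenzl projector annihilates any cup entering it. Already in the smallest case ($\ell=3$, $m=1$, $S=\{0\}$, $m(S)=3$) one has $\jw_3\,\zupr_{\{0\}}=0$, and the whole Gram value $\qnd{3}/\qnd{2}$ comes from the non-classical summand $\tfrac{\qnd{2}}{\qnd{3}}\ssupr_{\{0\}}\jw_1\ssdor_{\{0\}}$ of $\jwlpz_3$ --- the exact opposite of your picture. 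Second, even where a classical partial trace does survive, capping $y$ strands of $\jw_{m(S)}$ can only ever produce $\qnd{m(S)+1}/\qnd{m(S)+1-y}$, never the ancestor-indexed ratio $\qnd{a_{m(S),k}[S]+1}/\qnd{a_{m(S),k-1}[S]+1}$ required by the statement. The missing idea is the shortening property (Proposition~\ref{shortening}), which is the engine of the paper's computation: in the presence of the cap one \emph{first} replaces $\jwlpz_{m(S)}$ by $\jwlpz_{x}\otimes\id$ with $x=a_{m(S),k}$ the ancestor above the traced digit, so that the cup/cap of $\zdor_{\{k\}},\zupr_{\{k\}}$ become an honest partial-trace configuration around $\jwlpz_x$; only then does expanding $\jwlpz_x=\jw_x+(\text{rest})$ yield $\qnd{x+1}/\qnd{x+1-y}$, and the rest must be killed by the paper's refined through-degree estimate ($m(S)[t]<m$ because $t>S$, via Lemma~\ref{jwlpzexp}) --- ``lower through degree in $\End_{\tlcat}(m(S'))$'' alone is not enough, since such terms can still have a nonzero $\vv[m(S')]{}$-component after composition with $\zupr_{S'}\jwlpz_m$. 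Finally, your ``technical wrinkle'' rests on a misconception: $\jwlpz_{m(S)}$ \emph{is} idempotent over $\Zm$, being a sum of orthogonal idempotents $\lambda_{m(S)}^T\sslo_{m(S)}^T$ over $\Q(\delta)$ whose sum has coefficients in $\Zm$, so your remark about $\jwlpz_{m(S)}^2-\jwlpz_{m(S)}$ concerns a quantity that vanishes identically and does not address the real difficulty. Together with the deferred digit bookkeeping (which is genuine work, though true), the proposal as written does not contain the computation that the proposition actually requires.
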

\begin{proof}
Write $S$ as a disjoint union of up-admissible stretches, $S=\sqcup_{i=1}^{l} S_i$, with $S_i>S_{i-1}$. We will the show the result by induction on $l$.

If $l=1$, then $S=\{s_1,\ldots,s_j\}$ is itself a stretch. Write $m(S)+1=[a_k,a_{k-1},\ldots,a_0]_{\ell,\p}$ and let $x=[a_k,\ldots,a_{s_j+1},0,\ldots,0]_{\ell,\p}-1$, $y=\sum_{s\in S}a_s\p^{(s)}$, and $z=m(S)-x-y$. Then, $\ip{\vv[m(S)]{},\vv[m(S)]{}}$ is the coefficient in front of the term of maximal through degree in
\begin{equation*}
\jwlpz_m\zdor_S\zupr_S\jwlpz_m=\;
\begin{tikzpicture}[scale=0.25,mylabels,centered]
\draw[st] (3,1) -- (3,2);
\draw[st] (5,1) -- (5,2);
\draw[st] (7,1) -- (7,2);
\draw[jwlpz] (2,2) rectangle (8,4);
\node at (5,3) {$m$};
\draw[morg] (2,4) -- (8,4) -- (10,6) -- (0,6) -- cycle;
\node at (5,5) {$\zupr_S$};
\draw[jwlpz] (0,6) rectangle (10,8);
\node at (5,7) {$m(S)$};
\draw[morg] (0,8) -- (10,8) -- (8,10) -- (2,10) -- cycle;
\node at (5,9) {$\zdor_S$};
\draw[jwlpz] (2,10) rectangle (8,12);
\node at (5,11) {$m$};
\draw[st] (3,12) -- (3,13);
\draw[st] (5,12) -- (5,13);
\draw[st] (7,12) -- (7,13);
\end{tikzpicture}\;
=\;
\begin{tikzpicture}[scale=0.25,mylabels,centered]
\draw[st] (3,1) -- (3,2);
\draw[st] (5,1) -- (5,2);
\draw[st] (7,1) -- (7,2);
\draw[jwlpz] (2,2) rectangle (8,4);
\node at (5,3) {$m$};
\tlcup{5,6}{6,5}{7,6};
\node at (6,4.5) {$S$};
\draw[st] (3,4)..controls{(3,5.2) and (1,4.8)}..(1,6);
\draw[st] (5,4)..controls{(5,5.2) and (3,4.8)}..(3,6);
\draw[st] (7,4)..controls{(7,5.2) and (9,4.8)}..(9,6);
\draw[jwlpz] (0,6) rectangle (10,8);
\node at (5,7) {$m(S)$};
\begin{scope}[yscale=-1,shift={(0,-14)}]
\tlcup{5,6}{6,5}{7,6};
\draw[st] (3,4)..controls{(3,5.2) and (1,4.8)}..(1,6);
\draw[st] (5,4)..controls{(5,5.2) and (3,4.8)}..(3,6);
\draw[st] (7,4)..controls{(7,5.2) and (9,4.8)}..(9,6);
\end{scope}
\node at (6,9.5) {$S$};
\draw[jwlpz] (2,10) rectangle (8,12);
\node at (5,11) {$m$};
\draw[st] (3,12) -- (3,13);
\draw[st] (5,12) -- (5,13);
\draw[st] (7,12) -- (7,13);
\end{tikzpicture}\;
=\;
\begin{tikzpicture}[scale=0.25,mylabels,centered]
\draw[st] (3,1) -- (3,2);
\draw[st] (5,1) -- (5,2);
\draw[st] (7,1) -- (7,2);
\draw[jwlpz] (2,2) rectangle (8,4);
\node at (5,3) {$m$};
\tlcup{5,6}{6,5}{7,6};
\node at (7.6,7) {$y$};
\draw[st] (3,4)..controls{(3,5.2) and (1,4.8)}..(1,6);
\draw[st] (5,4)..controls{(5,5.2) and (3,4.8)}..(3,6);
\draw[st] (7,4)..controls{(7,5.2) and (9,4.8)}..(9,6);
\draw[jwlpz] (0,6) rectangle (6,8);
\node at (3,7) {$x$};
\draw[st] (7,6) -- (7,8);
\draw[st] (9,6) -- (9,8);
\begin{scope}[yscale=-1,shift={(0,-14)}]
\tlcup{5,6}{6,5}{7,6};
\draw[st] (3,4)..controls{(3,5.2) and (1,4.8)}..(1,6);
\draw[st] (5,4)..controls{(5,5.2) and (3,4.8)}..(3,6);
\draw[st] (7,4)..controls{(7,5.2) and (9,4.8)}..(9,6);
\end{scope}
\node at (9.6,7) {$z$};
\draw[jwlpz] (2,10) rectangle (8,12);
\node at (5,11) {$m$};
\draw[st] (3,12) -- (3,13);
\draw[st] (5,12) -- (5,13);
\draw[st] (7,12) -- (7,13);
\end{tikzpicture}\;.
\end{equation*}
Expanding $\jwlpz_x$, we may rewrite the above as
\begin{align}
\label{expandedtrace}
\begin{tikzpicture}[scale=0.25,mylabels,centered]
\draw[st] (3,1) -- (3,2);
\draw[st] (5,1) -- (5,2);
\draw[st] (7,1) -- (7,2);
\draw[jwlpz] (2,2) rectangle (8,4);
\node at (5,3) {$m$};
\tlcup{5,6}{6,5}{7,6};
\node at (7.6,7) {$y$};
\draw[st] (3,4)..controls{(3,5.2) and (1,4.8)}..(1,6);
\draw[st] (5,4)..controls{(5,5.2) and (3,4.8)}..(3,6);
\draw[st] (7,4)..controls{(7,5.2) and (9,4.8)}..(9,6);
\draw[jw] (0,6) rectangle (6,8);
\node at (3,7) {$x$};
\draw[st] (7,6) -- (7,8);
\draw[st] (9,6) -- (9,8);
\begin{scope}[yscale=-1,shift={(0,-14)}]
\tlcup{5,6}{6,5}{7,6};
\draw[st] (3,4)..controls{(3,5.2) and (1,4.8)}..(1,6);
\draw[st] (5,4)..controls{(5,5.2) and (3,4.8)}..(3,6);
\draw[st] (7,4)..controls{(7,5.2) and (9,4.8)}..(9,6);
\end{scope}
\node at (9.6,7) {$z$};
\draw[jwlpz] (2,10) rectangle (8,12);
\node at (5,11) {$m$};
\draw[st] (3,12) -- (3,13);
\draw[st] (5,12) -- (5,13);
\draw[st] (7,12) -- (7,13);
\end{tikzpicture}\;
+\;
\begin{tikzpicture}[scale=0.25,mylabels,centered]
\draw[st] (3,1) -- (3,2);
\draw[st] (5,1) -- (5,2);
\draw[st] (7,1) -- (7,2);
\draw[jwlpz] (2,2) rectangle (8,4);
\node at (5,3) {$m$};
\tlcup{5,6}{6,5}{7,6};
\node at (7.6,7) {$y$};
\draw[st] (3,4)..controls{(3,5.2) and (1,4.8)}..(1,6);
\draw[st] (5,4)..controls{(5,5.2) and (3,4.8)}..(3,6);
\draw[st] (7,4)..controls{(7,5.2) and (9,4.8)}..(9,6);
\draw[empty] (0,6) rectangle (6,8);
\node at (3,7) {rest};
\draw[st] (7,6) -- (7,8);
\draw[st] (9,6) -- (9,8);
\begin{scope}[yscale=-1,shift={(0,-14)}]
\tlcup{5,6}{6,5}{7,6};
\draw[st] (3,4)..controls{(3,5.2) and (1,4.8)}..(1,6);
\draw[st] (5,4)..controls{(5,5.2) and (3,4.8)}..(3,6);
\draw[st] (7,4)..controls{(7,5.2) and (9,4.8)}..(9,6);
\end{scope}
\node at (9.6,7) {$z$};
\draw[jwlpz] (2,10) rectangle (8,12);
\node at (5,11) {$m$};
\draw[st] (3,12) -- (3,13);
\draw[st] (5,12) -- (5,13);
\draw[st] (7,12) -- (7,13);
\end{tikzpicture}\;
&=\frac{\qnd{x+1}}{\qnd{x+1-y}}\;
\begin{tikzpicture}[scale=0.25,mylabels,centered]
\draw[st] (3,1) -- (3,2);
\draw[st] (5,1) -- (5,2);
\draw[st] (7,1) -- (7,2);
\draw[jwlpz] (2,2) rectangle (8,4);
\node at (5,3) {$m$};
\node at (7.6,7) {$z$};
\draw[st] (3,4) -- (3,10);
\draw[st] (5,4) -- (5,10);
\draw[st] (7,4) -- (7,10);
\draw[jw] (2,6) rectangle (6,8);
\node at (4,7) {$x-y$};
\draw[jwlpz] (2,10) rectangle (8,12);
\node at (5,11) {$m$};
\draw[st] (3,12) -- (3,13);
\draw[st] (5,12) -- (5,13);
\draw[st] (7,12) -- (7,13);
\end{tikzpicture}\;
+\;
\begin{tikzpicture}[scale=0.25,mylabels,centered]
\draw[st] (3,1) -- (3,2);
\draw[st] (5,1) -- (5,2);
\draw[st] (7,1) -- (7,2);
\draw[jwlpz] (2,2) rectangle (8,4);
\node at (5,3) {$m$};
\tlcup{5,6}{6,5}{7,6};
\node at (7.6,7) {$y$};
\draw[st] (3,4)..controls{(3,5.2) and (1,4.8)}..(1,6);
\draw[st] (5,4)..controls{(5,5.2) and (3,4.8)}..(3,6);
\draw[st] (7,4)..controls{(7,5.2) and (9,4.8)}..(9,6);
\draw[empty] (0,6) rectangle (6,8);
\node at (3,7) {rest};
\draw[st] (7,6) -- (7,8);
\draw[st] (9,6) -- (9,8);
\begin{scope}[yscale=-1,shift={(0,-14)}]
\tlcup{5,6}{6,5}{7,6};
\draw[st] (3,4)..controls{(3,5.2) and (1,4.8)}..(1,6);
\draw[st] (5,4)..controls{(5,5.2) and (3,4.8)}..(3,6);
\draw[st] (7,4)..controls{(7,5.2) and (9,4.8)}..(9,6);
\end{scope}
\node at (9.6,7) {$z$};
\draw[jwlpz] (2,10) rectangle (8,12);
\node at (5,11) {$m$};
\draw[st] (3,12) -- (3,13);
\draw[st] (5,12) -- (5,13);
\draw[st] (7,12) -- (7,13);
\end{tikzpicture}\\
\label{endtrace}
&=\frac{\qnd{x+1}}{\qnd{x+1-y}}\;
\begin{tikzpicture}[scale=0.25,mylabels,centered]
\draw[st] (3,1) -- (3,2);
\draw[st] (5,1) -- (5,2);
\draw[st] (7,1) -- (7,2);
\draw[jwlpz] (2,2) rectangle (8,4);
\node at (5,3) {$m$};
\draw[st] (3,4) -- (3,5);
\draw[st] (5,4) -- (5,5);
\draw[st] (7,4) -- (7,5);
\end{tikzpicture}\;
+\td(<m),
\end{align}
where the last equality follows from the fact that the second term consists solely of terms of lower through degree. This is trivial in the case where $x$ is Eve, and otherwise recall from Lemma~\ref{jwlpzexp} that one can write every remaining term of $\jwlpz_x$ in the form
\begin{equation}\label{restjwlp}
\begin{tikzpicture}[scale=0.25,mylabels,centered]
\draw[st] (1,2) -- (1,3);
\draw[st] (3,2) -- (3,3);
\draw[st] (5,2) -- (5,3);
\draw[st] (7,2) -- (7,3);
\draw[st] (9,2) -- (9,3);
\draw[st] (11,2) -- (11,10);
\draw[morg] (0,3) -- (10,3) -- (8,5) -- (2,5) -- cycle;
\node at (5,4) {$\ssdor_{S'}$};
\draw[jw] (2,5) rectangle (12,7);
\node at (7,6) {$x[S']$};
\draw[morg] (2,7) -- (8,7) -- (10,9) -- (0,9) -- cycle;
\node at (5,8) {$\ssupr_{S'}$};
\draw[st] (1,9) -- (1,10);
\draw[st] (3,9) -- (3,10);
\draw[st] (5,9) -- (5,10);
\draw[st] (7,9) -- (7,10);
\draw[st] (9,9) -- (9,10);
\node at (13.2,3) {$a_t\p^{(t)}$};
\end{tikzpicture}
\quad\mbox{or}\quad
\begin{tikzpicture}[scale=0.25,mylabels,centered]
\draw[st] (1,2) -- (1,3);
\draw[st] (3,2) -- (3,11);
\draw[st] (5,2) -- (5,11);
\draw[st] (7,2) -- (7,3);
\draw[st] (9,2) -- (9,3);
\draw[st] (11,2) -- (11,5);
\draw[morg] (0,3) -- (10,3) -- (8,5) -- (2,5) -- cycle;
\node at (5,4) {$\ssdor_{S'}$};
\draw[jw] (2,7) rectangle (6,9);
\node at (4,8) {$x[S'][t]$};
\tlcap{7,5}{9,7}{11,5};
\tlcup{7,11}{9,9}{11,11};
\begin{scope}[shift={(0,4)}]
\draw[morg] (2,7) -- (8,7) -- (10,9) -- (0,9) -- cycle;
\node at (5,8) {$\ssupr_{S'}$};
\draw[st] (1,9) -- (1,10);
\draw[st] (3,9) -- (3,10);
\draw[st] (5,9) -- (5,10);
\draw[st] (7,9) -- (7,10);
\draw[st] (9,9) -- (9,10);
\end{scope}
\draw[st] (11,11) -- (11,14);
\node at (13.2,4) {$a_t\p^{(t)}$};
\end{tikzpicture}\;,
\end{equation}
where $S'$ is a down-admissible set for $m_x$ and is non-empty in the first case, and where $a_t$ is the first non-zero coefficient in the $(\ell,\p)$-expansion of $x$. Tracing off the last $y$ strands of either diagram, we end up with elements of through degree at most $x[S']-y<x[t]-y<x[t]$ in the first case, and $x[S'][t]+y\leq x[t]+y$ in the second case. Therefore, even adding the $z$ additional strands as in \eqref{expandedtrace}, these terms end up having through degree less than $x[t]+y+z=m(S)[t]<m,$ the last inequality holding because $t>S$, and \eqref{endtrace} follows. Finally, a simple computation gives
\begin{equation*}
\frac{\qnd{x+1}}{\qnd{x+1-y}}=\frac{\qnd{a_{m(S),s_j}[S]+1}}{\qnd{a_{m(S),s_1}[S]+1}}=\prod_{k=1}^{j}\frac{\qnd{a_{m(S),s_k}[S]+1}}{\qnd{a_{m(S),s_{k}-1}[S]+1}}\;,
\end{equation*}
as desired.

Let $l\geq 2$ and write $S=\sqcup_{i=1}^{l}S_i=S_l\sqcup S'$, where $S_l<S'$. As in the base case, write $m(S)+1=[a_k,a_{k-1},\ldots,a_0]_{\ell,\p}$, let $x=[a_k,\ldots,a_{s_j+1},0,\ldots,0]_{\ell,\p}-1$, where $s_j$ is the largest element of $S'$, and let $t$ be the index of the first non-zero $(\ell,\p)$-digit of $x$. Then, using the inductive hypothesis for $S'$ and using the same procedure as in the base case for $S_l$ gives
\begin{align*}
\jwlpz_m\zdor_S\zupr_S\jwlpz_m &=\jwlpz_m\zdor_{S_l}\zdor_{S'}\zupr_{S'}\zupr_{S_l}\jwlpz_m=\;
\begin{tikzpicture}[scale=0.25,mylabels,centered]
\draw[st] (3,1) -- (3,2);
\draw[st] (5,1) -- (5,2);
\draw[st] (7,1) -- (7,2);
\draw[jwlpz] (2,2) rectangle (8,4);
\node at (5,3) {$m$};
\draw[morg] (2,4) -- (8,4) -- (10,6) -- (0,6) -- cycle;
\node at (5,5) {$\zupr_{S_l}$};
\draw[jwlpz] (0,6) rectangle (10,8);
\node at (5,7) {$m(S_l)$};
\draw[morg] (0,8) -- (10,8) -- (12,10) -- (-2,10) -- cycle;
\node at (5,9) {$\zupr_{S'}$};
\draw[jwlpz] (-2,10) rectangle (12,12);
\node at (5,11) {$m(S)$};
\draw[morg] (-2,12) -- (12,12) -- (10,14) -- (0,14) -- cycle;
\node at (5,13) {$\zdor_{S'}$};
\draw[jwlpz] (0,14) rectangle (10,16);
\node at (5,15) {$m(S_l)$};
\begin{scope}[shift={(0,8)}]
\draw[morg] (0,8) -- (10,8) -- (8,10) -- (2,10) -- cycle;
\node at (5,9) {$\zdor_{S_l}$};
\draw[jwlpz] (2,10) rectangle (8,12);
\node at (5,11) {$m$};
\draw[st] (3,12) -- (3,13);
\draw[st] (5,12) -- (5,13);
\draw[st] (7,12) -- (7,13);
\end{scope}
\end{tikzpicture}\\
&=\prod_{s\in S'}\frac{\qnd{a_{m(S),s}[S]+1}}{\qnd{a_{m(S),s-1}[S]+1}}\;
\begin{tikzpicture}[scale=0.25,mylabels,centered]
\draw[st] (3,1) -- (3,2);
\draw[st] (5,1) -- (5,2);
\draw[st] (7,1) -- (7,2);
\draw[jwlpz] (2,2) rectangle (8,4);
\node at (5,3) {$m$};
\draw[morg] (2,4) -- (8,4) -- (10,6) -- (0,6) -- cycle;
\node at (5,5) {$\zupr_{S_l}$};
\draw[jwlpz] (0,6) rectangle (10,8);
\node at (5,7) {$m(S_l)$};
\begin{scope}[shift={(0,0)}]
\draw[morg] (0,8) -- (10,8) -- (8,10) -- (2,10) -- cycle;
\node at (5,9) {$\zdor_{S_l}$};
\draw[jwlpz] (2,10) rectangle (8,12);
\node at (5,11) {$m$};
\draw[st] (3,12) -- (3,13);
\draw[st] (5,12) -- (5,13);
\draw[st] (7,12) -- (7,13);
\end{scope}
\end{tikzpicture}\;
+\;
\begin{tikzpicture}[scale=0.25,mylabels,centered]
\draw[st] (3,1) -- (3,2);
\draw[st] (5,1) -- (5,2);
\draw[st] (7,1) -- (7,2);
\draw[jwlpz] (2,2) rectangle (8,4);
\node at (5,3) {$m$};
\draw[morg] (2,4) -- (8,4) -- (10,6) -- (0,6) -- cycle;
\node at (5,5) {$\zupr_{S_l}$};
\draw[jwlpz] (0,6) rectangle (10,8);
\node at (5,7) {$m(S_l)$};
\draw[empty] (0,8) rectangle (10,10);
\node at (5,9) {$\td(<m(S)[t])$};
\draw[jwlpz] (0,10) rectangle (10,12);
\node at (5,11) {$m(S_l)$};
\begin{scope}[shift={(0,4)}]
\draw[morg] (0,8) -- (10,8) -- (8,10) -- (2,10) -- cycle;
\node at (5,9) {$\zdor_{S_l}$};
\draw[jwlpz] (2,10) rectangle (8,12);
\node at (5,11) {$m$};
\draw[st] (3,12) -- (3,13);
\draw[st] (5,12) -- (5,13);
\draw[st] (7,12) -- (7,13);
\end{scope}
\end{tikzpicture}\\
&=\prod_{s\in S}\frac{\qnd{a_{m(S),s}[S]+1}}{\qnd{a_{m(S),s-1}[S]+1}}\;
\begin{tikzpicture}[scale=0.25,mylabels,centered]
\draw[st] (3,1) -- (3,5);
\draw[st] (5,1) -- (5,5);
\draw[st] (7,1) -- (7,5);
\draw[jwlpz] (2,2) rectangle (8,4);
\node at (5,3) {$m$};
\end{tikzpicture}\;
+\td(<m),
\end{align*}
where we used the fact that $m(S)[t]<m$. The proposition follows.
\end{proof}

\begin{rem}
This shows that the value of $\ip{\vv[m(S)]{},\vv[m(S)]{}}$ is equal to the inverse of the coefficient $\lambda_{m(S)}^S$, from Definition~\ref{jwlpzdef}, that appears in $\jwlpz_m$. This is perhaps not so surprising, as these coefficients are defined as the inverses of a particular local intersection form (in the language of \cite{Elias2015}) that would be computed by looking at the coefficient in front of the identity in $\jw_m\ssdor_{S}\ssupr_{S}\jw_m$. What is surprising is that the value does not change when replacing the usual Jones--Wenzl elements by their semisimple $(\ell,\p)$-analogues.
\end{rem}

\subsection{Two-dimensional Jantzen filtrations} 
Of particular interest in the context of the mixed characteristic $(\ell,\p)$ is the two-dimensional grid of ideals given by
\begin{equation*}
\begin{tikzpicture}[centered]
\foreach \i in {1,...,3}
{
    \foreach \j in {1,...,3}
    {
        \ifnum\i=1
            \ifnum\j=1
                \node at (2*\i,-\j) {$(\p,m_\delta)$};
            \else
                \node at (2*\i,-\j) {$(\p,m_\delta^\j)$};
            \fi
        \else
            \ifnum\j=1
                \node at (2*\i,-\j) {$(\p^\i,m_\delta)$};
            \else
                \node at (2*\i,-\j) {$(\p^\i,m_\delta^\j)$};
            \fi
        \fi
    }
}
\foreach \i in {1,...,4}
{
    \foreach \j in {1,...,3}
    {
        \ifnum\i=4
            \node at (2*\i-0.5,-\j) {$\cdots$};
        \else
            \node at (2*\i+1,-\j) {$\supset$};
        \fi
    }
}
\foreach \i in {1,...,3}
{
    \foreach \j in {1,...,4}
    {
        \ifnum\j=4
            \node[rotate=270] at (2*\i,-\j) {$\cdots$};
        \else
            \node[rotate=270] at (2*\i,-\j-0.5) {$\supset$};
        \fi
    }
}
\end{tikzpicture}\;.
\end{equation*}
Hence, from now on, we fix $\mathbb{S}=\{(\p^i,m_\delta^j)\mid i,j\geq 0\}$. Computing two-dimensional Jantzen filtrations cell modules will rely on careful examination of the quantum integers appearing in the corresponding Gram matrices, for which the following will be useful.

\begin{lem}\label{qidentity}
For any integer $k \ge 1$, 
\begin{equation}
 \qnn{\p^{(k+1)}}=\qnn{\p^{(k)}}\left(\qnn{\p}^{p^{k-1}}\qnn{\ell}^{(p-1)p^{k-1}}+\p F\right),
\end{equation} where $F\in\Z[\delta]$ is not divisible by $m_\delta$ nor $\p$.
\end{lem}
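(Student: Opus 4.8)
The plan is to peel off the factor $\qnn{\p^{(k)}}$ by multiplicativity and then analyse the remaining factor modulo $\p$ and modulo $\p^2$. Since $\p^{(k+1)}=\ell\p^{k}=\p\cdot\p^{(k)}$, Lemma~\ref{qnum}(1) (with $n\mapsto\p$ and $\ell\mapsto\p^{(k)}$) gives the exact identity $\qnn{\p^{(k+1)}}=\qnn{\p^{(k)}}\,\qn[q^{\p^{(k)}}]{\p}$, so it suffices to understand the factor $B:=\qn[q^{\p^{(k)}}]{\p}$. Writing $M:=\qnn{\p}^{\p^{k-1}}\qnn{\ell}^{(\p-1)\p^{k-1}}=\bigl(\qnn{\p}\,\qnn{\ell}^{\p-1}\bigr)^{\p^{k-1}}$ for the claimed main term, a direct Frobenius computation (the one underlying the Freshman Lemma invoked in Section~\ref{section:qnum}) shows $\qnn{\p}\,\qnn{\ell}^{\p-1}\equiv\qn[q^\ell]{\p}\pmod{\p}$; raising to the $\p^{k-1}$ power and applying Frobenius once more yields $M\equiv\qn[q^{\p^{(k)}}]{\p}=B\pmod{\p}$. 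As $B,M\in\Z[\delta]$ are congruent mod $\p$, the element $F:=(B-M)/\p$ lies in $\Z[\delta]$, and the identity of the lemma holds by construction.

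For $m_\delta\nmid F$ I would argue via $m_\delta$-adic valuations in the UFD $\Z[\delta]$. By Lemmata~\ref{chebyfac} and~\ref{lem:irredfac}, $m_\delta$ is (up to the substitution $\delta\mapsto-\delta$) a single irreducible factor $\psi_c$, and $m_\delta\mid\qnn{n}$ with multiplicity exactly $1$ precisely when $\ell\mid n$. Hence $v_{m_\delta}(\qnn{\p^{(k+1)}})=v_{m_\delta}(\qnn{\p^{(k)}})=1$, so the factorisation forces $v_{m_\delta}(M+\p F)=0$, i.e.\ $m_\delta\nmid(M+\p F)$. On the other hand $v_{m_\delta}(M)\ge(\p-1)\p^{k-1}\,v_{m_\delta}(\qnn{\ell})=(\p-1)\p^{k-1}\ge1$, so $m_\delta\mid M$; since $m_\delta\nmid\p$ this forces $m_\delta\nmid\p F$ and therefore $m_\delta\nmid F$.

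It remains to prove $\p\nmid F$, equivalently $\overline F\ne0$ in $\F_\p[\delta]$, equivalently $B\not\equiv M\pmod{\p^2}$; this is the genuinely second-order part and is the main obstacle. When $k\ge2$ I would simply evaluate the lemma's identity at $\delta=2$ (where $\qnn{n}\mapsto n$): this gives $F|_{\delta=2}=1-\p^{\,\p^{k-1}-1}\ell^{(\p-1)\p^{k-1}}$, and since $\p^{k-1}-1\ge1$ the subtracted term is divisible by $\p$, whence $F|_{\delta=2}\equiv1\pmod{\p}$ and so $\overline F\ne0$. When $k=1$ and $\p=2$, a short manipulation using $q^{\ell}+q^{-\ell}=\qnn{\ell+1}-\qnn{\ell-1}$ and the recurrence $\delta\qnn{\ell}=\qnn{\ell-1}+\qnn{\ell+1}$ collapses $B-M=\qn[q^{\ell}]{2}-\qnn{2}\qnn{\ell}$ to $-2\qnn{\ell-1}$, giving the clean formula $F=-\qnn{\ell-1}$, which is monic of degree $\ell-2$ and hence nonzero mod $2$.

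The truly delicate case is $k=1$ with $\p$ odd, because there $F|_{\delta=2}\equiv0\pmod{\p}$ by Fermat and the evaluation trick fails. Here I would expand $B-M$ to order $\p$ using $\binom{\p}{i}\equiv(-1)^{i-1}\p/i\pmod{\p^2}$. Setting $u=q^{\ell}$ and $\tilde\Delta_x:=(x-x^{-1})^{\p-1}-\qn[x]{\p}$ (an element divisible by $\p$), one obtains the identity $\p F=\tilde\Delta_q\,\qnn{\ell}^{\,\p-1}-\tilde\Delta_u$, hence $\overline F=\overline{\tilde\Delta_q/\p}\,\overline{\qnn{\ell}}^{\,\p-1}-\overline{\tilde\Delta_u/\p}$ in $\F_\p[q^{\pm1}]$, where $\overline{\tilde\Delta_x/\p}=\bigl(\sum_{i=1}^{\p-1}i^{-1}x^{2i-\p}\bigr)/(x-x^{-1})$. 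A comparison of top $q$-degrees—using $\ell\ge2$ to rule out cancellation between the two summands—shows the leading coefficient of $\overline F$ equals $1/(\p-1)\equiv-1\ne0$, so $\overline F\ne0$. The bulk of the real work is thus concentrated in this last case: producing the order-$\p$ expansion and extracting a provably nonzero coefficient.
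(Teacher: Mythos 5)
Your proposal is correct, and in the decisive step it takes a genuinely different route from the paper --- one that in fact repairs a gap in the paper's own argument. The first part (existence of $F$, i.e.\ the congruence $\qn[q^{\p^{(k)}}]{\p}\equiv\qnn{\p}^{\p^{k-1}}\qnn{\ell}^{(\p-1)\p^{k-1}}\pmod{\p}$ via Frobenius/Freshman) is the same computation the paper performs, and your valuation argument for $m_\delta\nmid F$ is an equivalent repackaging of the paper's step: the paper shows $m_\delta\nmid\bigl(\qnn{\p}^{\p^{k-1}}\qnn{\ell}^{(\p-1)\p^{k-1}}+\p F\bigr)$ by reducing $\qnd[\qnd{\p^{(k)}+1}-\qnd{\p^{(k)}-1}]{\p}\equiv\qnd[\pm 2]{\p}=\pm\p\pmod{m_\delta}$, whereas you get the same conclusion by comparing $m_\delta$-multiplicities across the factorisation; both are fine. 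The real divergence is in proving $\p\nmid F$. The paper substitutes $\delta=2$ for \emph{all} $k\geq 1$, obtaining $1-\p^{\p^{k-1}-1}\ell^{(\p-1)\p^{k-1}}$ and asserting this is prime to $\p$; but exactly as you observed, for $k=1$ and $\p\nmid\ell$ this equals $1-\ell^{\p-1}\equiv 0\pmod{\p}$ by Fermat's little theorem, so the paper's justification fails there (the lemma itself remains true: for $(\ell,\p)=(5,3)$ one computes $\p F=\qn[q^5]{3}-\qnn{3}\qnn{5}^2=-3(q^8+2q^6+3q^4+4q^2+4+4q^{-2}+3q^{-4}+2q^{-6}+q^{-8})$, so $F\not\equiv 0\pmod 3$ even though $F|_{\delta=2}=-24$). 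Your case analysis fills this hole: the evaluation trick is valid for $k\geq 2$ (and for $\ell=\p$); for $k=1$, $\p=2$ your exact formula $F=-\qnn{\ell-1}$ is correct; and for $k=1$, $\p$ odd, your identity $\p F=\tilde\Delta_q\,\qnn{\ell}^{\p-1}-\tilde\Delta_u$ with $u=q^\ell$ and $\tilde\Delta_x=(x-x^{-1})^{\p-1}-\qn[x]{\p}$ checks out, because $(q-q^{-1})^{\p-1}\qnn{\ell}^{\p-1}=(u-u^{-1})^{\p-1}$ and $\binom{\p-1}{i}\equiv(-1)^i\pmod{\p}$; the two summands of $\overline F$ then have top $q$-degrees $(\p-3)+(\ell-1)(\p-1)$ and $\ell(\p-3)$, which differ by $2\ell-2>0$, so the leading coefficient of $\overline F$ is $(\p-1)^{-1}\equiv-1\neq 0$. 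The cost of your route is the second-order binomial/harmonic-number expansion; what it buys is an argument that actually covers $k=1$ --- which the paper needs, since Theorem~\ref{theofilt} invokes the lemma with $k=s$ for every $s\geq 1$, including $s=1$.
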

\begin{proof}
Even though we are really interested in these expressions as polynomials in $\delta$, the first part of the proof is simpler to write in terms of expressions in $q$ and $q^{-1}$. Using Lemma~\ref{qnum}, we have
\begin{align*}
\qn{\p^{(k+1)}}=\qn{\p^{(k)}}\qn[q^{\p^{(k)}}]{p}&=\qn{\p^{(k)}}\left(\frac{q^{\ell\p^k}-q^{-\ell\p^k}}{q^{\ell\p^{k-1}}-q^{-\ell\p^{k-1}}}\right)\\
&\equiv \qn{\p^{(k)}}\left(\frac{q^{\ell\p}-q^{-\ell\p}}{q^\ell-q^{-\ell}}\right)^{\p^{k-1}}\pmod{p}\\
&= \qn{\p^{(k)}}\left(\frac{\qn{\ell\p}}{\qn{\ell}}\right)^{\p^{k-1}}\\
&\equiv \qn{\p^{(k)}}\left(\frac{\qn{\p}\qn{\ell}^p}{\qn{\ell}}\right)^{\p^{k-1}}\pmod{p}\\
&= \qn{\p^{(k)}}\qn{\p}^{p^{k-1}}\qn{\ell}^{(p-1)p^{k-1}},
\end{align*} and thus the same thing holds for polynomials in $\delta$ and the equality in the statement holds for some $F\in\Z[\delta]$. All that is left to show is that $m_\delta$ and $\p$ do not divide $F$. We have $\qnd{\p^{(k+1)}}=\qnd{\p^{(k)}}\qnd[\qnd{\p^{(k)}+1}-\qnd{\p^{(k)}-1}]{p}$ and $\qnd{\p^{(k)}+1}-\qnd{\p^{(k)}-1}\equiv \pm 2\pmod{m_\delta}$, so
\begin{equation*}
\qnd{\p}^{p^{k-1}}\qnd{\ell}^{(p-1)p^{k-1}}+\p F=\qnd[\qnd{\p^{(k)}+1}-\qnd{\p^{(k)}-1}]{p}\equiv \qnd[\pm 2]{p}=\pm p\not\equiv 0\pmod{m_\delta},
\end{equation*}
showing that $m_\delta$ doesn't divide $\qnd{\p}^{p^{k-1}}\qnd{\ell}^{(p-1)p^{k-1}}+\p F$. Since $m_\delta$ divides the first term, it cannot divide $F$. Finally, writing $$F=\frac{1}{\p}\left(\qnd[\qnd{\p^{(k)}+1}-\qnd{\p^{(k)}-1}]{p}-\qnd{\p}^{\p^{k-1}}\qnd{\ell}^{(\p-1)\p^{k-1}}\right)$$ and substituting $\delta=2$ gives $1-\p^{\p^{k-1}-1}\ell^{(\p-1)\p^{k-1}}$, which is not divisible by $\p$, and the result follows.
\end{proof}

\begin{deff}
For any integer $i\geq 1$ and any element $f\in\Zm$, let $$\xi_i(f)=\max\{j\in\Z_{\geq 0} : f\in (p^i,m_\delta^j)\}.$$
\end{deff}

Let $S$ be an up-admissible set for $m$ and let $\gamma_S:=\ip{\vv[m(S)]{},\vv[m(S)]{}}$, where the image of $\vv[m(S)]{}$ in $\ctlmod[m(S)]{m}$ generates a submodule with head isomorphic to the trivial module $\stlmod[m(S)]{m(S)}$. Then, using Proposition~\ref{propsublattice} and the definition above, we have
\begin{equation*}
\mathbb{S}_{\stlmod{m(S)}}(\ctlmod{m})=\{(\p^i,m_\delta^j) \mid \xi_i(\gamma_S)\geq 1\mbox{ and }j\leq \xi_i(\gamma_S)\},
\end{equation*}
for any $n\geq m(S)$. Therefore, Theorem~\ref{theofilt} completely solves the problem of computing two-dimensional Jantzen filtrations corresponding to $\mathbb{S}$, for any cell module $\ctlmod{m}$ in any mixed characteristic.

\begin{nota}
Write $S^i$ for the set $S$ with its $i$ greatest elements removed, with the convention that $S^0=S$ and $S^i=\varnothing$ if $i\geq \abs{S}$.
\end{nota}

\begin{theo}\label{theofilt}
Suppose that $\p\neq 2$. The coefficients $\gamma_S$ satisfy
\begin{equation*}
\xi_i\left(\gamma_S\right)=
\sum_{\substack{s\in S^{i-1}\\ s\neq 0}}(\p^s-\p^{s-1}) +\delta_{0\in S}.
\end{equation*}
On the other hand, if $\p=2$, then
\begin{equation*}
\xi_i\left(\gamma_S\right)=\begin{cases}
2\left(\sum_{\substack{s\in S^{i-1}\\ s\neq 0}}(\p^s-\p^{s-1}) +\delta_{0\in S}\right)& \mbox{if } i\leq \abs{S},\\
\delta_{0\in S} & \mbox{if } i>\abs{S}.
\end{cases}
\end{equation*}
\end{theo}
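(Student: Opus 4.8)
The plan is to turn the computation of $\xi_i(\gamma_S)$ into a two-variable valuation problem on $\Zm$, with one variable tracking the order in $\p$ and one the order in $m_\delta$. Since $\Zm$ is a two-dimensional regular local ring with regular parameters $(\p,m_\delta)$, its associated graded ring is the polynomial ring $\kk[P,M]$, which is a domain, and to every nonzero $f\in\Zm$ one can attach a Newton polygon in the $(\p\text{-order},m_\delta\text{-order})$ plane. The quantity $\xi_i(f)=\max\{j\mid f\in(\p^i,m_\delta^j)\}$ is then read off as the $m_\delta$-order of the lower boundary of this polygon at $\p$-order $i-1$: modulo $\p^i$ only the monomials of $\p$-order at most $i-1$ survive, and they must all carry $m_\delta$-order at least $j$. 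Because $\gr\Zm$ is a domain, leading forms multiply without cancellation, so the Newton polygon of a product is the Minkowski sum of the factors' polygons; this is exactly what makes the computation additive over the factors of $\gamma_S$.

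First I would invoke the previous proposition to write $\gamma_S=\prod_{s\in S}\qnd{a_{m(S),s}[S]+1}/\qnd{a_{m(S),s-1}[S]+1}$ and then reduce this to the ``basic'' ratios $\qnd{\p^{(s+1)}}/\qnd{\p^{(s)}}$. Working one up-admissible stretch at a time and unwinding the $(\ell,\p)$-expansions of the reflected ancestors, the telescoping product of quantum numbers attached to a stretch should collapse---using the factorisation $\qnd{n\ell}=\qnd{\ell}\qnd[\ldots]{n}$ of Lemma~\ref{qnum} together with the Freshman's-dream reduction modulo $\p$---to a unit multiple of a product of these basic ratios, plus one factor of $\qnd{\ell}$ arising from the digit $s=0$ precisely when $0\in S$. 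I expect this bookkeeping to be the main obstacle: matching the indices $a_{m(S),s}[S]+1$ against $\p^{(s)}=\ell\p^{s-1}$ forces one to track exactly which digits are zeroed by the reflection, and it is where the combinatorics of Section~\ref{section:qnum} must be deployed in full.

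With the reduction in hand, the Newton polygon of each basic ratio is immediate from Lemma~\ref{qidentity}, which gives $\qnd{\p^{(s+1)}}/\qnd{\p^{(s)}}=\qnd{\p}^{\p^{s-1}}\qnd{\ell}^{(\p-1)\p^{s-1}}+\p F$ with $F$ a unit of $\Zm$; for $\p$ odd, $\qnd{\ell}$ is a unit multiple of $m_\delta$ and $\qnd{\p}$ is a unit, so the two surviving vertices are $(0,(\p-1)\p^{s-1})$ and $(1,0)$ and the polygon is the segment joining them, while the $s=0$ factor $\qnd{\ell}$ contributes the single point $(0,1)$. Taking the Minkowski sum and ordering the segments by steepness---the steepest, i.e. largest-$s$, segments sit over the smallest $\p$-orders on the convex lower boundary---the boundary at $\p$-order $i-1$ has $m_\delta$-order $\sum_{s\in S^{i-1},\,s\neq 0}(\p^s-\p^{s-1})+\delta_{0\in S}$, since dropping the $i-1$ largest elements of $S$ is exactly the passage to $S^{i-1}$. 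This is the asserted value of $\xi_i(\gamma_S)$.

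Finally, for $\p=2$ the only change is in the shape of two elementary polygons, and it comes straight from Lemma~\ref{lem:irreduciblemd}: there $\qnd{\ell}$ has $m_\delta$-multiplicity $2$ modulo $2$ but multiplicity $1$ modulo $4$, so its polygon runs from $(0,2)$ to $(1,1)$ and then stays at $m_\delta$-order $1$. This simultaneously doubles every contribution while the boundary is still descending---producing the overall factor $2$ and the doubled $\delta_{0\in S}$ for $i\le\abs{S}$---and installs a floor at $m_\delta$-order $\delta_{0\in S}$ once all the descending segments are exhausted, that is for $i>\abs{S}$. I would check the transition at $i=\abs{S}$ directly against the Minkowski sum to confirm the two cases glue, and treat the degenerate case $\ell=2$ (where $\qnd{2}=\delta$ is no longer a unit) by hand.
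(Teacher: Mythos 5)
You follow the paper for the first half of the argument: telescoping $\gamma_S$ into the basic ratios $\qnd{\p^{(s+1)}}/\qnd{\p^{(s)}}$ (with the digit factors absorbed into units) and then feeding each factor into Lemma~\ref{qidentity} are exactly the paper's opening moves. The bookkeeping you single out as the main obstacle is in fact dispatched there in two lines: the factors indexed by the digits $\alpha_k,\beta_k$ are congruent to $\pm\alpha_k,\pm\beta_k$ modulo $(\p,m_\delta)$, hence are units of $\Zm$. After that your route genuinely diverges: the paper inducts on $\abs{S}$, peeling off the largest element $s$ and establishing the recursions $\xi_1(\gamma_S)=(\p^s-\p^{s-1})+\xi_1(\gamma_{S^1})$ (doubled when $\p=2$) and $\xi_i(\gamma_S)=\xi_{i-1}(\gamma_{S^1})$ for $i\geq 2$, whereas you assemble all factors at once via Newton polygons and a Minkowski sum.

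However, the pivot of your argument --- that $\xi_i(f)$ equals the $m_\delta$-order of the lower boundary of the \emph{convex} Newton polygon at abscissa $i-1$ --- is false as a general principle, and this is a genuine gap. Take $f=\p^2+m_\delta^2$: its polygon is the segment from $(0,2)$ to $(2,0)$, with boundary height $1$ at abscissa $1$, yet $\xi_2(f)=2$, since $f\in(\p^2,m_\delta^2)$ but $f\notin(\p^2,m_\delta^3)$ (as $m_\delta$ is not invertible modulo $\p^2$). Membership in $(\p^i,m_\delta^j)$ is a condition on the monomial \emph{staircase} of $f$, which the convex hull --- the object enjoying Minkowski additivity --- does not determine; the hull only yields $\xi_i(f)\geq$ boundary height, i.e.\ one inequality of the theorem. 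The repair, which your write-up never makes explicit, is twofold: (i) a vertex of the Newton polygon of a product is an honest monomial with unit coefficient, because an extreme point of a Minkowski sum decomposes uniquely into one extreme point per factor, so no cancellation can occur in that bidegree; and (ii) in the case at hand every integer abscissa on the descending part of the boundary is such a vertex (the factors are width-one segments with pairwise distinct drops $(\p-1)\p^{s-1}$, doubled for $\p=2$, plus the $\qnd{\ell}$ piece), and beyond it the boundary is flat at height $\delta_{0\in S}$ starting from a vertex, so equality holds for every $i$. You should also justify Minkowski additivity itself via the multiplicativity of \emph{all} monomial valuations on $\Zm$, or by passing to the completion (a power series ring over a complete DVR, where classical Newton polygon theory applies); the domain property of the $(\p,m_\delta)$-adic associated graded ring that you cite only controls the total order, not the whole polygon. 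With (i) and (ii) supplied your proof is correct, and the paper's induction is precisely its vertex-by-vertex sequential version. (One further caveat, shared with the paper: both arguments treat $\qnd{\p}$ as a unit of $\Zm$, which requires $\ell\neq\p$; your worry about $\ell=2$ with $\p$ odd is, by contrast, harmless, since there $\qnd{2}=\delta$ is $m_\delta$ itself and enters only through $\qnd{\ell}$.)
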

\begin{proof}
First, write $S=\sqcup_{k=1}^lS_k=\sqcup_{k=1}^l\{s_{k,1},\ldots,s_{k,j_k}\}$ as a disjoint union of stretches, and note that
\begin{align*}
\gamma_S=\prod_{s\in S}\frac{\qnd{a_{m(S),s}[S]+1}}{\qnd{a_{m(S),s-1}[S]+1}}&=\prod_{k=1}^l\frac{\qnd{\alpha_k\p^{(s_{k,j_k}+1)}}}{\qnd{\beta_k\p^{(s_{k,1})}}}\\
&=\prod_{k=1}^l\frac{\qn[\qnd{\p^{(s_{k,j_k}+1)}+1}-\qnd{\p^{(s_{k,j_k}+1)}-1}]{\alpha_k}}{\qn[\qnd{\p^{(s_{k,1})}+1}-\qnd{\p^{(s_{k,1})}-1}]{\beta_k}}\prod_{k=1}^l\frac{\qnd{\p^{(s_{k,j_k}+1)}}}{\qnd{\p^{(s_{k,1})}}},
\end{align*}
for some integers satisfying $0<\alpha_k,\beta_k<\p$ (or $0<\beta_k<\ell$ if $s_{k,1}=0$) since $S$ is up-admissible for $m$. This implies that
\begin{equation*}
\qn[\qnd{\p^{(s_{k,j_k}+1)}+1}-\qnd{\p^{(s_{k,j_k}+1)}-1}]{\alpha_k}\equiv\pm\alpha_k\not\equiv 0\pmod{(\p,m_\delta)},
\end{equation*}
and similarly for $\qn[\qnd{\p^{(s_{k,1})}+1}-\qnd{\p^{(s_{k,1})}-1}]{\beta_k}$, so that these elements are invertible in $\Zm$ and can be ignored for the rest of this proof. That is, it is sufficient to compute the value of $\xi_i$ on expressions of the form
\begin{equation*}
\gamma_S=\prod_{k=1}^l\frac{\qnd{\p^{(s_{k,j_k}+1)}}}{\qnd{\p^{(s_{k,1})}}}=\prod_{s\in S}\frac{\qnd{\p^{(s+1)}}}{\qnd{\p^{(s)}}},
\end{equation*}
and we record here that this last expression of $\gamma_S$ makes sense even if $S$ is not an up-admissible set for $m$. In particular, all of the $\gamma_{S^i}$, for $i\leq\abs{S}$, are well-defined elements of $\Zm$ and this will be crucial to use induction later.

If $\abs{S}=1$, then $S=\{s\}$. If $s>0$, then by Lemma~\ref{qidentity} there exists some $F\in\Z[\delta]$ such that $\gamma_S=\qnd{\p}^{\p^{s-1}}\qnd{\ell}^{(\p-1)\p^{s-1}}+\p F$ and $\p,m_\delta\nmid F$, so that $\xi_1(\gamma_S)=\p^s-\p^{s-1}$ if $\p\neq 2$ and $\xi_1(\gamma_S)=2(\p^s-\p^{s-1})$ if $\p=2$, while $\xi_i(\gamma_S)=0$ for all $i\geq 2$ and any $\p$. If $s=0$, then $\gamma_S=\qnd{\ell}$, so $\xi_1(\gamma_S)=1$ if $\p\neq 2$ and $\xi_1(\gamma_S)=2$ if $\p=2$, and $\xi_i(\gamma_S)=1$ for any other $i$ and any $\p$, as desired.

If $\abs{S}=k+1$, then $S=S^1\sqcup\{s\}$, where $s$ is the greatest element of $S$. Using Lemma~\ref{qidentity} again,
\begin{equation*}
\gamma_S =\frac{\qnd{\p^{(s+1)}}}{\qnd{\p^{(s)}}}\gamma_{S^1}=
\qnd{\p}^{\p^{s-1}}\qnd{\ell}^{(\p-1)\p^{s-1}}\gamma_{S^1}+\p G\gamma_{S^1},
\end{equation*}
where $G\in\Z[\delta]$ and $\p,m_\delta\nmid G$. By induction, this implies, if $\p\neq 2$,
\begin{equation*}
\xi_1(\gamma_S)=\p^s-\p^{s-1}+\xi_1(\gamma_{S^1})=\sum_{\substack{s\in S\\ s\neq 0}}(\p^s-\p^{s-1}) +\delta_{0\in S},
\end{equation*}
and
\begin{equation*}
\xi_i(\gamma_S)=\xi_{i-1}(\gamma_{S^1})=\sum_{\substack{s\in (S^1)^{i-2}\\ s\neq 0}}(\p^s-\p^{s-1}) +\delta_{0\in S^1}
\end{equation*}
which gives the desired result since $(S^1)^{i-2}=S^{i-1}$ and $0\in S$ if and only if $0\in S^1$. On the other hand, if $\p=2$,
\begin{equation*}
\xi_1(\gamma_S)=2(\p^s-\p^{s-1})+\xi_1(\gamma_{S^1})=2\left(\sum_{\substack{s\in S\\s\neq 0}} (\p^s-\p^{s-1})+\delta_{0\in S}\right),
\end{equation*}
and
\begin{equation*}
\xi_i(\gamma_S)=\xi_{i-1}(\gamma_{S^1})=\begin{cases}
2\left(\sum_{\substack{s\in (S^1)^{i-2}\\ s\neq 0}}(\p^s-\p^{s-1}) +\delta_{0\in S^1}\right)& \mbox{if } i-1\leq \abs{S^1},\\
\delta_{0\in S} & \mbox{if } i-1>\abs{S^1},
\end{cases}
\end{equation*}
which is again the desired result, noting that $\abs{S^1}=\abs{S}-1$. The theorem follows.
\end{proof}

\begin{ex}
Taking $\kk=\Z[\delta]/(\p,m_\delta)=\Z[\delta]/(2,\delta+1)$ so that $\ell=3$ and $\p=2$, consider the cell module $\ctlmod[200]{18}$ for $\TL_{200}$. Then
\begin{equation*}
18+1=[1,1,0,1]_{3,2},
\end{equation*}
so that the Alperin diagram of the cell module is the following, where every composition factor is listed with its corresponding up-admissible set:
\begin{equation*}
\begin{tikzpicture}[mylabels,centered]

\node[inner sep=1pt] (18) at (-1,0) {\begin{tabular}{c}$18$\\ $\{\}$\end{tabular}};
\node[inner sep=1pt] (22) at (-1,-1) {\begin{tabular}{c}$22$\\ $\{0\}$\end{tabular}};
\node[inner sep=1pt] (42) at (1,-1) {\begin{tabular}{c}$42$\\ $\{3\}$\end{tabular}};
\node[inner sep=1pt] (46) at (-1,-2) {\begin{tabular}{c}$46$\\ $\{0,3\}$\end{tabular}};
\node[inner sep=1pt] (30) at (1,-2) {\begin{tabular}{c}$30$\\ $\{2,3\}$\end{tabular}};
\node[inner sep=1pt] (90) at (3,-2) {\begin{tabular}{c}$90$\\ $\{3,4\}$\end{tabular}};
\node[inner sep=1pt] (34) at (-1,-3) {\begin{tabular}{c}$34$\\ $\{0,2,3\}$\end{tabular}};
\node[inner sep=1pt] (94) at (1,-3) {\begin{tabular}{c}$94$\\ $\{0,3,4\}$\end{tabular}};
\node[inner sep=1pt] (78) at (3,-3) {\begin{tabular}{c}$78$\\ $\{2,3,4\}$\end{tabular}};
\node[inner sep=1pt] (186) at (5,-3) {\begin{tabular}{c}$186$\\ $\{3,4,5\}$\end{tabular}};
\node[inner sep=1pt] (28) at (-1,-4) {\begin{tabular}{c}$28$\\ $\{0,1,2,3\}$\end{tabular}};
\node[inner sep=1pt] (82) at (1,-4) {\begin{tabular}{c}$82$\\ $\{0,2,3,4\}$\end{tabular}};
\node[inner sep=1pt] (190) at (3,-4) {\begin{tabular}{c}$190$\\ $\{0,3,4,5\}$\end{tabular}};
\node[inner sep=1pt] (174) at (5,-4) {\begin{tabular}{c}$174$\\ $\{2,3,4,5\}$\end{tabular}};
\node[inner sep=1pt] (76) at (1,-5) {\begin{tabular}{c}$76$\\ $\{0,1,2,3,4\}$\end{tabular}};
\node[inner sep=1pt] (178) at (3,-5) {\begin{tabular}{c}$178$\\ $\{0,2,3,4,5\}$\end{tabular}};
\node[inner sep=1pt] (172) at (1,-6) {\begin{tabular}{c}$172$\\ $\{0,1,2,3,4,5\}$\end{tabular}};

\draw[thick] (18) -- (22);
\draw[thick] (18) -- (42);
\draw[thick] (22) -- (46);
\draw[thick] (42) -- (30);
\draw[thick] (42) -- (46);
\draw[thick] (42) -- (90);
\draw[thick] (30) -- (34);
\draw[thick] (30) -- (78);
\draw[thick] (46) -- (34);
\draw[thick] (46) -- (94);
\draw[thick] (90) -- (78);
\draw[thick] (90) -- (94);
\draw[thick] (90) -- (186);
\draw[thick] (34) -- (28);
\draw[thick] (34) -- (82);
\draw[thick] (78) -- (82);
\draw[thick] (78) -- (174);
\draw[thick] (94) -- (82);
\draw[thick] (94) -- (190);
\draw[thick] (186) -- (174);
\draw[thick] (186) -- (190);
\draw[thick] (28) -- (76);
\draw[thick] (82) -- (76);
\draw[thick] (82) -- (178);
\draw[thick] (174) -- (178);
\draw[thick] (190) -- (178);
\draw[thick] (178) -- (172);
\draw[thick] (76) -- (172);

\draw[line width=10pt,color=red,opacity=0.2,rounded corners] (18) -- (22) -- (46) -- (34) -- (28) -- (76) -- (172);
\draw[line width=10pt,color=red,opacity=0.2,rounded corners] (18) -- (42) -- (90) -- (78) -- (174) -- (178) -- (172);

\end{tikzpicture}\; .
\end{equation*}
For each composition factor in the two highlighted chains, we want to draw the boundary of the ideals containing the corresponding $\gamma_S$ in the two-dimensional grid of ideals $\{(\p^i,m_\delta^j)\}_{i,j\geq 1}$, which is done by computing the value of $\xi_i(\gamma_S)$ for every $i$. We will sometimes abuse notation by writing $\xi_i(a)$ to denote $\xi_i(\gamma_{S_a})$, where $S_a$ is the up-admissible set corresponding to the simple composition factor $\stlmod[200]{a}$. We get the following pictures for the left chain and the right chain respectively:
\begin{equation*}
\begin{tikzpicture}[scale=0.4,centered]
\draw[step=1,dashed] (0,0) grid (9.9,-32.9);
\draw[color=\colzero,fill=\colzero,fill opacity=\jantzenopacity] (0,0) -- (0,-33) -- (1,-33) -- (1,-32) -- (2,-32) -- (2,-16) -- (3,-16) -- (3,-8) -- (4,-8) -- (4,-4) -- (5,-4) -- (5,-2) -- (6,-2) -- (6,-1) -- (10,-1) -- (10,0);
\draw[line width=1pt,color=black,fill=none] (1,-33) -- (1,-32) -- (2,-32) -- (2,-16) -- (3,-16) -- (3,-8) -- (4,-8) -- (4,-4) -- (5,-4) -- (5,-2) -- (6,-2) -- (6,-1) -- (10,-1);
\draw[color=\colone,fill=\colone,fill opacity=\jantzenopacity] (0,0) -- (0,-32) -- (1,-32) -- (1,-16) -- (2,-16) -- (2,-8) -- (3,-8) -- (3,-4) -- (4,-4) -- (4,-2) -- (5,-2) -- (5,-1) -- (10,-1) -- (10,0);
\draw[line width=1pt,color=black,fill=none] (0,-32) -- (1,-32) -- (1,-16) -- (2,-16) -- (2,-8) -- (3,-8) -- (3,-4) -- (4,-4) -- (4,-2) -- (5,-2) -- (5,-1) -- (10,-1);
\draw[color=\coltwo,fill=\coltwo,fill opacity=\jantzenopacity] (0,0) -- (0,-16) -- (1,-16) -- (1,-8) -- (2,-8) -- (2,-4) -- (3,-4) -- (3,-2) -- (4,-2) -- (4,-1) -- (10,-1) -- (10,0);
\draw[line width=1pt,color=black,fill=none] (0,-16) -- (1,-16) -- (1,-8) -- (2,-8) -- (2,-4) -- (3,-4) -- (3,-2) -- (4,-2) -- (4,-1) -- (10,-1);
\draw[color=\colthree,fill=\colthree,fill opacity=\jantzenopacity] (0,0) -- (0,-14) -- (1,-14) -- (1,-6) -- (2,-6) -- (2,-2) -- (3,-2) -- (3,-1) -- (4,-1) -- (4,-1) -- (10,-1) -- (10,0);
\draw[line width=1pt,color=black,fill=none] (0,-14) -- (1,-14) -- (1,-6) -- (2,-6) -- (2,-2) -- (3,-2) -- (3,-1) -- (4,-1) -- (4,-1) -- (10,-1);
\draw[color=\colfour,fill=\colfour,fill opacity=\jantzenopacity] (0,0) -- (0,-10) -- (1,-10) -- (1,-2) -- (2,-2) -- (2,-1) -- (3,-1) -- (3,-1) -- (4,-1) -- (4,-1) -- (10,-1) -- (10,0);
\draw[line width=1pt,color=black,fill=none] (0,-10) -- (1,-10) -- (1,-2) -- (2,-2) -- (2,-1) -- (3,-1) -- (3,-1) -- (4,-1) -- (4,-1) -- (10,-1);
\draw[color=\colfive,fill=\colfive,fill opacity=\jantzenopacity] (0,0) -- (0,-2) -- (1,-2) -- (1,-1) -- (2,-1) -- (2,-1) -- (3,-1) -- (3,-1) -- (4,-1) -- (4,-1) -- (10,-1) -- (10,0);
\draw[line width=1pt,color=black,fill=none] (0,-2) -- (1,-2) -- (1,-1) -- (2,-1) -- (2,-1) -- (3,-1) -- (3,-1) -- (4,-1) -- (4,-1) -- (10,-1);
\node[color=\colzero] (172) at (-2,-33) {$172$};
\node[color=\colone] (76) at (-2,-24) {$76$};
\node[color=\coltwo] (28) at (-2,-15) {$28$};
\node[color=\colthree] (34) at (-2,-12) {$34$};
\node[color=\colfour] (46) at (-2,-6) {$46$};
\node[color=\colfive] (22) at (-2,-1) {$22$};
\draw[thick,color=\colzero] (172) -- (0.5,-32.5);
\draw[thick,color=\colone] (76) -- (0.5,-24);
\draw[thick,color=\coltwo] (28) -- (0.5,-15);
\draw[thick,color=\colthree] (34) -- (0.5,-12);
\draw[thick,color=\colfour] (46) -- (0.5,-6);
\draw[thick,color=\colfive] (22) -- (0.5,-1);
\node at (0.5,-33.5) {$\vdots$};
\node at (10.8,-0.5) {$\cdots$};
\foreach \y in {1,...,33}
{
    \node at (-0.5,-\y+0.5) {{\scriptsize $\y$}};
}
\foreach \x in {1,...,10}
{
    \node at (\x-0.5,0.5) {{\scriptsize $\x$}};
}
\end{tikzpicture}
\hspace{10pt}
\mbox{and}
\hspace{10pt}
\begin{tikzpicture}[scale=0.4,centered]
\draw[step=1,dashed] (0,0) grid (9.9,-32.9);
\draw[color=\colzero,fill=\colzero,fill opacity=\jantzenopacity] (0,0) -- (0,-33) -- (1,-33) -- (1,-32) -- (2,-32) -- (2,-16) -- (3,-16) -- (3,-8) -- (4,-8) -- (4,-4) -- (5,-4) -- (5,-2) -- (6,-2) -- (6,-1) -- (10,-1) -- (10,0);
\draw[line width=1pt,color=black,fill=none] (1,-33) -- (1,-32) -- (2,-32) -- (2,-16) -- (3,-16) -- (3,-8) -- (4,-8) -- (4,-4) -- (5,-4) -- (5,-2) -- (6,-2) -- (6,-1) -- (10,-1);
\draw[color=\colone,fill=\colone,fill opacity=\jantzenopacity] (0,0) -- (0,-33) -- (1,-33) -- (1,-30) -- (2,-30) -- (2,-14) -- (3,-14) -- (3,-6) -- (4,-6) -- (4,-2) -- (5,-2) -- (5,-1) -- (10,-1) -- (10,0);
\draw[line width=1pt,color=black,fill=none] (1,-33) -- (1,-30) -- (2,-30) -- (2,-14) -- (3,-14) -- (3,-6) -- (4,-6) -- (4,-2) -- (5,-2) -- (5,-1) -- (10,-1);
\draw[color=\coltwo,fill=\coltwo,fill opacity=\jantzenopacity] (0,0) -- (0,-33) -- (1,-33) -- (1,-28) -- (2,-28) -- (2,-12) -- (3,-12) -- (3,-4) -- (4,-4) -- (4,-0);
\draw[line width=1pt,color=black,fill=none] (1,-33) -- (1,-28) -- (2,-28) -- (2,-12) -- (3,-12) -- (3,-4) -- (4,-4) -- (4,-0);
\draw[color=\colthree,fill=\colthree,fill opacity=\jantzenopacity] (0,0) -- (0,-28) -- (1,-28) -- (1,-12) -- (2,-12) -- (2,-4) -- (3,-4) -- (3,-0);
\draw[line width=1pt,color=black,fill=none] (0,-28) -- (1,-28) -- (1,-12) -- (2,-12) -- (2,-4) -- (3,-4) -- (3,-0);
\draw[color=\colfour,fill=\colfour,fill opacity=\jantzenopacity] (0,0) -- (0,-24) -- (1,-24) -- (1,-8) -- (2,-8) -- (2,0);
\draw[line width=1pt,color=black,fill=none] (0,-24) -- (1,-24) -- (1,-8) -- (2,-8) -- (2,0);
\draw[color=\colfive,fill=\colfive,fill opacity=\jantzenopacity] (0,0) -- (0,-8) -- (1,-8) -- (1,0);
\draw[line width=1pt,color=black,fill=none] (0,-8) -- (1,-8) -- (1,0);
\node[color=\colzero] (172) at (-2,-33.5) {$172$};
\node[color=\colone] (178) at (-2,-31.5) {$178$};
\node[color=\coltwo] (174) at (-2,-29) {$174$};
\node[color=\colthree] (78) at (-2,-26) {$78$};
\node[color=\colfour] (90) at (-2,-16) {$90$};
\node[color=\colfive] (42) at (-2,-4) {$42$};
\draw[thick,color=\colzero] (172) -- (1.5,-31.5);
\draw[thick,color=\colone] (178) -- (1.5,-29.5);
\draw[thick,color=\coltwo] (174) -- (0.5,-29);
\draw[thick,color=\colthree] (78) -- (0.5,-26);
\draw[thick,color=\colfour] (90) -- (0.5,-16);
\draw[thick,color=\colfive] (42) -- (0.5,-4);
\node at (0.5,-33.5) {$\vdots$};
\node at (10.8,-0.5) {$\cdots$};
\foreach \y in {1,...,33}
{
    \node at (-0.5,-\y+0.5) {{\scriptsize $\y$}};
}
\foreach \x in {1,...,10}
{
    \node at (\x-0.5,0.5) {{\scriptsize $\x$}};
}
\end{tikzpicture}\;,
\end{equation*}
where the finite values of $\xi_1(172)=64$, $\xi_1(178)=62$, and $\xi_1(174)=60$ were cut off for space considerations, whereas $\xi_i(m(S))$ stays equal to $1$ indefinitely for every $S$ containing $0$.

Taking $n=10 000$ so that $\ctlmod[10000]{18}$ has $42$ composition factors, we can draw the boundary of ideals as above for each one of them and organise them together as horizontal slices (in increasing order of the index of the composition factor) to get the image in Figure~\ref{fig:3Dlayers} that highlights the fractal nature of the modular representation theory of $\TL_n$ (c.f. \cite{Spencer2023,Sutton2023}).

\begin{figure}
\includegraphics[scale=0.5]{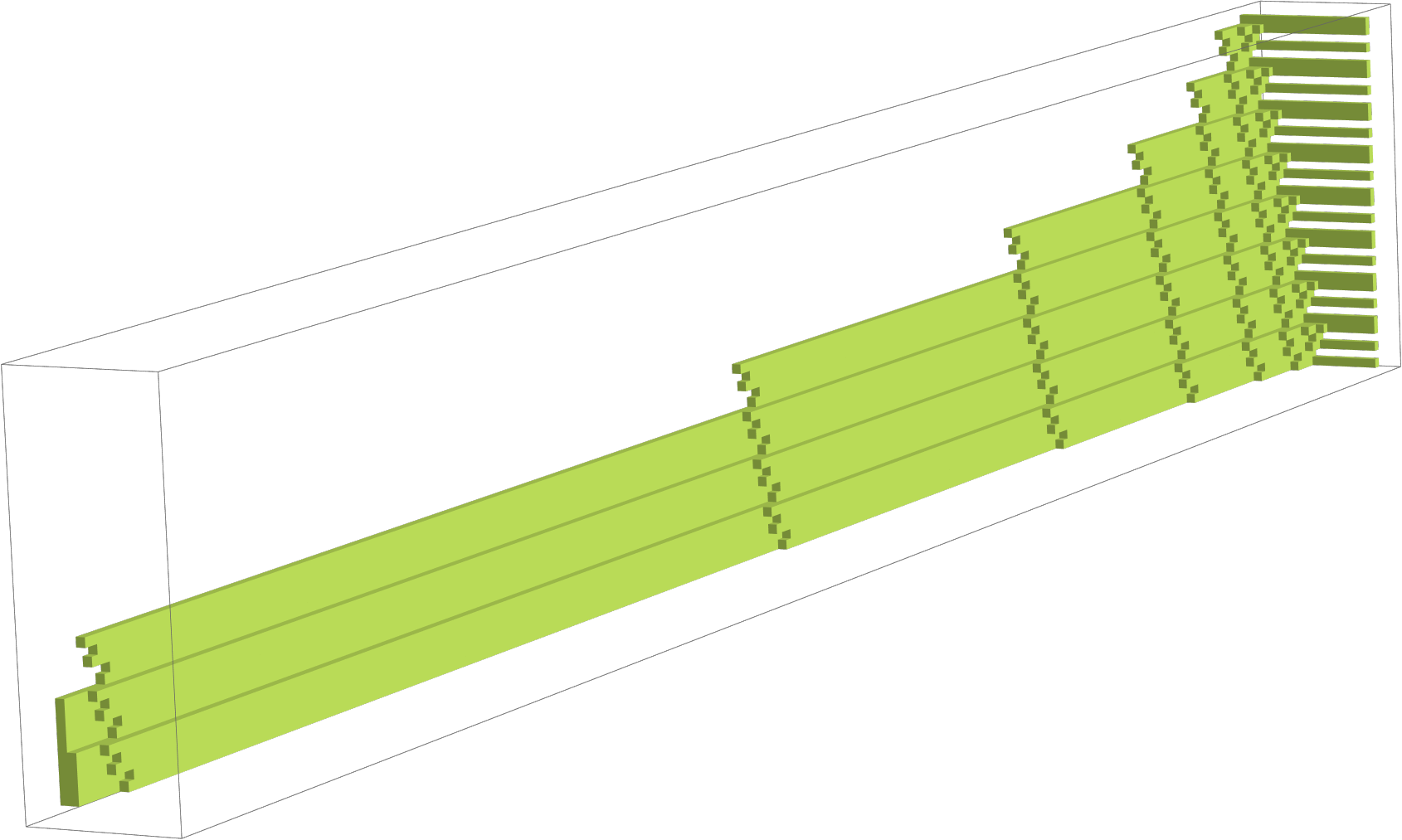}
\caption{The $42$ composition factors of $\ctlmod[10 000]{18}$.}
\label{fig:3Dlayers}
\end{figure}
\end{ex}

\section*{Acknowledgements}
The authors would like to thank Dani Tubbenhauer and Geordie Williamson for interesting discussions. This work was supported by the Additional Funding Programme for Mathematical Sciences, delivered by EPSRC (EP/V521917/1) and the Heilbronn Institute for Mathematical Research. The second author was also supported by the Natural Sciences and Engineering Research Council of Canada.

\bibliographystyle{plain}
\bibliography{references}

\end{document}